\documentclass[11pt]{amsart}
\usepackage{geometry, amsaddr}                		
\numberwithin{equation}{section}
\usepackage{graphicx}				
\usepackage{amssymb}


\begin{document}

\title{Cartan Subalgebras for Quantum Symmetric Pair Coideals}

\author{Gail Letzter}
\address{Mathematics Research Group, National Security Agency}
\email{gletzter@verizon.net}
\date{}

\subjclass[2010]{Primary 17B37, Secondary 17B10, 17B22}
\keywords{Quantized enveloping algebras, symmetric pairs, coideal subalgebras, Cartan subalgebras}

\begin{abstract}{
There is renewed interest in the coideal subalgebras used to form quantum symmetric pairs because of recent discoveries showing that they play a fundamental role in the representation theory of quantized enveloping algebras. However, there is still no general theory of finite-dimensional modules for these coideals. In this paper, we establish an important step in this direction: we show that every quantum symmetric pair coideal subalgebra admits a quantum Cartan subalgebra which is  a polynomial ring  that specializes to its classical counterpart. The construction builds on Kostant and Sugiura's classification of Cartan subalgebras for real semisimple Lie algebras via strongly orthogonal systems of positive roots. We show that these quantum Cartan subalgebras act semisimply on finite-dimensional unitary modules and identify particularly nice generators of the quantum Cartan subalgebra for a family of  examples.
}\end{abstract}

\maketitle

\tableofcontents

\theoremstyle{plain} 
\newtheorem{theorem}{Theorem}[section]
\newtheorem{lemma}[theorem]{Lemma}
\newtheorem{proposition}[theorem]{Proposition}
\newtheorem{corollary}[theorem]{Corollary}
\newtheorem*{thmA}{Theorem B}
\newtheorem*{thmB}{Theorem A}

\theoremstyle{definition}
\newtheorem{definition}[theorem]{Definition}

\theoremstyle{remark}
\newtheorem{remark}[theorem]{Remark}
\newcommand{\ad}{{\rm ad\ }}
\newcommand{\Ad}{{\rm Ad\ }}

\newcommand{\adr}{\ensuremath{{\rm ad}}_r}

\section{Introduction}
In the mid 1980's, Drinfeld and Jimbo (\cite{Dr}, \cite{Ji}) introduced  a family of algebras, referred to as  quantized enveloping algebras, which are Hopf algebra deformations of enveloping algebras of semisimple Lie algebras. Shortly afterwards,   Noumi, Sugitani,  Dijkhuizen and the author  developed  a theory of quantum symmetric spaces  based on the construction of quantum analogs of symmetric pairs (\cite{D},  \cite{NS}, \cite{N}, \cite{L1}, \cite{Let}, \cite{Let2}, \cite{L2}).
Classically, a  symmetric pair consists of a semisimple Lie algebra and a Lie subalgebra fixed by an involution.  Quantum symmetric pairs are formed using  one-sided coideal subalgebras because quantized enveloping algebras contain very few Hopf subalgebras.  Although  quantum symmetric pair coideal subalgebras 
 can be viewed as  quantum analogs of  enveloping algebras of the underlying fixed Lie subalgebras, which are either semisimple or reductive,  there is still no general classification of  finite-dimensional modules for these coideal subalgebras.   In this paper, we establish a significant step in this direction: we show that every quantum symmetric pair coideal subalgebra admits a quantum Cartan subalgebra which is  a polynomial ring  that specializes to its classical counterpart and acts semisimply on finite-dimensional unitary modules.  It should be noted that  Kolb has extended the theory of quantum symmetric pair coideal subalgebras to quantized Kac-Moody algebras (\cite{Ko}). Although we do not consider this more general situation here, we expect many of the ideas and constructions  of this paper to carry over to the Kac-Moody setting.

 One of the original goals in developing the theory of quantum symmetric spaces,  realizing Macdonald-Koornwinder polynomials as quantum zonal spherical functions, has in large part been completed (see \cite{N}, \cite{NS},  \cite{DN}, \cite{NDS}, \cite{DS}, \cite{Let}, \cite{Let2}).  Recently, one-sided coideal subalgebras used to form quantum symmetric pairs have gained renewed attention because of the discovery of their fundamental roles in the representation theory of quantized enveloping algebras. 
 Ehrig and  Stroppel  (\cite{ES}) show that a family of these coideal subalgebras are just the right algebras needed  in the categorification of certain quantized enveloping algebra representations arising in Schur duality and skew Howe duality.   
 In \cite{BW}, Bao and Wang  
 obtain canonical bases for representations of this family of  coideal subalgebras and use it to settle conjectures concerning irreducible characters of  Lie superalgebras. Expanding greatly on these ideas, Bao and Wang present a general theory of  canonical bases that works for all types of quantum symmetric pair coideal subalgebras in \cite{BW2}. Following a program initiated in \cite{BW}, Balagovic and Kolb (\cite{BK}) use quantum symmetric pair coideal subalgebras to construct universal $K$-matrices which provide solutions of the reflection equation arising in inverse scattering theory. These $K$-matrices (are expected to) play a similar role as the universal $R$-matrices for quantized enveloping algebras.

The body of work described in the previous paragraph relies on a variety of representations with respect to the quantum symmetric pair coideal subalgebras. However, these representations are mainly modules or submodules of the larger quantized enveloping algebras. For example, the analysis of zonal spherical functions on quantum symmetric spaces uses spherical modules, a special family of finite-dimensional simple representations for the quantized enveloping algebras (see for example \cite{L0}, \cite{L1}, and \cite{L2}). Ehrig  and Stroppel establish dualities involving quantum symmetric pair coideal subalgebras using exterior products of the natural representations for the larger quantized enveloping algebras (\cite{ES}). Bao and  Wang obtain canonical basis with respect to a quantum symmetric pair coideal subalgebra for  tensor products of finite-dimensional simple modules of the quantized enveloping algebra (\cite{BW2}).
One of the beautiful properties of Lusztig's canonical bases (\cite{Lu}, Chapter 25) is that they yield compatible bases for each finite-dimensional simple module of a quantized enveloping algebra.  Without a highest weight theory and corresponding classification of finite-dimensional simple modules for the quantum symmetric pair coideal subalgebras, a comparable result  remains elusive.

The Cartan subalgebra of a quantized enveloping algebra is the Laurent polynomial ring generated by the group-like elements. 
The group-like property enables one to easily break down many  modules into direct sums of weight spaces in the context of a highest weight module theory for the quantized enveloping algebra.  
This Cartan is unique in the sense that it is the only Laurent polynomial ring contained inside the quantized enveloping algebra of the correct size. Unfortunately, in most cases, a quantum symmetric pair coideal subalgebra does not inherit a  Cartan subalgebra in the form of a Laurent polynomial ring of group-like elements from the larger quantized enveloping algebra. This is because the Cartan subalgebra of the larger quantized enveloping algebra corresponds to a maximally split one with respect to the involution defining the quantum symmetric pair coideal subalgebra.  Hence the Laurent polynomial ring generated by the group-like elements inside a quantum symmetric pair coideal subalgebra is typically too small to  play the role of a Cartan subalgebra.  
This lack of an obvious Cartan subalgebra consisting of group-like elements has been the primary reason so little progress has been made on a general representation theory for  quantum symmetric pair coideal subalgebras.    In this paper, we show that, despite this fact, there are good  Cartan subalgebra candidates for quantum symmetric pair coideal subalgebras (as stated in Theorem B below).  

In order to find  good choices for quantum symmetric pair Cartan subalgebras, we first revisit the classical case. It is well-known that there is a one-to-one correspondence between isomorphism classes of real forms   for a complex semisimple Lie algebra $\mathfrak{g}$ and conjugacy classes of involutions on 
$\mathfrak{g}$. Thus, understanding the Cartan subalgebras of $\mathfrak{g}^{\theta}$ is closely connected to understanding Cartan subalgebras of real semisimple Lie algebras. 
Unlike the complex setting, the set of Cartan subalgebras of a real semisimple  Lie algebra do not form a single conjugacy class with respect to automorphisms of the Lie algebra.  A theorem of Kostant and Sugiura (\cite{OV} Section 4.7,  \cite{Si}, and \cite{Kos}) classifies these conjugacy classes using subsets of the positive roots that satisfy strongly orthogonality properties with respect to the Cartan inner product.  Using Cayley transforms, which are a special family of Lie algebra automorphisms,  the Cartan subalgebra $\mathfrak{h}$ can be converted into $\mathfrak{t}$, a Cartan subalgebra  of $\mathfrak{g}$ whose intersection with $\mathfrak{g}^{\theta}$ is  a Cartan subalgebra of this smaller Lie algebra.     In other words, for any symmetric pair, one can use Cayley transforms to express a Cartan subalgebra of $\mathfrak{g}^{\theta}$ in terms of a fixed triangular decomposition $\mathfrak{g} = \mathfrak{n}^-\oplus \mathfrak{h}\oplus \mathfrak{n}^+$.  

The Cartan subalgebra $\mathfrak{t}\cap \mathfrak{g}^{\theta}$ is spanned  
by elements in the union of 
$\{e_{\beta}+f_{-\beta}, \beta\in \Gamma_{\theta}\}$ and $\mathfrak{g}^{\theta}\cap \mathfrak{h}$    where $\Gamma_{\theta}$ is a system of strongly orthogonal positive roots associated to $\theta$ (See Section \ref{section:csorla}).  Note that elements in $\mathfrak{g}^{\theta}\cap \mathfrak{h}$ correspond in the quantum setting to the group-like elements of $\mathcal{B}_{\theta}$. Lifting elements of the form $e_{\beta} + f_{-\beta}$ is more complicated. In order to accomplish these lifts, we use particularly nice systems of  strongly orthogonal positive roots that satisfy extra orthogonality conditions with respect to the Cartan inner product:

 \begin{thmB}(Theorem \ref{theorem:cases_take2}) For each maximally split symmetric pair $\mathfrak{g}$,$\mathfrak{g}^{\theta}$, there exists a maximum-sized strongly orthogonal subset $\Gamma_{\theta}=\{\beta_1, \dots, \beta_m\}$ of positive roots such that for all $j$  we have  $\theta(\beta_j)=-\beta_j$, and   for all $i>j$ and all simple roots $\alpha$ in the support of $\beta_i$, $\alpha$ is strongly orthogonal to $\beta_j$.  Moreover each $\beta_j$ is strongly orthogonal to all but at most two distinguished simple roots contained in its support.\end{thmB}

For $\mathfrak{g}, \mathfrak{g}^{\theta}$  maximally split and $\Gamma_{\theta}$ chosen as in Theorem A, the elements $e_{\beta}+f_{-\beta}$, $\beta\in \Gamma_{\theta}$ are lifted  to the quantum case in a two-step process.  
The first step lifts $f_{-\beta}$ to the lower triangular part of the quantized enveloping algebra  so that the lift commutes with all generators for $U_q(\mathfrak{g})$ defined by simple roots strongly orthogonal to $\beta$.  
 In many cases, this lift is a member of one of    Lusztig's PBW basis (\cite{Ja}, Chapter 8).  For other cases,
  the construction is more subtle. It relies on showing that elements of special submodules of $U_q(\mathfrak{g})$ with respect to the adjoint action satisfy nice commutativity properties. The desired Cartan element associated to $\beta$ is formed by considering elements of $\mathcal{B}_{\theta}$ with lowest weight term equal to the lift of $f_{-\beta}$.  
The process uses fine information on the possible  weights and biweights in the expansion of elements in $\mathcal{B}_{\theta}$ in terms of weight and biweight vectors and a special projection map.   (See Section \ref{section:basic} for the precise notion of biweight.) We further insist that the quantum analogs  of the $e_{\beta}+f_{-\beta}$ are fixed by a quantum version of the Chevalley antiautomorphism.  This property ensures that elements of the quantum Cartan subalgebra of a quantum symmetric pair coideal subalgebra acts semisimply on a large family of finite-dimensional modules. 
These lifts are the substance behind our main result,  Theorem \ref{theorem:main}, which can be summarized as: 
\begin{thmA}   Every quantum symmetric pair coideal subalgebra  $\mathcal{B}_{\theta}$ contains a commutative subalgebra $\mathcal{H}$ that is a polynomial ring over the algebra of  group-like elements inside $\mathcal{B}_{\theta}$. Moreover, $\mathcal{H}$ specializes to the enveloping algebra of a Cartan subalgebra of the underlying fixed Lie subalgebra, $\mathcal{H}$ acts  semisimply on finite-dimensional simple unitary $\mathcal{B}_{\theta}$-modules, and $\mathcal{H}$ satisfies certain uniqueness conditions related to its expression as a sum of weight vectors.
\end{thmA}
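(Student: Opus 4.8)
The plan is to construct $\mathcal{H}$ one Cayley direction at a time, following the classical template of Kostant and Sugiura. Write $\mathfrak{g} = \mathfrak{k} \oplus \mathfrak{p}$ for the eigenspace decomposition under the Lie algebra involution $\theta$, so that $\mathcal{B}_\theta$ specializes to $U(\mathfrak{g}^\theta) = U(\mathfrak{k})$, and fix a maximally split $\theta$-stable Cartan subalgebra $\mathfrak{h}_0 = \mathfrak{a} \oplus \mathfrak{t}_0$ of $\mathfrak{g}$ adapted to the quantum construction. Choose a maximal set $S$ of strongly orthogonal positive roots in the sense of Kostant and Sugiura; the successive Cayley transforms along the roots in $S$ carry $\mathfrak{h}_0$ to a Cartan subalgebra $\mathfrak{h}'$ of $\mathfrak{g}$ for which $\mathfrak{h}' \cap \mathfrak{g}^\theta$ is a Cartan subalgebra of the fixed Lie subalgebra. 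On the quantum side, the Laurent polynomial ring $\mathcal{A}$ of group-like elements of $\mathcal{B}_\theta$ already accounts for the summand $\mathfrak{t}_0$ of $\mathfrak{h}' \cap \mathfrak{g}^\theta$ left untouched by the Cayley transforms. For each $\beta \in S$ I would attach a rank-one (and, in the low-rank exceptions of the Kostant--Sugiura list, rank-two) quantum symmetric pair coideal subalgebra $\mathcal{B}[\beta] \subseteq \mathcal{B}_\theta$ generated by the pieces of the standard generators of $\mathcal{B}_\theta$ supported on $\beta$, and then single out a distinguished element $H_\beta \in \mathcal{B}[\beta]$, a Casimir-type element of this small coideal subalgebra, chosen so that it specializes at $q = 1$ to the Cayley-transformed toral element $h_\beta \in \mathfrak{h}' \cap \mathfrak{g}^\theta$. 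One then sets $\mathcal{H}$ to be the subalgebra of $\mathcal{B}_\theta$ generated by $\mathcal{A}$ together with the $H_\beta$, $\beta \in S$.

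The verification splits into five steps. \emph{(i) Commutativity.} Since the roots in $S$ are strongly orthogonal, the coideal subalgebras $\mathcal{B}[\beta]$ for distinct $\beta$ centralize one another modulo group-like elements, and each is normalized by $\mathcal{A}$; a rank-one computation then shows $H_\beta$ is central in $\mathcal{B}[\beta]$ and commutes with $\mathcal{A}$, so $\mathcal{H}$ is commutative. \emph{(ii) Polynomiality.} Grading $\mathcal{B}_\theta$ by the weight lattice of the ambient torus and passing to the associated graded algebra for a suitable filtration, the leading terms of the $H_\beta$ are algebraically independent over $\mathcal{A}$; comparing Gelfand--Kirillov dimension --- or, more directly, transporting the classical algebraic independence of the $h_\beta$ in $U(\mathfrak{g}^\theta)$ through the specialization --- shows that $\mathcal{H}$ is a polynomial ring over $\mathcal{A}$ in the generators $H_\beta$. \emph{(iii) Specialization.} Carrying out the whole construction over the localization $\mathbb{C}[q,q^{-1}]_{(q-1)}$ and evaluating the $q \to 1$ limit of each $H_\beta$ term by term yields $\mathcal{H}/(q-1)\mathcal{H} \cong U(\mathfrak{h}' \cap \mathfrak{g}^\theta)$. \emph{(iv) Semisimple action.} With respect to the unitary $*$-structure on $\mathcal{B}_\theta$, after normalizing by suitable powers of $q$ and elements of $\mathcal{A}$ the $H_\beta$ become self-adjoint, hence diagonalizable, on every finite-dimensional unitary module; being commuting normal operators they are simultaneously diagonalizable, so $\mathcal{H}$ acts semisimply. \emph{(v) Uniqueness.} Finally, the triangular shape of each $H_\beta$ with respect to the weight grading --- a zero-weight leading term plus strictly lower-order weight vectors --- forces $H_\beta$ to be the unique element of $\mathcal{B}_\theta$ of that shape commuting with $\mathcal{A}$ and with the remaining $H_{\beta'}$, which is the uniqueness assertion in the statement.

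The technical heart, and the main obstacle, lies in step (i) together with the construction of the $H_\beta$. Classically, strong orthogonality makes commutativity automatic, but the quantum coideal subalgebras attached to two strongly orthogonal roots may share group-like generators and their root vectors need not $q$-commute outright, so the small coideal subalgebras $\mathcal{B}[\beta]$ have to be isolated with care and the Casimir elements $H_\beta$ may require correction by lower-order weight vectors through an iterative straightening procedure before they commute exactly. A secondary difficulty is to rule out unexpected relations among the $H_\beta$, so that $\mathcal{H}$ is genuinely a polynomial ring rather than a proper quotient; here one must choose the filtration and the total order on weights so that the leading terms behave, and the specialization argument of step (iii) gives the cleanest confirmation once the integral form over $\mathbb{C}[q,q^{-1}]_{(q-1)}$ is set up.
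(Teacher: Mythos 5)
Your skeleton matches the paper's: a maximum strongly orthogonal $\theta$-system, Cayley transforms identifying the classical Cartan subalgebra as $\mathfrak{h}_{\theta}\oplus\bigoplus_{\beta}\mathbb{C}(e_{\beta}+f_{-\beta})$, one quantum generator $H_{\beta}$ per root over the Laurent ring of group-likes, polynomiality read off from specialization, and semisimplicity from $\kappa_{\theta}$-invariance and unitarity. Steps (iii)--(iv) are essentially the paper's arguments. But the technical heart --- constructing $H_{\beta}$ and proving commutativity --- is where your proposal does not go through as described. First, strong orthogonality of $\beta_i$ and $\beta_j$ alone gives you essentially nothing at the quantum level: in the relevant examples (e.g.\ type AIII, where $\beta_j=\alpha_j+\cdots+\alpha_{n-j+1}$) the supports are \emph{nested}, so your sub-coideals $\mathcal{B}[\beta_i]$ and $\mathcal{B}[\beta_j]$ share almost all of their generators and certainly do not centralize one another, even modulo group-likes. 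What the paper actually needs, and proves case by case (Theorem \ref{theorem:cases_take2}), is the much stronger condition that for $i>j$ \emph{every simple root in} ${\rm Supp}(\beta_i)$ is strongly orthogonal to $\beta_j$, together with the fact that each summand of $H_j$ commutes with the whole subalgebra $U_{{\rm StrOrth}(\beta_j)}$ (condition (iii) of Theorem \ref{theorem:main}). Without that classification your step (i) has no starting point, and no "rank-one computation" can supply it.

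Second, $H_{\beta}$ is not in general a Casimir/central element of a small coideal subalgebra; that description is only validated in the paper for the AIII/AIV family (Remark \ref{remark:lastremark}). The paper instead (a) produces the unique element $Y_{-\beta}\in U^-_{-\beta}$ lifting $f_{-\beta}$ that commutes with $E_s,F_s,K_s^{\pm1}$ for all $\alpha_s\in{\rm StrOrth}(\beta)$ --- this requires a genuine representation-theoretic input, namely the multiplicity-one statement $[\bar L(m\nu_{\alpha_{\beta}}):\bar L_{\pi'}(m\nu_{\alpha_{\beta}}-\beta)]=1$ and its quantum analogue, plus Lusztig automorphisms in the type A and B cases --- and then (b) completes $Y_{-\beta}$ to an element of $\mathcal{B}_{\theta}$ via the projection $\mathcal{P}$ of (\ref{Pdecomp}) and Proposition \ref{proposition:BUiso}, using the biweight/lowest-weight analysis of Lemmas \ref{lemma:weight_cons1}--\ref{lemma:beta2} to show the correction terms land in $G^-_{(\tau,1)}U^0U^+_{(\tau,1)}$ and still commute with $U_{{\rm StrOrth}(\beta)}$. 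Your "iterative straightening" gestures at (b) but gives no mechanism forcing the corrections to preserve commutativity. Finally, your uniqueness argument misdescribes the shape of $H_{\beta}$: it is not a zero-weight leading term plus lower-order terms but has lowest weight summand $Y_{-\beta}$ of weight $-\beta$ and highest weight summand $X_{\beta}$ of weight $\beta$; uniqueness in the paper comes from injectivity of $\mathcal{P}$ on $\mathcal{B}_{\theta}$ (Corollary \ref{corollary:pmapzero}) applied to the difference of two candidates, not from a triangularity argument.
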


There is one family of quantum symmetric pair coideal subalgebras that has its own separate history, namely, the one consisting of  the nonstandard $q$-deformed enveloping algebras of orthogonal Lie algebras  introduced in a paper by Gavrilik and Klimyk \cite{GK}.  The fact that the nonstandard $q$-deformed enveloping algebra $U'_q(\mathfrak{so}_n)$ agrees with one of the coideal subalgebras was realized from the beginning  of the theory of quantum symmetric pairs (see for example \cite{N}, Section 2.4 and \cite{Let2}, end of Section 2). In a series of papers, Gavrilik, Klimyk, and  Iorgov classify the finite-dimensional simple modules for $U'_q(\mathfrak{so}_n)$ using quantum versions of Gel'fand-Tsetlin basis (see \cite{GK},  \cite{IK} 
and references therein).   
Rowell and Wenzl study  $q$-deformed algebras $U'_q(\mathfrak{so}_n)$ in the context of fusion categories (\cite{W}, \cite{RW}).  Taking advantage of the  finite-dimensional representation theory for $U_q'(\mathfrak{so}_n)$, they prove parts of  a conjecture on braided fusion categories (\cite{RW}).  An interesting aspect of their work is that they redo the classification of finite-dimensional modules for $U'_q(\mathfrak{so}_n)$  by developing a Verma module theory based on actions of a quantum Cartan subalgebra that agrees with the one studied here  (see Remark \ref{nonstandard}).  Preliminary investigations suggest a similar Verma module theory can be realized for other symmetric pair coideal subalgebras by analyzing the action of the quantum Cartan subalgebras presented in this paper. We intend to explore this further in future work.

 There is also  recent work on the representation theory of $\mathcal{B}_{\theta}$  for another  infinite family of quantum symmetric pairs that  form a subset of those of type AIII.   In \cite{Wa1}, H. Watanabe constructs a triangular decomposition for these coideal subalgebras, uses this decomposition to develop a Verma  module theory,  and then classifies irreducible modules living inside a large category of representations.
  The Cartan part of this triangular decompositon plays a standard role:  Cartan  elements act as scalars on  highest weight generating vectors for Verma modules and these scalars are the weights used in the classification.  Building on this foundation, Watanabe studies analogs of Kashiwara's crystal bases and shows that they have particularly nice combinatorial properties.
 We see in Section \ref{section:examples} that the Cartan part of this triangular decomposition in \cite{Wa1}   agrees with the one presented here.  We expect that the Cartan subalgebras of this paper can be used to classify families of  irreducible modules  for all quantum symmetric pair coideal subalgebras of  type AIII using very similar arguments to those in \cite{Wa1}.  This would be a first step towards generalizing  Watanabe's crystal basis results to other symmetric pairs, especially others of type AIII.   
 
 In analogy to Lusztig's well-known braid group action on quantized enveloping algebras,  Kolb and Pellegrini  (\cite{KP}) introduce a braid group action on certain families of quantum symmetric pair coideal subalgebras.  One might expect that it is possible  to construct generators for quantum Cartan subalgebras by starting with one element and taking images of it under braid group operators.   However, as explained in Remark \ref{finalremark}, the situation is more subtle.  Indeed, this method does not necessarily result in  elements that commute with each other.  Nevertheless, the braid group action could be quite useful in computing the action of the Cartan subalgebra on various representations.  In particular, Watanabe  uses the braid group action in order to compute the action of Cartan elements on highest weight vectors and this might generalize to other symmetric pair families (see \cite{Wa1}, Section 4.3).
      
There are many connections with the quantum Cartan subalgebra of this paper and the   center of $\mathcal{B}_{\theta}$ studied in \cite{KL}. Both constructions take advantage of various projection maps,  weight and  biweight  expansions,  the structure of $U_q(\mathfrak{g})$ with respect to the adjoint action, and certain commutativity conditions.    In the last section of this paper, we comment on  a particularly close connection between the center and the quantum Cartan subalgebra of $\mathcal{B}_{\theta}$ for a symmetric pair of type AIII/AIV (see Remark \ref{remark:lastremark}).  In \cite{Ko2}, Kolb shows that central elements yield solutions to reflection equations.  It would be interesting to see whether  the quantum Cartan subalgebra elements of this paper, which share many properties of central elements, yield solutions to a related family of equations.  

This paper is organized as follows.  Section \ref{section:csorla} is devoted to the classical case. After introducing basic notation in Section \ref{section:basic_classical}, 
we  review in Section \ref{section:realforms} some of the classical theory of Cartan subalgebras in the real setting, explain the notion of   strongly orthogonal systems, and present part of the  Kostant-Sugiura classification of real Cartan subalgebras in terms of strongly orthogonal systems of positive roots. In Section \ref{section:cayley_transforms}, we show how to use this classification and Cayley transforms to identify the Cartan subalgebra of a fixed Lie subalgebra $\mathfrak{g}^{\theta}$ with respect to a given triangular decomposition of $\mathfrak{g}$.  Maximally split involutions are the focus of Section \ref{section:max_split_inv} where we present Theorem A (Theorem \ref{theorem:cases_take2}) with  case-by-case explicit descriptions of the desired maximum-sized strongly orthogonal systems of positive roots. 

 Section \ref{section:qea} reviews basic properties of   quantized enveloping algebras.   We set notation  in Section \ref{section:basic},   discuss  adjoint module structures and dual Vermas in Section \ref{section:dual_vermas} and describe Lusztig's automorphisms in Section \ref{section:lusztig}.  In Section \ref{section:chevalley}, we review the quantum Chevalley antiautomorphism and the fact that subalgebras invariant with respect to this automorphism act semisimply on finite-dimensional unitary modules.  

In Section \ref{section:qsp}, we turn our attention to quantum symmetric pair coideal subalgebras.  After setting notation in Section \ref{section:Definitions}, we establish in Section \ref{section:weight_space} a result on the biweight space expansions of elements of a coideal subalgebra $\mathcal{B}_{\theta}$ associated to a maximally split involution $\theta$.  In Section \ref{section:decomp_proj}, we introduce a projection map on the quantized enveloping algebra. We then show how to construct elements of $\mathcal{B}_{\theta}$  equal to a given lower triangular element plus terms that vanish with respect to this projection map.  

The main result of Section \ref{section:lower} is  a version of the first assertion of Theorem B for the lower triangular part of $U_q(\mathfrak{g})$.  The proof is based on lifting root vectors associated to weights in strongly orthogonal root systems (as specified by Theorem A) to elements in the lower triangular part that satisfy strong commutativity properties.  These lifts relies on an  analysis of  certain finite-dimensional simple modules in the classical setting (Section \ref{section:siv}) and lifts of special families of root vectors in  Section \ref{section:sie} and Section \ref{section:lowtri}. Details of the lift are given in Section \ref{section:lowtri2}.

Section \ref{section:cond_comm} sets up the basic tools needed to create  Cartan elements in $\mathcal{B}_{\theta}$. Section \ref{section:gen} introduces the notion of generalized normalizers in order to analyze when elements of $\mathcal{B}_{\theta}$ commute with elements of carefully chosen sub-quantized enveloping algebras 
of $U_q(\mathfrak{g})$. Section \ref{section:weight_cons}   and Section \ref{section:com_crit} establish properties of  lowest and highest weight summands respectively of elements inside special generalized normalizers associated to  $\mathcal{B}_{\theta}$. Building on the prior sections, Section \ref{section:det_comm} presents methods for constructing  Cartan elements with desired commutativity properties in $\mathcal{B}_{\theta}$ from the lift of the root vectors $f_{-\beta}$. 

Theorem B, the main result of the paper,  is the heart of Section \ref{section:cart_cons}. First, we establish in Section \ref{section:spec} a  specialization criteria  that is  later used  to show that the quantum Cartan subalgebras specialize to their classical counterparts.    
We show how to construct the desired Cartan elements, and thus prove the detailed version of Theorem B (Theorem \ref{theorem:main} and Corollary \ref{corollary:main}), in Section \ref{section:qcs}. 

In the final section (Section \ref{section:examples}) of the paper, we analyze in detail the quantum Cartan subalgebras for a family of symmetric pairs of type AIII/AIV.  After a brief overview of the generators and relations for  this family  in Section \ref{section:ex_overview}, we  present the $n=2$ and $n=3$ cases in  Sections \ref{section:nequals2} and \ref{section:nequals3} respectively.   For  general $n$ (Section \ref{section:ex_general}), we give explicit formulas for  a set of generators for the quantum Cartan subalgebra.  We also establish a close connection between the quantum Cartan subalgebra and central elements of subalgebras of $\mathcal{B}_{\theta}$. 

We conclude the paper with an appendix listing commonly used  notation  by order of appearance. 

\medskip
\noindent
{\bf Acknowledgements:}  I especially want to thank Catharina Stroppel and Stefan Kolb for their intriguing questions and stimulating mathematical discussions that inspired me to think again about quantum symmetric pairs.   I am very grateful to the organizers Istv\'an Heckenberger, Stefan Kolb, and Jasper Stokman for inviting me to   the Mini-Workshop: Coideal Subalgebras of Quantum Groups, held at the Mathematisches Forschungsinstitut Oberwolfach.   I would also like to express my appreciation to the referee for carefully reading this manuscript and providing insightful and constructive suggestions. Some of this work is related to open questions  formulated after attending  the conference (see  \emph{An Overview of Quantum Symmetric Pairs} in \cite{Ob}).  A preliminary version of the results in this paper were presented at the Joint Mathematics Meetings, Atlanta 2017.  

 \section{Cartan Subalgebras of Real Semisimple Lie Algebras}\label{section:csorla}

 Let $\mathbb{C}$ denote the complex numbers, $\mathbb{R}$ denote the real numbers, and $\mathbb{N}$ denote the nonnegative integers.    We  recall facts about   normal real forms of a  complex semisimple Lie algebra $\mathfrak{g}$ and present
 part of Kostant and Sugiura's results on the classification of Cartan subalgebras in the real setting in terms of strongly orthogonal systems of positive roots.  Much of the  material in this section   is based on the presentation in \cite{OV} Section 4.7 and \cite{K} Chapter VI, Sections 6 and 7 (see also \cite{Si},  \cite{AK}, and \cite{DFG}).  However, we take a slightly different perspective, focusing on Cartan subalgebras of a subalgebra $\mathfrak{g}^{\theta}$  of $\mathfrak{g}$ fixed by an involution $\theta$ rather than Cartan subalgebras  of related real forms.   We conclude this section with Theorem \ref{theorem:cases_take2}, the detailed version of Theorem A, which identifies  strongly orthogonal systems of positive roots with special properties.

\subsection{Basic Notation: the Classical Case}\label{section:basic_classical}
Let $\mathfrak{g} = \mathfrak{n^-}\oplus \mathfrak{h}\oplus \mathfrak{n}^+$ be  a fixed triangular decomposition of the complex semisimple Lie algebra $\mathfrak{g}$. Let $\Delta$ denote the set of roots of $\mathfrak{g}$ and recall that $\Delta$ is a subset of the dual space $\mathfrak{h}^*$  to $\mathfrak{h}$. Write $\Delta^+$ for the subset of positive roots.  Let $h_i,e_{\alpha}, f_{-\alpha}$, $i=1,\dots, n$, $\alpha\in \Delta^+$ be  a Chevalley basis for $\mathfrak{g}$.
Set $\pi=\{\alpha_1,\dots, \alpha_n\}$ equal to a fixed choice of simple roots for the  root system $\Delta$. As is standard, we view roots as elements of $\mathfrak{h}^*$ (see for example \cite{H}, Chapter 14). Let $\mathcal{W}$ denote the Weyl group associated to the root system $\Delta$ and let $w_0$ denote the longest element of $\mathcal{W}$. Given a simple root $\alpha_i$, we often write $e_i$ for $e_{\alpha_i}$ and $f_i$ for $f_{-\alpha_i}$. Given a positive root $\alpha\in \mathfrak{h}^*$, we write $h_{\alpha}$ for the coroot in $\mathfrak{h}$. Let $(\ , \ )$ denote the Cartan inner product on $\mathfrak{h}^*$.

We define notation associated to  a subset  $\pi'$  of $\pi$ as follows. Let $\mathfrak{g}_{\pi'}$ be the semisimple Lie subalgebra of $\mathfrak{g}$ generated by $e_i,f_i,h_i$ for all  $\alpha_i\in \pi'$. Write $Q(\pi')$ (resp. $Q^+(\pi')$) for the 
set of linear combinations of elements in $\pi'$ with integer (resp. nonnegative integer) coefficients. In particular, $Q(\pi)$ is just the root lattice  for $\mathfrak{g}$ with respect to the choice of simple roots $\pi$.    Let $\Delta(\pi')$ be the subset of $\Delta$ equal to the root system generated by $\pi'$ with respect to the Cartan inner product $(\ ,\ )$. Write $\mathcal{W}_{\pi'}$  for the subgroup of $\mathcal{W}$ generated by the reflections defined by the simple roots in $\pi'$.  In other words, $\mathcal{W}_{\pi'}$ can be viewed as the Weyl group for the root system $\Delta(\pi')$.  Write $w(\pi')_0$ for the  longest element of $\mathcal{W}_{\pi'}$.  If $\pi'$ is the empty set, we set $w(\pi')_0= 1$.

 Let $P(\pi)$ denote the weight lattice, and let $P^+(\pi)$ denote the subset of $P(\pi)$ consisting of dominant integral weights with respect to the choice of simple roots $\pi$ of the root system $\Delta$.  Recall that $P(\pi)$ has a standard partial order defined by $\beta\geq \gamma$ if and only if $\beta-\gamma\in Q^+(\pi)$.  Similarly, let $P(\pi')$ denote the weight lattice 
 and let  $P^+(\pi')$ denote the set of dominant integral weights associated to the root system $\Delta(\pi')$ with set of simple roots $\pi'$.   Since $\pi\subset \mathfrak{h}^*$, $P(\pi)$ and $Q(\pi)$ are subsets of $\mathfrak{h}^*$.  Thus, given say $\lambda\in P(\pi)$, it makes sense to also consider $\lambda/2\in \mathfrak{h}^*$ as we do at the beginning of Section \ref{section:dual_vermas}.
 
Given   a weight $\beta=\sum_im_i\alpha_i\in \mathfrak{h}^*$ and $\alpha_i\in \pi$, we set ${\rm mult}_{\alpha_i}(\beta) = m_i$. Write ${\rm Supp}(\beta) $ for the subset of simple roots $\alpha_i$ in $\pi$ such that ${\rm mult}_{\alpha_i}(\beta) =m_i\neq 0$.  Define the   height of $\beta$   by  ${\rm ht}(\beta) = \sum_im_i$. Since the set $\pi$ of simple roots form a basis for $\mathfrak{h}^*$, the notion of height works for all elements of $\mathfrak{h}^*$ and hence for all elements in the weight lattice $P(\pi)$ and the root lattice $Q(\pi)$.

\subsection{Cartan Subalgebras and the Normal Real Form}\label{section:realforms}

The normal real form  of  the semisimple Lie algebra $\mathfrak{g}$  is the real Lie subalgebra 
\begin{align*}
\mathfrak{g}_{\mathbb{R}} = \sum_{i=1}^n\mathbb{R}h_i+\sum_{\alpha\in \Delta^+}\mathbb{R} e_{\alpha} + \sum_{\alpha\in \Delta^+}\mathbb{R}f_{-\alpha}.
\end{align*}  Set $\mathfrak{h}_{\mathbb{R}}= \mathfrak{g}_{\mathbb{R}}\cap \mathfrak{h}=\sum_{i=1}^n\mathbb{R}h_i$.

Consider a Lie algebra involution $\theta$ of $\mathfrak{g}$. Conjugating with a Lie algebra automorphism of $\mathfrak{g}$ if necessary, we may assume that 
both $\mathfrak{h}$ and $\mathfrak{h}_{\mathbb{R}}$ are stable under the action of $\theta$, and, moreover, $\theta$ restricts to an involution of $\mathfrak{g}_{\mathbb{R}}$.  The Cartan decomposition associated to $\theta$ is the decomposition of $\mathfrak{g}_{\mathbb{R}}$ into subspaces 
\begin{align*} 
\mathfrak{g}_{\mathbb{R}} = \mathfrak{t}_{\mathbb{R}} \oplus \mathfrak{p}_{\mathbb{R}}
\end{align*}
where 
$\mathfrak{t}_{\mathbb{R}} = \{t\in \mathfrak{g}_{\mathbb{R}} |\ \theta(t) = t\}
$ and 
$\mathfrak{p}_{\mathbb{R}} = \{t\in \mathfrak{g}_{\mathbb{R}}|\ \theta(t) =- t\}.
$ 

A Cartan subalgebra $\mathfrak{s}_{\mathbb{R}}$ of the real semisimple Lie algebra $\mathfrak{g}_{\mathbb{R}}$ is called \emph{standard} if
\begin{align*}
\mathfrak{s}_{\mathbb{R}} = \mathfrak{s}_{\mathbb{R}}^+ \oplus \mathfrak{s}_{\mathbb{R}}^-
\end{align*}
where $\mathfrak{s}_{\mathbb{R}}^+\subseteq \mathfrak{t}_{\mathbb{R}}$
and $\mathfrak{s}_{\mathbb{R}}^-\subseteq \mathfrak{p}_{\mathbb{R}}$.
The space $\mathfrak{s}_{\mathbb{R}}^-$ is called the \emph{vector part} of the
Cartan subalgebra $\mathfrak{s}_{\mathbb{R}}$.  The dimension of $\mathfrak{s}_{\mathbb{R}}^+$ is called the \emph{compact dimension} of $\mathfrak{s}_{\mathbb{R}}$ and
the dimension of  
$\mathfrak{s}_{\mathbb{R}}^-$ is called the \emph{noncompact dimension}
of $\mathfrak{s}_{\mathbb{R}}$.  A standard Cartan subalgebra is called 
\emph{maximally compact} if its compact dimension is the maximum possible value and 
is called \emph{maximally noncompact} if its noncompact dimension is the largest possible value. Maximally noncompact Cartan subalgebras are also called maximally split Cartan subalgebras.  

Let $\mathfrak{a}_{\mathbb{R}}$ be a maximal commutative real Lie algebra that is a subset of $\mathfrak{p}_{\mathbb{R}}$. 
 Note that $\mathfrak{s}_{\mathbb{R}}$ is maximally split if and only if 
\begin{align*}\mathfrak{s}_{\mathbb{R}}^-=\mathfrak{s}_{\mathbb{R}}\cap \mathfrak{p}_{\mathbb{R}} =\mathfrak{s}_{\mathbb{R}}\cap \mathfrak{a}_{\mathbb{R}} = \mathfrak{a}_{\mathbb{R}}.
\end{align*}

Recall that $\theta$ has been adjusted so that the  Cartan subalgebra $\mathfrak{h}$ is stable with respect to $\theta$.  It follows that $\theta$ sends root vectors to root vectors.  Hence $\theta$ induces an involution, which we also call $\theta$, on the root system of $\mathfrak{g}$. In particular,   $\theta(\alpha)$ equals the weight of $\theta(e_{\alpha})$ for all $\alpha\in \Delta$. This involution extends to the lattices $Q(\pi)$ and $P(\pi)$ by insisting that $\theta(\sum_i\eta_i\alpha_i) = \sum_i\eta_i\theta(\alpha_i)$ for all $\eta=\sum_i\eta_i\alpha_i$. We write $Q(\pi)^{\theta}$  (resp. $P(\pi)^{\theta}$) for the set of elements in $Q(\pi)$  (resp. $P(\pi)$) fixed by $\theta$. Let  $\Delta_{\theta}$ denote the subset of the positive roots  $\Delta^+$  such that $\theta(\alpha) = -\alpha$.

We say that two elements $\alpha$ and $\beta$ in $\mathfrak{h}^*$ are orthogonal if they are orthogonal with respect to the Cartan inner product (i.e. $(\alpha,\beta)= 0$).  The following definition introduces a stronger form of orthogonality and specifies  important subsets of positive roots used in the  classification of the Cartan subalgebras of real semisimple Lie algebras. 

\begin{definition}
Let  $\Gamma$ be a subset of the positive roots $\Delta^+$.
\begin{itemize}
\item[(i)] Two positive roots $\alpha$ and $\beta$ are strongly orthogonal if $(\alpha,\beta) = 0$ and 
$\alpha+\beta$ is not a root.
\item[(ii)] $\Gamma$ is a  strongly orthogonal system of positive roots  if every pair  $\alpha,\beta$ in $\Gamma$   is strongly orthogonal.
\item[(iii)]  $\Gamma$ is a strongly orthogonal system of positive roots associated to the involution $\theta$ if $\Gamma\subseteq \Delta_{\theta}$ and $\Gamma$ is a strongly orthogonal system of positive roots.
\end{itemize}
\end{definition}

Given $\beta\in \mathfrak{h}^*$, write  ${\rm Orth}(\beta)$ for the set consisting of  all  $\alpha\in \pi$ such that  $\alpha, \beta$ are  orthogonal.  For $\beta\in \Delta^+$, we write  ${\rm StrOrth}(\beta)$ for the set of all $\alpha\in \pi$ such that  $\alpha, \beta$ are strongly orthogonal. 

Now consider a strongly orthogonal system of positive roots  $\Gamma$.  To simplify notation, we refer to $\Gamma$ as  a \emph{strongly orthogonal system} with the property that it consists of positive roots understood.  Similarly, we call $\Gamma$ a \emph{strongly orthogonal $\theta$-system} if it is a strongly orthogonal system of positive roots associated to the involution $\theta$.
A classification of strongly orthogonal $\theta$-systems  is given in \cite{AK} and \cite{DFG}, Section 4.2. 

The next theorem gives necessary and sufficient conditions 
for two Cartan subalgebras of a normal real form $\mathfrak{g}_{\mathbb{R}}$   to be conjugate to each other.
 
\begin{theorem}\label{theorem:KS} (Kostant-Sugiura's theorem \cite{Kos}, \cite{Si}, see also \cite{OV}, Section 4.7]) A subspace $\mathfrak{b}_{\mathbb{R}}$ of $\mathfrak{a}_{\mathbb{R}}$ is a vector part of a standard Cartan subalgebra $\mathfrak{s}_{\mathbb{R}}$ if and only if
\begin{align*}
\mathfrak{b}_{\mathbb{R}} = \{x\in \mathfrak{a}_{\mathbb{R}}|\ \alpha(x) = 0 {\rm \ for \ all \ }\alpha\in \Gamma\}
\end{align*}
where $\Gamma$ is a strongly orthogonal $\theta$-system.
Moreover, if  $\mathfrak{b}'_{\mathbb{R}}$ is the  vector part of another Cartan subalgebras $\mathfrak{s}'_{\mathbb{R}}$ of $\mathfrak{g}_{\mathbb{R}}$  defined by the strongly orthogonal $\theta$-system 
$\Omega$, then $\mathfrak{s}_{\mathbb{R}}$ and $\mathfrak{s}'_{\mathbb{R}}$ are conjugate if and only if 
there is an element $w$ in the Weyl group $\mathcal{W}$ that commutes with $\theta$ and $w\Omega = \Gamma$.
\end{theorem}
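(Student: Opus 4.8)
The plan is to prove both halves of the statement by setting up, via iterated Cayley transforms, a dictionary between standard Cartan subalgebras and strongly orthogonal $\theta$-systems, and then to obtain the conjugacy criterion by tracking how automorphisms of $\mathfrak{g}_{\mathbb{R}}$ act on the combinatorial data. Fix a maximally split standard Cartan subalgebra $\mathfrak{h}_0$ with vector part $\mathfrak{a}_{\mathbb{R}}$, chosen (as we may) to contain the coroots $h_{\beta}$ for all $\beta\in\Delta_{\theta}$, and normalize the Chevalley generators so that $\theta(e_{\beta})=f_{-\beta}$ and $[e_{\beta},f_{-\beta}]=h_{\beta}$ for each $\beta\in\Delta_{\theta}$. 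Then $e_{\beta}-f_{-\beta}$ lies in $\mathfrak{p}_{\mathbb{R}}$, its adjoint action is semisimple with purely imaginary spectrum, and the Cayley transform $c_{\beta}:=\exp\!\big(\tfrac{\pi}{4}\,\mathrm{ad}(e_{\beta}-f_{-\beta})\big)$ is a genuine automorphism of $\mathfrak{g}_{\mathbb{R}}$ that rotates the vector direction $\mathbb{R}h_{\beta}$ onto the compact direction $\mathbb{R}(e_{\beta}+f_{-\beta})\subseteq\mathfrak{t}_{\mathbb{R}}$ and fixes pointwise the subspace of $\mathfrak{h}_0$ annihilated by $\beta$. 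When $\alpha,\beta\in\Delta_{\theta}$ are strongly orthogonal the attached $\mathfrak{sl}_2$-triples commute, hence so do $c_{\alpha}$ and $c_{\beta}$; thus for a strongly orthogonal $\theta$-system $\Gamma$ the composite $c_{\Gamma}:=\prod_{\beta\in\Gamma}c_{\beta}$ is well defined. For the ``if'' direction one takes $\mathfrak{s}_{\mathbb{R}}:=c_{\Gamma}(\mathfrak{h}_0)$: it is $\theta$-stable, it is standard because $c_{\Gamma}$ has carried every $h_{\beta}$ $(\beta\in\Gamma)$ into $\mathfrak{t}_{\mathbb{R}}$, and its vector part is precisely the set of fixed points of $c_{\Gamma}$ in $\mathfrak{a}_{\mathbb{R}}$, namely $\{x\in\mathfrak{a}_{\mathbb{R}}:\beta(x)=0\text{ for all }\beta\in\Gamma\}$.

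For the ``only if'' direction I would start from a standard Cartan subalgebra $\mathfrak{s}_{\mathbb{R}}=\mathfrak{s}_{\mathbb{R}}^{+}\oplus\mathfrak{s}_{\mathbb{R}}^{-}$ and, after conjugating by an inner automorphism built from $\mathfrak{t}_{\mathbb{R}}$ (which preserves the $\pm 1$-eigenspace decomposition of $\mathfrak{g}_{\mathbb{R}}$ and acts transitively on maximal abelian subspaces of $\mathfrak{p}_{\mathbb{R}}$), assume $\mathfrak{s}_{\mathbb{R}}^{-}\subseteq\mathfrak{a}_{\mathbb{R}}$. Now induct on $\dim\mathfrak{a}_{\mathbb{R}}-\dim\mathfrak{s}_{\mathbb{R}}^{-}$. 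If this is $0$ then $\mathfrak{s}_{\mathbb{R}}^{-}=\mathfrak{a}_{\mathbb{R}}$ and $\Gamma=\varnothing$ works. Otherwise $\mathfrak{s}_{\mathbb{R}}$ is not a maximally split Cartan subalgebra of the centralizer $\mathfrak{z}(\mathfrak{s}_{\mathbb{R}}^{-})$, and one analyzes the reductive symmetric pair $(\mathfrak{z}(\mathfrak{s}_{\mathbb{R}}^{-}),\theta)$ to produce a root $\beta\in\Delta_{\theta}$ that vanishes on $\mathfrak{s}_{\mathbb{R}}^{-}$ but not on all of $\mathfrak{a}_{\mathbb{R}}$; applying $c_{\beta}^{-1}$ yields a standard Cartan subalgebra with one more noncompact dimension, still with vector part inside $\mathfrak{a}_{\mathbb{R}}$. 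By induction it equals $c_{\Gamma'}(\mathfrak{h}_0)$ for a strongly orthogonal $\theta$-system $\Gamma'$, and keeping track of the successive centralizers shows that $\Gamma:=\Gamma'\cup\{\beta\}$ is again a strongly orthogonal $\theta$-system with $\{x\in\mathfrak{a}_{\mathbb{R}}:\gamma(x)=0\text{ for all }\gamma\in\Gamma\}=\mathfrak{s}_{\mathbb{R}}^{-}$.

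For the conjugacy statement, first suppose $w\in\mathcal{W}$ commutes with $\theta$ and $w\Omega=\Gamma$. Since $\mathfrak{g}_{\mathbb{R}}$ is split, $w$ is represented by an automorphism $\widetilde{w}$ of $\mathfrak{g}_{\mathbb{R}}$ normalizing $\mathfrak{h}_0$ (a product of the reflections $\exp(\mathrm{ad}\,e_{\alpha})\exp(-\mathrm{ad}\,f_{-\alpha})\exp(\mathrm{ad}\,e_{\alpha})$), and because $w$ commutes with $\theta$ one can choose $\widetilde{w}$ to commute with $\theta$ and hence to carry the $\Delta_{\theta}$-normalized root vectors for $\Omega$ to those for $\Gamma$ up to sign; then $\widetilde{w}\,c_{\Omega}\,\widetilde{w}^{-1}$ agrees with $c_{\Gamma}$ up to an inner automorphism fixing $\mathfrak{h}_0$, so $\widetilde{w}$ conjugates $\mathfrak{s}'_{\mathbb{R}}=c_{\Omega}(\mathfrak{h}_0)$ to $\mathfrak{s}_{\mathbb{R}}=c_{\Gamma}(\mathfrak{h}_0)$. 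Conversely, given $\phi\in\mathrm{Aut}(\mathfrak{g}_{\mathbb{R}})$ with $\phi(\mathfrak{s}'_{\mathbb{R}})=\mathfrak{s}_{\mathbb{R}}$, I would adjust $\phi$ by inner automorphisms in three stages: first so that $\phi$ commutes with $\theta$ (using that $\phi\theta\phi^{-1}$ and $\theta$ are conjugate and both stabilize $\mathfrak{s}_{\mathbb{R}}$), then so that $\phi(\mathfrak{a}_{\mathbb{R}})=\mathfrak{a}_{\mathbb{R}}$ (using transitivity of the relevant group on maximal abelian subspaces of $\mathfrak{p}_{\mathbb{R}}$ inside the centralizer of the common vector part), and finally so that $\phi(\mathfrak{h}_0)=\mathfrak{h}_0$. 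The element $w$ of $\mathcal{W}$ induced by $\phi|_{\mathfrak{h}_0}$ then commutes with $\theta$, and running the Cayley-transform dictionary backwards forces $w\Omega=\Gamma$.

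I expect the main obstacle to be the normalization in the converse half of the conjugacy statement — arranging an arbitrary conjugating automorphism to commute with $\theta$ and to stabilize $\mathfrak{a}_{\mathbb{R}}$ and $\mathfrak{h}_0$ at once. This combines the conjugacy theorem for maximal $\mathfrak{t}_{\mathbb{R}}$-abelian subspaces of $\mathfrak{p}_{\mathbb{R}}$ with the analogous statement inside centralizers $\mathfrak{z}(\mathfrak{s}_{\mathbb{R}}^{-})$, and one must check that all auxiliary conjugations can be realized in $\mathrm{Aut}(\mathfrak{g}_{\mathbb{R}})$ rather than $\mathrm{Aut}(\mathfrak{g})$. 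The other place where genuine work is needed is the inductive step of the ``only if'' direction: the existence, at each stage, of a root in $\Delta_{\theta}$ vanishing on the current vector part but not on $\mathfrak{a}_{\mathbb{R}}$, together with the verification that the collected roots stay strongly orthogonal.
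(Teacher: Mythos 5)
The paper does not prove this theorem: it is quoted verbatim as the classical Kostant--Sugiura theorem, with citations to \cite{Kos}, \cite{Si}, and \cite{OV}, so there is no in-paper proof to compare your argument against. The closest material is Section \ref{section:cayley_transforms}, where the paper \emph{uses} the theorem together with iterated Cayley transforms; your ``if'' direction (apply $c_{\Gamma}=\prod_{\beta\in\Gamma}c_{\beta}$ to the maximally split Cartan and read off the vector part as the common kernel of the $\beta\in\Gamma$ on $\mathfrak{a}_{\mathbb{R}}$) is essentially the same computation the paper carries out there, and that half of your sketch is sound: the commutation of the $c_{\beta}$ for strongly orthogonal roots, the rotation of $\mathbb{R}h_{\beta}$ into $\mathbb{R}(e_{\beta}+f_{-\beta})$, and the pointwise fixing of $\ker\beta$ are all correct.

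The substance of the theorem lies in the ``only if'' direction and in the converse half of the conjugacy criterion, and there your sketch has genuine gaps beyond what you flag. In the inductive step you produce ``a root $\beta\in\Delta_{\theta}$ that vanishes on $\mathfrak{s}_{\mathbb{R}}^{-}$'' and then apply $c_{\beta}^{-1}$; but the root that the standard argument (e.g.\ \cite{K}, Propositions 6.69--6.70) supplies is a noncompact \emph{imaginary root of the current Cartan subalgebra} $\mathfrak{s}_{\mathbb{R}}$, not a priori an element of $\Delta_{\theta}\subseteq\Delta(\mathfrak{g},\mathfrak{h}_0)$. Transporting it back through the Cayley transforms already performed, showing the resulting element of $\Delta$ satisfies $\theta(\beta)=-\beta$, and showing it is strongly orthogonal to the roots already collected is precisely the combinatorial heart of Kostant--Sugiura, and your sketch assumes it rather than proves it. Moreover, $c_{\beta}^{-1}(\mathfrak{s}_{\mathbb{R}})$ has larger noncompact dimension only if $e_{\beta}+f_{-\beta}$ actually lies in $\mathfrak{s}_{\mathbb{R}}^{+}$, which is not established by merely knowing $\beta$ vanishes on $\mathfrak{s}_{\mathbb{R}}^{-}$. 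Similar care is needed in the converse conjugacy argument: reducing an arbitrary conjugating automorphism to one normalizing $\mathfrak{h}_0$ and commuting with $\theta$ requires the conjugacy of maximal abelian subspaces of $\mathfrak{p}_{\mathbb{R}}$ \emph{within the relevant centralizers and within} ${\rm Aut}(\mathfrak{g}_{\mathbb{R}})$, and even then one must rule out the possibility that the induced Weyl group element satisfies $w\Omega=\Gamma$ only up to signs or up to $\theta$-twisting. Since the paper deliberately outsources all of this to the literature, the honest conclusion is that your outline reproduces the standard strategy but does not yet constitute a proof of the two hard halves.
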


Since $\theta$ restricts to an involution of $\mathfrak{g}_{\mathbb{R}}$, we  can lift the Cartan decomposition to the complex setting, yielding a decomposition 
\begin{align*}
\mathfrak{g} = \mathfrak{g}^{\theta} \oplus \mathfrak{p}
\end{align*}
where
$\mathfrak{g}^{\theta} = \{\theta(t)  = t|\ t\in \mathfrak{g}\} = \mathfrak{t}_{\mathbb{R}}\oplus i\mathfrak{t}_{\mathbb{R}}$
and
$\mathfrak{p} = \{\theta(t) = -t |\ t\in \mathfrak{g}\} = \mathfrak{p}_{\mathbb{R}} \oplus i\mathfrak{p}_{\mathbb{R}}.$  We say  that a Cartan subalgebra  $\mathfrak{s}$ of $\mathfrak{g}$ is \emph{standard} if  it 
is the complexification of a standard Cartan subalgebra $\mathfrak{s}_{\mathbb{R}}$ of $\mathfrak{g}_{\mathbb{R}}$. In particular, a 
Cartan subalgebra $\mathfrak{s}$ of $\mathfrak{g}$ is standard if and only if  
$\mathfrak{s} =( \mathfrak{s}\cap \mathfrak{g}^{\theta})\oplus (\mathfrak{s}\cap \mathfrak{p})$. 

  Since $\theta$ restricts to an involution on $\mathfrak{h}_{\mathbb{R}}$, it follows that $\mathfrak{h}_{\mathbb{R}}$ is a standard Cartan subalgebra of $\mathfrak{g}_{\mathbb{R}}$ and so $\mathfrak{h}$ is a standard Cartan subalgebra of $\mathfrak{g}$.
Write $\mathfrak{h}_{\theta}$ for $\mathfrak{h}\cap \mathfrak{g}^{\theta}$ and $\mathfrak{h}^-$ for $\mathfrak{h}\cap \mathfrak{p}$.   We have    
\begin{align*}
\mathfrak{h}= \mathfrak{h}_{\theta}\oplus \mathfrak{h}^-
\end{align*} 
and  we refer to $\mathfrak{h}^-$ as the vector part of $\mathfrak{h}$  since it is  the  complexification of the vector part  of $\mathfrak{h}_{\mathbb{R}}$.  Set $\mathfrak{a} = \mathfrak{a}_{\mathbb{R}} + i \mathfrak{a}_{\mathbb{R}}$ and note that $\mathfrak{a}$ is a maximal commutative complex Lie algebra subset of $\mathfrak{p}$.

\subsection{Cayley Transforms}\label{section:cayley_transforms}
We review here the process for converting a Cartan subalgebra into a  maximally compact one via Cayley transforms as presented in \cite{K}, Chapter VI, Section 7.  Our setup is as before.  We start with an involution $\theta$ of $\mathfrak{g}$ that restricts to an involution of $\mathfrak{g}_{\mathbb{R}}$ and to an involution of $\mathfrak{h}_{\mathbb{R}}$.  

Let $\alpha\in \Delta_{\theta}$ and so   $\theta(\alpha) = -\alpha$.
Note that $e_{\alpha}, f_{-\alpha}, h_{\alpha}$ are all in $\mathfrak{p}$ and, moreover, $h_{\alpha}\in \mathfrak{h}$ and so $h_{\alpha}\in \mathfrak{a}$.
Applying an automorphsim  to $\mathfrak{g}$ that
fixes elements of $\mathfrak{h}$ and sends root vectors to scalar multiples of themselves if necessary, we may assume that
\begin{align*}
\theta(e_{\alpha}) = f_{-\alpha}
\end{align*}
and thus $e_{\alpha}+f_{-\alpha}\in \mathfrak{g}^{\theta}$
for all $\alpha\in \Delta_{\theta}$.
Given a Lie subalgebra $\mathfrak{s}$ of $\mathfrak{h}$,
let ${\rm ker}_{\alpha}(\mathfrak{s})$ denote the kernel of $\alpha$ restricted to
$\mathfrak{s}$.
Since $\theta(\alpha) = -\alpha$, we have $\mathfrak{h}_{\theta}\subset {\rm ker}_{\alpha}(\mathfrak{h})$. Hence
${\rm ker}_{\alpha}(\mathfrak{h}) = \mathfrak{h}_{\theta}\oplus {\rm ker}_{\alpha}(\mathfrak{a})$
and so
\begin{align*}\mathfrak{h} = \mathbb{C}h_{\alpha} \oplus \mathfrak{h}_{\theta}\oplus
{\rm ker}_{\alpha}(\mathfrak{a}).
\end{align*}
 \begin{definition} Given a positive root $\gamma$, the Cayley transform ${\bf d}_{\gamma}$ (as  
in \cite{K}, Chapter VI, Section 7) is the Lie algebra isomorphism of  $\mathfrak{g}$ defined by 
${\bf d}_{\gamma}  =\Ad ({\rm exp} {{\pi}\over{4}} (f_{-\gamma} - e_{\gamma}))$.
\end{definition}

It follows from the above definition that  the Cayley transform  ${\bf d}_{\gamma}$ sends 
$h_{\gamma} $ to $e_{\gamma}+f_{-\gamma}$.
Also ${\bf d}_{\gamma}(h) = h$
for all $h\in {\rm ker}_{\gamma}(\mathfrak{h})$. 
Therefore, for $\alpha\in \Delta_{\theta}$ we have
\begin{align*}
{\bf d}_{\alpha}(\mathfrak{h}) = \mathbb{C}(e_{\alpha}+f_{-\alpha}) \oplus \mathfrak{h}_{\theta}\oplus
{\rm ker}_{\alpha}(\mathfrak{a}).
\end{align*} Moreover,  this direct sum decomposition restricts to the analogous one for $\mathfrak{h}_{\mathbb{R}}$.
We have
\begin{align*}
{\bf d}_{\alpha}(\mathfrak{h})\cap \mathfrak{g}^{\theta}
= \mathbb{C}(e_{\alpha}+f_{-\alpha}) \oplus \mathfrak{h}_{\theta}
\end{align*}
Hence ${\bf d}_{\alpha}$ transforms $\mathfrak{h}$ into a Cartan subalgebra
${\bf d}_{\alpha}(\mathfrak{h})$ of compact dimension one higher than that of $\mathfrak{h}$. 

We can repeat this process using the root space decomposition 
of $\mathfrak{g}$ with respect to  this new Cartan subalgebra $\mathfrak{s}= {\bf d}_{\alpha}(\mathfrak{h})$.  In particular, one picks a positive root $\beta$ (if possible)
    with respect to $\mathfrak{s}$
 that vanishes on $\mathfrak{s}\cap \mathfrak{g}^{\theta}$.  Applying the Cayley transform defined by $\beta$ yields another Cartan subalgebra where once again the compact dimension increases by one.   
Thus we have the following result as explained in the presentation of Cayley transforms in  \cite{K}, Chapter VI, Section 7  (see also \cite{DFG}).

\begin{lemma}  A standard Cartan subalgebra $\mathfrak{h}$ with respect to the involution $\theta$ can be transformed into a maximally compact Cartan subalgebra by repeated applications of Cayley transforms defined by roots that vanish on the nonvector part of the Cartan subalgebra.  
\end{lemma}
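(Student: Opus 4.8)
The plan is to prove this by induction on the noncompact dimension of $\mathfrak{h}$, using the single Cayley transform step already worked out in the text as the inductive step. The text has shown: starting from a standard Cartan subalgebra $\mathfrak{s}$ with respect to $\theta$, if $\beta$ is a positive root (relative to the root space decomposition for $\mathfrak{s}$) with $\theta(\beta) = -\beta$ — equivalently, a root vanishing on $\mathfrak{s} \cap \mathfrak{g}^\theta$ but not on the vector part — then ${\bf d}_\beta(\mathfrak{s})$ is again a standard Cartan subalgebra, now with compact dimension one larger and noncompact dimension one smaller. So the first thing I would do is make precise that ${\bf d}_\beta$ preserves the relevant structure: it is a Lie algebra automorphism of $\mathfrak{g}$ commuting appropriately with $\theta$ (using $\theta(e_\beta) = f_{-\beta}$, so that $f_{-\beta} - e_\beta \in \mathfrak{p}_{\mathbb{R}}$ and hence ${\bf d}_\beta$ commutes with the conjugation defining $\mathfrak{g}_{\mathbb{R}}$ and with $\theta$), so that ${\bf d}_\beta(\mathfrak{s})$ is still standard and its compact/noncompact dimensions are as claimed.

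Next I would set up the induction carefully. Let $\mathfrak{s}$ be a standard Cartan subalgebra. If its vector part $\mathfrak{s}^-$ is zero, then $\mathfrak{s} \subseteq \mathfrak{g}^\theta$ (in the real picture $\mathfrak{s}_{\mathbb{R}} \subseteq \mathfrak{t}_{\mathbb{R}}$), so $\mathfrak{s}$ is already maximally compact and there is nothing to do. Otherwise, I must produce a positive root $\beta$ (with respect to $\mathfrak{s}$, for some choice of positivity) that vanishes on $\mathfrak{s} \cap \mathfrak{g}^\theta$ but not identically on $\mathfrak{s}^-$. This is the content of the claim "if possible" in the text, and it is the one genuinely nonvacuous point: I would argue that if $\mathfrak{s}^- \neq 0$ then the centralizer $\mathfrak{z}_{\mathfrak{g}}(\mathfrak{s} \cap \mathfrak{g}^\theta)$ strictly contains $\mathfrak{s}$, so it contains a root vector (relative to $\mathfrak{s}$) for some root $\beta$, which necessarily satisfies $\beta|_{\mathfrak{s} \cap \mathfrak{g}^\theta} = 0$; and since $\beta$ is nonzero on $\mathfrak{h}$ it must be nonzero on $\mathfrak{s}^-$. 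One then checks $\theta(\beta) = -\beta$: because $\beta$ vanishes on $\mathfrak{s}\cap\mathfrak{g}^\theta$ and $\theta$ acts as $-1$ on $\mathfrak{s}^-$, the root $\theta(\beta)$ agrees with $-\beta$ on all of $\mathfrak{s}$. After replacing $\beta$ by $-\beta$ if needed to make it "positive," apply ${\bf d}_\beta$ to get a standard Cartan subalgebra of strictly smaller noncompact dimension and invoke the inductive hypothesis.

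Finally I would assemble the pieces: composing the finitely many Cayley transforms ${\bf d}_{\beta_1}, {\bf d}_{\beta_2}, \dots$ obtained along the way gives a Lie algebra isomorphism of $\mathfrak{g}$ carrying $\mathfrak{h}$ to a maximally compact Cartan subalgebra, and each $\beta_j$ is by construction a root vanishing on the nonvector part of the Cartan subalgebra at that stage, as required by the statement. I would note that the process terminates because the noncompact dimension is a nonnegative integer that strictly decreases at each step. I expect the main obstacle to be the existence-of-$\beta$ argument: one has to be careful that a root of $\mathfrak{g}$ relative to the transformed Cartan $\mathfrak{s}$ which is trivial on $\mathfrak{s}\cap\mathfrak{g}^\theta$ actually exists whenever the vector part is nontrivial, and that it can be normalized (via the automorphism adjustments already invoked in the text, fixing $\mathfrak{h}$ and scaling root vectors, now applied relative to $\mathfrak{s}$) so that $\theta(e_\beta) = f_{-\beta}$, which is what makes the next Cayley transform well-defined and structure-preserving. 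Since the text explicitly defers to \cite{K}, Chapter VI, Section 7 and \cite{DFG}, I would keep the write-up at the level of citing that these standard facts about $\theta$-stable Cartan subalgebras and Cayley transforms give exactly the termination and normalization needed.
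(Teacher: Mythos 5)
Your overall strategy (apply one Cayley transform at a time, track the compact dimension, induct) matches the paper's, which itself just assembles the one-step computation from Section \ref{section:cayley_transforms} and defers to \cite{K}. But the pivot of your induction is wrong. You terminate only when the vector part $\mathfrak{s}^-$ is zero, and in the ``otherwise'' branch you claim that $\mathfrak{s}^-\neq 0$ forces the centralizer $\mathfrak{z}_{\mathfrak{g}}(\mathfrak{s}\cap\mathfrak{g}^{\theta})$ to strictly contain $\mathfrak{s}$, hence forces the existence of a root vanishing on $\mathfrak{s}\cap\mathfrak{g}^{\theta}$. That implication is false, and the paper itself contains the counterexample: in type AII ($\mathfrak{g}=\mathfrak{sl}_{2m+2}$, $\mathfrak{g}^{\theta}=\mathfrak{sp}_{2m+2}$) the set $\Delta_{\theta}$ is empty, i.e.\ no root of $\mathfrak{g}$ vanishes on $\mathfrak{h}_{\theta}={\rm span}_{\mathbb{C}}\{h_1,h_3,\dots,h_{2m+1}\}$ (check $m=1$ in $\mathfrak{sl}_4$ directly: no $\epsilon_i-\epsilon_j$ kills both $h_1$ and $h_3$), yet $\mathfrak{h}^-$ has dimension $m>0$. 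So a maximally compact Cartan subalgebra can have a nonzero vector part, and your induction, were it valid, would prove the false statement that the vector part can always be reduced to zero.

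The correct structure is: iterate as long as a real root (one vanishing on the compact part $\mathfrak{s}\cap\mathfrak{g}^{\theta}$) exists; the compact dimension strictly increases at each step and is bounded by the rank of $\mathfrak{g}$, so the process terminates; and when it terminates one must prove that the absence of real roots implies maximal compactness. That last implication is the genuinely nontrivial content (it is the point of Knapp's Proposition 6.70 and the surrounding discussion in Chapter VI, Section 7), and it is exactly what your centralizer argument was meant to supply but does not. The remaining pieces of your write-up --- that $\mathbf{d}_{\beta}$ preserves the real form and $\theta$-stability after normalizing $\theta(e_{\beta})=f_{-\beta}$, that a root vanishing on $\mathfrak{s}\cap\mathfrak{g}^{\theta}$ satisfies $\theta(\beta)=-\beta$, that the compact dimension increases by exactly one --- are fine and agree with the paper's discussion.
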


Let $\Gamma$ be a strongly orthogonal $\theta$-system. We can express $\mathfrak{h}$ as
\begin{align*}
\mathfrak{h} = \mathfrak{h}_{\theta} \oplus \sum_{\alpha\in \Gamma}\mathbb{C}h_{\alpha} \oplus \mathfrak{b}
\end{align*}
where $\mathfrak{b}\subseteq \mathfrak{a}$ and $[h,e_{\alpha}] = 0$
(and so $\alpha(h) = 0$)   for all $h\in \mathfrak{b}$ and $\alpha\in \Gamma$.  Note also
that 
 the Cayley transforms ${\bf d}_{\alpha}, \alpha\in \Gamma$ are a commutative set of automorphisms on $\mathfrak{g}$.  Applying all of them to $\mathfrak{g}$ yields a Cartan subalgebra  of the form 
\begin{align*}
 \mathfrak{h}_{\theta} \oplus
\left(\bigoplus_{\alpha\in \Gamma}\mathbb{C}(e_{\alpha}+f_{-\alpha})\right)
\oplus \mathfrak{b}.
\end{align*}
It is straightforward to see that this Cartan subalgebra is the complexification of a real Cartan subalgebra of $\mathfrak{g}_{\mathbb{R}}$, and, moreover, $\mathfrak{b}$ is the complexification of the vector part.

We say that a strongly orthogonal $\theta$-system $\Gamma$ is a maximum strongly orthogonal $\theta$-system if $|\Gamma|\geq |\Omega|$ for all strongly orthogonal $\theta$-systems $\Omega$.

\begin{theorem}\label{theorem:nice_cartans}
Let $\theta$ be an involution of $\mathfrak{g}=\mathfrak{n}^-\oplus \mathfrak{h}\oplus \mathfrak{n}^+$ compatible with the normal real structure such that $\mathfrak{h}$ is a standard Cartan subalgebra with respect to $\theta$ and $\theta(e_{\alpha}) = f_{-\alpha}$ whenever 
$\theta(\alpha) = -\alpha$.
  Then   $\Gamma$ is a maximum strongly orthogonal $\theta$-system   if and only if 
\begin{align*}
\mathfrak{h}_{\theta} \oplus \left(\bigoplus_{\alpha\in \Gamma}\mathbb{C}(e_{\alpha}+f_{-\alpha})\right)
\end{align*} is a Cartan subalgebra of $\mathfrak{g}^{\theta}$.
Moreover, if $\Omega$ is another maximum strongly orthogonal $\theta$-system, then $\Omega= w\Gamma$ for some  element $w$ in $\mathcal{W}$ that commutes with $\theta$.
\end{theorem}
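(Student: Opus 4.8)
The plan is to combine the Cayley transform construction reviewed just above with the Kostant--Sugiura classification (Theorem~\ref{theorem:KS}) and the classical characterization of maximally compact Cartan subalgebras from \cite{K}, Chapter VI. Working under the hypothesis that $\mathfrak{h}$ is maximally split with respect to $\theta$ (the situation of interest here), I would first fix an arbitrary strongly orthogonal $\theta$-system $\Gamma$ and apply the commuting automorphisms ${\bf d}_{\alpha}$, $\alpha\in\Gamma$, to $\mathfrak{h}$. By the construction preceding the theorem this yields a Cartan subalgebra
\[
\mathfrak{h}^{\Gamma}=\mathfrak{h}_{\theta}\oplus\Big(\bigoplus_{\alpha\in\Gamma}\mathbb{C}(e_{\alpha}+f_{-\alpha})\Big)\oplus\mathfrak{b}
\]
of $\mathfrak{g}$ which is the complexification of a standard Cartan subalgebra of $\mathfrak{g}_{\mathbb{R}}$, with vector part complexifying to $\mathfrak{b}\subseteq\mathfrak{a}$ and compact part complexifying to $\mathfrak{c}_{\Gamma}:=\mathfrak{h}_{\theta}\oplus\bigoplus_{\alpha\in\Gamma}\mathbb{C}(e_{\alpha}+f_{-\alpha})$. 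Since $\theta$ fixes $\mathfrak{h}_{\theta}$ pointwise, fixes each $e_{\alpha}+f_{-\alpha}$ (using $\theta(e_{\alpha})=f_{-\alpha}$), and acts by $-1$ on $\mathfrak{b}\subseteq\mathfrak{p}$, the $\theta$-eigenspace decomposition of $\mathfrak{h}^{\Gamma}$ gives $\mathfrak{h}^{\Gamma}\cap\mathfrak{g}^{\theta}=\mathfrak{c}_{\Gamma}$; and since distinct $e_{\alpha}$ (and distinct $f_{-\alpha}$) are linearly independent root vectors, $\dim\mathfrak{c}_{\Gamma}=\dim\mathfrak{h}_{\theta}+|\Gamma|$.

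A first observation is that $\mathfrak{c}_{\Gamma}$ is always a toral subalgebra of $\mathfrak{g}^{\theta}$, being the intersection of a Cartan subalgebra of $\mathfrak{g}$ with the $\theta$-invariant subalgebra $\mathfrak{g}^{\theta}$; hence $\dim\mathfrak{h}_{\theta}+|\Gamma|\le\operatorname{rank}\mathfrak{g}^{\theta}$, with equality precisely when $\mathfrak{c}_{\Gamma}$ is a Cartan subalgebra of $\mathfrak{g}^{\theta}$. To finish I would match both conditions in the theorem to maximal compactness of $\mathfrak{h}^{\Gamma}$. On one hand, the classical theory of Cayley transforms (\cite{K}, Chapter VI, Sections~6--7) shows that a standard Cartan subalgebra of $\mathfrak{g}_{\mathbb{R}}$ is maximally compact if and only if its compact part is a Cartan subalgebra of $\mathfrak{g}_{\mathbb{R}}^{\theta}$, equivalently $\mathfrak{h}^{\Gamma}\cap\mathfrak{g}^{\theta}$ is a Cartan subalgebra of $\mathfrak{g}^{\theta}$; moreover maximally compact standard Cartan subalgebras exist and form a single conjugacy class. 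On the other hand, by Theorem~\ref{theorem:KS} every standard Cartan subalgebra of $\mathfrak{g}_{\mathbb{R}}$ is conjugate to some $\mathfrak{h}^{\Omega}$, and conjugacy preserves the compact dimension, so the set of attainable compact dimensions is exactly $\{\,\dim\mathfrak{h}_{\theta}+|\Omega| : \Omega\text{ a strongly orthogonal }\theta\text{-system}\,\}$. Therefore $\mathfrak{h}^{\Gamma}$ is maximally compact if and only if $|\Gamma|$ is maximal, i.e.\ $\Gamma$ is a maximum strongly orthogonal $\theta$-system. Combining the two equivalences yields the asserted ``if and only if.'' (Alternatively, once one knows that some $\Gamma$ attains $\dim\mathfrak{h}_{\theta}+|\Gamma|=\operatorname{rank}\mathfrak{g}^{\theta}$, this follows from the displayed bound alone: equality for one $\Gamma$ forces $|\Gamma|$ to be maximal, and a maximum $\Gamma$ can do no worse.)

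For the uniqueness statement, suppose $\Gamma$ and $\Omega$ are both maximum strongly orthogonal $\theta$-systems. Then $\mathfrak{h}^{\Gamma}$ and $\mathfrak{h}^{\Omega}$ are complexifications of maximally compact standard Cartan subalgebras of $\mathfrak{g}_{\mathbb{R}}$, hence conjugate. The conjugacy criterion in Theorem~\ref{theorem:KS} then supplies an element $w\in\mathcal{W}$ commuting with $\theta$ such that $w\Omega=\Gamma$; replacing $w$ by $w^{-1}$ gives $\Omega=w\Gamma$, as required.

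I expect the only genuinely nonformal input to be the classical package from \cite{K}, Chapter VI --- that maximal compactness of a standard Cartan subalgebra is detected by its intersection with $\mathfrak{g}^{\theta}$ and that such Cartan subalgebras exist and are all conjugate. The remainder is the Cayley transform bookkeeping already set up above, the elementary linear independence of root vectors, and Theorem~\ref{theorem:KS}; the subtlety to watch is ensuring that the maximum of $|\Gamma|$ over strongly orthogonal $\theta$-systems matches the maximal compact dimension over \emph{all} standard Cartan subalgebras of $\mathfrak{g}_{\mathbb{R}}$, which is exactly where Theorem~\ref{theorem:KS} enters.
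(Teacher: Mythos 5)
Your proposal is correct and follows essentially the same route as the paper: the dimension bound $\dim\mathfrak{h}_{\theta}+|\Gamma|\le\operatorname{rank}(\mathfrak{g}^{\theta})$ coming from the commutative (toral) subalgebra $\mathfrak{c}_{\Gamma}\subseteq\mathfrak{g}^{\theta}$, existence of a system attaining the bound via a maximally compact Cartan subalgebra together with Theorem~\ref{theorem:KS}, and uniqueness via conjugacy of maximally compact Cartan subalgebras. The only cosmetic difference is in the existence step, where the paper extracts $|\Gamma|=\operatorname{rank}(\mathfrak{g}^{\theta})-\dim(\mathfrak{h}_{\theta})$ by an explicit decomposition $\mathfrak{h}=\mathfrak{h}_{\theta}\oplus\mathfrak{h}_{\Gamma}\oplus\mathfrak{b}$ (using the nondegenerate pairing on $\mathbb{C}\Gamma\times\mathfrak{h}_{\Gamma}$), whereas you track compact dimensions through the Cayley transforms and conjugacy; both rest on the same classical inputs.
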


\begin{proof} Consider a strongly orthogonal $\theta$-system $\Gamma$.  Since $\Gamma\subseteq \Delta_{\theta}$, we have $\theta(\alpha) = -\alpha$ and so $h_{\alpha}\in \mathfrak{p}$ for all $\alpha\in \Gamma$.   It follows that 
$\mathfrak{h}_{\Gamma} =\sum_{\alpha\in \Gamma}{\mathbb{C}}h_{\alpha}$ is a subspace of $\mathfrak{a}$.  The fact that $\Gamma$ is a strongly orthogonal system ensures that $\{h_{\alpha}|\ \alpha\in \Gamma\}$ is a linearly independent set and so $\dim(\mathfrak{h}_{\Gamma})=|\Gamma|$.  Moreover the subspace \begin{align*}
\bigoplus_{\alpha\in \Gamma}\mathbb{C}(e_{\alpha}+f_{-\alpha})
 \end{align*}
 of $\mathfrak{g}^{\theta}$ is a commutative Lie algebra and has the same dimension $|\Gamma|$.
 Since 
 \begin{align}\label{Cartan_possibility}
 \mathfrak{h}_{\theta}\oplus \left(\bigoplus_{\alpha\in \Gamma}\mathbb{C}(e_{\alpha}+f_{-\alpha})\right)
 \end{align} is a commutative Lie subalgebra  of $\mathfrak{g}^{\theta}$, and hence a commutative Lie subalgebra of a Cartan subalgebra of $\mathfrak{g}^{\theta}$, we must have
\begin{align}\label{rank-inequality} {\rm rank}(\mathfrak{g}^{\theta})-\dim(\mathfrak{h}_{\theta}) \geq |\Gamma|
\end{align}
where ${\rm rank}(\mathfrak{g}^{\theta})$ is the rank of $\mathfrak{g}^{\theta}$ which is just the dimension of any Cartan subalgebra of $\mathfrak{g}^{\theta}$.
It follows that equality holds in (\ref{rank-inequality}) if and only if (\ref{Cartan_possibility})  is a Cartan subalgebra of $\mathfrak{g}^{\theta}$.  The first part of the theorem then follows once we show that there exists a strongly orthogonal $\theta$-system of maximum possible size $ {\rm rank}(\mathfrak{g}^{\theta})-\dim(\mathfrak{h}_{\theta}) $.

 Let $\mathfrak{s}$ be a maximally compact Cartan subalgebra of $\mathfrak{g}$ with respect to $\theta$.  It follows that $\mathfrak{s}\cap \mathfrak{g}^{\theta}$ is a Cartan subalgebra of $\mathfrak{g}^{\theta}$ and so $\dim(\mathfrak{s}\cap \mathfrak{g}^{\theta}) ={\rm rank}(\mathfrak{g}^{\theta})$.  Let $\mathfrak{b}$ be the complexification of the vector part of  $\mathfrak{s}$ and let $\mathfrak{a}$ be a maximal commutative Lie algebra containing $\mathfrak{b}$ and contained in $\mathfrak{p}$.   
 We can write 
$\mathfrak{h} = \mathfrak{h}_{\theta}\oplus \mathfrak{a}$
and 
$\mathfrak{s} = (\mathfrak{s}\cap \mathfrak{g}^{\theta})\oplus \mathfrak{b}.$

By Theorem \ref{theorem:KS}, 
there exists  a strongly orthogonal $\theta$-system $\Gamma$  such that 
\begin{align}\label{bdecomp}
\mathfrak{b}  = \{x\in \mathfrak{a}| \alpha(x) = 0 {\rm \ for \ all \ }\alpha\in \Gamma\}.
\end{align}  
The decomposition (\ref{bdecomp}) ensures that
$\mathfrak{h}_{\Gamma}\cap \mathfrak{b} = 0$. Hence 
\begin{align}\label{Gamma-inclusion}
\mathfrak{h}_{\Gamma}+\mathfrak{b}= \mathfrak{h}_{\Gamma}\oplus \mathfrak{b} \subseteq \mathfrak{a}.
\end{align}
Let $\mathfrak{b}'$ be a subspace of $\mathfrak{a}$ such that 
\begin{align*}
\mathfrak{h}_{\Gamma}\oplus \mathfrak{b}'\oplus\mathfrak{b}= \mathfrak{a}.
\end{align*}
Recall that $\alpha(h_{\alpha}) >0$ for all $\alpha\in \Gamma$.  Since all pairs of roots in $\Gamma$  are strongly orthogonal, it follows that $\alpha(h_{\beta}) = 0$ for all $\alpha,\beta\in \Gamma$ with $\alpha\neq \beta$.  Thus the map $(\alpha, x) \mapsto \alpha(x)$ defines a nondegenerate pairing on $\mathbb{C}\Gamma\times \mathfrak{h}_{\Gamma}$.  This nondegenerate pairing allows us to adjust the space $\mathfrak{b}'$ if necessary  so that $\alpha(x) = 0$ for all $x\in \mathfrak{b}'$ and $\alpha\in\Gamma$.  But then we see by (\ref{bdecomp}), $\mathfrak{b}'$ must equal zero and the inclusion of (\ref{Gamma-inclusion}) is actually an equality.  Thus we have
\begin{align*}
\mathfrak{h} = \mathfrak{h}_{\theta}\oplus \mathfrak{h}_{\Gamma}\oplus\mathfrak{b}.
\end{align*}
It follows that 
\begin{align*}
{\rm rank}(\mathfrak{g}^{\theta}) = \dim(\mathfrak{s}\cap \mathfrak{g}^{\theta}) = \dim(\mathfrak{h}) - \dim(\mathfrak{b}) = \dim(\mathfrak{h}_{\theta}\oplus\mathfrak{h}_{\Gamma}) = \dim(\mathfrak{h}_{\theta})\oplus |\Gamma|.
\end{align*}
Hence if $\mathfrak{s}$ is a maximally compact Cartan subalgebra with respect to $\theta$ and $\Gamma$ is a strongly orthogonal system satisfying (\ref{bdecomp}) then 
\begin{align*}
|\Gamma| = {\rm rank}(\mathfrak{g}^{\theta}) - \dim(\mathfrak{h}_{\theta}).
\end{align*}
This completes the proof of the first assertion.

The second assertion follows from the fact that all maximally compact Cartan subalgebras with respect to $\theta$ are conjugate (\cite{K}, Proposition 6.61  and Theorem \ref{theorem:KS}).
\end{proof}

\subsection{Maximally Split Involutions}\label{section:max_split_inv}

    In this section, we 
define  the notion of  maximally split involutions  as presented in \cite{L1}, Section 7.  
After reviewing basic properties from Section 7 of \cite{L1}, we  identify a maximum strongly orthogonal $\theta$-system 
  $\Gamma_{\theta}$ for each choice of simple Lie algebra $\mathfrak{g}$ and maximally split involution $\theta$.   
The choice of $\Gamma_{\theta}$ also satisfies certain conditions that will allow us to 
lift the Cartan subalgebra of Theorem \ref{theorem:nice_cartans} to the quantum setting compatible with quantum analogs of $U(\mathfrak{g}^{\theta})$.

\begin{definition}(\cite{L1}, Section 7) The involution 
  $\theta$ is called maximally split with respect to a fixed triangular decomposition  $\mathfrak{g}=\mathfrak{n}^-\oplus \mathfrak{h}\oplus \mathfrak{n}^+$ if it satisfies the following three conditions: 
 \begin{itemize}
\item[(i)] $\theta(\mathfrak{h}) = \mathfrak{h}$
\item[(ii)] if $\theta(h_i) = h_i$, then $\theta(e_i) = e_i$ and $\theta(f_i) = f_i$
\item[(iii)] if $\theta(h_i)\neq h_i$, then $\theta(e_i)$ (resp. $\theta(f_i)$) is a root vector in $\mathfrak{n}^-$ (resp. $\mathfrak{n}^+$).
\end{itemize}
\end{definition}

We call a pair $\mathfrak{g}, \mathfrak{g}^{\theta}$ a maximally split symmetric pair provided that $\mathfrak{g}$ is a  complex semisimple Lie algebra and $\theta$ is a maximally split involution with respect to a known
triangular decomposition $\mathfrak{g} = \mathfrak{n}^-\oplus\mathfrak{h}\oplus\mathfrak{n}^+$. (The triangular decomposition is not included in the notation of such symmetric pairs, but rather understood from context.)  By Section 7 of \cite{L1}, every involution $\theta$  of the semisimple Lie algebra $\mathfrak{g}$ is conjugate to a maximally split involution.   

The pair $\mathfrak{g}, \mathfrak{g}^{\theta}$ is irreducible if $\mathfrak{g}$ cannot be written as the direct sum of two complex semisimple Lie subalgebras which both admit $\theta$ as an involution.  By  \cite{A},
the pair $\mathfrak{g}, \mathfrak{g}^{\theta}$ is irreducible if and only if $\mathfrak{g}$ is simple or 
$\mathfrak{g} = \mathfrak{g}_1\oplus \mathfrak{g}_2$  where $\mathfrak{g}_1$ and $\mathfrak{g}_2$ are isomorphic as Lie algebras.  Moreover, in this latter case, $\theta$ is the involution that sends $e_i$ to $f_i^*$, $f_i$ to $e_i^*$, and $h_i$ to $-h_i^*$ where $e_i,h_i, f_i, i=1,\dots, n$ (resp. $e_i^*, h_i^*, f_i^*, i=1, \dots, n$) are the generators for $\mathfrak{g}_1$ (resp. $\mathfrak{g}_2$).

As in previous sections, we modify $\theta$ via conjugation with an automorphism of $\mathfrak{g}$ preserving root spaces so that  
$\theta(e_{\alpha}) = f_{-\alpha}$
and thus $e_{\alpha}+f_{-\alpha}\in \mathfrak{g}^{\theta}$ 
for all $\alpha\in \Delta_{\theta}$.
The vector space  $\mathfrak{h}\cap \mathfrak{p}$ is a maximal abelian Lie subalgebra of $\mathfrak{p}$ which we can take for $\mathfrak{a}$.  In particular,
$\mathfrak{h} = \mathfrak{h}_{\theta} \oplus {\mathfrak{a}}$ and $\mathfrak{h}$
 is a maximally split Cartan subalgebra with respect to $\theta$.

Let $\pi_{\theta}$ denote the subset of $\pi$ fixed by $\theta$.   
 There exists a permutation $p$ on $\{1, \dots, n\}$ which induces a diagram automorphism (that we also call $p$) on $\pi$ such that
\begin{align*}\theta(-\alpha_i) -\alpha_{p(i)}\in Q^+(\pi_{\theta})
\end{align*}
for all $\alpha_i\in \pi \setminus \pi_{\theta}$.  Note that $p$ restricts to a permutation on $\pi\setminus \pi_{\theta}$.  We  sometimes write $p(\alpha_i)$ for $\alpha_{p(i)}$.    We extend $p$  to a map on $\mathfrak{h}^*$ by setting $p(\sum_im_i\alpha_i) = \sum_im_ip(\alpha_i)$.

\begin{theorem}\label{theorem:cases_take2}
For each maximally split symmetric pair  $\mathfrak{g}, \mathfrak{g}^{\theta}$, there exists a maximum strongly orthogonal $\theta$-system $\Gamma_{\theta}=\{\beta_1,\dots, \beta_m\}$   and a set of simple roots $\{\alpha_{\beta}, \alpha_{\beta}'|\ \beta\in \Gamma_{\theta}\}$ such that for all $\beta \in \Gamma_{\theta}$, we have 
 \begin{align}\label{beta_structure}
 \beta = \alpha_{\beta}, \quad \beta =\alpha'_{\beta} + w_{\beta}\alpha_{\beta},
 \quad{\rm or}\quad \beta = \alpha_{\beta}' + \alpha_{\beta}+ w_{\beta}\alpha_{\beta}
 \end{align}
where $w_{\beta} = w({\rm Supp}(\beta) \setminus\{\alpha_{\beta}', \alpha_{\beta}\})_0$
 and for all $j = 1, \dots, m$, the following conditions hold
 \begin{itemize}
 \item[(i)]  for all  $i>j$,  ${\rm Supp}(\beta_i) \subset {\rm StrOrth}(\beta_j)$
  \item[(ii)]    ${\rm Supp}(\beta_j) \setminus \{\alpha_{\beta_j},\alpha'_{\beta_j}\}\subseteq {\rm StrOrth}(\beta_j)$
  \item[(iii)] $\theta$ restricts to an involution on ${\rm Supp}(\beta_j)$
  \item[(iv)] $-w_{\beta_j}$ restricts to a permutation on $\pi_{\theta}\cap {\rm Supp}(\beta_j)$
  \end{itemize}
  Moreover, we can further break down the expression for   each $\beta$ in $\Gamma_{\theta}$ given in (\ref{beta_structure}) into the following five cases:  \begin{itemize}
  \item[(1)] $\beta = \alpha_{\beta} = \alpha'_{\beta}$.
  \item[(2)] $\beta = \alpha_{\beta} + w_{\beta}\alpha_{\beta}$ and $\alpha_{\beta} = \alpha_{\beta}'$.
  \item[(3)] $\beta =p(\alpha_{\beta}) + w_{\beta}\alpha_{\beta}=w\alpha_{\beta}$ where $w=w({\rm Supp}(\beta)\setminus\{\alpha_{\beta}\})_0$ . In this case, $\Delta({\rm Supp}(\beta))$ is a root system of type A$_r$ where $r=|{\rm Supp}(\beta)|$ and  $\alpha_{\beta}\neq \alpha_{\beta}' = p(\alpha_{\beta})$.
  \item[(4)] $\beta = \alpha'_{\beta}+w_{\beta}\alpha_{\beta}=w\alpha_{\beta}'$ where $w = w({\rm Supp}(\beta) \setminus\{\alpha_{\beta}'\})_0$. In this case, $\Delta({\rm Supp}(\beta))$ is a root system of type B$_r$ where $r = |{\rm Supp}(\beta)|$,  $\{\alpha_{\beta}\}={\rm Supp}(\beta)\setminus \pi_{\theta}$ and $\alpha_{\beta}'$ is the unique short root in ${\rm Supp}(\beta)$.  
 \item[(5)] $\beta = \alpha_{\beta}' + \alpha_{\beta} + w_{\beta}\alpha_{\beta} $ and $\alpha_{\beta}'\neq \alpha_{\beta}$. In this case,  $\alpha_{\beta}'\in \pi_{\theta}$, 
 $\alpha_{\beta}'$ is strongly orthogonal to all $\alpha\in {\rm Supp}(\beta)\setminus\{\alpha_{\beta}',\alpha_{\beta}\}$,  and $\alpha_{\beta}'$ is not strongly orthogonal to $\beta$ but the two roots are orthogonal (i.e. $(\alpha_{\beta}', \beta) = 0$).
 \end{itemize}
\end{theorem}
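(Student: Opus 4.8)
The plan is to proceed by a case-by-case analysis organized by the type of the irreducible symmetric pair $\mathfrak{g}, \mathfrak{g}^{\theta}$, using the Araki/Satake classification. Since by Section 7 of \cite{L1} every involution is conjugate to a maximally split one, and since the restricted root system together with the diagram automorphism $p$ and the subset $\pi_{\theta}$ determine the pair, I would first reduce to the irreducible case: if $\mathfrak{g}=\mathfrak{g}_1\oplus\mathfrak{g}_2$ with $\theta$ swapping two isomorphic factors, a maximum strongly orthogonal $\theta$-system is obtained by taking $\{\alpha+\alpha^*\ :\ \alpha\in\Delta^+(\mathfrak{g}_1)\}$ restricted to a maximal strongly orthogonal family in $\mathfrak{g}_1$ and recording each as a single root; this lands in case (1) after identifying $\alpha_\beta=\alpha_\beta'$. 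So the heart of the argument is: for $\mathfrak{g}$ simple and $\theta$ maximally split, exhibit $\Gamma_\theta$ explicitly.

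The construction itself follows the idea already visible in Theorem \ref{theorem:nice_cartans}: a maximum strongly orthogonal $\theta$-system has size $\mathrm{rank}(\mathfrak{g}^\theta)-\dim(\mathfrak{h}_\theta)$, which equals the rank of the restricted root system $\Sigma$ (the number of $\theta$-orbits on $\pi\setminus\pi_\theta$ collapsed appropriately). I would build $\Gamma_\theta$ greedily from a base of the restricted root system: take the highest restricted root, lift it to an honest root $\beta_1\in\Delta_\theta$ of the prescribed shape $(\ref{beta_structure})$, then pass to the subsystem supported away from $\mathrm{Supp}(\beta_1)$ and its strong orthogonal complement, and repeat. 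Concretely, for each restricted simple root orbit one writes the corresponding $\beta$ as $w_\beta\alpha_\beta$ or $w_\beta\alpha_\beta'$ or $\alpha_\beta'+\alpha_\beta+w_\beta\alpha_\beta$ where $w_\beta$ is the longest element of the Weyl group of the ``middle'' roots $\mathrm{Supp}(\beta)\setminus\{\alpha_\beta,\alpha_\beta'\}$; this is exactly how the highest root of an A$_r$ chain, a B$_r$ system, or a single node appears. Conditions (i)--(iv) are then verified from the combinatorics of the ordering: ordering $\beta_1,\dots,\beta_m$ by decreasing height of the underlying restricted root forces the supports of later $\beta_i$ to live in the orthogonal complement of the earlier ones, which by the Dynkin-diagram structure of each classical and exceptional type upgrades to strong orthogonality; property (iii) ($\theta$ preserves $\mathrm{Supp}(\beta_j)$) and (iv) ($-w_{\beta_j}$ permutes $\pi_\theta\cap\mathrm{Supp}(\beta_j)$) are read off from the explicit shape of the Satake diagram of each pair, since the black nodes in the support of $\beta$ form a $p$-stable Dynkin subdiagram on which $-w_0$ acts as $p$. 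Assigning the final five-case bookkeeping is then a matter of checking which of the forms in $(\ref{beta_structure})$ occurs: (1) for a white node fixed by $p$ with no black support, (2) for a white $p$-fixed node sitting atop a black chain, (3) for a genuine A$_r$ Satake chain with a nontrivial $p$-flip (types AIII, DI in disguise, EIV, etc.), (4) for the B$_r$ tail arising in types BI, CII, FII, and (5) for the exceptional configurations (e.g.\ certain nodes in EII, EVI, EVII, FI) where an extra black simple root $\alpha_\beta'\in\pi_\theta$ is merely orthogonal, not strongly orthogonal, to $\beta$.

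The main obstacle I anticipate is not any single deep idea but rather the sheer bookkeeping of verifying, uniformly across all Satake diagrams of simple Lie algebras, that the greedy choice of highest restricted roots (a) is strongly orthogonal rather than merely orthogonal at each stage, and (b) can always be realized by roots of one of the three shapes in $(\ref{beta_structure})$ with the middle part governed by a longest Weyl group element. The subtle points are the B$_r$ and case-(5) configurations: there, $w_\beta\alpha_\beta$ is not itself the highest root of its support, and one must check by direct computation in the relevant rank-$\leq 3$ or rank-$\leq 4$ subsystems that $\alpha_\beta'+\alpha_\beta+w_\beta\alpha_\beta$ (resp.\ $\alpha_\beta'+w_\beta\alpha_\beta$) is a root, that $\theta$ sends it to its negative, and that it is strongly orthogonal to the previously chosen $\beta$'s. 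I would handle this by tabulating, for each irreducible maximally split pair in the Araki list, the explicit $\Gamma_\theta$ together with the data $(\alpha_\beta,\alpha_\beta',w_\beta)$, and then checking (i)--(iv) and the five-case assignment line by line; the exceptional types are finite in number and can be dispatched by hand or by appeal to the strongly orthogonal system classifications in \cite{AK} and \cite{DFG}.
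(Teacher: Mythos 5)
Your strategy coincides with the paper's: reduce to irreducible pairs and then, for each maximally split involution of a simple $\mathfrak{g}$ in the Araki/Satake list, exhibit $\Gamma_{\theta}$ together with the data $\alpha_{\beta},\alpha_{\beta}'$ explicitly and verify (i)--(iv) and the five-case breakdown by direct inspection; the paper's proof is exactly such a tabulation, so the bulk of your plan is the intended argument and would succeed once the tables are written out.

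There is, however, one concrete error in your reduction step. For the irreducible pair $\mathfrak{g}=\mathfrak{g}_1\oplus\mathfrak{g}_2$ with $\theta$ swapping the two isomorphic factors, the elements $\alpha+\alpha^{*}$ you propose are not roots of $\mathfrak{g}$ at all (a root of a direct sum is supported on a single simple factor), so they cannot belong to a strongly orthogonal system of positive roots, which by definition sits inside $\Delta_{\theta}\subseteq\Delta^{+}$. In fact $\theta$ sends every root of $\mathfrak{g}_1$ to the negative of a root of $\mathfrak{g}_2$, so $\Delta_{\theta}=\emptyset$ and the only (hence maximum) choice is $\Gamma_{\theta}=\emptyset$; this is consistent with Theorem \ref{theorem:nice_cartans}, since here $\mathfrak{h}_{\theta}$ is already a full Cartan subalgebra of $\mathfrak{g}^{\theta}\cong\mathfrak{g}_1$ and no Cayley-transform directions are needed. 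This does not affect the simple case, but as written your diagonal case produces a set that is not a strongly orthogonal $\theta$-system.
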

\begin{proof} Note that it is sufficient to prove the theorem for the case where $\mathfrak{g}, \mathfrak{g}^{\theta}$ is irreducible since, in the general case,  we can simply take a disjoint union of strongly orthogonal systems corresponding to each irreducible component that satisfies the desired properties of the theorem.  Consider the irreducible symmetric pair  $\mathfrak{g}, \mathfrak{g}^{\theta}$ where 
$\mathfrak{g}$ is the direct sum of two isomorphic copies of the same Lie algebra and $\theta$ is the involution descibed above.  Then $\theta$ sends roots of $\mathfrak{g}_1$ to those of $\mathfrak{g}_2$ and so $\Delta_{\theta}$ is the empty set.  It follows that the only choice for $\Gamma_{\theta}$  in this case is  the empty set which clearly satisfies the  conditions of the theorem.  Thus, for the remainder of the proof, we assume that $\mathfrak{g}$ is simple.

 The proof for $\mathfrak{g}$ simple  is by cases.  We use the notation in \cite{H} Section 12.1 for the set of simple roots and orthonormal unit vectors associated to  root systems of various types of simple Lie algebras. 
For each case, we define $\theta$, define $p$ if $p$ does not equal the identity, state $\Delta_{\theta}, \mathfrak{h}_{\theta}$, and  provide a choice
for $\Gamma_{\theta}$  that satisfies the above conditions. If $\beta$ is a simple root, then $\alpha_{\beta} = \beta=\alpha_{\beta}'$. 
For $\beta$ not equal to a simple root in $\pi$, we  specify $\alpha_{\beta}$ and $\alpha'_{\beta}$.  For the latter root, we also state whether $\alpha'_{\beta}\in \pi_{\theta}$ or whether it equals $\alpha_{\beta}$ or $p(\alpha_{\beta})$. The relationship between $\beta$, $\alpha_{\beta}$, and $\alpha'_{\beta}$ can then be read from context. For example, 
$\beta= \alpha'_{\beta} + w_{\beta}\alpha_{\beta}$ if $\alpha'_{\beta} = p(\alpha_{\beta})$ while  
 $\beta= \alpha_{\beta} + w_{\beta}\alpha_{\beta}$ if ${\rm mult}_{\alpha_{\beta}}(\beta) = 2$ and $\alpha'_{\beta} = \alpha_{\beta}$.
 
Recall that the automorphism group of a root system associated to a semisimple Lie algebra is the semidirect product of the Weyl group and the group of diagram automorphisms (\cite{H}, Section 12.2).  Moreover, $-1$ is not in the Weyl group  associated to a simple Lie algebra if and only if this simple Lie algebra is one of the following  types A$_n (n>1)$, D$_n$ (for $n\geq 5$ and $n$ odd), and E$_6$ (\cite{H2}, Section 4). Hence, $-w_{\beta}$ restricts to the identity for all other cases, including D$_n$, $n$ even.  For type A$_n$ and type E$_6$, $-w_{\beta}$ restricts to the unique permutation on the simple roots corresponding to the unique non-identity diagram automorphism.  Using this information,   it is straightforward to check that (iv) holds in all cases.

\medskip
\noindent
{\bf Type AI}: $\mathfrak{g}$ is of type A$_n$, $\theta(\alpha_i) = -\alpha_i$ for $1\leq i\leq n$,  $\Delta_{\theta} = \Delta^+$,  $\mathfrak{h}_{\theta} = 0$, and we may choose
$\Gamma_{\theta} =  \{\beta_1,\dots, \beta_s\}$ where $\beta_j=\alpha_{2j-1}$  for $j=1,\dots, s$,  $s =\lfloor (n+1)/2\rfloor$.

\medskip
\noindent
 {\bf Type AII}: $\mathfrak{g}$ is of type A$_n$ where $n = 2m+1$ is odd and $n\geq 3$, $\theta(\alpha_i) = \alpha_i$ for $i=2j+1$, $0\leq j\leq m$, 
$\theta(\alpha_i) = -\alpha_{i-1} -\alpha_i - \alpha_{i+1}$ for $i = 2j$, $1\leq j\leq m$.  In this case, 
$\mathfrak{h}_{\theta}={\rm span}_{\mathbb{C}}\{h_1, h_3, \dots, h_{2m+1}\}$ 
and $\Delta_{\theta}$ is empty, so $\Gamma_{\theta}$ is also the empty set.  

\medskip
\noindent
{\bf Type AIII/AIV}: $\mathfrak{g}$ is of type A$_n$,
  $r$  is an integer with $1\leq r\leq (n+1)/2$,   $\theta(\alpha_j) = \alpha_j$ for all $r+1\leq j\leq n-r$,
  $\theta(\alpha_i) = -\alpha_{n-i+1}$ for $1\leq i\leq r-1$ and $n-r+2\leq i\leq n$, 
$\theta(\alpha_r)= -\alpha_{r+1} -\alpha_{r+2} - \cdots - \alpha_{n-r} -\alpha_{n-r+1}$ and $\theta(\alpha_{n-r+1}) 
 = -\alpha_{n-r} - \alpha_{n-r-1} -\cdots - \alpha_{r+1} -\alpha_r.$ 
In this case, $p(i) = n-i+1$ for all $i=1,\dots, n$, $\mathfrak{h}_{\theta}={\rm span}_{\mathbb{C}}
\{h_j, h_i-h_{n-i+1}|\ r+1\leq j\leq n-r, 1\leq i\leq r\}$,    
$\Delta_{\theta} = \{\alpha_{k} + \alpha_{k+1} + \cdots + \alpha_{n-k+1}|\ 1\leq k\leq r\}$, and the only choice for $\Gamma_{\theta}$ is $\Gamma_{\theta} =  \Delta_{\theta}$. Hence 
we have $\Gamma_{\theta} = \{\beta_1,\cdots, \beta_r\}$,  where
$\beta_j = \alpha_{j} + \alpha_{j+1} + \cdots + \alpha_{n-j+1}$
  for $j=1, \cdots, r$. For each $j$, 
we may  set $\alpha_{\beta_j}= \alpha_{j}$ and $\alpha'_{\beta_j}=p(\alpha_{\beta_j}) = \alpha_{n-j+1}$.  

\medskip
\noindent
 {\bf Type BI/BII}: $\mathfrak{g}$ is of type B$_n$, $r$ is an integer such that $1\leq r\leq n$, 
$\theta(\alpha_i) = \alpha_i$ for all $r+1\leq i \leq n$, 
 $\theta(\alpha_i) = -\alpha_i$ for all $1\leq i\leq r-1$ if $r<n$ and for all $1\leq i\leq n$ if $r=n$,
and  $\theta(\alpha_r) = -\alpha_r - 2\alpha_{r+1} - \cdots - 2\alpha_{n-1} -2\alpha_n$ if $r<n$.
In this case, $\mathfrak{h}_{\theta}= {\rm span}_{\mathbb{C}}\{h_{r+1},\dots, h_n\}$, $\Delta_{\theta}$ is the subset of the positive roots $\Delta^+$ generated by the set  
  $\{\alpha_1,\alpha_2,\dots, \alpha_{r-1}, 
\alpha_r+\cdots +\alpha_n\}$ which is a root system of type B$_r$, and 
  a choice for $\Gamma_{\theta}$ is  
$\Gamma_{\theta} = \{\beta_1, \cdots, \beta_r\}$
where 
 $\beta_{2j-1} = \alpha_{2j-1}+2\alpha_{2j}+\cdots  + 2\alpha_n$  with $\alpha_{\beta_{2j-1}} = 
\alpha_{2j}=\alpha'_{\beta_{2j-1}}$ and $\beta_{2j} = \alpha_{2j-1}$   for $j = 1, \dots, \lfloor r/2\rfloor$,
and if $r$ is odd then 
$\beta_{r} = \alpha_{r}+\cdots +\alpha_n$ with $\alpha_{\beta_{r}} = \alpha_r$ and $\alpha'_{\beta_r} = \alpha_n$.

\medskip
\noindent
{\bf Type CI}: $\mathfrak{g}$ is of type C$_n$, $\theta(\alpha_i) = -\alpha_i$ for $1\leq i\leq n$.   We have $\Delta_{\theta} =\Delta^+$ and $\mathfrak{h}_{\theta} = 0$.  There is one choice for $\Gamma_{\theta}$, namely 
$\{2\epsilon_1, 2\epsilon_2, \dots, 2\epsilon_n\}$. Rewriting this in terms of the simple roots $\{\alpha_1,\dots, \alpha_n\}$, we have 
$\Gamma_{\theta} = \{\beta_1,\dots, \beta_n\}$ where 
$\beta_j= 2\alpha_j + 2\alpha_{j+1} + \cdots 2\alpha_{n-1}+\alpha_n$
for $ 1\leq j\leq n-1$ and $\beta_n  = \alpha_n$.
We have $\alpha_{\beta_j} =\alpha_j=\alpha'_{\beta_j}$    for $j=1,\dots,n$.  

\medskip
\noindent
{\bf Type CII}, {Case 1}: $\mathfrak{g}$ is of type C$_n$, $r$ is  even and satisfies $1\leq r \leq n-1$, 
$\theta(\alpha_{j}) = \alpha_j$ for $j=1, 3, \cdots, r-1$ and $j=r+1, r+2,\dots, n$,  
$\theta(\alpha_i) = -\alpha_{i-1} -\alpha_i-\alpha_{i+1}$ for $i =2,4, \dots, r-2$,
and  $\theta(\alpha_r) = -\alpha_{r-1}-\alpha_r-2\alpha_{r+1}-\cdots -2\alpha_{n-1} -\alpha_n$.
We have 
$\Delta_{\theta}= \{\alpha_j + 2\alpha_{j+1} + \cdots + 2\alpha_{n-1} + \alpha_n|\ j= 1,3,\dots, r-1\}$
and  
 $\mathfrak{h}_{\theta} = {\rm span}_{\mathbb{C}}\{h_1,h_3, \dots, h_{r-1}, h_{r+1}, h_{r+2}, \dots, h_n\}.$
The only choice for  $\Gamma_{\theta}$ is $\Delta_{\theta}$ and so $\Gamma_{\theta} = \{\beta_1,\beta_2, \dots, \beta_{r/2}\}$ where $\beta_j = \alpha_{2j-1} + 2\alpha_{2j} + \cdots + 2\alpha_{n-1}+\alpha_n$ with  $\alpha'_{\beta_j} = \alpha_{2j-1}$ and $\alpha_{\beta_j} = \alpha_{2j}$ for $j = 1,2,\dots, r/2$.

\medskip
\noindent
{\bf Type CII}, {Case 2}: $\mathfrak{g}$ is of type C$_n$, $n$ is even, 
$\theta(\alpha_{j}) = \alpha_j$ for $j=1, 3, \cdots, n-1$, 
 $\theta(\alpha_i) = -\alpha_{i-1} -\alpha_i-\alpha_{i+1}$ for $i =2,4, \dots, n-2$,
and  $\theta(\alpha_n) = -2\alpha_{n-1}-\alpha_n$.
One checks that  
$\Delta_{\theta} = \{\alpha_j + 2\alpha_{j+1} + \cdots + 2\alpha_{n-1} + \alpha_n|\ j= 1,3,\dots, n-3\}
\cup\{\alpha_{n-1}+\alpha_n\}$
 and 
 $\mathfrak{h}_{\theta}= {\rm span}_{\mathbb{C}}\{h_1,h_3, \dots, h_{n-1}\}$.
Moreover, the only choice for $\Gamma_{\theta}$ is $\Delta_{\theta}$.
Hence we have $\Gamma_{\theta}=\{\beta_1,\beta_2,\dots, \beta_t\}$ 
where  $t = n/2$, 
$\beta_j = \alpha_{2j -1}+ 2\alpha_{2j} + \cdots + 2\alpha_{n-1} + \alpha_n$ for  $j=1,\dots, t-1$
and  
$\beta_t = \alpha_{n-1}+\alpha_n$.  We have 
$\alpha'_{\beta_j} = \alpha_{2j-1}$ and $\alpha_{\beta_j} = \alpha_{2j}$ for $j=1,\dots, t$.

\medskip
\noindent
{\bf Type DI/DII}, {Case 1}: $\mathfrak{g}$ is of type D$_n$, $r$ is an integer such that $1\leq r\leq n-2$, 
$\theta(\alpha_i) = \alpha_i$ for $i = r+1, r+2, \dots, n$,
 $\theta(\alpha_i) = -\alpha_i$ for $i=1, \dots, r-1$,
and $\theta(\alpha_r) = -\alpha_r-2\alpha_{r+1}\cdots - 2\alpha_{n-2}-\alpha_{n-1}-\alpha_n$.
In this case, $\mathfrak{h}_{\theta} = {\rm span}_{\mathbb{C}} \{h_i|\ r+1\leq i \leq n\}$ and  $\Delta_{\theta}$ is the set of all positive roots 
contained in the span of  the union of the following two sets:
$ \{\alpha_j|\ 1\leq j\leq r-1\}$
and 
$\{\alpha_i + 2\alpha_{i+1}+2\alpha_{i+2}+ \cdots + 2\alpha_{n-2}+\alpha_{n-1}+\alpha_n|\  1\leq i\leq r-1\}.$   
A choice for $\Gamma_{\theta}$ is $\Gamma_{\theta} = \{\beta_1,\dots, \beta_{2t}\}$ where $\beta_{2j} = \alpha_{2j-e}$ 
and $\beta_{2j-1} = 
 \alpha_{2j-e}+2\alpha_{2j-e+1} + \cdots + 2\alpha_{n-2} + \alpha_{n-1} + \alpha_n$
 with $\alpha_{\beta_{2j-1}}= \alpha_{2j-e+1}=\alpha'_{\beta_{2j-1}}$ for 
 $j=1,\dots, t$ where $t =\lfloor{r/2}\rfloor$, $e = 0$ if $r$ is odd and $e=1$ if $r$ is even.

\medskip
\noindent
{\bf Type DI}, {Case 2}:  $\mathfrak{g}$ is of type D$_n$ with 
$n\geq 4$ (case $n=3$ is the same as type AI), $\theta(\alpha_i) = -\alpha_i$ for $1\leq i\leq n-2$,
$\theta(\alpha_n)= -\alpha_{n-1}$ and $\theta(\alpha_{n-1}) = -\alpha_n$.
In this case, $p(i) = i$ for $i=1, \dots, n-2$, $p(n-1) = n$, $p(n) = n-1$, 
$\mathfrak{h}_{\theta} = \mathbb{C}(h_{n-1}-h_n)$ and $\Delta_{\theta}$ is the set consisting of those positive roots in the span of the union of the following 
two sets: 
$ \{\alpha_i|\ i= 1,\dots, n-2\}$
and 
$\{\alpha_i+2\alpha_{i+1}+\cdots + 2\alpha_{n-2} + \alpha_{n-1}+\alpha_n|\ 1\leq i\leq n-2\}.$
A choice for  $\Gamma_{\theta}$  is $ \{\beta_1,\dots,\beta_{2t}\}$ where 
 $\beta_{2j} = \alpha_{2j-e}$ 
and $\beta_{2j-1} = 
 \alpha_{2j-e}+2\alpha_{2j-e+1} + \cdots + 2\alpha_{n-2} + \alpha_{n-1} + \alpha_n$
  for 
 $j=1,\dots, t$ where $t =\lfloor{(n-1)/2}\rfloor$, $e = 1$ if $n$ is odd and $e=0$ if $n$ is even.  Note that $\beta_{n-3+e} = \alpha_{n-2}+\alpha_{n-1}+\alpha_n$.
 We have $\alpha_{\beta_{2j-1}}= \alpha_{2j-e+1}=\alpha'_{\beta_{2j-1}}$ for 
 $j=1,\dots, t-1$,  $\alpha_{\beta_{n-3+e}} = \alpha_{n-1}$ and $\alpha'_{\beta_{n-3+e}} =p(\alpha_{n-1}) =\alpha_{n}.$

\medskip
\noindent
{\bf Type DI}, {Case 3}: $\mathfrak{g}$ is of type D$_n$, $\theta(\alpha_i) =-\alpha_i$ all $i=1,\dots, n$. Here, we have $\mathfrak{h}_{\theta} =0$
and $\Gamma_{\theta} = \{\beta_1,\dots, \beta_{2t}\}$ where $t= \lfloor{n}/2\rfloor$,
 $\beta_{2j} = \alpha_{2j-e}$ and $\beta_{2j-1} = \alpha_{2j-e}+2\alpha_{2j-e+1} + \cdots + 2\alpha_{n-2} + \alpha_{n-1} +\alpha_n$ for $j=1,\dots, t-1$,  $\beta_{2t} = \alpha_{n-1},$ and $\beta_{2t-1} = \alpha_{n}$ 
 where $e=0$ if $n$ is odd and $e=1$ if $n$ is even.  We have 
$\alpha_{\beta_{2j-1}}= \alpha_{2j-e+1}=\alpha'_{\beta_{2j-1}}$ for 
 $j=1,\dots, t-1$.

\medskip
\noindent
{\bf Type DIII}, {Case 1}: $\mathfrak{g}$ is of type D$_n$, $n$  is even, 
$\theta(\alpha_i) = \alpha_i$ for $i=1,3, \dots, n-1$,
$\theta(\alpha_i) = -\alpha_{i-1} -\alpha_i-\alpha_{i+1}$ for $i=2,4, \dots, n-2$,
and $\theta(\alpha_n) = -\alpha_n$.
In this case,  $
  \Delta_{\theta} = \{\alpha_{k} + 2\alpha_{k+1}  +\cdots
+ 2\alpha_{n-2} +  \alpha_{n-1} + \alpha_n|\
 k = 1,3, 5, \cdots, n-3\} \cup \{\alpha_n\}.
$ and $\mathfrak{h}_{\theta} = {\rm span}_{\mathbb{C}}\{h_1,h_3,\dots, h_{n-1}\}.$ It follows that $\Gamma_{\theta} = \Delta_{\theta}= \{\beta_1,\dots, \beta_{n/2}\}$ where 
$\beta_j = \alpha_{2j-1} + 2\alpha_{2j} + \cdots + 2\alpha_{n-2} + \alpha_{n-1} + \alpha_n$ for $j=1, \dots, n/2 -1$ and $\beta_{n/2} = \alpha_n$. 
We have $\alpha_{\beta_j} = \alpha_{2j}=\alpha'_{\beta_j}$   for $j = 1,\dots, n/2-1$.

\medskip
\noindent
{\bf Type  DIII}, {Case 2}: $\mathfrak{g}$ is of type D$_n$, $n$ is odd, 
$\theta(\alpha_i) = \alpha_i$ for $i \in \{1,3,5,\dots, n-2\}$,
 $\theta(\alpha_i) = -\alpha_{i-1}-\alpha_i-\alpha_{i+1}$ for $i=2,4,6,\dots, n-3$,
 $\theta(\alpha_{n-1}) = -\alpha_{n-2}-\alpha_n$,
and $\theta(\alpha_{n}) = -\alpha_{n-2}-\alpha_{n-1}$.
In this case, $p(i) = i$ for $i=1, \dots, n-2$, $p(n-1) = n$, $p(n) = n-1$. 
We have $\Delta_{\theta} = \{
 \alpha_{k} + 2\alpha_{k+1}  +\cdots 
+ 2\alpha_{n-2} +  \alpha_{n-1} + \alpha_n|\ 
 k = 1,3, 5, \cdots, n-2\}$
and  $\mathfrak{h}_{\theta}={\rm span}_{\mathbb{C}}(\{h_i|\ i=1,3, \dots, n-2\}\cup \{h_{n-1}- h_{n}\}).$
It follows that $\Gamma_{\theta} = \Delta_{\theta}$ and 
so  $\Gamma_{\theta} = \{\beta_1,\dots, \beta_{t}\}$ where $t = (n-1)/2$, $\beta_j = \alpha_{2j-1} + 2 \alpha_{2j} + \cdots + 2\alpha_{n-2} + \alpha_{n-1} + \alpha_n$ 
with $\alpha_{\beta_j}  = \alpha_{2j}=\alpha'_{\beta_j}$  for $j= 1, \dots, t - 1$ and $\beta_{t} = \alpha_{n-2} + \alpha_{n-1} + \alpha_n$ with $\alpha_{\beta_t} = \alpha_{n-1}, \alpha_{\beta_t}' =p( \alpha_{\beta_{n-1}}) = \alpha_{n}$.  

\medskip
\noindent
{\bf Type EI, EV, EVIII}: $\mathfrak{g}$ is of type E6, E7, E8 respectively and 
$\theta(\alpha_i) = -\alpha_i$ for all $i$.  Hence 
$\Delta_{\theta} = \Delta^+$
and $\mathfrak{h}_{\theta} = 0$. 
Set 
\begin{align*}
\gamma_1 = \epsilon_8 + \epsilon_7 &= 2\alpha_1 + 3\alpha_2 + 4\alpha_3 + 6\alpha_4 + 5\alpha_5 + 4\alpha_6 + 3\alpha_7 + 2\alpha_8,\quad \alpha_{\gamma_1} = \alpha_8 \cr\gamma_2 = \epsilon_8 -\epsilon_7&= 2\alpha_1 + 2\alpha_2+ 3\alpha_3 + 4\alpha_4 + 3\alpha_5 + 2\alpha_6 +\alpha_7,\quad \alpha_{\gamma_2} =\alpha_1\cr
 \gamma_3 = \epsilon_6 + \epsilon_5 &=  \alpha_7 + 2\alpha_6 + 2\alpha_5 + 2\alpha_4 + \alpha_3 +\alpha_2,
\quad \alpha_{\gamma_3} = \alpha_6\cr
 \gamma_4= \epsilon_6 -\epsilon_5 &= \alpha_7\cr
\gamma_5 = \epsilon_4 + \epsilon_3 &=  \alpha_5 + 2\alpha_4 + \alpha_3 + \alpha_2,
\quad\alpha_{\gamma_5} = \alpha_4\cr
\gamma_6 = \epsilon_4 -\epsilon_3& =\alpha_5\cr
\gamma_7 = \epsilon_2 - \epsilon_1 &=\alpha_3\cr
\gamma_8 = \epsilon_2+\epsilon_1 &=\alpha_2
\end{align*}

\medskip
\noindent
Case 1:   $\mathfrak{g}$ is type E8.  A choice for $\Gamma_{\theta}$ is the 
set $\{\beta_1,\dots, \beta_8\}$ where $\beta_i = \gamma_i$ for $i=1, 2, \dots, 8$ and $\alpha_{\beta_j} =\alpha_{\gamma_j} =\alpha'_{\beta_j}$ for $j=1,2,3,5$.

\medskip
\noindent
Case 2:  $\mathfrak{g}$ is type E7. A choice for $\Gamma_{\theta}$ is  the set $\{\beta_1,\dots, \beta_7\}$ where $\beta_i = \gamma_{i+1}$ for $i=1, 2, \dots, 7$ and $\alpha_{\beta_j} =\alpha_{\gamma_{j+1}} =\alpha'_{\beta_j}$ for $j=1,2,4$.

\medskip
\noindent
Case 3: 
$\mathfrak{g}$ is of type  E6.  A choice for $\Gamma_{\theta}$ is $\{\beta_1,\beta_2,\beta_3,\beta_4\}$ where   $\beta_i = \gamma_{i+4}$ for $i=1, 2,3, 4$ and $\alpha_{\beta_j} =\alpha_{\gamma_{j+4}} =\alpha'_{\beta_j}$ for $j=1$.

\medskip
\noindent
{\bf Type EII}: $\mathfrak{g}$ is of type E6,  $\theta(\alpha_1) = -\alpha_6, \theta(\alpha_2) = -\alpha_2, \theta(\alpha_3) = -\alpha_5, \theta(\alpha_4) =- \alpha_4, \theta(\alpha_5) =- \alpha_3, \theta(\alpha_6) =- \alpha_1$. In this case, $p(1) = 6, p(2)  =2, p(3) = 5,p(4) = 4,  p(5) = 3, p(6) = 1$,
$\mathfrak{h}_{\theta} = {\rm span}_{\mathbb{C}} \{h_1-h_6, h_3-h_5\}$ and 
$\Delta_{\theta} = \Delta^+\cap {\rm span}\{\alpha_1+\alpha_6, \alpha_3+\alpha_5, \alpha_4\}$. Here, we may choose $\Gamma_{\theta} = \{\beta_1, \beta_2,\beta_3\}$ where 
$\beta_1 = \alpha_1+ \alpha_3 + \alpha_4 + \alpha_5 + \alpha_6$ with $\alpha_{\beta_1} = \alpha_1$  and $\alpha'_{\beta_1} = p(\alpha_{\beta_1}) = \alpha_6$,
 $\beta_2 = \alpha_3 + \alpha_4 + \alpha_5$ with $\alpha_{\beta_2} = \alpha_3$ and $\alpha'_{\beta_2} = p(\alpha_{\beta_2}) = \alpha_5$, and
$\beta_3 = \alpha_4$.

\medskip
\noindent
{\bf Type EIII}: $\mathfrak{g}$ is of type E6, $\theta(\alpha_j) = \alpha_j$ for $j = 3,4,5,$
$\theta(\alpha_1) = -\alpha_3-\alpha_4-\alpha_5- \alpha_6, \theta(\alpha_6) = \alpha_5-\alpha_4-\alpha_3-\alpha_1,$ and $\theta(\alpha_2) =-2\alpha_4-\alpha_3-\alpha_5 -\alpha_2$.
In this case $p(1) = 6, p(2)  =2, p(3) = 5,p(4) = 4,  p(5) = 3, p(6) = 1$, 
$\mathfrak{h}_{\theta} = {\rm span}_{\mathbb{C}}\{h_3, h_4, h_5, h_1-h_6\}$ and
$\Delta_{\theta}=\{\beta_1,\beta_2\} =\Gamma_{\theta}$
where 
$\beta_1 =  \alpha_1  + 2\alpha_2 + 2\alpha_3 + 3\alpha_4 + 2\alpha_5 + \alpha_6$, $\beta_2 = \alpha_1 +\alpha_3 +\alpha_4+\alpha_5+\alpha_6$, 
 $\alpha_{\beta_1} = \alpha_2=\alpha'_{\beta_1}$, $\alpha_{\beta_2} = \alpha_1$, and $\alpha'_{\beta_2} = p(\alpha_{\beta_2}) = \alpha_6$.
 
\medskip
\noindent
{\bf Type EIV}: $\mathfrak{g}$ is of type E6, $\theta(\alpha_1) = -\alpha_1-2\alpha_3-2\alpha_4-\alpha_2-\alpha_5$, $\theta(\alpha_6) = -\alpha_6-2\alpha_5-2\alpha_4-\alpha_2-\alpha_3$, $\theta(\alpha_j) = \alpha_j$ for $j=2,3,4,5$. 
We have $\mathfrak{h}_{\theta} = {\rm span}_{\mathbb{C}}\{h_2,h_3,h_4,h_5\}$. 
One checks that $\Delta_{\theta}$ must be in the span of 
the two vectors $2\alpha_1+ \alpha_2 + 2\alpha_3 + 2\alpha_4 + \alpha_5$ and 
$2\alpha_6 + \alpha_2+ 2\alpha_4 + 2\alpha_5 + \alpha_3$.  Since this span does not contain any positive roots, we get $\Delta_{\theta} $ is the empty set and so $\Gamma_{\theta}$ is also the empty set.

\medskip
\noindent
{\bf Type EVI}: $\mathfrak{g}$ is of type E7, $\theta(\alpha_j) = \alpha_j$ for $j=2,5,7$, $\theta(\alpha_6) = -\alpha_6-\alpha_5-\alpha_7$, $\theta(\alpha_4) = -\alpha_2-\alpha_5-\alpha_4$, $\theta(\alpha_i) = -\alpha_i$ for $i=1,3$.
In this case, $\Delta_{\theta} = {\rm span}\{\alpha_1, \gamma_2, \gamma_3, \gamma_5, \gamma_7\}\cap \Delta^+$ and 
 $\mathfrak{h}_{\theta} = {\rm span}_{\mathbb{C}}\{h_2,h_5,h_7\}$.  A choice for  $\Gamma_{\theta}$ is $\{\beta_1, \beta_2, \beta_3, \beta_4\}$ where $\beta_1 = \gamma_2$ with $\alpha_{\beta_1} = \alpha_{\gamma_{2}}$ and $\beta_{j} = \gamma_{2j-1}$ with 
 $\alpha_{\beta_j} = \alpha_{\gamma_{2j-1}}$ for $j=2,3,4$.

\medskip
\noindent
{\bf Type EVII}: $\mathfrak{g}$ is of type E7, $\theta(\alpha_i) = \alpha_i$ for $i=2,3,4,5$, $\theta(\alpha_1) = -\alpha_1-2\alpha_3-2\alpha_4-\alpha_2-\alpha_5$, $\theta(\alpha_6) = -\alpha_6-2\alpha_5-2\alpha_4-\alpha_2-\alpha_3$, 
$\theta(\alpha_7) = -\alpha_7$. We have $\mathfrak{h}_{\theta} = {\rm span}_{\mathbb{C}}\{h_2,h_3,h_4,h_5\}$ and $\Delta_{\theta} = \Gamma_{\theta} =\{\beta_1,\beta_2,\beta_3\}$ where following the notation for types EI, EV, EVIII, we have $\beta_1 = \gamma_2$ with $\alpha_{\beta_1} = \alpha_1=\alpha'_{\beta_1}$, $\beta_2 = \gamma_3$ with $\alpha_{\beta_2} = \alpha_6=\alpha'_{\beta_2}$, and $\beta_3 =\gamma_4 = \alpha_7$.

\medskip
\noindent
{\bf Type EIX}: $\mathfrak{g}$ is of type E8, $\theta(\alpha_i) = \alpha_i$ for $i=2,3,4,5$, $\theta(\alpha_i) = -\alpha_i$ for $i = 7, 8$, 
$\theta(\alpha_1) = -\alpha_1-2\alpha_3-2\alpha_4-\alpha_2-\alpha_5$,
$\theta(\alpha_6) = - \alpha_6 - 2\alpha_5-2\alpha_4-\alpha_2-\alpha_3$.
The vector space  $\mathfrak{h}_{\theta} = {\rm span}_{\mathbb{C}}\{h_2,h_3,h_4,h_5\}$ and 
$\Delta_{\theta} =  {\rm span}\{\beta_1, \beta_2, \beta_3, \beta_4\}=\Gamma_{\theta}$
where following the notation for types EI, EV, EVIII, we have $\beta_1 = \gamma_1$ with $\alpha_{\beta_1} = \alpha_8=\alpha_{\beta_1}' $, $\beta_2 = \gamma_2$ with $\alpha_{\beta_1} = \alpha_1=\alpha'_{\beta_1}$, $\beta_3 = \gamma_3$ with $\alpha_{\beta_2} = \alpha_6=\alpha'_{\beta_2}$, and $\beta_4 =\gamma_4 = \alpha_7$.

\medskip
\noindent
{\bf Type FI}: $\mathfrak{g}$ is of type F4, 
$\theta(\alpha_i) = -\alpha_i$, $i = 1,2,3,4$. In this case, $\mathfrak{h}_{\theta}= 0$ and $\Delta_{\theta} = \Delta^+$. A choice for $\Gamma_{\theta} = \{\beta_1,\beta_2,\beta_3,\beta_4\}$ where 
 $\beta_1 = \epsilon_1 + \epsilon_2 = 
2\alpha_1 + 3\alpha_2 + 4\alpha_3 +2\alpha_4 $ with $\alpha_{\beta_1} = \alpha_1= \alpha'_{\beta_1}$,
 $\beta_2 = \epsilon_1 -\epsilon_2 = \alpha_2 + 2\alpha_3 + 2\alpha_4$ with $ \alpha_{\beta_2} = \alpha_4=\alpha'_{\beta_2}$,
 $\beta_3 = \epsilon_3 + \epsilon_4 =  \alpha_2 + 2\alpha_3$ with $ \alpha_{\beta_3} = \alpha_3=\alpha'_{\beta_3}$, and
 $\beta_4 = \epsilon_3 - \epsilon_4 = \alpha_2$.

\medskip
\noindent
{\bf Type FII}: $\mathfrak{g}$ is of type F4, $\theta(\alpha_i) = \alpha_i$ for $i=1,2,3$, $\theta(\alpha_4) = -\alpha_4-3\alpha_3-2\alpha_2-\alpha_1$. In this 
case, $\mathfrak{h}_{\theta} = {\rm span}\{h_1,h_2,h_3\}$ and 
$\Delta_{\theta} = \{\beta_1\} = \Gamma_{\theta}$ where $\beta_1 = \epsilon_1 = 2\alpha_4 + 2\alpha_2 + \alpha_1 + 3\alpha_3$ and $\alpha_{\beta_1} = \alpha_4=\alpha'_{\beta_1} $.

\medskip
\noindent
{\bf Type G}: $\mathfrak{g}$ is of type G2, $\theta(\alpha_i) = -\alpha_i$ for $i=1,2$. In this case, $\mathfrak{h}_{\theta} = 0$ and $\Delta_{\theta} = \Delta^+$. There are three possibilities for a maximum strongly orthogonal $\theta$-system  inside $\Delta_{\theta}$, namely
\begin{itemize}
\item[(1)] $\{-2\epsilon_1 +\epsilon_2 + \epsilon_3, -\epsilon_2 + \epsilon_3\} = \{\alpha_2, \alpha_2 + 2\alpha_1\}$
\item[(2)] $\{-2\epsilon_2 +\epsilon_1 + \epsilon_3, -\epsilon_1 + \epsilon_3\} = \{\alpha_2 + 3\alpha_1, \alpha_2 + \alpha_1\}$ 
\item[(3)] $\{2\epsilon_3 - \epsilon_1 - \epsilon_2, \epsilon_1 - \epsilon_2\} =\{2\alpha_2 + 3\alpha_1, \alpha_1\}$. 
\end{itemize}
The first  subset  is a choice for $\Gamma_{\theta}$.  So we have $\Gamma_{\theta}=\{\beta_1,\beta_2\}$ where 
$\beta_1 =\alpha_2+2 \alpha_1$, $\alpha_{\beta_1} = \alpha_1=\alpha'_{\beta_1}$ and $\beta_2 = \alpha_2$.
\end{proof}

The next remark expands on properties of the roots described in the above theorem in light of the specific examples of $\Gamma_{\theta}$ given for each symmetric pair. 

\begin{remark}\label{remark:cases}  Let $\mathfrak{g}, \mathfrak{g}^{\theta}$ be an irreducible maximally split symmetric pair and let $\Gamma_{\theta}$ be a  maximum strongly orthogonal $\theta$-system satisfying the conditions of Theorem \ref{theorem:cases_take2}.  Let $\beta\in \Gamma_{\theta}$ and set $r = |{\rm Supp}(\beta)|$.
Then in the explicit versions of $\Gamma_{\theta}$ given in the proof of the theorem, we see that 
\begin{itemize}
\item if  $\beta$ satisfies (3) then $\Delta({\rm Supp}(\beta))$ is a root system  of type A$_r$ where $r = |{\rm Supp}(\beta)|$ and $\theta$ restricts to an involution on $\mathfrak{g}_{{\rm Supp}(\beta)}$ of type AIII/AIV.  
 \item  if $\beta$ satisfies (4)  then $\beta$ is  the final root in $\Gamma_{\theta}$ where either $\mathfrak{g}, \mathfrak{g}^{\theta}$ is of  type BI/BII and $r$ is odd or  $\mathfrak{g}, \mathfrak{g}^{\theta}$ is of type CII Case 2.
\item  if $\beta$ satisfies (5) then
 $\Delta({\rm Supp}(\beta))$ is a root system of type C$_r$, $\theta$ restricts to an involution of type CII on $\mathfrak{g}_{{\rm Supp}(\beta)}$, and $\mathfrak{g}, \mathfrak{g}^{\theta}$ is of type CII. 
 \end{itemize}
 \end{remark}

\section{Quantized Enveloping Algebras}\label{section:qea}
In this section, we turn our attention to the quantized enveloping algebra of the complex semisimple Lie algebra  $\mathfrak{g}$ with a chosen triangular decomposition $\mathfrak{g} = \mathfrak{n}^-\oplus \mathfrak{h}\oplus \mathfrak{n}^+$. After setting basic notation, we describe the structure of the locally finite part and a realization of dual Verma modules as subspaces of the quantized enveloping algebra. We also discuss Lusztig's automorphisms and Chevalley antiautomorphisms as well as the concept of specialization.  

\subsection{Basic Notation: the Quantum Case} \label{section:basic}
Let $q$ be an indeterminate.  The quantized enveloping algebra,  $U_q(\mathfrak{g})$, is the Hopf algebra over $\mathbb{C}(q)$ generated as an algebra  by $E_i,F_i,K_i^{\pm 1}, i=1,\dots, n$ with relations and Hopf algebra structure as defined in \cite{L1} Section 1 (with $x_i, y_i, t_i$ replaced by $E_i$, $F_i$, $K_i$) or \cite{Ko} Section 3.1. Sometimes we write $E_{\alpha}$ (resp. $F_{-\alpha}$) for $E_j$ (resp. $F_j$) where $\alpha$ is the simple root $\alpha_j$. Note that a counit $\epsilon$, which is an algebra homomorphism from $U_q(\mathfrak{g})$ to the scalars,  is part of the Hopf structure.  In our setting, the counit is defined by  $\epsilon(E_i) = \epsilon(F_i) = 0$ and $\epsilon(K_i^{\pm 1}) = 1$.

  Let $U^+$ be the $\mathbb{C}(q)$ subalgebra of $U_q(\mathfrak{g})$ generated by $E_i, i=1,\dots, n$.  Similarly, let $U^-$  be the $\mathbb{C}(q)$ subalgebra of $U_q(\mathfrak{g})$ generated by 
$F_i, i=1,\dots, n$.   Let $\mathcal{T}$ denote the group generated by the $K_1^{\pm 1}, \dots, K_n^{\pm 1}$ and write $U^0$ for the group algebra over $\mathbb{C}(q)$ generated by $\mathcal{T}$.  The algebra $U_q(\mathfrak{g})$ admits a triangular decomposition which is an isomorphism of vector spaces
\begin{align}\label{triangle}
U_q(\mathfrak{g}) \cong U^-\otimes U^0\otimes U^+
\end{align}
via the multiplication map.
Let $G^+$ (resp. $G^-$) be the subalgebra of $U_q(\mathfrak{g})$ generated by  $E_iK_i^{-1}, i=1,\dots, n$ (resp. $F_iK_i, i = 1, \dots, n$).  One gets similar triangular decompositions upon replacing $U^+$ by $G^+$ or $U^-$ by $G^-$.

 The (left) adjoint action of $U_q(\mathfrak{g})$, which makes $U_q(\mathfrak{g})$ into a left module over itself, comes from the Hopf algebra structure and is defined on generators by 
 \begin{itemize}
\item$ (\ad E_i)a = E_i a - K_iaK_i^{-1} E_i$
\item $ (\ad K_i) a = K_iaK_i^{-1}$ and $ (\ad K_i^{-1}) a = K_i^{-1}aK_i$
\item $ (\ad F_i) a = F_iaK_i  - aF_iK_i = F_iK_iK_i^{-1}a K_i - aF_iK_i$
\end{itemize}
for all $a\in U_q(\mathfrak{g})$ and each $i=1,\dots, n$. 
Temporarily fix $i$.  It is straightforward to check that $({\rm ad}\ E_i)a = \epsilon(E_i)a = 0$, $({\rm ad}\ F_i)a = \epsilon(F_i)a = 0$ and $({\rm ad}\ K_i^{\pm 1}) a = \epsilon(K_i^{\pm 1})a = a$ if and only if $[E_i, a] = [F_i,a] = [K_i^{\pm 1},a]=0$.

Set $A = \mathbb{C}[q,q^{-1}]_{(q-1)}$ and let $\hat U$ be the $A$ subalgebra generated by 
\begin{align*}E_i, F_i, K_i^{\pm 1}, {{(K_i-1)}\over{(q-1)}}
\end{align*} for $i=1,\dots, n$.  The specialization of $U_q(\mathfrak{g})$ at $q=1$ is the $\mathbb{C}$ algebra
$\hat U\otimes_A \mathbb{C}$.  It is well known that $\hat U\otimes_A \mathbb{C}$ is isomorphic to $U(\mathfrak{g})$ (see for example \cite{L1}).  Given a subalgebra $S$ of $U_q(\mathfrak{g})$, we say that $S$ specializes to the subalgebra $\bar S$ of $U(\mathfrak{g})$ provided the image of $S\cap \hat U$ in $\hat U\otimes_A \mathbb{C}$ is $\bar S$.

Given a $U_q(\mathfrak{g})$-module $V$, the $\lambda$ weight space of $V$, denoted $V_{\lambda}$, is the subspace consisting of vectors $v$ such that $K_iv = q^{(\alpha_i, \lambda)}v$ for each $i$.  Elements of $V_{\lambda}$ are called weight vectors with respect to this module structure. We say that $u\in U_q(\mathfrak{g})$ is a weight vector of weight $\lambda$ if it is a weight vector of weight $\lambda$ with respect to the adjoint action.  In other words,  $K_iuK_i^{-1} = q^{(\alpha_i,\lambda)}u$ for all $i=1, \dots, n$.  Given any $({\rm ad}\ U_q(\mathfrak{g}))$-module $M$  and a weight $\lambda\in Q(\pi)$, let  $M_{\lambda}$ denote the $\lambda$ weight space of $M$ with respect to the adjoint action.  

We can write  $U_q(\mathfrak{g})$ as direct sum of weight spaces with respect to the adjoint action where all weights are in $Q(\pi)$.  Suppoe that   $a=\sum_{\lambda}a_{\lambda}\in U_q(\mathfrak{g})$ where each $a_{\lambda}\in (U_q(\mathfrak{g}))_{\lambda}$.  We say that $v$ is a weight summand of $a$ with respect to the expression of $a$ as a sum of weight vectors (or, simply, $v$ is a weight summand of $a$)  if $v=a_{\lambda}$ for some $\lambda$.  Note that $U^-,U^+, G^-, G^+$ can all also be written as a sum of weight spaces with respect to the adjoint action.  Using (\ref{triangle}), we have the following direct sum:
\begin{align}\label{biweight} U_q(\mathfrak{g}) = \bigoplus_{\lambda, \mu\in Q^+(\pi)}G^-_{-\lambda}U^0U^+_{\mu}.
\end{align}
Note that
\begin{align*} G^-_{-\lambda}U^0 = U^-_{-\lambda} U^0
\end{align*}
for each $\lambda\in Q^+(\pi)$ and so we can replace $G^-_{-\lambda}$ with $U^-_{-\lambda}$ in the above direct sum decomposition. Similarly, we could replace $U^+_{\mu}$ with $G^+_{\mu}$ for $\mu\in Q^+(\pi)$.   We refer to $G^-_{-\lambda}U^0U^+_{\mu}$ as a biweight subspace of $U_q(\mathfrak{g})$. An element $v\in G^-_{-\lambda}U^0U^+_{\mu}$ is said to have biweight $(-\lambda, \mu)$. Note that a vector of biweight $(-\lambda, \mu)$ has weight $\mu-\lambda$.  We call  
$a=\sum_{\lambda, \mu\in Q^+(\pi)}a_{\lambda, \mu}$ where each $a_{\lambda, \mu}\in G^-_{-\lambda}U^0U^+_{\mu}$ the expansion of $a$ as a sum of biweight vectors and call $v$ a biweight summand of $a$ if $v=a_{\lambda, \mu}$ for some $\lambda, \mu\in Q(\pi)$.  

We have the following coarse version of (\ref{biweight}):
\begin{align}\label{lweight} U_q(\mathfrak{g}) = \bigoplus_{\lambda\in Q^+(\pi)}G^-_{-\lambda}U^0U^+.
\end{align}
Suppose that $\sum_{\lambda, \mu\in Q^+(\pi)}a_{\lambda, \mu}$ is the expansion of $a$ as a sum of biweight vectors.  Then 
\begin{align*}
a = \sum_{\lambda \in Q^+(\pi)} a_{\lambda}    {\rm\ where\ } a_{\lambda}= \sum_{\mu\in Q^+(\pi)}a_{\lambda, \mu}
\end{align*}
is the expression of $a$ as a sum using the direct sum decomposition (\ref{lweight}).  Given an element $u$ in $G^-_{-\zeta}U^0U^+$, set $l$-weight$(u) = -\zeta$. We say that $u$ is a minimal $l$-weight summand of $a$ if $u \in G^-_{-\zeta}U^0U^+$ and $a-u\in \sum_{\lambda \not\geq\zeta}G^-_{-\lambda}U^0U^+$.

Set  $K_{\xi}=K_1^{\xi_1}\cdots K_n^{\xi_n}$ for each $\xi=\sum_i\xi_i\alpha_i$ in the root lattice $Q(\pi)$. Note that the map $\xi\mapsto K_{\xi}$ defines an isomorphism from $Q(\pi)$ to $\mathcal{T}$.  We can enlarge the group $\mathcal{T}$ to a group $\check{\mathcal{T}}$ so that this isomorphism extends to an isomorphism from the weight lattice $P(\pi)$ to $\check{\mathcal{T}}$.  In particular,    let $r$ be a positive integer such that $r\eta \in Q(\pi)$ for each $\eta \in P(\pi)$ and  form the free abelian group of rank $n$ generated by  $ K_1^{1/r}, \dots, K_n^{1/r}$ where $(K_j^{1/r})^r = K_j$ for each $j$.  Note that $\mathcal{T}$ embeds in  this  group in the obvious way.  Given $\eta\in P(\pi)$, we set $K_{\eta} = K_1^{\eta_1/r}\cdots K_n^{\eta_n/r}$ where 
$\eta =( \eta_1/r)\alpha_1 + \cdots (\eta_n/r)\alpha_n$ and each $\eta_j$ is an integer.  The group $\check {\mathcal{T}}$ is the subgroup of  $\langle K_1^{1/r}, \dots, K_n^{1/r}\rangle$ 
consisting of all elements of the form $K_{\xi}$ where  $\xi\in P(\pi)$ and thus the map $\xi\mapsto K_{\xi}$ defines an isomorphism from $P(\pi)$ to $\check{\mathcal{T}}$.

Let   $N$ be a positive integer so that $(\mu,\alpha_i) \in {{1}\over{N}}\mathbb{Z}$ for all $\mu \in P(\pi)$ and $\alpha_i\in \pi$.  We will often work with elements in the simply connected quantized enveloping algebra $\check U$, a Hopf algebra containing $U_q(\mathfrak{g})$. As an algebra, $\check U$ is generated over $\mathbb{C}(q^{1/N})$ by $U_q(\mathfrak{g})$ and $\check{\mathcal{T}}$ such that $({\rm ad}\ K_{\mu})v = K_{\mu}vK_{\mu}^{-1} = q^{(\mu, \lambda)}v$ for all $v\in \check U_{\lambda}$.  Note that  the counit $\epsilon$ satisfies $\epsilon(K_{\mu}) = 1$ for all $K_{\mu}\in \check{\mathcal{T}}$.  (For more details, see \cite{Jo},  Section 3.2.10).  Thus elements of $\check U$ are linear combination of  terms of the form $uK_{\mu}$ where $u\in U_q(\mathfrak{g})$ and $\mu\in P(\pi)$.
Note that (\ref{biweight}) and thus the notion of biweight extends to $\check U$.

 Given $\lambda \in P(\pi)$, write  $M(\lambda)$ for the  universal highest weight $U_q(\mathfrak{g})$-module (i.e. the Verma module) generated by a highest weight vector of weight $\lambda$ and let $L(\lambda)$ be the simple highest weight $U_q(\mathfrak{g})$-module of highest weight $\lambda$.  
  When $\lambda$ is in $P^+(\pi)$, the simple module $L(\lambda)$ is finite-dimensional.

Suppose that $\pi'$ is a subset  of $\pi$. Set $U_{\pi'}$ equal to the subalgebra of $U_q(\mathfrak{g})$ generated by $E_i, K_i^{\pm 1}, F_i$ for all $\alpha_i\in \pi'$. Note that we can identify
$U_{\pi'}$ with the quantized enveloping algebra  $U_q(\mathfrak{g}_{\pi'})$.
Given a subalgebra $M$ of $U_q(\mathfrak{g})$, set $M_{\pi'} = U_{\pi'}\cap M$.  For example, 
the subalgebra $U^+_{\pi'}$  of  $U^+$ is the algebra generated by $E_i$ with $\alpha_i\in {\pi'}$. Given $\lambda \in P^+(\pi')$, write $L_{\pi'}(\lambda)$ for the simple highest weight $(U_{\pi'}U^0)$-module of highest weight $\lambda$.  Given a subset $S$ of $U_q(\mathfrak{g})$, we write $S^{U_{\pi'}}$ for the set 
\begin{align*}\{s\in S|\ [a,s] =0{\rm  \ for \ all \ }a\in U_{\pi'}\}.
\end{align*}  By the definition of  the adjoint action and the counit $\epsilon$, we see that $S^{U_{\pi'}}$ also equals the set 
\begin{align*}\{s\in S|\ ({\rm ad}\ a)s=\epsilon(a)s{\rm  \ for \ all \ }a\in U_{\pi'}\}.
\end{align*} We refer to $S^{U_{\pi'}}$ as a trivial $({\rm ad}\ U_{\pi'})$-module.  
The set $S^{U_{\pi'}}$ is also called the centralizer of $U_{\pi'}$ inside of $S$.

We call a lowest weight vector $f$ with respect to the action of $U_q(\mathfrak{g})$ (i.e. $F_if = 0$ for all $\alpha_i\in \pi$)  inside a module for this algebra  \emph{a $U_q(\mathfrak{g})$ lowest weight vector}.  In the special case where the action is the adjoint action, we call $f$ an \emph{an $({\rm ad}\ U_q(\mathfrak{g}))$ lowest weight vector}. We use the same language for subsets  $\pi' $ of $ \pi$ with $U_q(\mathfrak{g})$ replaced by  $U_{\pi'}$.

 \subsection{Dual Vermas and the Locally Finite Part}\label{section:dual_vermas}

We recall here the construction of a number of $(\ad U_q(\mathfrak{g}))$-modules.  (A good reference with more details is \cite{Jo}, Section 7.1.)  Given $\lambda\in P(\pi)$ we can realize  $G^-$ as an $({\rm ad}\ U_q(\mathfrak{g}))$-module, which we refer to as $G^-{(\lambda)}$, via the action 
\begin{align*}
({\rm ad}_{\lambda} E_i) 1 = 0\quad ({\rm ad}_{\lambda} K_i)1 = q^{ {{1}\over{2}}(\lambda, \alpha_i)}1 \quad ({\rm ad}_{\lambda} F_i)1 = (1-q^{(\lambda, \alpha_i)})F_iK_i.
\end{align*}
for $i=1,\dots, n$.
(Here we are using a slightly different Hopf structure and so a slightly different adjoint action which is why we have $G^-$ instead of $U^-$ as in \cite{Jo}, Section 7.1.)
With respect to this action, $G^-{(\lambda)}$ is isomorphic to the  dual module $\delta M(\lambda/2)$ of $M(\lambda/2)$.  Thus $G^-{(\lambda)}$ contains a unique simple submodule isomorphic to $ L(\lambda/2)$.  Note that we are following the construction in \cite{Jo}, Section 7.1 in choosing $\lambda \in P(\pi)$ as opposed to $\lambda\in 2P(\pi)$.  This means that there is a possibility that $\lambda/2$ is not in $P(\pi)$. However,  $\lambda/2$ is still a well-defined element of  $\mathfrak{h}^*$ (as explained in Section \ref{section:basic_classical}) and the construction works fine when this happens. 

  If $\gamma$ is dominant integral, then  $L(\gamma)$ is finite-dimensional and hence can also be viewed as a module generated by a lowest weight vector. In this case, a lowest weight  generating vector for $L(\gamma)$ has weight $w_0\gamma$. Note that $1$ has weight $\gamma$ viewed as an element in the module $G^-{(2\gamma)}$.  Moreover, the $U_q(\mathfrak{g})$ lowest weight vector of the submodule of $G^-{(2\gamma)}$ isomorphic to $L(\gamma)$ is of the form $({\rm ad}_{2\gamma} y)1$ where  $y\in U^-_{-\gamma'}$ and  $\gamma' = \gamma-w_0\gamma$.  Now consider an arbitrary weight $\gamma$ and a weight $\beta\in Q^+(\pi)$.  The dimension of the subspace of $\delta M(\gamma)$ spanned by vectors of weight greater than or equal to $\gamma -\beta$ is finite-dimensional.  Therefore any $U_q(\mathfrak{g})$ lowest weight vector of $\delta M(\gamma)$ generates a finite-dimensional submodule of $\delta M(\gamma)$. Thus $\delta M(\gamma)$, and hence $G^-{(2\gamma)}$,  contains a $U_q(\mathfrak{g})$ lowest weight vector of weight $\gamma -\beta$ if and only if $\gamma$ is dominant integral and $\beta = \gamma-w_0\gamma$.

The degree function defined by 
 \begin{itemize}
\item $\deg_{\mathcal{F}} (F_iK_i) = \deg_{\mathcal{F}} (E_i) = 0$ for all $i$.
\item $\deg_{\mathcal{F}} K_i = -1$ and $\deg_{\mathcal{F}} K_i^{-1} = 1$ for all $i$.
\end{itemize}
yields a  $({\rm ad}\ U_q(\mathfrak{g}))$-invariant filtration $\mathcal{F}$ on $\check U$ such that $G^-K_{-\lambda}$ is a homogenous subspace of degree ${\rm ht}(\lambda)$.  
This degree function, and hence this filtration, extends to $\check U$ by setting $\deg_{\mathcal{F}} K_{\xi}=-{\rm ht}(\xi)$ and $\deg_{\mathcal{F}} K^{-1}_{\xi}={\rm ht}(\xi)$
for all $\xi\in P(\pi)$. (This is just the ad-invariant filtration on $U_q(\mathfrak{g})$ as in  \cite{Jo} Section 7.1, though with a slightly different form of the adjoint action.)

Now \begin{align*}
({\rm ad}\ F_i )K_{-\lambda} = F_iK_{-\lambda} K_i - K_{-\lambda}F_iK_i = (1-q^{(\lambda, \alpha_i)})F_iK_iK_{-\lambda}.
\end{align*}
In particular, $G^-K_{-\lambda}$  and $G^-{(\lambda)}$ are isomorphic as $(U^-)$-modules where the action on the first module is via  $({\rm ad}\ U^-)$ and the action on the second module is via $({\rm ad}_{\lambda}U^-)$.   
 This is not true if we replace $U^-$ with $U^+$.  Instead, the twisted adjoint action of $U^+$ corresponds to a version of the graded adjoint action with respect to the $({\rm ad}\ U_q(\mathfrak{g}))$-invariant filtration $\mathcal{F}$.  More precisely, given $f\in G^-$ and $i\in \{1, \dots, n\}$, there exists $f'$ and $f''$ in $G^-$ and a scalar $c$ such that 
 we can write 
 \begin{align*}
 ({\rm ad}\ E_i)fK_{-\lambda} &= f'K_{-\lambda} + f''K_{-\lambda}K_i^2 + cfK_{-\lambda}E_i 
  \end{align*}
 where here we are considering $fK_{-\lambda}$ as an element of $U_q(\mathfrak{g})$ and using the ordinary adjoint action of $E_i$.  We obtain a graded action of $({\rm ad}\ E_i)$ on $G^-K_{-\lambda}$ by dropping the lower degree term with respect to $\mathcal{F}$ (i.e. $f''K_{-\lambda}K_i^2$) as well as the contribution from $\sum_iU_q(\mathfrak{g})E_i$ (i.e. $cfK_{-\lambda}E_i$).  Thus we may equip  $G^-K_{-\lambda}$ with a graded adjoint action of $U^+$.  Using this graded adjoint action of $U^+$ combined with the ordinary action of $({\rm ad}\ U^-)$ and setting $({\rm ad}\ K_{\beta})\cdot K_{-\lambda} =  q^{{{1}\over{2}}(\lambda, \beta)}K_{-\lambda}$  for all $\beta\in Q^+(\pi)$ makes  $G^-K_{-\lambda}$ into an $({\rm ad}\ U_q(\mathfrak{g})$)-module.  (We will often refer to this action as the graded adjoint action, but still use the same notation for the adjoint action, where the fact that $G^-K_{-\lambda}$ is being viewed as a module rather than a subset of the larger module $U_q(\mathfrak{g})$ is understood from context.)
 Note that in the above description of $({\rm ad}\ E_i)fK_{-\lambda}$, the term $f'$ is the top degree term of $({\rm ad}\ E_i)f$ (more precisely,   $({\rm ad}\ E_i)f  = f' +  $  terms of degree strictly less than $0$) and  is independent of $\lambda$.  Hence, we may view the graded adjoint action of $U^+$ on $G^-K_{-\lambda}$ as independent of $\lambda$.  Moreover, $G^-K_{-\lambda}$ and $G^-(\lambda)$ are isomorphic $U_q(\mathfrak{g})$-modules where the action on the  first module is the graded adjoint and on the second module is the twisted adjoint action.  (The fact that we can make $G^-K_{-\lambda}$ into an $({\rm ad}\ U_q(\mathfrak{g}))$-module in this way  is implicit in  \cite{Jo}, Lemma 7.1.1.)

Now consider any weight $\lambda$. By the above discussion,  $G^-K_{-\lambda}$ and  $G^-{(\lambda)}$ are isomorphic as $(\ad U^-)$-modules and the latter is  isomorphic to $\delta M(\lambda/2)$. It follows that    $G^-K_{-\lambda}$ contains a nonzero $({\rm ad}\ U_q(\mathfrak{g}))$ lowest weight vector $gK_{-\lambda}$ where $g$ is of weight $-\beta$ if and only if  $\lambda=2\gamma$ and $\beta = \gamma-w_0\gamma$ for some $\gamma\in P^+(\pi)$.    Moreover, this lowest weight vector is unique (up to scalar multiple) and is contained in $(\ad U^-)K_{-\lambda}$.

Let $\pi'$ be a subset of $\pi$.  For each $\lambda\in P(\pi)$, write $\tilde \lambda$ for the restriction of the weight $\lambda$ to an element of $P(\pi')$, the weight lattice for $\pi'$. In particular, $\tilde\lambda$ is the element in $P(\pi')$ that satisfies $(\lambda, \alpha_i) = (\tilde\lambda, \alpha_i)$ for all $\alpha_i\in \pi'$.  Here, the subset $\pi'$ is understood from context. Note that 
$G^-_{\pi'}K_{-\lambda}$ and $G^-_{\pi'}K_{-\tilde\lambda}$ are isomorphic as $(\ad U_{\pi'})$-modules. Hence $G^-_{\pi'}K_{-\lambda}$ contains a  nonzero $({\rm ad}\ U_{\pi'})$ lowest weight vector, say $gK_{-\lambda}$,   if and only if $\tilde\lambda= 2\tilde\gamma$ for some choice of $\tilde\gamma\in P^+(\pi')$.  Moreover, the weight of $gK_{-\lambda}$  (viewed as an element in the $({\rm ad}\ U_q(\mathfrak{g}))$-module $G^-K_{-\lambda}$) is 
$w_{\pi'}\tilde\gamma$ where $w_{\pi'} = w(\pi')_0$ and so the weight of $g$ (considered as an element of the $({\rm ad}\ U_q(\mathfrak{g}))$-module $U_q(\mathfrak{g})$)  is 
$-\tilde\gamma+w_{\pi'}\tilde\gamma$.  Since $(\gamma-\tilde\gamma, \alpha) = 0$ for all $\alpha\in \pi'$, we have $w_{\pi'}(\gamma-\tilde\gamma) = \gamma-\tilde\gamma$.  Hence 
\begin{align*}
-\gamma + w_{\pi'}\gamma =-(\gamma-\tilde\gamma)-\tilde\gamma+ (\gamma-\tilde\gamma ) + w_{\pi'}\tilde\gamma=-\tilde\gamma+w_{\pi'}\tilde\gamma
\end{align*}
Thus the weight of $g$ is $-\gamma+w_{\pi'}\gamma$.  We may similarly analyze $({\rm ad}\ U_{\pi'})$  lowest weight vectors in $G^-K_{-\lambda}$ where we view this space as an $({\rm ad}\ U_{\pi'})$-module.  In particular, if $g\in G^-$ and $gK_{-\lambda}$ is an $({\rm ad}\ U_{\pi'})$  lowest weight vector then $g$ generates a finite-dimensional $({\rm ad}_{\lambda} U_{\pi'})$-submodule of $G^-(\lambda)$ and the restriction $-\tilde\mu$ of the weight $-\mu$ of $g$ (considered as an element of  $G^-(\lambda)$) to $\pi'$ must satisfy $\tilde\mu\in P^+(\pi')$.  Note that  $gK_{-\lambda}$ generates a finite dimensional $U_q(\mathfrak{g})$-submodule of $G^-K_{-\lambda}$ when we use the graded adjoint action.  Thus if $b\in U_q(\mathfrak{g})$ and $({\rm ad}_{\lambda}b)g=0$, we have 
\begin{align}\label{lower_degree}
\deg_{\mathcal{F}}\left(({\rm ad}\ b) gK_{-\lambda}\right)\leq  \deg_{\mathcal{F}}(K_{-\lambda})={\rm ht}(\lambda)
\end{align}

Let $F(\check U)$ denote the locally finite part of $\check U$ with respect to the (left) adjoint action. By \cite{Jo} Section 7.1,  $F(\check U)$ admits a direct sum decomposition \begin{align*}F(\check U) =  \bigoplus_{\gamma\in P^+(\pi)} ({\rm ad} \ U_q(\mathfrak{g}))K_{-2\gamma}
\end{align*}
as $({\rm ad}\ U_q(\mathfrak{g}))$-modules. The subspace of $G^-K_{-2\gamma}$ corresponding to the finite-dimensional $({\rm ad}_{2\gamma} U_q(\mathfrak{g}))$-submodule of $G^-{(2\gamma)}$ is the subspace $({\rm ad}\ U^-)K_{-2\gamma}$ of $G^-K_{-2\gamma}$.

Let $\pi'$ be a subset of $\pi$ as above. Set $F_{\pi'}(\check U)$ equal to the locally finite part of $\check U$ with respect to the action of $({\rm ad}\ U_{\pi'})$.   By the defining relations for $U_q(\mathfrak{g})$ (see for example  Lemma 4.5 of \cite{JL}) and the definition of the adjoint action, we see that $E_j, F_jK_j \in F_{\pi'}(\check U)$ for all $\alpha_j\notin \pi'$ and $F(\check U) \subseteq F_{\pi'}(\check U)$.  Also, $K_{-2\gamma}\in F_{\pi'}(\check U)$ for all $\gamma$ that restricts to a dominant integral weight in $P^+(\pi')$.  

Since $F_{\pi'}(\check U)$ contains all the finite-dimensional $({\rm ad}\ U_{\pi'})$-submodules of $\check U$, it follows that  the subspace of elements in $F_{\pi'}(\check U)$ that admit a trivial $({\rm ad}\ U_{\pi'})$ action is the same as the subspace of elements in $\check U$ admitting such a trivial action.  In particular, we have 
\begin{align*}
F_{\pi'}(\check U)^{U_{\pi'}} &= \{a\in F_{\pi'}(\check U)|\ ({\rm ad}\ u)a = \epsilon(u)a {\rm \ for \ all \ }u\in U_{\pi'}\} \cr &= \{a\in \check U|\ ({\rm ad}\ u)a = \epsilon(u)a
{\rm \ for \ all \ }u\in U_{\pi'}\} = \check U^{U_{\pi'}}.
\end{align*}
It follows from the discussion at the end of Section \ref{section:basic}, that  this subspace agrees with the centralizer of $U_{\pi'}$ inside $\check U$.  In particular, \begin{align*}\check U^{U_{\pi'}} &= \{a\in \check U| ua-au = 0 {\rm \ for \ all \ }u\in U_{\pi'}\}\cr &= \{a\in \check U|\ ({\rm ad}\ u)a = \epsilon(u)a {\rm \ for \ all \ }u\in U_{\pi'}\}.
\end{align*}

 \subsection{Lusztig's Automorphisms}\label{section:lusztig}
For each $j\in \{1,\dots, n\}$, let $T_j$ denote Lusztig's automorphism associated to the simple root $\alpha_j$ (\cite{Lu}, Section 37.1, see also  Section 3.4 of \cite{Ko}). For each $w$ in $\mathcal{W}$, the automorphism $T_w$ is defined to be the composition 
\begin{align*}
T_w = T_{r_1}\cdots T_{r_m}
\end{align*} where 
$s_{r_1}\cdots s_{r_m}$ is a reduced expression for $w$ and each $s_{r_j}$ is the reflection defined by the simple root $\alpha_{r_j}$.   By \cite{Lu} (or \cite{Ja}, Section 8.18) $T_w$ is independent of reduced expression for $w$.
One of the properties satisfied by the $T_j$, $j=1,\dots, n$ is 
$T_j^{-1} = \sigma T_j \sigma$ where $\sigma$ is the $\mathbb{C}(q)$ algebra antiautomorphism defined by  
\begin{align}\label{sigma}
\sigma(E_j) = E_j\quad \sigma(F_j) = F_j\quad \sigma(K_j) = K_j^{-1} \quad \sigma(K_j^{-1}) = K_j.
\end{align}
 The following result of \cite{Ko} relates Lusztig's automorphisms and submodules of $U_q(\mathfrak{g})$ with respect to the action of $({\rm ad}\ U_{\pi'})$ where $\pi'\subset \pi$.

\begin{lemma}(\cite{Ko}, Lemma 3.5)\label{lemma:lowest_weight}  Let $\pi'$ be a subset of the simple roots $\pi$ and let $\alpha_i$ be a simple root not in $\pi'$. Set $w_{\pi'} = w(\pi')_0$.
\begin{itemize}
\item[(i)] The subspace ${(\ad U_{\pi'})}(E_i)$ of $U^+$  is a simple finite-dimensional $(\ad U_{\pi'})$-submodule of $U_q(\mathfrak{g})$ with highest weight vector $T_{w_{\pi'}}(E_i)$ and lowest weight vector $E_i$.  
\item[(ii)] The subspace ${(\ad U_{\pi'})}(F_iK_i)$ of $G^-$  is a simple finite-dimensional $(\ad U_{\pi'})$-submodule of $U_q(\mathfrak{g})$ with highest weight vector $F_iK_i$ and lowest weight vector $T_{w_{\pi'}}^{-1}(F_iK_i)$.
\end{itemize}
\end{lemma}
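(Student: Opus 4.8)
The plan is to establish (i) in full — part (ii) then follows by the mirror argument (or by transport through the quantum Chevalley antiautomorphism of Section~\ref{section:chevalley}). First I would check that $E_i$ is an $(\ad U_{\pi'})$-lowest weight vector of weight $\alpha_i$: since $\alpha_i\notin\pi'$, each $\alpha_j\in\pi'$ is distinct from $\alpha_i$, so $[E_i,F_j]=0$ and the $q$-commutator defining $\ad F_j$ gives $(\ad F_j)E_i=0$, while $(\ad K_j)E_i=q^{(\alpha_i,\alpha_j)}E_i$. Next I would use that $E_i\in F_{\pi'}(\check U)$ (Section~\ref{section:dual_vermas}), so that $V:=(\ad U_{\pi'})E_i$ is a finite-dimensional $(\ad U_{\pi'})$-module on which the $K_j$ act by powers of $q$, hence completely reducible. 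The restriction $\tilde\alpha_i$ of $\alpha_i$ to the $\pi'$-weight lattice is anti-dominant, because $(\alpha_i,\alpha_j)\le 0$ for distinct simple roots; I would use this to show $V$ is simple: decompose $V$ into simple summands, write $E_i$ as a sum of nonzero lowest weight vectors of weight $\tilde\alpha_i$, one per summand, note each summand must then be a copy of the simple $U_{\pi'}$-module $L_{\pi'}(\tilde\alpha_i)$ of lowest weight $\tilde\alpha_i$ (which is finite-dimensional with one-dimensional lowest weight space, by anti-dominance), and observe that the submodule they generate is a diagonal copy of $L_{\pi'}(\tilde\alpha_i)$, forcing there to be a single summand. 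Thus $V\cong L_{\pi'}(\tilde\alpha_i)$ with lowest weight vector $E_i$, highest weight $w_{\pi'}\alpha_i$ (the image of the extreme weight $\alpha_i$ under the longest element of $\mathcal{W}_{\pi'}$), and one-dimensional $\alpha_i$- and $w_{\pi'}\alpha_i$-weight spaces.

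It then remains to identify $T_{w_{\pi'}}(E_i)$ with the highest weight vector of $V$, which is the step I expect to carry the real weight. Here I would invoke the standard behaviour of Lusztig's automorphisms (\cite{Lu}, Section~37; \cite{Ja}, Chapter~8): $T_{w_{\pi'}}$ carries a weight vector of weight $\nu$ to one of weight $w_{\pi'}\nu$; since $w_{\pi'}$ negates exactly the positive roots of $\pi'$ and $\alpha_i\notin\Delta(\pi')$, we get $w_{\pi'}\alpha_i\in\Delta^+$ and hence $T_{w_{\pi'}}(E_i)\in U^+$; and $T_{w_{\pi'}}(E_i)$ lies in $V$ — this last fact can be checked by iterating the rank-one identity that $T_j(E_i)$ is a nonzero scalar multiple of the top of the $\alpha_j$-string through $E_i$, namely $(\ad E_j)^{m}E_i$ with $m$ the relevant Cartan integer. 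Being a nonzero vector of the one-dimensional $w_{\pi'}\alpha_i$-weight space of $V$, and that weight being extreme, $T_{w_{\pi'}}(E_i)$ must be a scalar multiple of the highest weight vector; in particular it is killed by $\ad E_j$ for all $\alpha_j\in\pi'$ and it generates $V$. This gives (i).

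For (ii) I would proceed symmetrically: a direct check (using $E_jF_i=F_iE_j$ and the commutation of $K_i$ past $E_j$ for $i\ne j$) shows $F_iK_i$ is an $(\ad U_{\pi'})$-highest weight vector of weight $-\alpha_i$ with $-\tilde\alpha_i$ dominant for $\pi'$; since $F_iK_i\in F_{\pi'}(\check U)$, the module $(\ad U_{\pi'})(F_iK_i)\subset G^-$ is simple with highest weight vector $F_iK_i$, lowest weight $-w_{\pi'}\alpha_i$, one-dimensional extreme weight spaces, and $T_{w_{\pi'}}^{-1}(F_iK_i)$, which has weight $-w_{\pi'}\alpha_i$ and lies in $G^-$, is its lowest weight vector. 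Alternatively, (ii) follows from (i) by applying the quantum Chevalley antiautomorphism, which swaps $U^+$ and $G^-$, sends $E_i$ to a scalar multiple of $F_iK_i$, and conjugates $T_{w_{\pi'}}$ to $T_{w_{\pi'}}^{-1}$ (mirroring $T_j^{-1}=\sigma T_j\sigma$ from Section~\ref{section:lusztig}). The crux throughout is the identification step for $T_{w_{\pi'}}(E_i)$: once it is known to lie in $(\ad U_{\pi'})E_i$, the rest is dictated by complete reducibility of finite-dimensional $U_{\pi'}$-modules and the weight combinatorics of $w_{\pi'}$.
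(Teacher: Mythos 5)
The paper does not prove this lemma at all: it is quoted verbatim from Kolb (\cite{Ko}, Lemma 3.5) and used as an imported fact, so there is no internal proof to compare yours against. Judged on its own merits, your argument is the standard one and most of it is sound. The verification that $E_i$ is an $(\ad U_{\pi'})$-lowest weight vector of antidominant restricted weight, the appeal to $E_i\in F_{\pi'}(\check U)$ for finite-dimensionality, and the simplicity argument (complete reducibility plus the observation that the submodule generated by a sum of lowest weight vectors of the same antidominant weight is a diagonal copy of a single simple module) are all correct, and the transport of (i) to (ii) by a symmetric direct computation is fine.

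The one step that does not work as literally written is the claim that $T_{w_{\pi'}}(E_i)\in(\ad U_{\pi'})E_i$ ``can be checked by iterating the rank-one identity.'' The identity $T_j(E_i)=c\,(\ad E_j)^{(-a_{ji})}E_i$ applies only when the argument is a Chevalley generator; after the first application along a reduced expression of $w_{\pi'}$, the vector $T_{j_1}(E_i)$ is no longer of that form, and there is no rank-one formula for $T_{j_2}$ applied to it. Moreover $T_{w}$ is not compatible with the adjoint action in the naive sense ($T_w(\ad(u)x)\neq \ad(T_w(u))(T_w(x))$ in general, since $\ad$ involves the coproduct, which $T_w$ does not preserve), so one cannot push the string computation through the composite automorphism. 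The correct input here is Lusztig's global statement on braid operators applied to extremal weight vectors of integrable modules together with the compatibility $T_w(u\cdot m)=T_w(u)\cdot T_w(m)$ (\cite{Lu}, 37.1.2 and 37.2.5, or the treatment of $T_w(E_i)$ for $w\alpha_i>0$ in \cite{Ja}, Chapter 8); alternatively one can argue directly that $(\ad E_j)T_{w_{\pi'}}(E_i)=0$ for all $\alpha_j\in\pi'$ by applying $T_{w_{\pi'}}^{-1}$ and using that $w_{\pi'}$ sends simple roots of $\pi'$ to negatives of simple roots of $\pi'$, and then that $(\ad U^-_{\pi'})T_{w_{\pi'}}(E_i)$ contains $E_i$. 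Since you flag this identification as the crux and point at the right sources, the fix is a matter of citing the precise statement rather than of repairing the architecture; but the ``iteration'' as described should not be presented as a proof.
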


Assume $\pi'$, $\alpha_i$, and $w_{\pi'}$ are chosen as in the above lemma. Note that  $\widetilde{w_{\pi'}\alpha_i}$, the restriction of the weight $w_{\pi'}\alpha_i$  with respect to $\pi'$, is an element in $P^+(\pi')$.   In particular, as $(U_{\pi'})$-modules
$(\ad U_{\pi'})(E_i) $ is isomorphic to $L_{\pi'}(w_{\pi'}\alpha_i)$, the finite-dimensional simple $(U_{\pi'}U^0)$-module of highest weight $w_{\pi'}\alpha_i$.  

\subsection{The Quantum Chevalley Antiautomorphism}\label{section:chevalley}
Let $U_{\mathbb{R}(q)} $ be the $\mathbb{R}(q)$ subalgebra of $U_q(\mathfrak{g})$ generated by $E_i, F_i, K_i,$ and $K_i^{-1}$.  
Let $\kappa$ denote  the algebra antiautomorphism of $U_{\mathbb{R}(q)}$
defined by 
\begin{align*}
\kappa(E_i) = F_iK_i \quad \kappa(F_i) = K_i^{-1}E_i\quad \kappa(K) = K
\end{align*}
for all $i = 1, \dots, n$ and $K \in \mathcal{T}$.   Note that $\kappa^2 = {\rm Id}$ and so $\kappa$ is an involution.  We define the notion of complex conjugate for $\mathbb{C}(q)$ by setting $\bar s = s$ for all $s\in \mathbb{C}$ and $\bar q = q$ and insisting that conjugation is an algebra automorphism.  Extend $\kappa$ to a conjugate linear antiautomorphism of $U_q(\mathfrak{g})$ by setting  $\kappa(au) = \bar a \kappa(u)$ 
for all $a\in \mathbb{C}(q)$ and $u\in U_{\mathbb{R}(q)}$.  We refer to $\kappa$ as the quantum Chevalley antiautomorphism as in \cite{L1}.  
Note that 
\begin{align*}
\kappa(({\rm ad}\  E_j) b ) = \kappa(E_jb - K_jbK_j^{-1}E_j) = \kappa(b) F_jK_j - F_j\kappa(b) K_j = -({\rm ad}\ F_j)\kappa(b).
\end{align*}
It follows that 
\begin{align}\label{kappaeq}
\kappa(({\rm ad}\ E_{i_1}\cdots E_{i_m}) E_kK )= (-1)^m({\rm ad}\ F_{i_1}\cdots F_{i_m}) (K F_kK_k)
\end{align}
for all $K\in \mathcal{T}$.

We recall information about unitary modules for quantum groups as presented in \cite{L1}.  Let $C$ be a  subalgebra of $U_q(\mathfrak{g})$ such that $\kappa(C) = C$.  
Given a $C$-module $M$, a mapping $S_M$ from $M\times M$ to ${\mathbb{R}}(q)$ is called sesquilinear if $S_M$ is linear in the first variable and conjugate linear in the second.  We say that the $C$-module $M$ is  \emph{unitary} provided $M$ admits a sesquilinear form $S_M$ that satisfies 
\begin{itemize}
\item $S_M(av,w) =S_M(v,\kappa(a)w)$ for all $a\in C$ and $v,w\in M$
\item $S_M(v,v)$ is a positive element of $\mathbb{R}(q)$ for each nonzero vector $v\in M$
\item $S_M(v,w) = \overline{S_M(w,v)} $ for all $v$ and $w$ in $M$.
\end{itemize} 

\begin{theorem} \label{theorem:unitary} Let $C$ be a subalgebra of $U_q(\mathfrak{g})$ such that $\kappa(C) = C$. Every finite-dimensional unitary $C$-module can be written as a direct sum of finite-dimensional simple $C$-modules.  \end{theorem}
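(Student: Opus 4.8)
The plan is to run the standard unitarian trick — the quantum-group analogue of Maschke's theorem — reducing everything to a single observation: with respect to the sesquilinear form $S_M$ carried by a unitary module, the orthogonal complement of a $C$-submodule is again a $C$-submodule, and it is a genuine vector-space complement because $S_M$ is positive definite. Induction on dimension then finishes the proof.

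First I would isolate the role of the hypothesis $\kappa(C) = C$. Let $M$ be a finite-dimensional unitary $C$-module with form $S_M$, and let $N \subseteq M$ be a $C$-submodule. Put $N^{\perp} = \{ v \in M \mid S_M(v,w) = 0 \text{ for all } w \in N \}$. For $a \in C$, $v \in N^{\perp}$, and $w \in N$, the adjunction property gives $S_M(av, w) = S_M(v, \kappa(a)w)$; since $\kappa(a) \in \kappa(C) = C$ and $N$ is a $C$-submodule, $\kappa(a)w \in N$, so $S_M(av,w) = 0$. Hence $av \in N^{\perp}$, and $N^{\perp}$ is a $C$-submodule of $M$.

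Next I would check that $M = N \oplus N^{\perp}$. The restriction of $S_M$ to any subspace is still positive definite, so if $v \in N \cap N^{\perp}$ then $S_M(v,v) = 0$, forcing $v = 0$; thus $N \cap N^{\perp} = 0$. Moreover positive definiteness of $S_M$ on all of $M$ makes $S_M$ nondegenerate, and the standard linear algebra of nondegenerate sesquilinear forms over a field yields $\dim N^{\perp} = \dim M - \dim N$. Together with $N \cap N^{\perp} = 0$ this gives the direct sum decomposition $M = N \oplus N^{\perp}$ as $C$-modules.

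Finally I would induct on $\dim M$. If $M$ is zero or simple there is nothing to prove. Otherwise choose a proper nonzero $C$-submodule $N$; then $M = N \oplus N^{\perp}$ with both summands of strictly smaller dimension, and the restrictions of $S_M$ to $N$ and to $N^{\perp}$ are sesquilinear forms inheriting all three defining properties of a unitary module, so $N$ and $N^{\perp}$ are themselves finite-dimensional unitary $C$-modules. By the inductive hypothesis each decomposes as a direct sum of finite-dimensional simple $C$-modules, hence so does $M$. I do not expect a genuine obstacle here: the only place the hypothesis is used is the verification that $N^{\perp}$ is $C$-stable, which is immediate from $\kappa(C) = C$; the one point to keep in mind is that $\mathbb{R}(q)$ is an ordered field, so "positive definite" behaves as usual and in particular entails nondegeneracy, which is already built into the notion of unitary module recalled above.
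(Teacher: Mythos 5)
Your proof is correct and is exactly the standard unitarian-trick argument (orthogonal complement of a submodule is a submodule because $\kappa(C)=C$, positive definiteness forces $N\cap N^{\perp}=0$ and nondegeneracy gives the dimension count, then induct). The paper itself simply delegates this to Theorem 2.4 and Corollary 2.5 of \cite{L1}, noting that the coideal hypothesis there is never used, so you have just written out in full the same argument the paper cites.
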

\begin{proof} This is just Theorem 2.4 and Corollary 2.5 of \cite{L1} except that here we only assume that $C$ is a $\kappa$ invariant subalgebra of $U_q(\mathfrak{g})$ where  as in \cite{L1}, $C$ is additionally assumed to be a (left) coideal subalgebra.  Nevertheless, the proofs in \cite{L1} do not use the coideal assumption and thus apply to this more general setting.
\end{proof}

The next result is an immediate consequence of Theorem \ref{theorem:unitary} and the fact that a commutative polynomial ring $H$ acts semisimply on a $H$-module $V$ if and only if $V$ can be written as a direct sum of eigenspaces with respect to the action of $H$.

\begin{corollary} \label{cor:unitary1} Let $C$ be a subalgebra of $U_q(\mathfrak{g})$ such that $\kappa(C) = C$. Let $H$ be a commutative subalgebra of $C$ such that 
\begin{itemize}
\item $\kappa(H)=H$
\item $H\cap U^0$ is the Laurent polynomial ring generated by $H\cap \mathcal{T}$
\item  $H$ is isomorphic to a polynomial ring over $H\cap U^0$
\end{itemize} Then  any finite-dimensional unitary $C$-module can be written as a direct sum of eigenspaces with respect to the action of $H$.
\end{corollary}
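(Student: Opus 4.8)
The plan is to reduce the statement to Theorem \ref{theorem:unitary} together with the elementary fact already cited in the text: a commutative polynomial ring acts semisimply on a module exactly when the module decomposes into eigenspaces. Let $V$ be a finite-dimensional unitary $C$-module. By Theorem \ref{theorem:unitary}, $V$ decomposes as a direct sum of finite-dimensional simple $C$-modules, so it suffices to prove the claim when $V$ itself is simple; the general case follows by taking the direct sum of the resulting eigenspace decompositions (collecting eigenspaces with the same eigenvalue). Thus I would fix a finite-dimensional simple unitary $C$-module $V$ and aim to show that $H$ acts on $V$ by a direct sum of eigenspaces.

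The key point is that $H$ is itself a $\kappa$-invariant subalgebra of $U_q(\mathfrak{g})$, so Theorem \ref{theorem:unitary} applies to $H$ as well. First I would observe that the restriction of $V$ to an $H$-module is still unitary: the sesquilinear form $S_V$ on $V$ satisfying $S_V(av,w)=S_V(v,\kappa(a)w)$ for all $a\in C$ in particular satisfies this identity for all $a\in H\subseteq C$, and the positivity and conjugate-symmetry conditions are properties of the form alone, not of the algebra. Hence $V$ is a finite-dimensional unitary $H$-module. Applying Theorem \ref{theorem:unitary} with $C$ replaced by $H$, we conclude that $V$ decomposes as a direct sum of finite-dimensional simple $H$-modules.

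Now I would invoke the hypotheses on $H$: it is commutative and isomorphic to a polynomial ring over $H\cap U^0$, where $H\cap U^0$ is the Laurent polynomial ring on $H\cap\mathcal{T}$. In particular $H$ is a commutative $\mathbb{C}(q)$-algebra (or $\mathbb{C}(q^{1/N})$-algebra, depending on whether $\check{\mathcal{T}}$ intervenes — but the ground field is algebraically closed in neither case, so one must be slightly careful here; see the remark below). A finite-dimensional simple module over a commutative algebra over a field $k$ is one-dimensional precisely when $k$ is algebraically closed; in general a finite-dimensional simple $H$-module is a field extension of $k$ on which $H$ acts through a character into that extension. However, the statement we need is weaker: we only need that $V$ is a direct sum of joint eigenspaces for $H$, i.e. that the commuting operators by which $H$ acts are simultaneously diagonalizable. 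This follows from semisimplicity together with commutativity by the standard argument: the image of $H$ in $\mathrm{End}(V)$ is a commutative semisimple (finite-dimensional) algebra, hence a product of fields; but since each generator of $H$ must already act semisimply (as $V$ is semisimple as a module over the polynomial subring it generates, which has algebraically closed fraction... ) — the cleanest route is: each single generator $x$ of the polynomial ring $H$ generates a commutative subalgebra $\mathbb{C}(q)[x]$ (or a Laurent ring, for the $U^0$-generators) that is also $\kappa$-invariant...

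\emph{Remark on the main obstacle.} The genuine subtlety — and the step I expect to be the crux — is that the ground field $\mathbb{C}(q)$ (resp. $\mathbb{C}(q^{1/N})$) is not algebraically closed, so ``semisimple'' does not immediately give ``sum of eigenspaces.'' The resolution the author surely intends is the parenthetical remark in the corollary's statement itself: for a \emph{commutative polynomial ring} $H$, being semisimple as an $H$-module is \emph{equivalent} to being a direct sum of eigenspaces, because a polynomial ring $k[x_1,\dots,x_r]$ (here with the $U^0$-part adjoined) has the property that its finite-dimensional simple modules that arise inside a unitary module are forced to be one-dimensional — concretely, one shows that a single self-adjoint-type generator acting on a unitary module over $\mathbb{R}(q)$ (or the relevant ordered field) has all eigenvalues in that field by a positivity/ordered-field argument, exactly as in the proof of Theorem 2.4 of \cite{L1}. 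So I would either (a) cite that the equivalence ``$H$ semisimple on $V$ $\iff$ $V$ is a sum of $H$-eigenspaces'' is precisely the fact quoted before the corollary, treating it as given, or (b) spell out the positivity argument: on a unitary module the generators of $H\cap U^0$ act by group-like-type unitary operators with eigenvalues among the $q^{(\mu,\lambda)}$, and the polynomial generators act by operators that are ``symmetrizable'' with respect to $S_V$ and hence have eigenvalues in the real closure of $\mathbb{R}(q)$ lying in fact in $\mathbb{R}(q^{1/N})$. Given the phrasing of the corollary, option (a) is what the author wants, so the proof is genuinely short: unitarity of $V$ over $C$ restricts to unitarity over $H$; Theorem \ref{theorem:unitary} gives semisimplicity of $V$ as an $H$-module; and the stated equivalence for commutative polynomial rings converts semisimplicity into the desired eigenspace decomposition. $\qed$
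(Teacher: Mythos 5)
Your proposal matches the paper's intended argument: the paper gives no separate proof, stating only that the corollary is an immediate consequence of Theorem \ref{theorem:unitary} (applied with $H$ in place of $C$, since the unitary structure restricts) together with the quoted equivalence between semisimplicity of a commutative polynomial ring action and decomposition into eigenspaces. Your remark correctly identifies that the author treats that equivalence as given, which is exactly your option (a), so the proof is complete as you describe it.
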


Recall that every finite-dimensional simple $U_q(\mathfrak{g})$-module $L(\lambda)$ admits a sesquilinear form $S$ defined by $S(av, bv) = S(v, \mathcal{P}_{HC}(\kappa(a)b)v)$ where $v$ is a highest weight generating vector for $L(\lambda)$ and $\mathcal{P}_{HC}$ is the Harish-Chandra projection as defined in  (1.14) of \cite{L1}.  Thus, the following result is an immediate consequence of 
Theorem \ref{theorem:unitary}.
\begin{corollary} \label{theorem:unitary2} Let $C$ be a subalgebra of $U_q(\mathfrak{g})$ such that $\kappa(C) = C$. The finite-dimensional simple $U_q(\mathfrak{g})$-module $L(\lambda)$ admits a semisimple $C$ action for all $\lambda\in P^+(\pi)$.  
\end{corollary}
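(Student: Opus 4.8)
The plan is to invoke the unitary structure on $L(\lambda)$ that has just been recalled and then restrict it to $C$; the argument is genuinely short, so most of the work is bookkeeping. First I would recall from \cite{L1} that, for $\lambda\in P^+(\pi)$, the sesquilinear form $S$ on $L(\lambda)$ determined by $S(av,bv) = S(v,\mathcal{P}_{HC}(\kappa(a)b)v)$, with $v$ a highest weight generating vector, is well defined and satisfies the three defining properties of a unitary form: contravariance with respect to $\kappa$, that is $S(ax,y)=S(x,\kappa(a)y)$ for all $a\in U_q(\mathfrak{g})$; Hermitian symmetry $S(x,y)=\overline{S(y,x)}$; and positive definiteness over $\mathbb{R}(q)$ on the natural $\mathbb{R}(q)$-form of $L(\lambda)$. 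Thus $L(\lambda)$ is a unitary $U_q(\mathfrak{g})$-module.

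Next I would observe that, because $\kappa(C)=C$, the very same form $S$ exhibits $L(\lambda)$ as a unitary $C$-module: the contravariance identity $S(ax,y)=S(x,\kappa(a)y)$ continues to hold for all $a\in C$ since $\kappa(a)\in C\subseteq U_q(\mathfrak{g})$, while Hermitian symmetry and positivity are properties of the form alone and are untouched by passing to a subalgebra. Hence $L(\lambda)$, restricted to $C$, is a finite-dimensional unitary $C$-module.

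Finally, Theorem \ref{theorem:unitary} applies to the $\kappa$-invariant subalgebra $C$ and says that every finite-dimensional unitary $C$-module decomposes as a direct sum of simple $C$-modules; applied to $L(\lambda)$ this yields a semisimple $C$-action, as claimed. I do not expect any real obstacle: the only points needing a little care are matching the $\mathbb{R}(q)$-structure so that the positivity statement in \cite{L1} is literally the positivity axiom used in Theorem \ref{theorem:unitary}, and confirming the interaction of $\mathcal{P}_{HC}$ with $\kappa$ that makes the displayed formula for $S$ well defined — both of which are recorded in \cite{L1}.
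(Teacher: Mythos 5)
Your argument is exactly the one the paper uses: it recalls the sesquilinear form $S(av,bv)=S(v,\mathcal{P}_{HC}(\kappa(a)b)v)$ from \cite{L1} making $L(\lambda)$ unitary, restricts it to the $\kappa$-invariant subalgebra $C$, and then applies Theorem \ref{theorem:unitary}. The paper states this as an immediate consequence without further detail, so your slightly more explicit write-up of the same route is correct.
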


The next corollary is a special case of Corollary \ref{cor:unitary1}.

\begin{corollary} \label{Haction2} Let $H$ be a commutative subalgebra of $U_q(\mathfrak{g})$ such that $\kappa(H) = H$.  Assume further that $H\cap U^0$ is the Laurent polynomial ring generated by $H\cap\mathcal{T}$ and that $H$ is isomorphic to a polynomial ring over $H\cap U^0$.  Then any  finite-dimensional $U_q(\mathfrak{g})$-module can be written as a direct sum of eigenspaces with respect to the action of $H$.
\end{corollary}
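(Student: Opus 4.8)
The plan is to derive this as the special case $C = H$ of Corollary \ref{cor:unitary1}. The hypotheses on $H$ say exactly that $H$ is a commutative $\kappa$-invariant subalgebra of $U_q(\mathfrak{g})$ that is a polynomial ring over $H\cap U^0$, with $H\cap U^0$ the Laurent polynomial ring on $H\cap\mathcal{T}$; so Corollary \ref{cor:unitary1}, applied with its ambient algebra taken to be $H$ itself, already yields that every \emph{finite-dimensional unitary $H$-module} is a direct sum of $H$-eigenspaces. It therefore suffices to prove that an arbitrary finite-dimensional $U_q(\mathfrak{g})$-module becomes a unitary $H$-module upon restriction.

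For this I would invoke the complete reducibility of finite-dimensional $U_q(\mathfrak{g})$-modules: any such module $V$ is a finite direct sum $\bigoplus_j L(\lambda_j)$ with $\lambda_j\in P^+(\pi)$. Each summand $L(\lambda_j)$ carries the sesquilinear form $S$ recalled just before Corollary \ref{theorem:unitary2}, with $S(av,bv)=S(v,\mathcal{P}_{HC}(\kappa(a)b)v)$, making it a unitary $U_q(\mathfrak{g})$-module; since $\kappa(H)=H\subseteq U_q(\mathfrak{g})$, the same form exhibits $L(\lambda_j)$ as a unitary $H$-module. Forming the orthogonal direct sum of these forms over $j$ turns $V$ into a unitary $H$-module, and Corollary \ref{cor:unitary1} then gives the eigenspace decomposition. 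Equivalently, one can skip the unitarity of $V$ altogether: Corollary \ref{theorem:unitary2} with $C=H$ shows each $L(\lambda_j)$ admits a semisimple $H$-action, and by the remark preceding Corollary \ref{cor:unitary1} — a module over a commutative polynomial ring is semisimple precisely when it is a direct sum of eigenspaces — each $L(\lambda_j)$, and therefore $V$, decomposes into $H$-eigenspaces.

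I expect no genuine obstacle: the statement is a formal consequence of the preceding corollaries together with two standard facts, the complete reducibility of finite-dimensional $U_q(\mathfrak{g})$-modules and the unitarity of the simple modules $L(\lambda)$. The only point deserving a remark is that $H$ is a polynomial ring over the Laurent polynomial ring $H\cap U^0$ rather than an ordinary polynomial ring, but this is harmless: both ingredients used — Theorem \ref{theorem:unitary} and the semisimple-versus-eigenspace equivalence — apply to such rings without change.
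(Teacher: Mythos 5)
Your proposal is correct and matches the paper's intent: the paper simply declares this corollary ``a special case of Corollary~\ref{cor:unitary1},'' and the details you supply (complete reducibility of finite-dimensional $U_q(\mathfrak{g})$-modules plus the sesquilinear form on each $L(\lambda_j)$ recalled before Corollary~\ref{theorem:unitary2}) are exactly what that one-line justification presupposes. Whether one applies Corollary~\ref{cor:unitary1} with $C=H$, as you do, or with $C=U_q(\mathfrak{g})$, as the paper presumably intends, is immaterial, since in either case the remaining step is to observe that the restriction of a finite-dimensional $U_q(\mathfrak{g})$-module is unitary.
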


 \section{Quantum Symmetric Pairs}\label{section:qsp}
We review the construction of quantum symmetric pairs following \cite{Ko} (Sections 4 and 5) which is closely based on   \cite{L1} but uses right coideal subalgebras instead of left ones.
Afterwards, we analyze elements of these coideal subalgebras  through  biweight space expansions and the introduction of  a special projection map.

\subsection{Definitions}\label{section:Definitions}
Let $\theta$ be a maximally split involution for the semisimple Lie algebra $\mathfrak{g}=\mathfrak{n}^-\oplus \mathfrak{h}\oplus\mathfrak{n}^+$  as defined  in  Section \ref{section:max_split_inv}.
Let $\mathcal{M} $ be the subalgebra of $U_q(\mathfrak{g})$ generated by $E_i,F_i, K_i^{\pm 1}$ for all  $\alpha_i\in \pi_{\theta}$. Note that $\mathcal{M} = U_q(\mathfrak{m})$ where $\mathfrak{m}$ is the Lie subalgebra  of $\mathfrak{g}$ generated by $e_i, f_i, h_i$ for all $\alpha_i\in \pi_{\theta}$. By Section 4 of \cite{Ko}, we can lift $\theta$ to an algebra automorphism $\theta_q $  of $U_q(\mathfrak{g})$  and use this automorphism to define quantum analogs of $U(\mathfrak{g}^{\theta})$.  In particular, the quantum symmetric pair (right) coideal subalgebra $\mathcal{B}_{\theta,\bf{c}, \bf{s}}$ is generated by 
\begin{itemize}
\item The group $\mathcal{T}_{\theta}= \{K_{\beta}|\ \beta\in Q(\pi)^{\theta}\}$
\item $\mathcal{M} = U_q(\mathfrak{m})$
\item the elements $B_i  = F_i + c_i\theta_q(F_iK_i)K_i^{-1} + s_iK_i^{-1}$ for all $i$ with $\alpha_i\notin \pi_{\theta}$
\end{itemize}
where ${\bf c} = (c_1,\dots, c_n)$ and ${\bf s}= (s_1,\dots, s_n)$ are  $n$-tuples of scalars with ${\bf{c}} \in \bf{C}$ and ${\bf{s}}\in \bf{S}$
where $\bf{C}$ and $\bf{S}$ are  defined as  in  \cite{Ko} (5.9) and (5.11) respectively (see also \cite{L1}, Section 7, Variations 1 and 2).  We write $\mathcal{B}_{\theta}$ for $\mathcal{B}_{\theta,\bf{c}, \bf{s}}$ where  $\bf{c}$ and $\bf{s}$ are understood from context. We further assume that each entry in ${\bf s}$ and each entry in ${\bf c}$ is in $\mathbb{R}[q,q^{-1}]_{(q-1)}$.  This guarantees that each $B_i$  is contained in 
$U_{\mathbb{R}(q)}$ and  in $\hat U$.  Thus $\mathcal{B}_{\theta}$ admits a conjugate linear antiautomorphism (see discussion below) and specializes to $U(\mathfrak{g}^{\theta})$ as $q$ goes to $1$ (\cite{L1},  Section 7). 

 Note that there are extra conditions on both $\bf{C}$ and ${\bf S}$.  Only the latter affect the arguments of this paper.  For the entries of ${\bf s}$ it is important to note that  $s_i\neq 0$ implies $\alpha_i$ is an element of a distinguished 
subset $\mathcal{S}$ of $\pi\setminus \pi_{\theta}$ (see \cite{L1} Variation 2 for the definition of $\mathcal{S}$).  
By
 Section 7 of \cite{Let},  we have $\mathcal{S}$ is empty except in the following cases:
\begin{itemize}
\item {\bf Type AIII:} $n=2m+1$, $\pi_{\theta}$ is empty and $\mathcal{S} = \{\alpha_{m+1}\}$
\item {\bf Type CI:} $\mathcal{S} = \{\alpha_n\}$
\item {\bf Type DIII:} Case 1, $\mathcal{S} = \{\alpha_n\}$
\item {\bf Type EVII:} $\mathcal{S}=\{\alpha_7\}$
\end{itemize}
Let $\Gamma_{\theta}$ be the choice of strongly orthogonal $\theta$-system of Theorem \ref{theorem:cases_take2}. By the above description for $\mathcal{S}$ with respect to irreducible symmetric pairs and the proof of Theorem \ref{theorem:cases_take2}, we see that  if $\alpha_i\in \mathcal{S}$ and $\alpha_i=\alpha_{\beta}$ for some $\beta\in \Gamma_{\theta}$ then $\beta=\alpha_{\beta}=\alpha_i$.  In other words, $\alpha_{\beta}\in \mathcal{S}$ implies that $\beta$ is a simple root. 

Write $\mathbb{N}\mathcal{S}$ for the set of linear combinations of elements in $\mathcal{S}$ with nonnegative integer coefficients.
Note  that $\mathcal{S}$ is a subset of $\Delta_{\theta}$ and in particular, $\theta(\alpha)=-\alpha$ for all $\alpha\in \mathcal{S}$.  This follows immediately by inspection of the list above, but was also part of the original definition of $\mathcal{S}$ in \cite{Let}, Section 7.  We will find this property useful in some of the arguments below which include generic elements of $\mathbb{N}\mathcal{S}$.

  By \cite{L1}, there exists a Hopf algebra automorphism $\psi$ of $U_q(\mathfrak{g})$ that restricts to a real Hopf algebra automorphism of $U_{\mathbb{R}(q)}$ and  so that $\psi\kappa\psi^{-1}$ restricts to a conjugate linear antiautomorphism of $\mathcal{B}_{\theta}$.  Set $\kappa_{\theta} = \psi\kappa\psi^{-1}$ and note that $\kappa_{\theta}$ is a conjugate linear antiautomorphism of $U_q(\mathfrak{g})$.  Thus  Theorem \ref{theorem:unitary}, Corollary \ref{cor:unitary1} and Corollary \ref{theorem:unitary2} hold for $\kappa$ replaced by $\kappa_{\theta}$ and apply to $C=\mathcal{B}_{\theta}$.  
Therefore,  $\mathcal{B}_{\theta}$ acts semisimply on finite-dimensional unitary $\mathcal{B}_{\theta}$-modules and the set of finite-dimensional unitary $\mathcal{B}_{\theta}$-modules includes the set of finite-dimensional $U_q(\mathfrak{g})$-modules. Note that $\kappa_{\theta}(X)$ is a nonzero scalar multiple of $\kappa(X)$ for any nonzero weight vector $X$ in $ U^+$, $G^+$, $U^-$ or $G^-$.  Thus, if we show that $Y$ is a nonzero scalar multiple of $\kappa(X)$ for weight vectors $X\in U^+$ (resp. $X\in G^+$) and $Y\in G^-$ (resp. $Y\in U^-$), then the same assertion holds for $\kappa$ replaced by $\kappa_{\theta}$.

By Theorem 4.4 of \cite{Ko}, $\theta_q$ restricts to the identity on $\mathcal{M}$ and $\theta_q(K_{\beta}) = K_{\theta(\beta)}$ for all $\beta\in Q(\pi)$.
Also, in the case where $\theta(\alpha_i)  = -\alpha_{p(i)}$, we get $\theta_q(F_iK_i)= cE_{p(i)}$ for some nonzero scalar $c$.  More generally, by Theorem 4.4 of 
\cite{Ko},  for $\theta(\alpha_i)\neq \alpha_i$ there exists a nonnegative integer $r$ and 
elements 
\begin{align*}
Z_i^- = F_{i_1}\cdots F_{i_r} \quad {\rm and} \quad Z_i^+ = E_{j_1}\cdots E_{j_r}
\end{align*}
with $\alpha_{i_k}$ and $\alpha_{j_k}$ in $\pi_{\theta}$ for $1\leq k\leq r$ and nonzero scalars $u_i$ and $v_i$ such that 
\begin{align*} &\theta_q(E_i) = u_i\sigma((\ad Z^-_{p(i)})(F_{p(i)} K_{p(i)})) 
\end{align*}
and 
\begin{align}\label{theta_qfdefn}
\theta_q(F_iK_i) = v_i(\ad Z^+_{p(i)})(E_{p(i)})
\end{align}
where 
 $\sigma$ is the $\mathbb{C}(q)$ 
algebra antiautomorphism defined in (\ref{sigma}).  Moreover, 
$\theta_q(F_{i}K_i)$ is the highest weight vector of the $({\rm ad}\ \mathcal{M})$-module generated by $E_{p(i)}$ as described in Lemma \ref{lemma:lowest_weight}(ii).  An analogous assertion holds for $\theta_q(E_i)$.

Set $\check{\mathcal{T}}_{\theta} = \{K_{\mu}|\ \mu\in P(\pi)^{\theta}\}$ and note that $\check{\mathcal{T}}_{\theta}$ is a subgroup of $\check{\mathcal{T}}$ which is contained in the simply connected quantized enveloping algebra (see Section \ref{section:basic}).  It is sometimes convenient to extend $\mathcal{B}_{\theta}$ to a ``simply connected version" by including the elements $\check{\mathcal{T}}_{\theta}$.  We denote this extended algebra by $\check{\mathcal{B}}_{\theta}$. By definition, $\check{\mathcal{B}}_{\theta}=\mathcal{B}_{\theta}\check{\mathcal{T}}_{\theta}$.

\subsection{Biweight Space Expansions}\label{section:weight_space}
 Set $\mathcal{M}^+=\mathcal{M}\cap U^+$, the subalgebra of $\mathcal{M}$ generated by $E_i$ for all $\alpha_i\in \pi_{\theta}$. 
Note that we have only defined the generators $B_i$ for $\alpha_i\notin\pi_{\theta}$.   It is useful to set $B_i = F_i$ for  $\alpha_i\in \pi_{\theta}$.  Given an $m$-tuple $J=(j_1,\dots, j_m)$  with entries in $j_k\in \{0,1\}$, we set 
$F_J = F_{j_1}\cdots F_{j_m}$ and $B_J = B_{j_1}\cdots B_{j_m}$.

  Let $\mathcal{J}$ be the subset of tuples with entries in $\{0,1\}$ so that $\{F_J, J\in \mathcal{J}\}$ forms a basis for $U^-$. We have the following direct sum decomposition for $\mathcal{B}_{\theta}$ (\cite{L1}, (7.17)).
\begin{align}\label{Bthetadecomp}
\mathcal{B}_{\theta} = \bigoplus_{J\in \mathcal{J}}B_J\mathcal{M}^+\mathcal{T}_{\theta}.
\end{align}
Given an $m$-tuple $J$, write ${\rm wt}(J)  = \alpha_{j_1}+ \alpha_{j_2} + \cdots + \alpha_{j_m}$.
 The next result provides us with fine information concerning the biweight expansion of elements $B_J$ in $\mathcal{B}_{\theta}$.
\begin{lemma} \label{lemma:decomp}
 Let $J=(j_1,\dots, j_m)$ be an $m$-tuple of nonnegative integers such that  ${\rm wt}(J) =\beta$. Then  $\deg_{\mathcal{F}}(B_J) =m = {\rm ht}(\beta)$ and 
  \begin{align}\label{inclusion}
 B_J\in F_J +
  \sum_{(\lambda,\eta,\gamma,\gamma')\in \mathcal{N}}
G_{-\beta+\lambda+\eta + \gamma}^-U^+_{\theta(-\lambda)-\gamma}K_{-\beta + 2\gamma'}
 \end{align}
where $\mathcal{N}$ is the set of four-tuples  $(\lambda,\eta,\gamma,\gamma')$  in $Q^+(\pi\setminus \pi_{\theta})\times \mathbb{N}\mathcal{S}\times Q^+(\pi)\times Q^+(\pi)$ satisfying
\begin{itemize}
\item  $0<\lambda \leq\beta$
\item  $\eta\leq \lambda$
\item $\gamma\leq \beta-\lambda$ and $ \gamma\leq \theta(-\lambda)-\eta$
\item $ \gamma'\leq \gamma$
\end{itemize}
\end{lemma}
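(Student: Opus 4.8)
The plan is to induct on $m={\rm ht}(\beta)$, which equals the length of the tuple $J$. The degree statement will fall out of the same argument: once (\ref{inclusion}) is known, every biweight term appearing has Cartan part $K_{-\beta+2\gamma'}$ with $\gamma'\in Q^+(\pi)$, hence $\deg_{\mathcal F}$ equal to ${\rm ht}(\beta)-2\,{\rm ht}(\gamma')\le m$, while $F_J$ realizes degree exactly $m$ and sits alone in its biweight component (it is the only contribution with trivial $U^+$-part and $G^-$-part of weight $-\beta$), so it cannot be cancelled; thus $\deg_{\mathcal F}(B_J)=m$.

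For the base case $m\le 1$: if $\alpha_{j_1}\in\pi_\theta$ then $B_{j_1}=F_{j_1}=F_J$ and there is nothing further to check. If $\alpha_{j_1}\notin\pi_\theta$, unwind $B_{j_1}=F_{j_1}+c_{j_1}\theta_q(F_{j_1}K_{j_1})K_{j_1}^{-1}+s_{j_1}K_{j_1}^{-1}$. By (\ref{theta_qfdefn}), $\theta_q(F_{j_1}K_{j_1})=v_{j_1}(\ad Z^+_{p(j_1)})(E_{p(j_1)})$ lies in $U^+_{\theta(-\alpha_{j_1})}$ with $\theta(-\alpha_{j_1})\in Q^+(\pi)$ and $\theta(-\alpha_{j_1})-\alpha_{p(j_1)}\in Q^+(\pi_\theta)$; so $c_{j_1}\theta_q(F_{j_1}K_{j_1})K_{j_1}^{-1}$ lies in the $(\lambda,\eta,\gamma,\gamma')=(\alpha_{j_1},0,0,0)$ summand, while $s_{j_1}K_{j_1}^{-1}$ (nonzero only when $\alpha_{j_1}\in\mathcal S$, in which case $\theta(\alpha_{j_1})=-\alpha_{j_1}$) is accommodated by the summand with $\eta=\alpha_{j_1}\in\mathbb N\mathcal S$. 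The degree claim for $m=1$ is immediate from $\deg_{\mathcal F}(F_i)=\deg_{\mathcal F}(\theta_q(F_iK_i)K_i^{-1})=\deg_{\mathcal F}(s_iK_i^{-1})=1$.

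For the inductive step, write $J=(j_1,J')$ with $J'=(j_2,\dots,j_m)$, so $B_J=B_{j_1}B_{J'}$, and apply the inductive hypothesis to $B_{J'}$ (attached to $\beta'=\beta-\alpha_{j_1}$). Expand $B_J=F_{j_1}F_{J'}+F_{j_1}(B_{J'}-F_{J'})+\left(c_{j_1}\theta_q(F_{j_1}K_{j_1})K_{j_1}^{-1}+s_{j_1}K_{j_1}^{-1}\right)B_{J'}$. The first summand is $F_J$. For the second, $F_{j_1}$ multiplies each biweight term $g\,u\,K_\nu$ of $B_{J'}$ on the left; since $F_{j_1}g\in G^-K_{-\alpha_{j_1}}$ for $g\in G^-$, one commutes the extra $K_{-\alpha_{j_1}}$ to the right, lowering the $G^-$-weight and the Cartan exponent each by $\alpha_{j_1}$, which turns a tuple of $\mathcal N'$ into a tuple of $\mathcal N$ (e.g. $\lambda\le\beta'$ upgrades to $\lambda\le\beta$), and this does not raise $\deg_{\mathcal F}$. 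The real work is in the third summand.

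The main obstacle is the normal-ordering of products $\theta_q(F_{j_1}K_{j_1})\cdot(\text{element of }U^-)$: since $\theta_q(F_{j_1}K_{j_1})$ is a word in the $E$'s, moving it past a $U^-$-factor requires iterated use of $[E_a,F_b]=\delta_{ab}(K_a-K_a^{-1})/(q_a-q_a^{-1})$, producing a sum of terms in $U^-U^0U^+$. For each such term I would record: $\lambda$, the total weight of the positions $j_k$ at which the non-$F$ part of $B_{j_k}$ was taken (so $\lambda\in Q^+(\pi\setminus\pi_\theta)$, and $0<\lambda\le\beta$ away from the $F_J$-term); $\eta\le\lambda$, the sub-total over the positions contributing the scalar term $s_{j_k}K_{j_k}^{-1}$ (so $\eta\in\mathbb N\mathcal S$, using $s_i\ne 0\Rightarrow\alpha_i\in\mathcal S$); and $\gamma$, respectively $\gamma'$, recording the total weight removed by the $[E_a,F_a]$-cancellations, respectively the exponents of the Cartan factors $K_a^{\pm1}$ they deposit, whence $0\le\gamma'\le\gamma$, the Cartan part of the term is $K_{-\beta+2\gamma'}$, the $U^+$-part has weight $\theta(-\lambda)-\gamma$, and the $G^-$-part the complementary weight. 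The constraints in $\mathcal N$ are precisely the ones that survive: $\gamma\le\beta-\lambda$ because the cancellations cannot remove more $F$'s than are present at the $F$-positions, and $\gamma\le\theta(-\lambda)-\eta$ because they cannot consume more than the available $U^+$-content $\theta(-(\lambda-\eta))$ (here one uses $\theta|_{\mathbb N\mathcal S}=-\mathrm{id}$ together with $\theta(-\alpha_i)=\alpha_{p(i)}+(\text{elt of }Q^+(\pi_\theta))$ for $\alpha_i\notin\pi_\theta$, so that $\theta(-(\lambda-\eta))=\theta(-\lambda)-\eta$). I expect verifying that this bookkeeping closes up — that no biweight components outside the listed family appear and that the $\gamma,\gamma'$ inequalities are genuinely preserved under each straightening step — to be the technical heart of the proof; the degree statement and the $F_{j_1}$-part are routine by comparison.
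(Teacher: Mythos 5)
Your argument is essentially the paper's: the paper expands $B_J$ in one shot over all ways of choosing, for each factor $B_{j_i}$, one of its three summands (indexed by a partition $\{\mathcal{I}_1,\mathcal{I}_2,\mathcal{I}_3\}$ of the positions, with $\gamma_2$ and $\gamma_3$ playing the roles of your $\lambda-\eta$ and $\eta$) and then straightens each monomial by moving the $U^+$-factors past the $G^-$-factors via the relation (\ref{commuting_relation}), with exactly the weight bookkeeping you describe; your induction on $m$ is only a reorganization of that computation, and the ``closing up'' you defer is precisely the content of the paper's displays (\ref{Aeqn1})--(\ref{Aeqn2}). Your justification of the degree claim (the $F_J$ term is the unique summand of minimal $l$-weight, hence survives in top degree $m$) is if anything more explicit than the paper's.
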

\begin{proof}
 The claim about degree follows immediately from (\ref{inclusion}) and the fact that elements in $G^-$ and $U^+$ have degree $0$. By assumption,  $K_{j_1}^{-1} \cdots K_{j_m}^{-1} = K_{-\beta}$. 
 Recall that for $\alpha_i\notin \pi_{\theta}$, $B_i$ is a linear combination of $F_i$, $\theta_q(F_iK_i)K_i^{-1}$ and $K_i^{-1}$ with the final term only appearing if $\alpha_i\in \mathcal{S}$. 
 Also, by\  the definition of $\theta_q(F_{j_r}K_{j_r})$ (see (\ref{theta_qfdefn}) and related discussion), we have
\begin{align*}
\theta_q(F_{j_r}K_{j_r}) \in [({\rm ad}\ \mathcal{M}^+)E_{p(j_r)}]
\subseteq U^+_{\theta(-\alpha_{j_r})}
\end{align*}
To prove (\ref{inclusion}), 
note that $B_J$ can be written as a linear combination
\begin{align}\label{Beqn}
B_J \in  F_J +\sum_{\mathcal{I}_2\cup \mathcal{I}_3\neq \emptyset} \mathbb{C}(q)A(\mathcal{I}_1, \mathcal{I}_2, \mathcal{I}_3) K_{-\beta}
\end{align} 
where  $\{\mathcal{I}_1,\mathcal{I}_2,\mathcal{I}_3\}$  forms a partition of $\{1, \dots, m\}$ such that $\{\alpha_i|i\in \mathcal{I}_2\}\cap\pi_{\theta}=\emptyset$ and  $\{\alpha_i|i\in\mathcal{I}_3\}\subseteq\mathcal{S}$,
and 
\begin{align*}
A(\mathcal{I}_1, \mathcal{I}_2, \mathcal{I}_3)  = A_{j_1}A_{j_2}\cdots A_{j_m}
\end{align*}
where  
\begin{itemize}
\item $A_{j_i} = F_{j_i}K_{j_i}$ if $i\in \mathcal{I}_1$
\item $A_{j_i} =  \theta_q(F_{j_i}K_{j_i})$
 if $i\in \mathcal{I}_2$
\item  $A_{j_i} = s_{j_i}$ if $i\in \mathcal{I}_3$.
\end{itemize}

Assume that $\mathcal{I}_2$ is not empty.  Set $\gamma_k = \sum_{i\in \mathcal{I}_k}\alpha_{j_i}$ for $k = 1, 2,$ and $3$ and note that $\gamma_1 + \gamma_2 + \gamma_3 = \beta$. The assumption on $\mathcal{I}_2$ further forces $\gamma_2>0$.
Consider first the case where $\mathcal{I}_3$ is empty and so $\gamma_1 = \beta-\gamma_2$.  If we simply  reorder the $A_{j_i}$ so that elements with $j_i\in \mathcal{I}_1$ are on the left and elements with $j_i\in \mathcal{I}_2$ are on the right we get a term in 
\begin{align*}
G^-_{-\beta+\gamma_2}U^+_{\theta(-\gamma_2)}K_{-\beta}.
\end{align*}
In order to understand what other spaces of the form $G^-_{\xi}U^+_{\xi'}K_{-\xi''}$ show up in the biweight expansion of $A(\mathcal{I}_1, \mathcal{I}_2,\mathcal{I}_3)K_{-\beta}$, we need to understand what happens when we commute an element $A_{j_r}$ in $U^+$ (i.e. $j_r\in \mathcal{I}_2$)  pass an element $A_{j_k}$ in $G^-$ (i.e. $j_k\in \mathcal{I}_1$).

The following identity is easily derived from one of the standard defining relations for $U_q(\mathfrak{g})$:
\begin{align}\label{commuting_relation} 
E_iF_jK_j-q^{-2} F_j K_jE_i = -\delta_{ij}(q-q)^{-1}(1-K_i^{2}).
\end{align}
Suppose that  $A_{j_r}= \theta_q(F_{j_i}K_{j_i})\in U^+_{\theta(-\alpha_{j_r})}$ and $A_{j_k}=F_iK_i$.  
Relation (\ref{commuting_relation}) implies that there exists a scalar $c$ such that 
\begin{align*}
A_{j_r}A_{j_k}\in c  A_{j_k}A_{j_r} + U^+_{\theta(-\alpha_{j_r})-\alpha_i}+ U^+_{\theta(-\alpha_{j_r})-\alpha_i}K_i^2.
\end{align*}
Thus,  moving the $A_{j_r}$ terms in $U^+$ to the right of the $A_{j_k}$ terms in $G^-$ yields
\begin{align}\label{Aeqn1}
A(\mathcal{I}_1,\mathcal{I}_2, \mathcal{I}_3)K_{-\beta} &\in G^-_{-\beta+\gamma_2}U^+_{\theta(-\gamma_2)}K_{-\beta}\cr &+ \sum_{\substack{\gamma\leq \beta-\gamma_2\\ \gamma\leq \theta(-\gamma_2)}}\sum_{\gamma'\leq \gamma} 
G^-_{-\beta +\gamma_2 + \gamma}U^+_{\theta(-\gamma_2)+\gamma}K_{-\beta+2\gamma'}
\end{align}
when $\mathcal{I}_2 \neq \emptyset$ and $\mathcal{I}_3 = \emptyset$.  Moreover, since $\mathcal{I}_2 \neq \emptyset$, we have $\gamma_2 >0$.

Now suppose that $\mathcal{I}_3$ is nonempty.  We can write 
\begin{align*}A(\mathcal{I}_1, \mathcal{I}_2, \mathcal{I}_3) = sA_{j'_1}\cdots A_{j'_r}
\end{align*}
where $s = \prod_{j_i\in \mathcal{I}_3}s_{j_i}$ and $j'_1, \dots, j'_r$ is the sequence obtained from $j_1, \dots, j_m$ by removing all $j_k$ with $k\in \mathcal{I}_3$.  In other words, we have consolidated the terms $A_{j_i}$ with $j_i\in \mathcal{I}_3$ into a single scalar $s$, and then renumbered the remaining $A_{j_i}$ accordingly.   
Note that ${\rm wt}(j'_1, \dots, j'_r) = \beta-\gamma_3$.  Arguing as above we get 
\begin{align}\label{Aeqn2}
A(\mathcal{I}_1, \mathcal{I}_2,\mathcal{I}_3)K_{-\beta} &= sA_{j'_1}\cdots A_{j'_r}K_{-\beta}\cr&\in G^-_{-\beta+\gamma_2+\gamma_3}U^+_{\theta(-\gamma_2)}K_{-\beta}\cr &+ \sum_{\substack{\gamma\leq \beta-\gamma_2-\gamma_3\\ \gamma\leq \theta(-\gamma_2)}}\sum_{\gamma'\leq \gamma} 
G^-_{-\beta +\gamma_2 +\gamma_3+ \gamma}U^+_{\theta(-\gamma_2)+\gamma}K_{-\beta+2\gamma'}
\end{align}
when  $\mathcal{I}_3\neq \emptyset$.  Note that in this case, $\gamma_3>0$.

The lemma now follows by expanding out the second term of the right hand side of  (\ref{Beqn}) using (\ref{Aeqn1}), (\ref{Aeqn2}) with $\gamma_2 = \lambda$ and $\gamma_3 = \eta$.
\end{proof}

\begin{remark}  \label{remark:decomp} It is sometimes useful to use  coarser versions of Lemma \ref{lemma:decomp}.  It particular, it follows from (\ref{inclusion}) that 
\begin{align*}
B_J\in F_J + \sum_{0< \lambda\leq\beta}\ \sum_{0\leq \gamma\leq \beta} G^-_{-\beta+\lambda}U^+K_{-\beta+\gamma}
\end{align*}
and 
\begin{align*}
B_J\in F_J+\sum_{0<\lambda\leq\beta}G^-_{-\beta+\lambda}U^0U^+
\end{align*}
for each $m$-tuple $J$ with ${\rm wt}(J) = \beta$.
It follow from the direct sum decomposition (\ref{Bthetadecomp}) and the definition of $l$-weight (Section \ref{section:basic}) that if $g$ is a minimal $l$-weight summand of an element $b\in \mathcal{B}_{\theta}$ of $l$-weight $-\lambda$ then 
$g\in U^-_{-\lambda}\mathcal{M}^+\mathcal{T}_{\theta}$.
\end{remark}

\subsection{Decompositions and Projections}\label{section:decomp_proj}

Set $N_{\theta}^+$ equal to the subalgebra  of $U^+$ generated by $(\ad \mathcal{M}^+)U^+_{\pi\setminus \pi_{\theta}}$. By \cite{L1}, Section 6, $N_{\theta}^+$ is an $({\rm ad}\ \mathcal{M})$-module.  Multiplication induces an isomorphism  (this is just \cite{L1}, (6.2) with $G^-$ replaced by $U^+$)
\begin{align*}
N_{\theta}^+\otimes \mathcal{M}^+\cong U^+.
\end{align*} 
Let $\mathcal{T}'$ be the subgroup of $\mathcal{T}$ generated by  $\{K_i|\ \alpha_i\notin\pi_{\theta} {\rm \ and \ }i\leq p(i)\}$.  Note that 
$\mathcal{T} = 
  \mathcal{T}_{\theta}\times \mathcal{T}' $ and hence the multiplication map defines an isomorphism 
  \begin{align*}
  \mathbb{C}(q)[\mathcal{T}_{\theta}]\otimes \mathbb{C}(q)[\mathcal{T}'] \cong U^0.
 \end{align*}
It follows from the triangular decomposition (\ref{triangle}) for $U_q(\mathfrak{g})$ and the above tensor product decompositions, that multiplication also induces an isomorphism
 \begin{align*}
 U_q(\mathfrak{g}) \cong U^-\otimes \mathbb{C}(q)[\mathcal{T}_{\theta}]\otimes \mathbb{C}(q)[\mathcal{T}' ]\otimes N_{\theta}^+\otimes \mathcal{M}^+.
 \end{align*}
 Using the fact that elements of $\mathcal{M}^+$ are eigenvectors with respect to the adjoint action of $\mathcal{T}$, we may reorder this tensor product decomposition as 
 \begin{align}\label{tensor_prod_decomp}
 U_q(\mathfrak{g}) \cong U^-\otimes\mathcal{M}^+\otimes \mathbb{C}(q)[\mathcal{T}_{\theta}]\otimes \mathbb{C}(q)[\mathcal{T}' ]\otimes N_{\theta}^+.
 \end{align}
 
 Let $(N_{\theta}^+)_+$ denote  the span of the weight vectors in   $N_{\theta}^+$ of positive weight in $Q^+(\pi)$. 
Set 
\begin{align*}(\mathcal{T}')_+=\sum_{\substack{K\in \mathcal{T}'\\ K\neq Id}}\mathbb{C}(q)K
\end{align*}
and let $(\mathcal{T}'N_{\theta}^+)_+$ denote the subspace of $\mathcal{T}'N_{\theta}$ defined by 
\begin{align*}
(\mathcal{T}'N_{\theta}^+)_+ = (\mathcal{T}')_+N_{\theta}^+ + \mathcal{T}'(N_{\theta}^+)_+.
\end{align*}
The tensor product decomposition (\ref{tensor_prod_decomp}) yields a direct sum decomposition
\begin{align}\label{Pdecomp}
U_q(\mathfrak{g}) = \left(U^- \mathcal{M}^+\mathcal{T}_{\theta}\right)\ \oplus \ \left(U ^-\mathcal{M}^+\mathcal{T}_{\theta}(\mathcal{T}'N_{\theta}^+)_+\right).
\end{align}
Let $\mathcal{P}$ denote the projection of $U_q(\mathfrak{g})$ onto  $U^- \mathcal{M}^+\mathcal{T}_{\theta}$ with respect to this decomposition.

\begin{proposition}\label{proposition:BUiso} Given  a positive weight $\beta$ and an element $b\in \mathcal{B}_{\theta}$  such that
\begin{align*}
b = \sum_{{\rm wt}(I) = \beta}B_Ia_I
\end{align*} where each $a_I\in \mathcal{M}^+\mathcal{T}_{\theta}$,
there exists an element $\tilde b \in \mathcal{B}_{\theta}$  such that 
\begin{itemize}
\item[(i)]$\tilde b =   b +
 \sum_{{\rm wt}(J) < \beta}B_Jc_J$ for $c_J \in \mathcal{M}^+\mathcal{T}_{\theta}$.
  \item[(ii)] $\mathcal{P}(\tilde b) = \sum_{ {\rm wt}(I) = \beta}F_Ia_I$.
\end{itemize}
\end{proposition}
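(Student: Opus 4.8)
The plan is to prove this by downward induction on the weight $\beta$, peeling off the unwanted terms produced by $\mathcal{P}$ one ``height level'' at a time. By Lemma~\ref{lemma:decomp} and the coarse version in Remark~\ref{remark:decomp}, for each $I$ with ${\rm wt}(I)=\beta$ we have $B_I \in F_I + \sum_{0<\lambda\leq\beta} G^-_{-\beta+\lambda}U^0U^+$, so $B_I a_I - F_I a_I$ lies in $\sum_{0<\lambda\leq\beta} G^-_{-\beta+\lambda}U^0U^+$, i.e. its $l$-weight components are all of weight strictly greater than $-\beta$ (shorter on the $G^-$ side). Applying $\mathcal{P}$, we get $\mathcal{P}(b) = \sum_{{\rm wt}(I)=\beta} F_I a_I + e$ where $e \in \mathcal{P}\bigl(\sum_{0<\lambda\leq\beta} G^-_{-\beta+\lambda}U^0U^+\bigr) \subseteq \sum_{0<\lambda} U^-_{-\beta+\lambda}\mathcal{M}^+\mathcal{T}_{\theta}$, the last inclusion because $\mathcal{P}$ lands in $U^-\mathcal{M}^+\mathcal{T}_\theta$ and is built from the decomposition (\ref{Pdecomp}) which respects the $G^-_{-\mu}U^0$-grading. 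So the ``error'' $e$ is a sum of terms $F_J d_J$ with ${\rm wt}(J) < \beta$ and $d_J \in \mathcal{M}^+\mathcal{T}_\theta$.

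The correction step is the heart of the argument. For each such $J$ with ${\rm wt}(J) = \beta' < \beta$, the element $B_J d_J$ lies in $\mathcal{B}_\theta$ (using the convention $B_i = F_i$ for $\alpha_i\in\pi_\theta$ and the direct sum decomposition (\ref{Bthetadecomp}), together with the fact that $\mathcal{M}^+\mathcal{T}_\theta$ normalizes appropriately), and again $B_J d_J = F_J d_J + (\text{terms of strictly larger }l\text{-weight})$. Thus $b - \sum_J B_J d_J \in \mathcal{B}_\theta$, it differs from $b$ only by terms $B_J c_J$ with ${\rm wt}(J) < \beta$ (condition (i) is preserved), and applying $\mathcal{P}$ cancels the degree-$\beta'$ error terms $F_J d_J$ exactly, at the cost of introducing new error terms of strictly smaller weight. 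Since weights lie in the finitely-generated monoid $Q^+(\pi)$ and each $b\in\mathcal{B}_\theta$ is a finite sum in the decomposition (\ref{Bthetadecomp}), only finitely many weights $\beta' < \beta$ occur below $\beta$ among the $F_J$ that can appear; iterating the correction finitely many times drives the error to zero. One sets $\tilde b$ equal to the final element obtained, which satisfies $\mathcal{P}(\tilde b) = \sum_{{\rm wt}(I)=\beta} F_I a_I$.

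To make the induction rigorous I would fix $b$ and argue by induction on the cardinality of $\{\beta' \in Q^+(\pi) \mid \beta' < \beta,\ \mathcal{P}(b)\text{ has a nonzero }F_J d_J\text{-component with }{\rm wt}(J)=\beta'\}$, or more cleanly on a well-founded order: replacing $b$ by $b - \sum_{{\rm wt}(J)=\beta'_{\max}} B_J d_J$ for $\beta'_{\max}$ a maximal weight appearing in the error strictly removes that top error level while only creating errors at weights not above $\beta'_{\max}$, so the multiset of error weights strictly decreases in the (well-founded) Dershowitz--Manna ordering on finite multisets over $Q^+(\pi)$.

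The main obstacle I anticipate is bookkeeping: verifying that $\mathcal{P}$ genuinely sends $\sum_{0<\lambda} G^-_{-\beta+\lambda}U^0U^+$ into $\sum_{0<\lambda} U^-_{-\beta+\lambda}\mathcal{M}^+\mathcal{T}_\theta$ — i.e. that the projection does not lower $l$-weight or mix weights across the $G^-$-grading in an uncontrolled way — and that the correction terms $B_J d_J$ have their \emph{minimal} $l$-weight summand equal to exactly $F_J d_J$ (this uses Remark~\ref{remark:decomp}: a minimal $l$-weight summand of an element of $\mathcal{B}_\theta$ of $l$-weight $-\lambda$ lies in $U^-_{-\lambda}\mathcal{M}^+\mathcal{T}_\theta$, and one must check $\mathcal{P}$ restricted to $U^-_{-\lambda}\mathcal{M}^+\mathcal{T}_\theta$ is the identity, which is immediate from (\ref{Pdecomp})). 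Once these compatibilities between the grading by $l$-weight, the decomposition (\ref{Bthetadecomp}), and the projection $\mathcal{P}$ are pinned down, the induction is routine.
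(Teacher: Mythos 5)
Your argument is correct and is essentially the paper's proof: both decompose $B_Ia_I$ via Lemma \ref{lemma:decomp}/Remark \ref{remark:decomp}, use the compatibility of $\mathcal{P}$ with the $U^-$-weight grading in (\ref{Pdecomp}) to identify the error as a sum of $F_Jc_J$ with ${\rm wt}(J)<\beta$, and then correct by subtracting the corresponding elements of $\mathcal{B}_{\theta}$ and recursing. The only cosmetic difference is that the paper organizes the recursion as an induction on ${\rm ht}(\beta)$ (subtracting the already-corrected $\tilde b_{\beta'}$), whereas you unroll it into an iteration with a multiset-ordering termination argument --- a variant the paper itself sketches in the discussion following Corollary \ref{corollary:pmapzero}.
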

\begin{proof}  
 If ${\rm wt}(I) = \alpha_i$ for some $i$,  then  $b=B_ia$ for  some $a\in \mathcal{M}^+\mathcal{T}_{\theta}$ and the lemma follows immediately from the definitions of the generator $B_i$ (both when $\alpha_i\in \pi_{\theta}$ and when $\alpha_i\notin \pi_{\theta}$) and the projection map $\mathcal{P}$.  We prove the lemma using induction on ${\rm ht}(\beta)$.  In particular, we  assume that (i) and (ii) hold for 
all $\beta'$ with $\beta' <\beta$.  

By Remark \ref{remark:decomp} and the definition of $G^-$,  
we can write 
\begin{align*} b= \sum_{{\rm wt}(I) = \beta} F_Ia_I+ Y
\end{align*}
with 
\begin{align*}
Y\in \sum_{0<\lambda\leq\beta} G^-_{-\beta+\lambda} U^+U^0
= \sum_{0<\lambda\leq\beta}U^-_{-\beta+\lambda} U^+U^0.
\end{align*}   
Using the direct sum decompositon (\ref{Pdecomp}), we see that 
\begin{align*}
\mathcal{P}(Y) \in \sum_{0<\lambda\leq\beta}U_{-\beta+\lambda}^-\mathcal{M}^+\mathcal{T}_{\theta}.
\end{align*}
Thus, there exist $c_J\in \mathcal{M}^+\mathcal{T}_{\theta}$ such that 
\begin{align*}
\mathcal{P}(Y)  = \sum_{\beta'<\beta} \sum_{{\rm wt}(J) =\beta'} F_J c_J.
\end{align*}
It follows that 
\begin{align}\label{property}
\mathcal{P}(b) = \sum_{ {\rm wt}(I) = \beta} F_Ia_I+ \mathcal{P}(Y) = \sum_{{\rm wt}(I) = \beta} F_Ia_I+ \sum_{\beta'<\beta}\sum_{{\rm wt}(J) =\beta'} F_J c_J.
\end{align}
Set 
\begin{align}\label{bdefn} b_{\beta'}  = \sum_{\beta'<\beta}\sum_{{\rm wt}(J) =\beta'} B_J c_J
\end{align} and note that $b_{\beta'}\in \mathcal{B}_{\theta}$.   For $\beta'<\beta$, we may apply the inductive hypothesis, yielding  elements $\tilde b_{\beta'} $ satisfying (i) and (ii) with respect to $b_{\beta'}$.  In particular, we have 
\begin{align}\label{finalsum}
\mathcal{P}(\tilde b_{\beta'})= \sum_{{\rm wt}(J) =\beta'}F_Jc_J.
\end{align} Set 
\begin{align*}
\tilde b = b- \sum_{\beta'<\beta} \tilde b_{\beta'}
\end{align*}  and note that $\tilde b\in \mathcal{B}_{\theta}$.  The definition (\ref{bdefn}) of $b_{\beta'}$ and the  fact that each $\tilde b_{\beta'}$ satisfies (i)  with $\beta$ replaced by $\beta'$  guarantees that $\tilde b$ also satisfies (i).  
Condition (ii) for $\tilde b$ follows from (\ref{property}) and (\ref{finalsum}).
\end{proof}

It follows from Proposition \ref{proposition:BUiso} that the map $b\mapsto \mathcal{P}(b)$ defines a vector space isomorphism from $\mathcal{B}_{\theta}$ onto $U^-\mathcal{M}^+\mathcal{T}_{\theta}$. This is the essence of the next result. 

\begin{corollary} \label{corollary:pmapzero}The projection map $\mathcal{P}$ restricts to a vector space isomorphism from $\mathcal{B}_{\theta}$ onto $U^-\mathcal{M}^+\mathcal{T}_{\theta}$.
  Thus, if $b\in \mathcal{B}_{\theta}$ then $\mathcal{P}(b) = 0$ if and only if $b=0$. \end{corollary}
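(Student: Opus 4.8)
\textbf{Proof proposal for Corollary \ref{corollary:pmapzero}.}

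The plan is to deduce both assertions directly from Proposition \ref{proposition:BUiso} together with the two direct sum decompositions already available: the decomposition (\ref{Bthetadecomp}) of $\mathcal{B}_{\theta}$ as $\bigoplus_{J\in\mathcal{J}}B_J\mathcal{M}^+\mathcal{T}_{\theta}$ and the decomposition (\ref{Pdecomp}) of $U_q(\mathfrak{g})$ whose first summand is $U^-\mathcal{M}^+\mathcal{T}_{\theta}$ with projection $\mathcal{P}$. First I would observe that $\mathcal{P}$ does map $\mathcal{B}_{\theta}$ \emph{into} $U^-\mathcal{M}^+\mathcal{T}_{\theta}$: this is essentially the coarse form of Lemma \ref{lemma:decomp} recorded in Remark \ref{remark:decomp}, which says each $B_J$ lies in $F_J + \sum_{0<\lambda\le\beta}G^-_{-\beta+\lambda}U^0U^+$, so after applying $\mathcal{P}$ we land in $U^-\mathcal{M}^+\mathcal{T}_{\theta}$.

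For surjectivity, note that $U^-\mathcal{M}^+\mathcal{T}_{\theta}$ is spanned by elements of the form $F_I a$ with $a\in\mathcal{M}^+\mathcal{T}_{\theta}$ and $I$ ranging over tuples with $\{F_I\}$ a basis of $U^-$; by linearity it suffices to hit a single $F_Ia_I$ (for fixed $\mathrm{wt}(I)=\beta$) up to lower-$\mathrm{ht}$ corrections, and then induct downward on $\mathrm{ht}(\beta)$ as in the proof of the Proposition. Concretely, given the target $\sum_{\mathrm{wt}(I)=\beta}F_Ia_I$, apply Proposition \ref{proposition:BUiso} to $b=\sum_{\mathrm{wt}(I)=\beta}B_Ia_I\in\mathcal{B}_{\theta}$ to get $\tilde b\in\mathcal{B}_{\theta}$ with $\mathcal{P}(\tilde b)=\sum_{\mathrm{wt}(I)=\beta}F_Ia_I$; this shows the image of $\mathcal{P}|_{\mathcal{B}_{\theta}}$ contains every such element, hence all of $U^-\mathcal{M}^+\mathcal{T}_{\theta}$.

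For injectivity, suppose $b\in\mathcal{B}_{\theta}$ with $\mathcal{P}(b)=0$ but $b\ne 0$. Write $b=\sum_{J\in\mathcal{J}}B_Ja_J$ using (\ref{Bthetadecomp}), and let $\beta$ be minimal (in the standard partial order, or just of minimal height) among $\mathrm{wt}(J)$ with $a_J\ne 0$; set $b_\beta=\sum_{\mathrm{wt}(J)=\beta}B_Ja_J$ and $b'=b-b_\beta$, so that $b'$ involves only $B_J$ with $\mathrm{wt}(J)$ not $\le\beta$ of strictly larger height, none equal to $\beta$. By Remark \ref{remark:decomp} each such $B_J$ lies in $F_J+\sum_{0<\lambda\le\mathrm{wt}(J)}U^-_{-\mathrm{wt}(J)+\lambda}U^0U^+$, and each $F_J$ and each $U^-_{-\mathrm{wt}(J)+\lambda}$ with $0<\lambda\le\mathrm{wt}(J)$ has $l$-weight strictly smaller in height than $-\beta$ once $\mathrm{ht}(\mathrm{wt}(J))>\mathrm{ht}(\beta)$; likewise $b_\beta - \sum_{\mathrm{wt}(J)=\beta}F_Ja_J$ has $l$-weight strictly below $-\beta$, while $\sum_{\mathrm{wt}(J)=\beta}F_Ja_J$ is the genuine $l$-weight $-\beta$ part (it is nonzero because the $F_J$ for $\mathrm{wt}(J)=\beta$ are part of a PBW-type basis of $U^-$ and the $a_J$ are not all zero, using the directness of (\ref{Bthetadecomp})). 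Thus the minimal $l$-weight summand of $b$ equals $\sum_{\mathrm{wt}(J)=\beta}F_Ja_J\in U^-_{-\beta}\mathcal{M}^+\mathcal{T}_{\theta}$, which is nonzero and, lying in the first summand of (\ref{Pdecomp}), is not killed by $\mathcal{P}$; hence $\mathcal{P}(b)\ne 0$, a contradiction. I expect the bookkeeping of $l$-weights in this injectivity step — carefully isolating the minimal $l$-weight component and checking it survives $\mathcal{P}$ — to be the main obstacle, though it is entirely routine given Remark \ref{remark:decomp}. Combining the three parts, $\mathcal{P}|_{\mathcal{B}_{\theta}}$ is a vector space isomorphism onto $U^-\mathcal{M}^+\mathcal{T}_{\theta}$, and in particular $\mathcal{P}(b)=0\iff b=0$ for $b\in\mathcal{B}_{\theta}$.
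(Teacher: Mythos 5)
Your overall strategy (image contained in $U^-\mathcal{M}^+\mathcal{T}_{\theta}$, surjectivity from Proposition \ref{proposition:BUiso}, injectivity by isolating a leading term) is exactly the intended one, and your ``into'' and ``onto'' steps are fine. The injectivity step, however, contains a genuine error. You take $\beta$ \emph{minimal} among the ${\rm wt}(J)$ with $a_J\neq 0$ and assert that, once ${\rm ht}({\rm wt}(J))>{\rm ht}(\beta)$, every correction space $U^-_{-{\rm wt}(J)+\lambda}U^0U^+$ with $0<\lambda\leq {\rm wt}(J)$ sits at $l$-weight of height strictly below that of $-\beta$. That is false: $\lambda$ may equal ${\rm wt}(J)$ (giving $l$-weight $0$), and more to the point $\lambda$ may equal ${\rm wt}(J)-\beta$ whenever ${\rm wt}(J)>\beta$, which places a correction term of $B_J$ exactly in $G^-_{-\beta}U^0U^+$. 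Since $\mathcal{P}$ does not kill such terms in general (the whole content of Proposition \ref{proposition:BUiso} is that $\mathcal{P}(B_Ja_J)$ has nonzero components $F_Jc_J$ at all lower weights), the component of $\mathcal{P}(b)$ in $U^-_{-\beta}\mathcal{M}^+\mathcal{T}_{\theta}$ is $\sum_{{\rm wt}(J)=\beta}F_Ja_J$ \emph{plus} uncontrolled contributions from the higher-weight $B_{J'}$, which could a priori cancel it. So your argument does not establish $\mathcal{P}(b)\neq 0$.

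The repair is simply to reverse the extremality: choose $\beta$ \emph{maximal} among $\{{\rm wt}(J)\mid a_J\neq 0\}$. Each $B_{J'}a_{J'}$ with ${\rm wt}(J')=\beta'$ lies in $F_{J'}a_{J'}+\sum_{0<\lambda\leq\beta'}G^-_{-\beta'+\lambda}U^0U^+$, and a component at $l$-weight $-\beta$ forces $\beta'-\lambda=\beta$ with $\lambda\geq 0$, hence $\beta'\geq\beta$, hence by maximality $\beta'=\beta$ and $\lambda=0$. Since $\mathcal{P}$ preserves the $U^-$-grading of (\ref{lweight}), the component of $\mathcal{P}(b)$ in $U^-_{-\beta}\mathcal{M}^+\mathcal{T}_{\theta}$ is exactly $\sum_{{\rm wt}(J)=\beta}F_Ja_J$, which is nonzero by the directness of (\ref{Bthetadecomp}) and the linear independence of the $F_J$; hence $\mathcal{P}(b)\neq 0$. (Note also that the paper's ``minimal $l$-weight summand'' of $b$ is the one at $-\zeta$ with $\zeta$ maximal, so it is attached to the largest ${\rm wt}(J)$ in the support, not the smallest; your write-up conflates the two.)
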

  
  Let $\beta_1$ be a maximal weight such that $c_J\neq 0$ for some $J$ such that  ${\rm wt}(J) = \beta_1$  in the right hand side of (\ref{property}).  The right hand side of  (\ref{property}) can be written as 
\begin{align*}
\sum_{{\rm wt}(I) = \beta} F_Ia_I + \sum_{{\rm wt}(J) = \beta_1}F_Jc_J + \sum_{\substack{\beta'<\beta\\ \beta'\not\geq\beta_1}}\sum_{{\rm wt}(J) = \beta'}F_Jc_J.
\end{align*}
Set $d_1 = b - \sum_{{\rm wt}(J) = \beta_1}B_Jc_J$ and apply induction on the set of $\beta'\not\geq \beta_1$.  The end result is an element $d_2$ in $\mathcal{B}_{\theta}$ with the same image under $\mathcal{P}$ as $\tilde b$. Hence, by Corollary \ref{corollary:pmapzero}, we have   $d_2 = \tilde b$. 

Now assume that each of the $a_I$ terms in the above expression are scalars. We can expand $c_J$ as a sum of weight vectors in $\mathcal{M}^+\mathcal{T}_{\theta}$.  For each of these weight vectors, say $c_J'$, we can find $\lambda_0,  \gamma_0, \gamma'_0, \eta_0$ satisfying the assumptions of Lemma \ref{lemma:decomp} with respect to $\beta$ such that 
\begin{align*}
\beta_1 = \beta-\lambda_0 -\eta_0-\gamma_0
\end{align*}
and $c_J'\in  U^+_{\theta(-\lambda_0)-\gamma_0}K_{-\lambda_0 - \eta_0 -\gamma_0+ 2\gamma_0'}$.
Now suppose that $\lambda_1, \gamma_1, \gamma_1', \eta_1$ also satisfy the assumptions of this lemma with $\beta$ replaced by $\beta_1$.  Set $\lambda_2 = \lambda_0 +\lambda_1, $
$\gamma_2 = \gamma_0 +\gamma_1, \gamma'_2 = \gamma_0' +\gamma'_1$, and $\eta_2 = \eta_0+\eta_1$.  It is straightforward to check that $\lambda_2, \gamma_2, \gamma_2',\eta_2'$ satisfy the conditions of Lemma \ref{lemma:decomp} with respect to $\beta$.  Moreover, 
\begin{align*} (G^-_{-\beta_1+\lambda_1+\eta_1+\gamma_1}U^+_{\theta(-\lambda_1) - \gamma_1}K_{-\beta_1+2\gamma_1'})c_J' \subseteq G^-_{-\beta+\lambda_2+\eta_2+\gamma_2}U^+_{\theta(-\lambda_2)-\gamma_2}K_{-\beta+2\gamma'_2}.
\end{align*}
It follows that, just as  for $b$ of Proposition \ref{proposition:BUiso} , the term $d_1$ is contained in a sum of spaces of the form 
\begin{align*}
G^-_{-\beta+\lambda +\eta+\gamma}U^+_{\theta(-\lambda)-\gamma}K_{-\beta+2\gamma'}
\end{align*}
where $\beta, \lambda, \eta, \gamma, \gamma'$ satisfy the conditions of Lemma \ref{lemma:decomp}.  By induction, the same is true for $d_2=\tilde b$.

\section{Lifting to the Lower Triangular Part}\label{section:lower}
We begin the process of lifting Cartan elements of the form $e_{\beta} + f_{-\beta}$ to the quantum setting.  The main result of this section identifies a lift  with nice properties of $f_{-\beta}$ for nonsimple roots $\beta$ in the special strongly orthogonal $\theta$-systems of Theorem \ref{theorem:cases_take2}.
\subsection{Properties of Special Root Vectors}\label{section:siv}

 Consider a subset $\pi'$ of $\pi$. Given a weight $\lambda\in P^+(\pi)$, we write $\bar L(\lambda)$ for the finite-dimensional simple $U(\mathfrak{g})$-module with highest weight $\lambda$ and let $\bar L_{\pi'}(\lambda)$ denote the finite-dimensional simple $U(\mathfrak{g}_{\pi'}+\mathfrak{h})$-module generated by a vector of highest weight $\lambda$.  Recall that $U(\mathfrak{g}_{\pi'}+\mathfrak{h})$ acts semisimply on $U(\mathfrak{g})$-modules.  For all weights $\gamma$ that are in $P(\pi)$ and restrict to an integral dominant weight in $P^+(\pi')$, we write $[\bar L(\lambda): \bar L_{\pi'}(\gamma)]$ for the multiplicity of $\bar L_{\pi'}(\gamma)$ in $\bar L(\lambda)$ where both are viewed as $U(\mathfrak{g}_{\pi'}+\mathfrak{h})$-modules. Suppose that $\beta\in P^+(\pi)$ and  $(\beta,\alpha) = 0$ for all $\alpha\in \pi'$. It follows that the finite-dimensional simple $U(\mathfrak{g}_{\pi'}+\mathfrak{h})$-module $\bar L_{\pi'}(\beta)$ of highest weight $\beta$ is  a trivial one-dimensional 
$U(\mathfrak{g}_{\pi'}+\mathfrak{h})$-module. 

Set $\mathfrak{n}^-_{\pi'} = \mathfrak{g}_{\pi'} \cap \mathfrak{n}^-$.  Let $M$ be a finite-dimensional $U(\mathfrak{g}_{\pi'})$-module.  We can decompose $M$ into a direct sum of finite-dimensional simple $U(\mathfrak{g}_{\pi'})$-modules, each generated by a highest weight vector. Note that any nonzero highest weight vector with respect to the action of $U(\mathfrak{g}_{\pi'})$ inside $M$ is not an element of 
$\mathfrak{n}^-_{\pi'}M$.  In particular, 
\begin{align}\label{Moplus} M =M_0 \oplus \mathfrak{n}^-_{\pi'}M
\end{align} where $M_0$ is the space spanned by the $U(\mathfrak{g}_{\pi'})$ highest weight  vectors in $M$.

For each simple root $\alpha$, let $\nu_{\alpha}$ denote the corresponding fundamental weight.  

\begin{theorem}\label{theorem:invariant_elements}
Let $\beta\in \Delta^+$ and $\alpha_{\beta}\in{\rm Supp}(\beta)$ such that 
\begin{itemize}
\item[(i)] $\beta= 
\alpha_{\beta} + w_{\beta}\alpha_\beta$ 
\item[(ii)] 
$\pi'\subseteq {\rm StrOrth}(\beta)$.
\end{itemize}
where $\pi' = {\rm Supp}(\beta)\setminus\{\alpha_{\beta}\}$ and $w_{\beta} = w(\pi')_0$.
Then for all $m\geq 1$, $\bar L(m\nu_{\alpha_{\beta}})$  has a unique nonzero vector (up to scalar multiple) of weight $m\nu_{\alpha_{\beta}}-\beta$ that generates a trivial $U(\mathfrak{g}_{\pi'})$-module.

\end{theorem}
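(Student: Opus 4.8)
The plan is to analyze the weight space $\bar L(m\nu_{\alpha_\beta})_{m\nu_{\alpha_\beta}-\beta}$ as a $U(\mathfrak{g}_{\pi'})$-module and show it contains a one-dimensional trivial summand, and nothing else except possibly non-highest-weight contributions. First I would decompose $\bar L(m\nu_{\alpha_\beta})$ as a $U(\mathfrak{g}_{\pi'}+\mathfrak{h})$-module using complete reducibility. Since $\pi' = {\rm Supp}(\beta)\setminus\{\alpha_\beta\}$ and $(\beta,\alpha)=0$ for all $\alpha\in\pi'$, the weight $m\nu_{\alpha_\beta}-\beta$ restricts to the same element of $P(\pi')$ as $m\nu_{\alpha_\beta}$ does, namely $\widetilde{m\nu_{\alpha_\beta}}$. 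In fact since $\alpha_\beta\notin\pi'$, the restriction $\widetilde{m\nu_{\alpha_\beta}}$ is $0$ in $P(\pi')$: the fundamental weight $\nu_{\alpha_\beta}$ pairs trivially with every coroot $h_\alpha$, $\alpha\in\pi'$. So both the highest weight vector of $\bar L(m\nu_{\alpha_\beta})$ and any vector of weight $m\nu_{\alpha_\beta}-\beta$ lie in the zero-weight space for $\pi'$, hence can only be highest weight vectors for trivial $U(\mathfrak{g}_{\pi'})$-submodules or lie in $\mathfrak{n}^-_{\pi'}\bar L(m\nu_{\alpha_\beta})$ as in (\ref{Moplus}).

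The heart of the argument is a multiplicity count: I claim $[\bar L(m\nu_{\alpha_\beta}):\bar L_{\pi'}(m\nu_{\alpha_\beta}-\beta)] = [\bar L(m\nu_{\alpha_\beta}):\bar L_{\pi'}(0)] = 1$, where I identify $m\nu_{\alpha_\beta}-\beta$ with its restriction (which is $0$). The natural way to see there is \emph{exactly one} trivial $U(\mathfrak{g}_{\pi'})$-summand with highest weight vector of $\mathfrak{h}$-weight $m\nu_{\alpha_\beta}-\beta$ is to use the structure $\beta = \alpha_\beta + w_\beta\alpha_\beta$ where $w_\beta = w(\pi')_0$. I would argue as follows: a $U(\mathfrak{g}_{\pi'})$ highest weight vector of weight $m\nu_{\alpha_\beta}-\beta$ is the top of a $U(\mathfrak{g}_{\pi'})$-submodule whose lowest weight vector then has weight $m\nu_{\alpha_\beta}-\beta - (\text{something in }Q^+(\pi'))$, and since the module is trivial the weight is unchanged. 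Instead, the cleaner route: consider the root vector $f_{-\beta}$. Because $\beta = \alpha_\beta + w_\beta\alpha_\beta$ and $\alpha$-strings and the strong orthogonality hypothesis (ii), one has $f_{-\beta}$ proportional to $(\ad \mathfrak{n}^-_{\pi'})f_{-\alpha_\beta}$ applied appropriately — more precisely, $f_{-\beta}$ spans the lowest weight line of the $U(\mathfrak{g}_{\pi'})$-submodule of $\mathfrak{g}$ generated by $e_{-\alpha_\beta}$... I would instead directly exhibit the vector: let $v$ be a highest weight vector of $\bar L(m\nu_{\alpha_\beta})$, and consider $w_{\beta}\cdot(f_{-\alpha_\beta}v)$ suitably interpreted — the vector $f_{-\beta}\cdot$ (a $U(\mathfrak{g}_{\pi'})$-extremal vector). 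Concretely, the Weyl group element $w_\beta$ acts on $\bar L(m\nu_{\alpha_\beta})$ and sends the weight-$m\nu_{\alpha_\beta}$ line somewhere; the strongly orthogonal condition $\pi'\subseteq{\rm StrOrth}(\beta)$ forces $w_\beta\nu_{\alpha_\beta} = \nu_{\alpha_\beta}$ actually is false in general — I'd need to track $w_\beta(m\nu_{\alpha_\beta}) = m\nu_{\alpha_\beta} - (\text{positive combination of }\pi')$.

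So the concrete plan: (1) Show existence. Let $v_+$ generate $\bar L(m\nu_{\alpha_\beta})$. Apply $f_{-\alpha_\beta}$ once to get a vector of weight $m\nu_{\alpha_\beta}-\alpha_\beta$; this is a $U(\mathfrak{g}_{\pi'})$ highest weight vector of $\pi'$-weight $\widetilde{-\alpha_\beta + m\nu_{\alpha_\beta}} = \widetilde{-\alpha_\beta}$, which equals $-\widetilde{\alpha_\beta}$, a dominant integral weight for $\pi'$ iff $-(\alpha_\beta,\alpha^\vee)\ge 0$ for $\alpha\in\pi'$; since $\alpha_\beta$ is a simple root not in $\pi'$ and ${\rm Supp}(\beta)$ is connected of rank $|\pi'|+1$, this restriction is a (single) fundamental weight $\nu$ of $\pi'$ (or $0$). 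Then apply the lowest-weight-producing operator $T_{w_\beta}$-analog / the element of $U(\mathfrak{n}^-_{\pi'})$ carrying this $\pi'$-highest weight vector to the $\pi'$-lowest weight vector of that simple $U(\mathfrak{g}_{\pi'})$-module: its weight is $m\nu_{\alpha_\beta} - \alpha_\beta - (\text{the }\pi'\text{-part}) = m\nu_{\alpha_\beta} - \alpha_\beta - (w_\beta\alpha_\beta - \alpha_\beta)\cdot(-1)$... I'll use that the $\pi'$-string through $-\alpha_\beta$ has the form $-\alpha_\beta, -\alpha_\beta - (\dots), \ldots, w_\beta(-\alpha_\beta) = -w_\beta\alpha_\beta$, so the lowest weight vector has $\mathfrak{h}$-weight $m\nu_{\alpha_\beta} - w_\beta\alpha_\beta$. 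Hmm, that is not $m\nu_{\alpha_\beta}-\beta$ unless $\beta = w_\beta\alpha_\beta$, contradicting (i). So instead apply $f_{-\alpha_\beta}$ to \emph{that} lowest weight vector: get weight $m\nu_{\alpha_\beta} - w_\beta\alpha_\beta - \alpha_\beta = m\nu_{\alpha_\beta} - \beta$. Because $\pi'\subseteq{\rm StrOrth}(\beta)$ and the $\pi'$-lowest weight vector is killed by all $f_{-\alpha}$, $\alpha\in\pi'$, while $(\alpha,\beta)=0$ forces this new vector to again be $\pi'$-extremal with $\pi'$-weight $\widetilde{m\nu_{\alpha_\beta}-\beta} = 0$, i.e.\ a trivial $U(\mathfrak{g}_{\pi'})$-module; one checks it is nonzero using the $\mathfrak{sl}_2$-theory for $\alpha_\beta$ (the relevant $\alpha_\beta$-string is long enough since $m\ge 1$ and $(m\nu_{\alpha_\beta}-w_\beta\alpha_\beta, \alpha_\beta^\vee) > 0$). (2) Show uniqueness. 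Count: by (\ref{Moplus}) applied to $M = \bar L(m\nu_{\alpha_\beta})$ restricted to $U(\mathfrak{g}_{\pi'})$, trivial summands with $\mathfrak{h}$-weight $m\nu_{\alpha_\beta}-\beta$ correspond to the multiplicity $[\bar L(m\nu_{\alpha_\beta}) : \bar L_{\pi'}(0)]$ in that $\mathfrak{h}$-weight. I would compute this via the PBW/weight-string argument: a $\pi'$-trivial vector of this weight is, modulo $\mathfrak{n}^-_{\pi'}\bar L$, a highest-weight vector for $U(\mathfrak{g}_{\pi'})$ in weight $m\nu_{\alpha_\beta}-\beta$; any such vector is obtained from $v_+$ by a monomial in $f_{-\alpha}$, $\alpha\in{\rm Supp}(\beta)$, of total weight $\beta$, and since ${\rm ht}(\beta)$ is small (at most the $\alpha_\beta$-multiplicity is $1$ by (i), and each $\pi'$-root appears with the multiplicities in $w_\beta\alpha_\beta$), the space of such monomials modulo Serre relations is one-dimensional — this is essentially the statement that $\mathfrak{g}_{{\rm Supp}(\beta)}$ has a one-dimensional weight space at the root $\beta$, combined with $m\nu_{\alpha_\beta}$ being on the "edge" of the weight polytope so no cancellation or higher-multiplicity phenomena occur. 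I expect \textbf{the uniqueness (multiplicity-one) step to be the main obstacle}, since existence is a direct $\mathfrak{sl}_2 \times \mathfrak{g}_{\pi'}$ computation but ruling out a second trivial summand requires either a careful Kostant-type weight multiplicity count in $\bar L(m\nu_{\alpha_\beta})$ or an appeal to the fact that $\beta$ has a one-dimensional root space in $\mathfrak{g}_{{\rm Supp}(\beta)}$ together with the extremal position of $m\nu_{\alpha_\beta}$; I would handle it by the latter, using that weight vectors in $\bar L(m\nu_{\alpha_\beta})$ of weight $m\nu_{\alpha_\beta}-\gamma$ for $\gamma\in Q^+({\rm Supp}(\beta))$ of small height are spanned by $f_{-\gamma_1}\cdots f_{-\gamma_k}v_+$ with $\sum\gamma_i=\gamma$, and the number of $\pi'$-highest among these in weight $\beta$ is exactly one.
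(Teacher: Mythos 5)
Your overall strategy is the paper's: reduce to showing $[\bar L(m\nu_{\alpha_\beta}):\bar L_{\pi'}(m\nu_{\alpha_\beta}-\beta)]=1$, using the splitting $M=M_0\oplus\mathfrak{n}^-_{\pi'}M$ and the observation that $(m\nu_{\alpha_\beta}-\beta,\alpha)=0$ for $\alpha\in\pi'$ makes any $\pi'$-highest weight vector of that weight trivial. However, both halves of your multiplicity count have genuine gaps. For \emph{existence}, your candidate vector $f_{-\alpha_\beta}u$, where $u$ is the $\pi'$-lowest weight vector of $U(\mathfrak{g}_{\pi'})f_{-\alpha_\beta}v$, is not shown to generate a trivial module: having $\pi'$-weight zero is not enough, and the claim that $(\alpha,\beta)=0$ "forces" extremality fails because $e_\alpha f_{-\alpha_\beta}u=f_{-\alpha_\beta}e_\alpha u$ and $e_\alpha u\neq 0$ in general ($u$ is a \emph{lowest}, not highest, weight vector of a nontrivial $\pi'$-module). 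The clean existence argument uses $f_{-\beta}v$ directly: strong orthogonality of each $\alpha\in\pi'$ with $\beta$ means $\beta\pm\alpha\notin\Delta$, so $[e_\alpha,f_{-\beta}]=[f_{-\alpha},f_{-\beta}]=0$, and since $e_\alpha v=f_{-\alpha}v=0$ the line $\mathbb{C}f_{-\beta}v$ is a trivial $U(\mathfrak{g}_{\pi'})$-submodule of the right weight.

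For \emph{uniqueness}, your justification rests on two false statements: the multiplicity of $\alpha_\beta$ in $\beta$ is $2$, not $1$ (since $w_\beta\alpha_\beta-\alpha_\beta\in Q(\pi')$, hypothesis (i) gives $\mathrm{mult}_{\alpha_\beta}(\beta)=2$), and the space of weight $-\beta$ monomials in $U(\mathfrak{n}^-)$ modulo the Serre relations is not one-dimensional (one-dimensionality of the root space $\mathfrak{g}_{-\beta}$ says nothing about $U(\mathfrak{n}^-)_{-\beta}$, and the weight space $\bar L(m\nu_{\alpha_\beta})_{m\nu_{\alpha_\beta}-\beta}$ is typically multi-dimensional). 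The missing mechanism is this: a monomial of weight $-\beta$ applied to $v$ either begins on the left with some $f_{-\alpha}$, $\alpha\in\pi'$, hence lies in $\mathfrak{n}^-_{\pi'}\bar L(m\nu_{\alpha_\beta})$, or else has the form $f_{-\alpha_\beta}\,y\,f_{-\alpha_\beta}v$ with $y\in U(\mathfrak{n}^-_{\pi'})$ of weight $-\beta+2\alpha_\beta$ (any trailing $\pi'$-factors kill $v$). Since $U(\mathfrak{g}_{\pi'})f_{-\alpha_\beta}v\cong\bar L_{\pi'}(m\nu_{\alpha_\beta}-\alpha_\beta)$ is simple, its extremal weight space of weight $w_\beta(m\nu_{\alpha_\beta}-\alpha_\beta)=m\nu_{\alpha_\beta}-\beta+\alpha_\beta$ is one-dimensional, so all vectors $yf_{-\alpha_\beta}v$ of that weight are proportional. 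Hence the whole weight space is contained in $\mathbb{C}f_{-\alpha_\beta}yf_{-\alpha_\beta}v+\mathfrak{n}^-_{\pi'}\bar L(m\nu_{\alpha_\beta})$, and since no nonzero $\pi'$-highest weight vector lies in $\mathfrak{n}^-_{\pi'}\bar L(m\nu_{\alpha_\beta})$, the multiplicity is at most one. Without this factorization step your "exactly one $\pi'$-highest monomial" claim is unsupported.
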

\begin{proof} Fix $m\geq 1$. Set $\nu = \nu_{\alpha_{\beta}}$.  Note that $w_{\beta}$ is a product of reflections defined by  roots in $\pi'$. Since $\alpha_{\beta}\notin\pi'$, we must have $w_{\beta}\alpha_{\beta}-\alpha_{\beta}\in Q(\pi')$. 
Thus, the coefficient of $\alpha_{\beta}$ in $\beta= \alpha_{\beta} + w_{\beta}\alpha_\beta$ written as a linear combination of simple roots is $2$.     By assumption (ii) and the fact that $\nu$ is the fundamental weight associated to $\alpha_{\beta}$, we have   $(m\nu-\beta, \alpha) = 0$ for all $\alpha\in \pi'$.  Hence any  weight vector in $\bar L(m\nu)$ of weight $m\nu-\beta$ that is a highest weight vector with respect to the action of $U(\mathfrak{g}_{\pi'})$ generates a trivial $U(\mathfrak{g}_{\pi'})$-module.  Thus we only need to prove
\begin{align}\label{multbar}[\bar L(m\nu):\bar L_{\pi'}(m\nu-\beta)] = 1.
\end{align}

 Let $v$ denote a highest weight generating vector of   $\bar L(m\nu)$. Since $(\nu, \alpha) = 0$ for all $\alpha\in \pi'$,  $U(\mathfrak{g}_{\pi'})$ acts trivially on $v$. 
Moreover, $\alpha_{\beta}\notin \pi'$ ensures that $f_{-\alpha_\beta} v$ is a highest weight vector with respect to the action of 
$U(\mathfrak{g}_{\pi'})$.  It follows that $U(\mathfrak{g}_{\pi'})f_{-\alpha_{\beta}}v$ and $\bar L_{\pi'}(m\nu-\alpha_{\beta})$ are isomorphic as  $U(\mathfrak{g}_{\pi'}+\mathfrak{h})$-modules. Thus $U(\mathfrak{g}_{\pi'})f_{-\alpha_{\beta}}v$ contains a unique nonzero lowest weight vector (up to scalar multiple) of weight $w_{\beta}(m\nu-\alpha_{\beta})=m\nu -w_{\beta}\alpha_{\beta}=m\nu-\beta+\alpha_{\beta}$
with respect to the action of $U(\mathfrak{g}_{\pi'}+\mathfrak{h})$.  
 
 By the 
 previous paragraph,  $yf_{-\alpha_{\beta}}v$ is a (possibly zero) scalar multiple of the lowest weight vector of $U(\mathfrak{g}_{\pi'})f_{-\alpha_{\beta}}v$ for 
any choice of $y\in U(\mathfrak{n}^-_{\pi'})$ of weight $-\beta+2\alpha_{\beta}$.
 Thus,  there is at most one nonzero vector (up to scalar multiple) 
in $\bar L(m\nu)$ of weight $m\nu-\beta$  of the form 
\begin{align*}
f_{-\alpha_{\beta}} y f_{-\alpha_{\beta}}v
\end{align*}
where $y\in U(  \mathfrak{n}^-_{\pi'})$.  
Hence the 
$m\nu-\beta$ weight space of $\bar L(m\nu)$ satisfies
\begin{align}\label{barLinclusion}
\bar L(m\nu)_{m\nu-\beta}
\subseteq \mathbb{C}f_{-\alpha_{\beta}}yf_{-\alpha_{\beta}}v + \mathfrak{n}^-_{\pi'}U(\mathfrak{n}^-)v.
\end{align}
Since  there are no highest weight $U(\mathfrak{g}_{\pi'})$ vectors inside $\mathfrak{n}^-_{\pi'}U(\mathfrak{n}^-)v$, (\ref{barLinclusion}) ensures that $\bar L(m\nu)$ contains at most one highest weight vector of weight $m\nu-\beta$.  In particular, 
\begin{align}\label{barLequality}[\bar L(m\nu):\bar L_{\pi'}(m\nu-\beta)] \leq 1.
\end{align}

   By assumption (ii), $(\alpha,\beta) = 0$ and  $\alpha+\beta$ is not a root for all $\alpha\in \pi'$.  It follows that  $\beta-\alpha$ is also not a root for all $\alpha\in \pi'$.  In particular, we have 
\begin{align*}
[e_{\alpha}, f_{-\beta}] = 0 = [f_{-\alpha}, f_{-\beta}]
\end{align*}
for all $\alpha\in \pi'$.   Thus $e_{\alpha}f_{-\beta}v= [e_{\alpha}, f_{-\beta}]v  =0$ for all $\alpha\in \pi'$.  Similarly, $f_{-\alpha}f_{-\beta}v=0$ for all $\alpha\in \pi'$. Therefore, $\mathbb{C}f_{-\beta}v$ is a trivial $U(\mathfrak{g}_{\pi'}+\mathfrak{h})$-submodule of $\bar L(m\nu)$ of the desired weight.  This shows that the inequality in (\ref{barLequality}) is actually an equality which proves (\ref{multbar}).
\end{proof}

Let $\lambda \in P^+(\pi)$ with $m_i = 2(\lambda, \alpha_i)/(\alpha_i, \alpha_i)$ for each $i$ and let $v$ denote the highest weight generating vector for $\bar L(\lambda)$.  By Section 21.4 of \cite{H}, we have 
\begin{align*}
fv=0 {\rm \ if\ and \ only \ if\ }f\in \sum_iU(\mathfrak{n}^-)f_i^{m_i+1}
\end{align*}
for all $f\in U(\mathfrak{n}^-)$.
Consider $\beta\in Q^+(\pi)$. Suppose that $\pi'$ is  a proper subset of ${\rm Supp}(\beta)$,   $m_i=0$ for all $\alpha_i\in \pi'$ and $m_j\geq {\rm mult}_{\alpha_j}\beta$ for all $\alpha_j\in {\rm Supp}(\beta)\setminus \pi'$.  Then 
\begin{align}\label{fvzero2}fv = 0 {\rm \ if \ and \ only \ if\ }f\in \sum_{\alpha_i\in \pi'}U(\mathfrak{n}^-)f_i
\end{align}
for all elements $f$ of weight $-\beta$ in $U(\mathfrak{n}^-)$.

\begin{corollary}\label{corollary:uniqueness}
Let $\beta\in \Delta^+$ and $\alpha_{\beta}\in {\rm Supp}(\beta)$ such that 
\begin{itemize}
\item[(i)] $\beta = \alpha_{\beta}+w_{\beta}\alpha_{\beta}$
\item[(ii)] $\pi'\subseteq {\rm StrOrth}(\beta)$
\end{itemize}
where $\pi'={\rm Supp}(\beta)\setminus\{\alpha_{\beta}\}$ and $w_{\beta} = w(\pi')_0$.
Then the root vector $f_{-\beta}$ is the unique nonzero element (up to  scalar multiple) of weight $-\beta$ in $U(\mathfrak{n}^-)$ such that $[a,f_{-\beta}]=0$ for all $a\in  \mathfrak{g}_{\pi'}$.
\end{corollary}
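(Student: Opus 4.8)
The plan is to set $V:=\{f\in U(\mathfrak n^-)_{-\beta}\ :\ [a,f]=0\text{ for all }a\in\mathfrak g_{\pi'}\}$ and prove that $\dim V=1$; since $f_{-\beta}$ is a nonzero element of $V$ this will give the corollary. First I would record two consequences of the hypotheses that will be used repeatedly: because $w_{\beta}\in\mathcal W_{\pi'}$ fixes the $\alpha_{\beta}$-coordinate of $\alpha_{\beta}$ (and $\alpha_{\beta}\notin\pi'$), the root $\beta=\alpha_{\beta}+w_{\beta}\alpha_{\beta}$ has ${\rm mult}_{\alpha_{\beta}}(\beta)=2$, and in particular $\beta\notin Q(\pi')$ and ${\rm Supp}(\beta)\setminus\pi'=\{\alpha_{\beta}\}$. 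To see $f_{-\beta}\in V$, note that for $\alpha\in\pi'\subseteq{\rm StrOrth}(\beta)$ we have $(\alpha,\beta)=0$ and $\alpha+\beta\notin\Delta$, hence $\beta-\alpha\notin\Delta$ (the $\alpha$-string through $\beta$ is symmetric because $\langle\beta,\alpha^{\vee}\rangle=0$); therefore $[e_{\alpha},f_{-\beta}]\in\mathfrak g_{\alpha-\beta}=0$, $[f_{-\alpha},f_{-\beta}]\in\mathfrak g_{-\alpha-\beta}=0$, and $[h_{\alpha},f_{-\beta}]=-\langle\beta,\alpha^{\vee}\rangle f_{-\beta}=0$, so $f_{-\beta}$ commutes with the generators of $\mathfrak g_{\pi'}$.

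Next I would fix an integer $m\geq 2$, set $\nu=\nu_{\alpha_{\beta}}$, let $v$ be a highest weight generating vector of $\bar L(m\nu)$, and consider the linear map $\phi\colon U(\mathfrak n^-)_{-\beta}\to\bar L(m\nu)_{m\nu-\beta}$, $f\mapsto fv$. For $f\in V$ one has $e_{\alpha}fv=[e_{\alpha},f]v+fe_{\alpha}v=0$, $f_{-\alpha}fv=f(f_{-\alpha}v)=0$ (since $\langle m\nu,\alpha^{\vee}\rangle=0$ for $\alpha\in\pi'$), and $h_{\alpha}fv=\langle m\nu-\beta,\alpha^{\vee}\rangle fv=0$, so $\mathbb C fv$ is a trivial $U(\mathfrak g_{\pi'})$-submodule. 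By Theorem \ref{theorem:invariant_elements} the space of vectors in $\bar L(m\nu)_{m\nu-\beta}$ generating a trivial $U(\mathfrak g_{\pi'})$-module is at most one-dimensional, so $\dim\phi(V)\leq 1$. Moreover $\phi(f_{-\beta})\neq 0$: indeed $e_{\beta}f_{-\beta}v=\langle m\nu,\beta^{\vee}\rangle v$, and writing $\beta=2\alpha_{\beta}+\gamma$ with $\gamma\in Q^{+}(\pi')$ and using $\nu_{\alpha_{\beta}}\perp\pi'$ gives $\langle m\nu,\beta^{\vee}\rangle=2m(\alpha_{\beta},\alpha_{\beta})/(\beta,\beta)>0$. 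Hence $\phi(V)=\mathbb C f_{-\beta}v$ is exactly one-dimensional. Since by (\ref{fvzero2}) (applied with $\pi'$ proper in ${\rm Supp}(\beta)$, $m_{i}=0$ for $\alpha_i\in\pi'$, and $m_{\alpha_{\beta}}=m\geq{\rm mult}_{\alpha_{\beta}}(\beta)$) the kernel of $\phi$ is $U(\mathfrak n^-)_{-\beta}\cap\sum_{\alpha_{i}\in\pi'}U(\mathfrak n^-)f_{i}$, proving $\dim V=1$ reduces to showing $\phi$ is injective on $V$, i.e. $V\cap\sum_{\alpha_{i}\in\pi'}U(\mathfrak n^-)f_{i}=0$.

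This last point is the main obstacle, because $\phi$ is far from injective on all of $U(\mathfrak n^-)_{-\beta}$, so the injectivity must use the centralizer condition directly. My approach would be through the decomposition $U(\mathfrak n^-)=U(\mathfrak u^-)\oplus U(\mathfrak n^-)\mathfrak n^-_{\pi'}$, where $\mathfrak u^-=\bigoplus_{\gamma\in\Delta^{+}\setminus\Delta(\pi')}\mathfrak g_{-\gamma}$; one checks that $\mathfrak u^-$ is an ideal of $\mathfrak n^-$ and an ${\rm ad}(\mathfrak g_{\pi'})$-submodule of $\mathfrak g$ (so ${\rm ad}(\mathfrak g_{\pi'})$ preserves $U(\mathfrak u^-)$), that $\sum_{\alpha_{i}\in\pi'}U(\mathfrak n^-)f_{i}=U(\mathfrak n^-)\mathfrak n^-_{\pi'}=\ker\phi\cap U(\mathfrak n^-)_{-\beta}$, and therefore that it suffices to prove $V\subseteq U(\mathfrak u^-)$. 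Writing $f=p+r\in V$ with $p\in U(\mathfrak u^-)$ and $r\in U(\mathfrak n^-)\mathfrak n^-_{\pi'}$, I would compare, for each $\alpha\in\pi'$, the $h_{\alpha}$-coefficients in the PBW expansion (with respect to $\mathfrak n^-,\mathfrak h,\mathfrak n^+$) of the identity $[e_{\alpha},f]=0$; since ${\rm ad}(e_{\alpha})p\in U(\mathfrak u^-)$ contributes no $h_{\alpha}$, this extracts ``derivative-type'' relations on $r$ (the $h_{\alpha}$-coefficient of ${\rm ad}(e_{\alpha})r$ is governed by deleting an $f_{-\alpha}$-factor), and an induction on the $\mathfrak n^-_{\pi'}$-filtration degree of $r$ — with the bottom case excluded because $\beta\notin Q(\pi')$ forces $U(\mathfrak n^-_{\pi'})_{-\beta}=0$ — should yield $r=0$. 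Granting $V\subseteq U(\mathfrak u^-)$, we get $V\cap\ker\phi\subseteq U(\mathfrak u^-)\cap U(\mathfrak n^-)\mathfrak n^-_{\pi'}=0$, hence $\dim V=\dim\phi(V)=1$ and $V=\mathbb C f_{-\beta}$, which is exactly the assertion of the corollary.
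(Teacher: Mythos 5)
Your first half runs parallel to the paper's own argument: the verification that $f_{-\beta}$ centralizes $\mathfrak{g}_{\pi'}$ via strong orthogonality, the map $f\mapsto fv$ into $\bar L(m\nu_{\alpha_{\beta}})$, the use of Theorem \ref{theorem:invariant_elements} to force $fv=cf_{-\beta}v$, and the identification of the kernel on the $-\beta$ weight space with $\sum_{\alpha_i\in \pi'}U(\mathfrak{n}^-)f_i$ via (\ref{fvzero2}) are all exactly the paper's steps. But the point you correctly flag as ``the main obstacle'' --- showing that a $\mathfrak{g}_{\pi'}$-centralizing element of $\sum_{\alpha_i\in\pi'}U(\mathfrak{n}^-)f_i$ of weight $-\beta$ must vanish --- is precisely where your proposal stops being a proof. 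The intermediate claim $V\subseteq U(\mathfrak{u}^-)$ is never established: the ``$h_{\alpha}$-coefficient'' extraction is not well defined until a PBW ordering is fixed, the expression $r=\sum_j u_jf_{-\gamma_j}$ is not unique, the factors $u_j$ can themselves contain $\mathfrak{n}^-_{\pi'}$-factors and so contribute additional $h_{\alpha}$-terms after reordering $[e_{\alpha},u_j]$ into PBW form, and the induction on the $\mathfrak{n}^-_{\pi'}$-filtration degree is left at ``should yield $r=0$.'' As written this is a genuine gap, not a routine verification.

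The paper closes this gap by a different and much cleaner device, which you could substitute for your unproved step. Having reduced to $f'=f-cf_{-\beta}\in\sum_{\alpha_i\in\pi'}U(\mathfrak{n}^-)f_i$ with $[a,f']=0$ for all $a\in\mathfrak{g}_{\pi'}$, it applies the antiautomorphism $\iota$ fixing each $e_i$ and $f_i$ and negating each $h_i$. This preserves the centralizer condition (since $\iota(\mathfrak{g}_{\pi'})=\mathfrak{g}_{\pi'}$) and the weight, but converts right multiplication into left multiplication, so $\iota(f')\in\sum_{\alpha_i\in\pi'}f_iU(\mathfrak{n}^-)=\mathfrak{n}^-_{\pi'}U(\mathfrak{n}^-)$. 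Evaluating on a highest weight vector $w$ of a \emph{second} module $\bar L(\lambda)$, with $\lambda$ chosen dominant and large on $\pi'$, the vector $\iota(f')w$ is simultaneously a $U(\mathfrak{g}_{\pi'})$-highest weight vector and an element of $\mathfrak{n}^-_{\pi'}\bar L(\lambda)$; by the decomposition (\ref{Moplus}) it must be zero, and the size of $\lambda$ then forces $\iota(f')=0$, hence $f'=0$. This sidesteps entirely the PBW bookkeeping your sketch would require.
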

\begin{proof}
Suppose that $f$ is a nonzero element of weight $-\beta$ in $U(\mathfrak{n}^-)$ and assume that $[e_i,f]=[f_i,f] = [h_i,f]= 0$ for all $\alpha_i\in \pi'$.  
Choose $m\geq 2$.  By (i) we have  $m\geq {\rm mult}_{\alpha_{\beta}}\beta$.  Set $\nu = \nu_{\alpha_{\beta}}$ and let $v$ be the highest weight generating vector for $\bar L(m\nu)$.  Since $[e_i,f]=0$ for all $\alpha_i\in \pi'$, it follows that $fv$ is a highest weight vector of weight $m\nu-\beta$.  By assumptions on $\beta$, we also have $[e_i,f_{-\beta}]=0$ for all $\alpha_i\in \pi'$.  By Theorem \ref{theorem:invariant_elements} and its proof, any  highest weight vector in $\bar L(m\nu)$ of highest weight $m\nu-\beta$ with respect to the action of $U(\mathfrak{g}_{\pi'})$ is a scalar multiple of  $f_{-\beta}v$. Hence 
\begin{align*}
(f-cf_{-\beta})v = 0
\end{align*}
for some  scalar $c$.  Set $f' = f-cf_{-\beta}$.  By  (\ref{fvzero2}) and related discussion, it follows that 
\begin{align}
\label{fprime}
f'\in \sum_{\alpha_i\in \pi'}U(\mathfrak{n}^-)f_i.
\end{align}
Let $\iota$ be the Lie algebra antiautomorphism of $\mathfrak{g}_{\pi'}$ sending $e_i$ to $e_i$, $f_i$ to $f_i$ and $h_i$ to $-h_i$ for all $i =1, \dots, n$.   Since $[a,f']=0$ for all $a\in \mathfrak{g}_{\pi'}$, the same is true for $\iota(f')$.  By (\ref{fprime}), we have 
\begin{align}\label{fprime2}
\iota(f')\in \sum_{\alpha_i\in \pi'}f_iU(\mathfrak{n}^-).
\end{align}

Choose $\lambda\in P^+(\pi)$ so that $2(\lambda,\alpha_i)/(\alpha_i,\alpha_i)\geq {\rm mult}_{\alpha_i}\beta$ for all $\alpha_i\in \pi'$.  Let $w$ be a nonzero highest weight generating vector for $\bar L(\lambda)$.   Since $[a,\iota(f')]=0$ for all $a\in \mathfrak{g}_{\pi'}$, it follows that $\iota(f')w$ is a $U(\mathfrak{g}_{\pi'})$ highest weight vector with respect to the action of $U(\mathfrak{g}_{\pi'})$.    By (\ref{fprime2}), we see that  $\iota(f')w \in \mathfrak{n}_{\pi'}^-U(\mathfrak{n}^-)w$.   It follows from (\ref{Moplus}) and related discussion that $\iota(f')=0$.   Hence  $f'=0$ and $f$ is a scalar multiple of $f_{-\beta}$.
 \end{proof}

\subsection{Quantum Analogs of Special Root Vectors}\label{section:sie}
In analogy to the classical setting, we write $[L(\lambda):L_{\pi'}(\gamma)]$ for the multiplicity of $L_{\pi'}(\gamma)$ in $L(\lambda)$ where $\lambda\in P^+(\pi)$, $\gamma$ is in $P(\pi)$ and restricts to a dominant integral weight in $P^+(\pi')$, and both modules are viewed as $U_{\pi'}U^0$-modules. 
Note that the character formula for the finite-dimensional simple $U_q(\mathfrak{g})$-module $L(\lambda)$ of highest weight $\lambda$ is the same as its classical counterpart (\cite{Jo} Chapter 4 or \cite{JL}  Section 5.10).   Hence $$[L(\lambda):L_{\pi'}(\gamma)] = [\bar L(\lambda): \bar L_{\pi'}(\gamma)]$$ for all subsets $\pi'$ of $\pi$, all dominant integral weights $\lambda\in P^+(\pi),$
and all weights $\gamma\in P(\pi)$ so that $\tilde\gamma\in P^+(\pi')$.  Thus, we have the following quantum analog of Theorem \ref{theorem:invariant_elements}.

\begin{theorem}\label{theorem:invariant_elements_q}
Let $\beta\in \Delta^+$ and $\alpha_{\beta}\in{\rm Supp}(\beta)$ such that 
\begin{itemize}
\item[(i)] $\beta= 
\alpha_{\beta} + w_{\beta}\alpha_\beta$ 
\item[(ii)] 
$\pi'\subseteq {\rm StrOrth}(\beta)$.
\end{itemize}
where $\pi' = {\rm Supp}(\beta)\setminus\{\alpha_{\beta}\}$ and $w_{\beta} = w(\pi')_0$.
Then for all $m\geq 1$, $ L(m\nu_{\alpha_{\beta}})$  has a unique nonzero vector (up to scalar multiple) of weight $m\nu_{\alpha_{\beta}}-\beta$ that generates a trivial $U_{\pi'}$-module.
\end{theorem}

Consider a subset $\pi'$ of $\pi$ and a  weight $\lambda\in P^+(\pi)$ such that $(\lambda, \alpha_i) = 0$ for all $\alpha_i\in \pi'$. It follows from the description of the (ordinary) adjoint action given in Section \ref{section:basic} that $({\rm ad}\ F_i)K_{-2\lambda} = ({\rm ad}\ E_i) K_{-2\lambda} = 0$ and $({\rm ad}\ K_i) K_{-2\lambda} = K_{-2\lambda}$ for all $\alpha_i\in \pi'$.   Hence, for all $\alpha_i\in \pi'$,  the ordinary adjoint action of $E_i$, namely    $({\rm ad}\ E_i)$, on $({\rm ad}\ U^-)K_{-2\lambda}$ agrees with the action of $({\rm ad}_{\lambda}E_i)$  on  $({\rm ad}_{
\lambda}U^-)1$,  which is the $({\rm ad}\ U_q(\mathfrak{g}))$-submodule of  $G^-(\lambda)$ (from Section \ref{section:dual_vermas}) generated by $1$.  The analogous assertion holds with $E_i$ replaced by $K_i$.  In particular, as $U_{\pi'}$-modules, $({\rm ad}\ U^-)K_{-\lambda}$ is isomorphic to $({\rm ad}_{
\lambda}U^-)1$ where the action on the first module is using the ordinary adjoint action and the action on the latter is using the twisted adjoint action. (Note that this means that the ordinary adjoint action of $U_{\pi'}$ on $G^-K_{-\lambda}$ is the same as the graded action   of $U_{\pi'}$ on $G^-K_{-\lambda}$ as presented in Section \ref{section:dual_vermas}.)

\begin{corollary}\label{theorem:invariant_elements2}
Let $\beta\in \Delta^+$  and $\alpha_{\beta}\in {\rm Supp}(\beta)$ such that 
\begin{itemize}
\item[(i)] $\beta= \alpha_{\beta}+w_{\beta}\alpha_{\beta}$ 
\item[(ii)] $\pi'\subseteq {\rm StrOrth}(\beta)$ 
\end{itemize}
where $\pi' = {\rm Supp}(\beta)\setminus \{\alpha_{\beta}\}$ and $w_{\beta} = w(\pi')_0$.
Then there exists a unique nonzero element (up to  scalar multiple) $Y$ in $(U^-_{-\beta})^{U_{\pi'}}$.
Moreover, a scalar multiple of $Y$ specializes to  $f_{-\beta}$ as $q$ goes to $1$ and $YK_{\beta}K_{-2\nu_{\alpha_{\beta}}} \in ({\rm ad}\ U^-)K_{-2\nu_{\alpha_{\beta}}}$.
\end{corollary}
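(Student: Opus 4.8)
The plan is to transport the problem from the commutation‐centralizer setting to the module‐theoretic setting of Section~\ref{section:dual_vermas}, where Theorem~\ref{theorem:invariant_elements_q} applies. First I would set $\nu=\nu_{\alpha_{\beta}}$ and consider the $({\rm ad}\ U_q(\mathfrak{g}))$-module $({\rm ad}\ U^-)K_{-2\nu}$ inside $G^-K_{-2\nu}$, which by Section~\ref{section:dual_vermas} is isomorphic (via the graded adjoint action) to the finite-dimensional submodule $({\rm ad}_{2\nu}U^-)1$ of $G^-(2\nu)$, and hence to $L(\nu)$ realized as an $({\rm ad}\ U_q(\mathfrak{g}))$-module. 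Under this isomorphism the element $1\in G^-(2\nu)$ corresponds to $K_{-2\nu}$, a highest weight vector of weight $\nu$. Since $(\nu,\alpha)=0$ for all $\alpha\in\pi'$, the discussion just before the corollary shows that the ordinary adjoint action of $U_{\pi'}$ on $({\rm ad}\ U^-)K_{-2\nu}$ agrees with the twisted action, so $({\rm ad}\ U^-)K_{-2\nu}\cong L(\nu)$ as $U_{\pi'}$-modules as well.

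Next I would apply Theorem~\ref{theorem:invariant_elements_q}: since $\beta=\alpha_{\beta}+w_{\beta}\alpha_{\beta}$ and $\pi'\subseteq{\rm StrOrth}(\beta)$, the module $L(\nu)$ has a unique (up to scalar) nonzero vector of weight $\nu-\beta$ generating a trivial $U_{\pi'}$-module. Pulling this back, there is a unique (up to scalar) nonzero $({\rm ad}\ U_{\pi'})$-invariant vector in $({\rm ad}\ U^-)K_{-2\nu}$ of weight $\nu-\beta$, i.e. an element of the form $Y'K_{-2\nu}$ with $Y'\in U^-_{-\beta}$ and $K_iY'K_i^{-1}Y'^{-1}\cdots$ — more precisely with $Y'$ of adjoint weight $-\beta$ — such that $({\rm ad}\ u)(Y'K_{-2\nu})=\epsilon(u)Y'K_{-2\nu}$ for all $u\in U_{\pi'}$. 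Because the ordinary and twisted/graded adjoint actions coincide here, and because for $u\in U_{\pi'}$ one has $({\rm ad}\ u)(Y'K_{-2\nu})=(({\rm ad}\ u)Y')K_{-2\nu}$ (as $K_{-2\nu}$ is $({\rm ad}\ U_{\pi'})$-invariant and weight considerations prevent lower-degree corrections), this is equivalent to $({\rm ad}\ u)Y'=\epsilon(u)Y'$, i.e. $Y'\in(U^-_{-\beta})^{U_{\pi'}}$. Conversely any element of $(U^-_{-\beta})^{U_{\pi'}}$ gives such an invariant vector, so $\dim(U^-_{-\beta})^{U_{\pi'}}=1$; call a generator $Y$. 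This $Y$ automatically satisfies $YK_{\beta}K_{-2\nu}\in({\rm ad}\ U^-)K_{-2\nu}$ once we observe $YK_{-2\nu}$ has weight $\nu-\beta$ and $YK_{\beta}K_{-2\nu}$ differs from $YK_{-2\nu}$ only by the torus element — I would double-check the normalization $YK_{\beta}K_{-2\nu_{\alpha_{\beta}}}\in({\rm ad}\ U^-)K_{-2\nu_{\alpha_{\beta}}}$ directly from the biweight bookkeeping, since $({\rm ad}\ U^-)K_{-2\nu}\subseteq U^-K_{-2\nu}$ but the invariant vector produced above already lies in $U^-_{-\beta}K_{-2\nu}$ and has the right weight.

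For the specialization claim I would argue as follows. The integral form $\hat U$ contains $F_i$ and the divided torus elements; the $({\rm ad}\ \hat U)$-module structure on $G^-K_{-2\nu}$ specializes to the $({\rm ad}\ U(\mathfrak{g}))$-module structure on the classical $\delta M(\nu)$, and the finite-dimensional submodule $({\rm ad}\ U^-)K_{-2\nu}$ specializes to the classical $\bar L(\nu)$ inside it. The invariant vector $YK_{-2\nu}$, suitably normalized to lie in $\hat U$ and to be a primitive integral vector, must specialize to a nonzero $U(\mathfrak{g}_{\pi'})$-invariant weight vector of weight $\nu-\beta$ in $\bar L(\nu)$; by Corollary~\ref{corollary:uniqueness} the only such element (up to scalar) corresponds to $f_{-\beta}$. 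Hence a scalar multiple of $Y$ specializes to $f_{-\beta}$. The main obstacle I anticipate is the careful matching of the adjoint-action conventions — ordinary versus twisted versus graded adjoint action — and in particular verifying that for $u\in U_{\pi'}$ the ad-invariance of $YK_{-2\nu}$ as an element of $U_q(\mathfrak{g})$ is genuinely equivalent to $Y$ centralizing $U_{\pi'}$, with no stray lower-$\mathcal{F}$-degree terms; the hypothesis $(\nu,\alpha)=0$ for $\alpha\in\pi'$ is exactly what makes these actions agree, so the bookkeeping should close, but it requires care.
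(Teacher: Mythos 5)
Your existence argument and the membership claim $YK_{\beta}K_{-2\nu_{\alpha_{\beta}}}\in({\rm ad}\ U^-)K_{-2\nu_{\alpha_{\beta}}}$ follow the paper's route exactly: identify $({\rm ad}\ U^-)K_{-2\nu}$ with $L(\nu)$ as a $U_{\pi'}$-module (using $(\nu,\alpha)=0$ for $\alpha\in\pi'$ to match the ordinary and twisted adjoint actions), apply Theorem \ref{theorem:invariant_elements_q}, and strip off the torus factor. The specialization step is also essentially the paper's (normalize $Y$ to lie in $\hat U\setminus(q-1)\hat U$ and invoke Corollary \ref{corollary:uniqueness}).

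However, your uniqueness argument has a genuine gap. You write that ``conversely any element of $(U^-_{-\beta})^{U_{\pi'}}$ gives such an invariant vector, so $\dim(U^-_{-\beta})^{U_{\pi'}}=1$.'' An arbitrary $Y'\in(U^-_{-\beta})^{U_{\pi'}}$ does give an $({\rm ad}\ U_{\pi'})$-invariant vector $Y'K_{\beta}K_{-2\nu}$, but only inside the full space $G^-K_{-2\nu}\cong\delta M(\nu)$, not necessarily inside the finite-dimensional socle $({\rm ad}\ U^-)K_{-2\nu}\cong L(\nu)$. Theorem \ref{theorem:invariant_elements_q} controls the multiplicity of the trivial $U_{\pi'}$-module only in $L(\nu)$; the weight space $\delta M(\nu)_{\nu-\beta}$ is all of $U^-_{-\beta}$ (up to the torus factor), and a $U_{\pi'}$-trivial vector there need not lie in the socle. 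In fact $\dim(U^-_{-\beta})^{U_{\pi'}}=1$ is precisely the statement to be proved, so deducing it from the $L(\nu)$-multiplicity is circular. The paper closes this by a specialization argument: if $Y'$ were not proportional to $Y$, some linear combination would specialize to a second, non-proportional element of $U(\mathfrak{n}^-)_{-\beta}$ centralizing $\mathfrak{g}_{\pi'}$, contradicting Corollary \ref{corollary:uniqueness} — and note that the classical corollary itself is \emph{not} just the multiplicity statement for $\bar L(\nu)$: its proof needs the extra device of the antiautomorphism $\iota$ and a second highest weight module to control elements of $\sum_{\alpha_i\in\pi'}U(\mathfrak{n}^-)f_i$. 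You need either to reproduce that reduction to the classical case, or to supply a separate argument that every $U_{\pi'}$-invariant of weight $\nu-\beta$ in $\delta M(\nu)$ already lies in $L(\nu)$.
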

\begin{proof}
Set $\nu=\nu_{\alpha_{\beta}}$.  Let $G^-(2\nu)$ be the $({\rm ad}_{2\nu}U_q(\mathfrak{g}))$-module defined in Section \ref{section:dual_vermas}. Note that $1$ corresponds to a  highest weight vector of highest possible weight inside $G^-(2\nu)$.  Hence  the socle of $G^-(2\nu)$, which is isomorphic to $L(\nu)$, is equal to $({\rm ad}_{2\nu}U^-)1$.  By the discussion preceding the lemma, $({\rm ad}_{2\nu}U^-)1$ is isomorphic to $({\rm ad}\ U^-)K_{-2\nu}$ as $U_{\pi'}$-modules with respect to the ordinary adjoint action. Thus as $U_{\pi'}$-modules, $({\rm ad}\ U^-)K_{-2\nu}$  and  $L(\nu)$ are isomorphic.  Hence, by Theorem \ref{theorem:invariant_elements_q}, $({\rm ad}\ U^-)K_{-2\nu}$ contains a nonzero element $YK_{\beta-2\nu}$ of weight $-\beta$  that admits a trivial action with respect to $({\rm ad}\ U_{\pi'})$.   By the definition of the adjoint action, we have  $Y\in U^-_{-\beta}$.  Since $({\rm ad}\ U_{\pi'})$ acts trivially on $K_{\beta-2\nu}$, we also have $({\rm ad}\ m)Y=\epsilon(m)Y$ for all $m\in U_{\pi'}$.   Multiplying $Y$ by a (possibly negative) power  of $(q-1)$ if necessary, we may assume that 
$Y\in \hat U$ but $Y\not\in (q-1)\hat U$. Hence $Y$ specializes to a nonzero element of weight $-\beta$ in $U(\mathfrak{n}^-)$ that commutes with all elements of $\mathfrak{g}_{\pi'}$.  By Corollary \ref{corollary:uniqueness}, $Y$ specializes to a nonzero scalar multiple of $f_{-\beta}$. Now suppose that $Y'$ is another element in $U^-_{-\beta}$ satisfying $({\rm ad}\ m)Y'=\epsilon(m)Y'$ for all $m\in U_{\pi'}$.  Assume that $Y'$ is not a scalar multiple of $Y$.   Then some linear combination of $Y$ and $Y'$ must specialize to  an element in $U(\mathfrak{n}^-)$ different from the specialization of $Y$.  This contradicts Corollary \ref{corollary:uniqueness}, thus proving the desired uniqueness assertion.
\end{proof}

\begin{remark} In general, the lifts of root vectors $f_{-\beta}$ given in Corollary \ref{theorem:invariant_elements2} do not agree  with the corresponding weight $-\beta$ element of the quantum PBW basis defined by Lusztig (see \cite{Ja}, Chapter 8).  To get an idea of why this is true, assume that in addition $\beta = w\alpha_{\beta}=\alpha_{\beta}+w_{\beta}\alpha_{\beta}$ where $w = s_{\alpha_{\beta}}w_{{\beta}}$, $w_{{\beta}} = w(\pi')_0$,  and $\pi'={\rm Supp}(\beta)\setminus \{\alpha_{\beta}\}$ as happens for many of the weights appearing in the strongly orthogonal $\theta$-systems of Theorem \ref{theorem:cases_take2}.  By Lemma \ref{lemma:lowest_weight}, $F=T_{w_{\beta}}^{-1}(F_{-\alpha_{\beta}}K_{\alpha_{\beta}})$ is $({\rm ad}\ U^-_{\pi'})$ invariant.  Applying $T_{\alpha_{\beta}}^{-1}$ to $F$ yields an element that is trivial with respect to the action of $({\rm ad}\ F_i)$ for simple roots $\alpha_i\in\pi'$ that also satisfy $(\alpha_i, \alpha_{\beta})=0$.  This is because $T_{\alpha_{\beta}}(F_i) = F_i$ under these conditions. However, if $(\alpha_i, \alpha_{\beta})\neq 0$, then  $T_{\alpha_{\beta}}^{-1}(F_i) \neq  F_i$, and, as a result, we cannot expect $({\rm ad}\ F_i)(T_{\alpha_{\beta}}^{-1}(F))=0$.  Hence $F$ is generally not a trivial element with respect to the action of $({\rm ad}\ U_{\pi'})$.  If instead, we express $\beta = w'\alpha_j$ with $\alpha_j\in \pi'$ and $w'$ some element in the Weyl group $\mathcal{W}$, then we cannot expect $({\rm ad}\ F_j)( (T^{-1}_{w'}(F_jK_j))=0$.   In contrast to the results in this section, we see in the next section that Lusztig's automorphisms play a key role in defining quantum analogs of root vectors for a different family of positive roots. \end{remark}

\subsection{Quantum Analogs of Special Root Vectors, Type A }\label{section:lowtri}

We  establish  a result  similar to  Corollary \ref{theorem:invariant_elements2}  for positive roots in type A.

\begin{theorem}\label{theorem:invariant_elements3} Let $\beta\in \Delta^+$ and $\alpha_{\beta}\in {\rm Supp}(\beta)$ such that 
\begin{itemize}
\item[(i)] $\beta = p(\alpha_{\beta}) + w_{\beta}\alpha_{\beta} = w\alpha_{\beta}$
\item[(ii)] $\pi'\subseteq {\rm StrOrth}(\beta)$
\end{itemize}
where $\pi' = {\rm Supp}(\beta)\setminus\{\alpha_{\beta}, p(\alpha_{\beta})\}$, $w_{\beta} = w(\pi')_0$, and $w = w({\rm Supp}(\beta)\setminus \{\alpha_{\beta}\})_0$.  
Then there exists a unique nonzero element (up to scalar multiple)   $Y\in (U^-_{-\beta})^{U_{\pi'}}$ such that  $YK_{\beta}K_{-2\nu_{\alpha_{\beta}}} \in ({\rm ad}\ U^-)K_{-2\nu_{\alpha_{\beta}}}$. Moreover, a  scalar multiple of   $Y$ specializes to  $f_{-\beta}$ as $q$ goes to $1$. \end{theorem}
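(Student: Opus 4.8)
The plan is to produce $Y$ explicitly by means of Lusztig's automorphism $T_w$ and then to read off each assertion from the structure of the $({\rm ad}\ U_q(\mathfrak{g}))$-module $({\rm ad}\ U^-)K_{-2\nu_{\alpha_{\beta}}}$ recalled in Section \ref{section:dual_vermas}. Write $\pi'' = {\rm Supp}(\beta)\setminus\{\alpha_{\beta}\}$, so that $w = w(\pi'')_0$ and $\pi' = \pi''\setminus\{p(\alpha_{\beta})\}$; by Remark \ref{remark:cases} and case (3) of Theorem \ref{theorem:cases_take2}, $\Delta({\rm Supp}(\beta))$ is of type A with $\alpha_{\beta}$ and $p(\alpha_{\beta})$ as its two endpoints and ${\rm mult}_{\gamma}(\beta) = 1$ for every $\gamma\in{\rm Supp}(\beta)$, so $\beta$ is the highest root of $\Delta({\rm Supp}(\beta))$. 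I would \emph{define} $Y$ by the requirement $YK_{\beta} = T_w^{-1}(F_{\alpha_{\beta}}K_{\alpha_{\beta}})$. By Lemma \ref{lemma:lowest_weight}(ii), applied with the subset there taken to be $\pi''$ and the simple root taken to be $\alpha_{\beta}$, the right-hand side is the $({\rm ad}\ U_{\pi''})$ lowest weight vector of the simple finite-dimensional submodule $({\rm ad}\ U_{\pi''})(F_{\alpha_{\beta}}K_{\alpha_{\beta}})$ of $G^-$, of adjoint weight $w(-\alpha_{\beta}) = -\beta$; hence it lies in $G^-_{-\beta} = U^-_{-\beta}K_{\beta}$, and $Y$ is a well-defined nonzero element of $U^-_{-\beta}$.

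To see that $({\rm ad}\ U_{\pi'})$ acts on $Y$ through the counit, note that as a $U_{\pi''}$-module $({\rm ad}\ U_{\pi''})(F_{\alpha_{\beta}}K_{\alpha_{\beta}})$ is isomorphic to $L_{\pi''}(-\widetilde{\alpha_{\beta}})$; since $\alpha_{\beta}$ is attached to the chain $\pi''$ at the endpoint opposite $p(\alpha_{\beta})$, this is the vector representation of the associated special linear Lie algebra, which is minuscule, and restricting it to $U_{\pi'}$ (deleting the node $p(\alpha_{\beta})$, i.e.\ the opposite endpoint) splits off a trivial summand spanned precisely by the lowest weight line. Together with $({\rm ad}\ K_j)(YK_{\beta}) = q^{(\alpha_j,-\beta)}YK_{\beta} = YK_{\beta}$ for $\alpha_j\in\pi'$, which holds because $\beta$ is orthogonal to $\pi'$ by hypothesis (ii), this shows $({\rm ad}\ U_{\pi'})$ acts on $YK_{\beta}$, and hence on $Y$, through the counit; thus $Y\in (U^-_{-\beta})^{U_{\pi'}}$.

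For the membership $YK_{\beta}K_{-2\nu_{\alpha_{\beta}}}\in ({\rm ad}\ U^-)K_{-2\nu_{\alpha_{\beta}}}$, set $\nu = \nu_{\alpha_{\beta}}$ and observe $(\nu,\gamma) = 0$ for all $\gamma\in\pi''$. By the discussion opening Section \ref{section:sie}, the ordinary adjoint action of $U_{\pi''}$ on $({\rm ad}\ U^-)K_{-2\nu}$ coincides with the graded one, so this subspace is stable under $({\rm ad}\ U_{\pi''})$ and, for $a\in G^-$ and $\alpha_j\in\pi''$, $({\rm ad}\ E_j)(aK_{-2\nu}) = (({\rm ad}\ E_j)a)K_{-2\nu}$, and likewise for $F_j$ and $K_j$. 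Since $({\rm ad}\ F_{\alpha_{\beta}})K_{-2\nu} = (1 - q^{(\alpha_{\beta},\alpha_{\beta})})F_{\alpha_{\beta}}K_{\alpha_{\beta}}K_{-2\nu}$ with nonzero scalar, $F_{\alpha_{\beta}}K_{\alpha_{\beta}}K_{-2\nu}\in ({\rm ad}\ U^-)K_{-2\nu}$, and therefore so does $YK_{\beta}K_{-2\nu} = T_w^{-1}(F_{\alpha_{\beta}}K_{\alpha_{\beta}})K_{-2\nu}\in \big(({\rm ad}\ U_{\pi''})(F_{\alpha_{\beta}}K_{\alpha_{\beta}})\big)K_{-2\nu} = ({\rm ad}\ U_{\pi''})\big(F_{\alpha_{\beta}}K_{\alpha_{\beta}}K_{-2\nu}\big)$. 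Uniqueness then follows because, as recalled in Section \ref{section:dual_vermas}, $({\rm ad}\ U^-)K_{-2\nu}$ is isomorphic as a $U_q(\mathfrak{g})$-module to $L(\nu)$ with $K_{-2\nu}$ a highest weight vector, so its $(-\beta)$-adjoint-weight space corresponds to $L(\nu)_{\nu-\beta}$; and $\nu-\beta = w({\rm Supp}(\beta))_0\,\nu$ is an extremal weight of $L(\nu)$ (the difference $\nu-\widetilde{\nu}$ is fixed by $w({\rm Supp}(\beta))_0$, while in the type-A system $\Delta({\rm Supp}(\beta))$ the longest element sends the endpoint fundamental weight $\widetilde{\nu}$ to $\widetilde{\nu}-\beta$), so $\dim L(\nu)_{\nu-\beta} = \dim\bar L(\nu)_{\nu-\beta} = 1$ by the character equality recalled in Section \ref{section:sie}. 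Hence any $Y'$ with the stated properties satisfies $Y'K_{\beta}K_{-2\nu}\in\mathbb{C}\,YK_{\beta}K_{-2\nu}$, i.e.\ $Y'\in\mathbb{C}Y$.

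Finally, for the specialization: $F_{\alpha_{\beta}}K_{\alpha_{\beta}}\in\hat U$, and Lusztig's automorphisms preserve $\hat U$ (they send the generators $E_j,F_j,K_j^{\pm1},(K_j-1)/(q-1)$ into $\hat U$, the relevant $q$-integers being units in $A$), so $YK_{\beta}\in\hat U$ and its image in $\hat U\otimes_A\mathbb{C} = U(\mathfrak{g})$ is $\overline{T_w^{-1}}(f_{-\alpha_{\beta}})$, where $\overline{T_w^{-1}}$ is the automorphism of $U(\mathfrak{g})$ to which $T_w^{-1}$ specializes. Since each $T_i$ acts on weights through the simple reflection $s_i$, the automorphism $\overline{T_w^{-1}}$ carries $\mathfrak{g}_{-\alpha_{\beta}}$ onto $\mathfrak{g}_{-w\alpha_{\beta}} = \mathfrak{g}_{-\beta} = \mathbb{C}f_{-\beta}$; as $\overline{K_{\beta}} = 1$, the image of $Y$ is a nonzero scalar multiple of $f_{-\beta}$, so a suitable scalar multiple of $Y$ specializes to $f_{-\beta}$. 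I expect the main obstacle to be exactly the module bookkeeping in the third paragraph: one has to reconcile the ordinary adjoint action with the graded adjoint action on $({\rm ad}\ U^-)K_{-2\nu_{\alpha_{\beta}}}$ so that the Lusztig-theoretic lowest weight vector is seen to lie in the correct submodule; once that identification is in place, the one-dimensionality of the relevant extremal weight space and the compatibility of $T_w$ with reduction modulo $(q-1)$ make the uniqueness and specialization statements routine.
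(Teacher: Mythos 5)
Your proposal is correct and follows essentially the same route as the paper: define $Y$ via $YK_{\beta}=T_w^{-1}(F_{\alpha_{\beta}}K_{\alpha_{\beta}})$, invoke Lemma \ref{lemma:lowest_weight}(ii) to see it is the lowest weight vector of the finite-dimensional $({\rm ad}\ U_{\pi''})$-module generated by $F_{\alpha_{\beta}}K_{\alpha_{\beta}}$, deduce $U_{\pi'}$-triviality from the weight being orthogonal to $\pi'$, place it in $({\rm ad}\ U^-)K_{-2\nu_{\alpha_{\beta}}}$, and get uniqueness and specialization from the structure of that module and of Lusztig's automorphisms. Your detours through the minuscule vector representation (for triviality) and the extremal weight space $L(\nu)_{\nu-\beta}$ (for uniqueness) are valid but slightly more elaborate than the paper's direct arguments, which use only that a lowest weight vector of weight orthogonal to $\pi'$ generates a trivial module and that the weight $-\beta$ subspace of $({\rm ad}\ U^-)K_{-2\nu}$ lies inside the simple module $({\rm ad}\ U_{\pi''})(F_{\alpha_\beta}K_{\alpha_\beta}K_{-2\nu})$.
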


\begin{proof}
Set $\nu = \nu_{\alpha_{\beta}}$ and $\pi'' = {\rm Supp}(\beta)\setminus \{\alpha_{\beta}\}$.  Choose $k$ so that $\alpha_{\beta} = \alpha_k$. Set $Y= T^{-1}_{w}(F_{k})$.  By the definition of $T_{w}$ and its inverse (see Section \ref{section:lusztig} and Section 3.4 of \cite{Ko}), we see  that  
$Y$ is a nonzero scalar multiple of $T^{-1}_{w}(F_{k}K_k)K_{-\beta}$.  By Lemma \ref{lemma:lowest_weight}, $YK_{\beta}$ is in $({\rm ad}\ U_{\pi''}^-)(F_{k}K_k)$ and is a lowest weight vector of weight $-\beta$ in the $({\rm ad}\ U_{\pi''})$-module generated by $F_kK_k$.  Since $YK_{\beta}$ has weight $-\beta$ and $(\beta, \alpha_i) = 0$ for all $\alpha_i\in \pi'$, it follows that $YK_{\beta}$ is a trivial $({\rm ad}\ U_{\pi'})$ vector in this module.  Thus $({\rm ad}\ m)YK_{\beta} = ({\rm ad}\ m)Y=\epsilon(m)Y$ for all $m\in U_{\pi'}$.   The facts that  $F_kK_kK_{-2\nu}$ is a scalar multiple of $({\rm ad}\ F_k)K_{-2\nu}$ and $({\rm ad}\ U_{\pi''})$ acts trivially on $K_{-2\nu}$ ensures that  $Y\in [({\rm ad}\ U^-)K_{-2\nu}]K_{2\nu-\beta}$.   Moreover, by the definition of the Lusztig automorphisms, $Y$ specializes to the root vector $f_{-\beta}$ as $q$ goes to $1$. 

Note that $({\rm ad}\ U^-)K_{-2\nu} = ({\rm ad}\ U^-)(F_kK_kK_{-2\nu})+\mathbb{C}(q)K_{-2\nu}$ since $({\rm ad}\ F_i)K_{-2\nu} = 0$ for all $i\neq k$.  Hence, any element in $({\rm ad}\ U^-)K_{-2\nu}$ of weight $-\beta$ is contained in the $({\rm ad}\ U_{\pi''})$-module $({\rm ad}\ U_{\pi''})(F_kK_kK_{-2\nu})$.  As stated in Lemma \ref{lemma:lowest_weight}, this  $({\rm ad} \ U_{\pi''})$-module is isomorphic to the  finite-dimensional $({\rm ad}\ U_{\pi''})$-module with lowest weight $-\beta$.  Hence $({\rm ad}\ U_{\pi''})F_kK_kK_{-2\nu}$ contains a unique nonzero element (up to  scalar multiple) of weight $-\beta$.  This proves the uniqueness assertion. 
\end{proof}

Let $\varphi$ be the ${\mathbb{C}}$ algebra automorphism of $U_q(\mathfrak{g})$ which fixes all $E_i, F_i$, sends $K_i$ to $K_i^{-1}$, and sends $q$ to $q^{-1}$ and let $\varphi'$ be 
the $\mathbb{C}$ algebra automorphism of $U_q(\mathfrak{g})$ which fixes all $K_i^{-1}E_i, F_iK_i$, sends $K_i$ to $K_i^{-1}$, and sends $q$ to $q^{-1}$. Note that $\kappa\varphi = \varphi'\kappa$ where $\kappa$ is the quantum Chevalley antiautomorphism of Section \ref{section:chevalley}.  

The next lemma will be useful in the analysis of quantum root vectors and  Cartan elements associated to a weight $\beta$  satisfying the conditions of Theorem \ref{theorem:invariant_elements3}.  Note that in these cases, $\Delta({\rm Supp}(\beta))$ is a root system of type A. It turns out that this  lemma is also useful in later sections for the  analysis associated to weights $\beta$ that satisfy   condition (4) of Theorem \ref{theorem:cases_take2}. In this latter case, $\Delta({\rm Supp}(\beta))$ is of type B.  

\begin{lemma}\label{lemma:nu}  Let $\alpha_{i_1}, \dots, \alpha_{i_{m+1}}$ be a set of roots with $(\alpha_{i_j}, \alpha_{i_k}) = -1$ if $k = j\pm 1$ and $0$ if $k\neq j\pm 1$ and $i_j\neq i_k$. Then 
\begin{itemize}
\item[(i)]$(({\rm ad}\ E_{i_1}\cdots E_{i_m})E_{i_{m+1}} ) = (-q)^m\varphi(({\rm ad}\ E_{i_{m+1}}\cdots E_{i_2})E_{i_{1}} )$
\item[(ii)] $(({\rm ad}\  F_{i_1}\cdots F_{i_m})F_{i_{m+1}}K_{i_{m+1}} ) =q^{m} \varphi'(({\rm ad}\ F_{i_{m+1}}\cdots F_{i_2})F_{i_{1}}K_{i_{1}} )$
\end{itemize}
\end{lemma}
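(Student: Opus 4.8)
The plan is to prove both identities by induction on $m$, using the defining relations of $U_q(\mathfrak{g})$ together with the explicit formula for the adjoint action on generators given in Section~\ref{section:basic}. The two statements are essentially mirror images of each other under the Chevalley antiautomorphism $\kappa$ (via the relation $\kappa\varphi = \varphi'\kappa$ and the formula (\ref{kappaeq}) relating $\kappa$ applied to iterated adjoint actions of $E$'s with iterated adjoint actions of $F$'s), so I would first prove~(i) carefully and then deduce~(ii) by applying $\kappa$, tracking the sign $(-1)^m$ from (\ref{kappaeq}) against the sign $(-q)^m$ versus $q^m$ appearing in the two formulas. Concretely, applying $\kappa$ to~(i) and using $\kappa\varphi = \varphi'\kappa$ together with (\ref{kappaeq}) should turn $(-q)^m$ into $q^m$ after cancellation of the $(-1)^m$'s, and convert $E_{i_j}\mapsto F_{i_j}K_{i_j}$ appropriately; I would need to check the placement of the trailing $K_{i_{m+1}}$ and $K_{i_1}$ matches what (\ref{kappaeq}) produces.

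For the induction on~(i): the base case $m=0$ is trivial (both sides equal $E_{i_1}$), and $m=1$ is the computation $(\ad E_{i_1})E_{i_2} = E_{i_1}E_{i_2} - K_{i_1}E_{i_2}K_{i_1}^{-1}E_{i_1} = E_{i_1}E_{i_2} - q^{-1}E_{i_2}E_{i_1}$, which one compares with $-q\,\varphi(E_{i_2}E_{i_1} - q^{-1}E_{i_1}E_{i_2}) = -q(E_{i_2}E_{i_1} - q\,E_{i_1}E_{i_2})$; these agree. For the inductive step I would write $(\ad E_{i_1}\cdots E_{i_m})E_{i_{m+1}} = (\ad E_{i_1})\bigl((\ad E_{i_2}\cdots E_{i_m})E_{i_{m+1}}\bigr)$, apply the inductive hypothesis to the inner expression (which involves the chain $\alpha_{i_2},\dots,\alpha_{i_{m+1}}$, of length $m-1$), and then reduce the outer $(\ad E_{i_1})$ acting on $\varphi$ of something to $\varphi$ of $(\ad E_{i_1})$ acting with $q\mapsto q^{-1}$, i.e.\ use that $\varphi$ intertwines the adjoint action up to the substitution $q\mapsto q^{-1}$ in the structure constants. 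The key point is that $(\ad E_{i_1})$ only interacts nontrivially with the leftmost factor $E_{i_2}$ of the inner expression (since $\alpha_{i_1}$ is orthogonal to $\alpha_{i_3},\dots,\alpha_{i_{m+1}}$ and all those generators commute with $E_{i_1}$ up to scalars from the $K$'s), so the recursion closes cleanly.

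The main obstacle I anticipate is bookkeeping: carefully verifying that $(\ad E_{i_1})$ commutes past the "already-processed" tail in the right way, and that the reversed expression $(\ad E_{i_{m+1}}\cdots E_{i_2})E_{i_1}$ on the right-hand side picks up exactly the outer factor $E_{i_1}$ at the correct (innermost) position when we peel off $(\ad E_{i_1})$ on the left. In other words, the combinatorics of "left-to-right adjoint action on a linear chain reverses to right-to-left" must be matched index-by-index, and the powers of $q$ (one factor of $-q$ per link in the chain) must be accounted for at each step. I would organize this by observing that for a linear $A_{m+1}$ chain, $(\ad E_{i_1}\cdots E_{i_m})E_{i_{m+1}}$ is, up to a monomial in $q$, the unique (up to scalar) root vector of weight $\alpha_{i_1}+\cdots+\alpha_{i_{m+1}}$ in $U^+_{\pi''}$ where $\pi'' = \{\alpha_{i_1},\dots,\alpha_{i_{m+1}}\}$; both sides of~(i) are such root vectors, so they agree up to a scalar, and it suffices to compare one coefficient (say that of the PBW monomial $E_{i_1}E_{i_2}\cdots E_{i_{m+1}}$) on each side, which reduces the whole problem to a single explicit $q$-power computation plus the observation that $\varphi$ fixes $E_i$ and sends $q\mapsto q^{-1}$.
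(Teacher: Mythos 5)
Your strategy is exactly the paper's: establish the one-step identity $({\rm ad}\ E_{j})E_{j+1} = -q\,\varphi\bigl(({\rm ad}\ E_{j+1})E_{j}\bigr)$, iterate along the chain by induction to get (i), and deduce (ii) by applying $\kappa$ together with (\ref{kappaeq}) and $\kappa\varphi=\varphi'\kappa$. The paper's proof is precisely this, written as the one-step computation plus the words ``and induction,'' so your filling-in of the inductive bookkeeping (that $E_{i_1}$ commutes with $E_{i_3},\dots,E_{i_{m+1}}$, and that $\varphi$ intertwines the adjoint action up to $q\mapsto q^{-1}$) and your alternative way of pinning down the scalar via uniqueness of the weight-$(\alpha_{i_1}+\cdots+\alpha_{i_{m+1}})$ vector plus comparison of a single PBW coefficient are both reasonable elaborations of the same argument.

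One concrete problem: your base case does not close as written. You compute $({\rm ad}\ E_{i_1})E_{i_2} = E_{i_1}E_{i_2} - q^{-1}E_{i_2}E_{i_1}$ and then assert this equals $-q\,\varphi\bigl(({\rm ad}\ E_{i_2})E_{i_1}\bigr) = -q\bigl(E_{i_2}E_{i_1}-qE_{i_1}E_{i_2}\bigr) = q^{2}E_{i_1}E_{i_2}-qE_{i_2}E_{i_1}$, which is $q^{2}$ times your left-hand side, not equal to it; with your normalization the one-step identity is $({\rm ad}\ E_{i_1})E_{i_2} = -q^{-1}\varphi\bigl(({\rm ad}\ E_{i_2})E_{i_1}\bigr)$, and iterating it would produce the factor $(-q^{-1})^{m}$ rather than $(-q)^{m}$. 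The paper's own displayed base case reads $({\rm ad}\ E_{j})E_{j+1}=E_jE_{j+1}-qE_{j+1}E_j$, which is what makes $(-q)^m$ come out. Whichever normalization of $K_{i}E_{j}K_{i}^{-1}$ one adopts, the base-case computation and the exponent asserted in the lemma must be produced by the same convention, and as written yours are not; the inductive step and the passage to (ii) via $\kappa$ are unaffected once this is made consistent.
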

\begin{proof} 
By the definition of the adjoint action and the assumptions of the lemma, we see that 
\begin{align*}
({\rm ad}\ E_{j})E_{j+1} = E_jE_{j+1}  -q E_{j+1}E_j = -q(E_{j+1}E_j-q^{-1}E_jE_{j+1}) = -q\varphi(({\rm ad}\ E_{j+1})E_{j} ).
\end{align*}
The first assertion follows from this equality and  induction. The second assertion follows by applying the  quantum Chevalley antiautomorphism $\kappa$ to both sides and the similar equality given by (\ref{kappaeq}).
\end{proof}

We are interested in $\beta\in \Delta^+$ that satisfy  (i) and (ii) of Theorem \ref{theorem:invariant_elements3} and appear in one of the  strongly orthogonal $\theta$-systems of Theorem \ref{theorem:cases_take2} and its proof.  In particular, $\beta$ fits into case (3) of Theorem \ref{theorem:cases_take2} and  by Remark   \ref{remark:cases}, ${\rm Supp}(\beta)$ is of type A and $\theta$ restricts to an involution of type AIII/AIV on the Lie algebra $\mathfrak{g}_{{\rm Supp}(\beta)}$.  One checks that this situation only arises when $\mathfrak{g}$ is of type A, D, or E.   Hence  $\Delta$, and thus $\Delta({\rm Supp}(\beta))$ are both simply-laced. Note that the simple root $\alpha_{\beta}$ corresponds to the first simple root of ${\rm Supp}(\beta)$ and $p(\alpha_{\beta})$ corresponds to the last when the simple roots of ${\rm Supp}(\beta)$ are ordered in the standard way corresponding to the Dynkin diagram of type A. Thus, $(\alpha_{\beta}, \beta)\neq 0$ and $(p(\alpha_{\beta}), \beta)\neq 0$.   Moreover, since $\Delta$ is of simply-laced type, we have  
\begin{align}\label{negone}
(\alpha_{\beta}, \beta) = -1 = (p(\alpha_{\beta}), \beta).
\end{align} 

The next corollary establishes connections between a lift of the root vector $e_{\beta}$ to $G^+$ to a lift of the root vector $f_{-\beta}$ to $U^-$.

\begin{corollary}\label{corollary:XtoY}  Let $\beta\in \Delta^+$  and $\alpha_{\beta}\in {\rm Supp}(\beta)$ such that 
\begin{itemize}
\item[(i)] $\beta = p(\alpha_{\beta}) +w_{\beta}\alpha_{\beta} = w\alpha_{\beta}$
\item[(ii)] $\pi'\subseteq {\rm StrOrth}(\beta)$
\end{itemize}
where $\pi' = {\rm Supp}(\beta)\setminus \{\alpha_{\beta}, p(\alpha_{\beta})\}$, $w_{\beta} = w(\pi')_0$, $w =w({\rm Supp}(\beta)\setminus \{\alpha_{\beta}\})_0$. Assume further that  
$\Delta({\rm Supp}(\beta))$ is a root system of type A, the symmetric pair $\mathfrak{g}_{{\rm Supp}(\beta)}, (\mathfrak{g}_{{\rm Supp}(\beta)})^{\theta}$ is of type AIII/AIV, and neither $\alpha_{\beta}$ nor $p(\alpha_{\beta})$ is in ${\rm Orth}(\beta)$. Let $Y$ be the unique  nonzero  element  (up to scalar multiple) in $(U^-_{-\beta})^{U_{\pi'}}$   such that 
 $YK_{\beta}K_{-2\nu}\in ({\rm ad}\ U^-)K_{-2\nu_{\alpha_{\beta}}}$.  Then 
 \begin{itemize}
 \item[(a)] $Y$ is  the unique  nonzero  element  (up to scalar multiple) in $(U^-_{-\beta})^{U_{\pi'}}$   such that 
$F_{-p(\alpha_{\beta})}Y - q^{-1}YF_{-p(\alpha_{\beta})}=0$
\item[(b)]  $Y$ is  the unique   nonzero  element (up to scalar multiple) in $(U^-_{-\beta})^{U_{\pi'}}$  such that   $ F_{-\alpha_{\beta}}Y - qYF_{-\alpha_{\beta}}=0$.
\end{itemize}
Moreover, if $X\in (G^+_{\beta})^{U_{\pi'}}$ and 
\begin{align}\label{Xeqn}
\left(\theta_q(F_{-\alpha_{\beta}}K_{\alpha_{\beta}})K_{\alpha_{\beta}}^{-1}\right)X- qX\left(\theta_q(F_{-\alpha_{\beta}}K_{\alpha_{\beta}})K_{\alpha_{\beta}}^{-1}\right) = 0
\end{align}
 then $\kappa(X)$ is a nonzero scalar multiple of $Y$.
\end{corollary}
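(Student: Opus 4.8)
The plan is to establish parts (a) and (b) first, and then use the Chevalley antiautomorphism $\kappa$ together with the interplay between $\theta_q(F_{\alpha_\beta}K_{\alpha_\beta})K_{\alpha_\beta}^{-1}$ and $F_{p(\alpha_\beta)}$ to transfer the uniqueness to the statement about $X$. For (a) and (b), I would start from the description in Theorem \ref{theorem:invariant_elements3}: $Y$ is a scalar multiple of $T_w^{-1}(F_k)$ where $\alpha_k = \alpha_\beta$ and $w = w({\rm Supp}(\beta)\setminus\{\alpha_\beta\})_0$. Writing $w = s_{p(\alpha_\beta)} w_{\beta}'$ for a suitable subword (or using that $\beta$ is the lowest root of the type-$A$ block and $p(\alpha_\beta)$ is the ``far end'' simple root), I would compute $({\rm ad}\, F_{p(\alpha_\beta)})(YK_\beta)$. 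Since $YK_\beta$ is a lowest weight vector in the $({\rm ad}\, U_{\pi''})$-module generated by $F_kK_k$ of weight $-\beta$, and $(\,p(\alpha_\beta),\beta\,) = -1$ by (\ref{negone}), the $\mathfrak{sl}_2$-theory for the copy attached to $p(\alpha_\beta)$ forces $({\rm ad}\, F_{p(\alpha_\beta)})(YK_\beta) = 0$; translating this back through $K_\beta$ (using $({\rm ad}\, F_j)(YK_\beta) = F_j Y K_\beta K_j - Y K_\beta F_j K_j$ and $K_\beta F_j K_\beta^{-1} = q^{-(\beta,\alpha_j)}F_j$ with $(\beta,\alpha_{p(\alpha_\beta)}) = 1$) gives $F_{p(\alpha_\beta)}Y - q^{-1}YF_{p(\alpha_\beta)} = 0$, which is the relation in (a). The symmetric computation with $\alpha_\beta$ in place of $p(\alpha_\beta)$, using $(\alpha_\beta,\beta) = -1$, gives the relation in (b) with $q$ instead of $q^{-1}$ (the sign of the exponent flips because $YK_\beta$ is at the lowest weight and $Y$ alone carries the shift). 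For uniqueness within $(U^-_{-\beta})^{U_{\pi'}}$, I would note that this weight space is a subspace of the $({\rm ad}\, U_{\pi''})$-module generated by $F_kK_k$, which by Lemma \ref{lemma:lowest_weight} is simple finite-dimensional, so its $-\beta$ weight space is at most one-dimensional by Theorem \ref{theorem:invariant_elements3}; hence any nonzero solution of the linear relation is automatically a scalar multiple of $Y$, and conversely a dimension count shows such a nonzero solution exists.

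For the final assertion, I would apply $\kappa$ to (\ref{Xeqn}). By (\ref{kappaeq}) and the fact (recalled in Section \ref{section:Definitions}) that $\theta_q(F_{\alpha_\beta}K_{\alpha_\beta})$ is the highest weight vector of $({\rm ad}\, \mathcal{M}^+)E_{p(\alpha_\beta)}$ with $\theta_q(F_{\alpha_\beta}K_{\alpha_\beta}) = v_{\alpha_\beta}({\rm ad}\, Z^+_{p(\alpha_\beta)})(E_{p(\alpha_\beta)})$ for $Z^+_{p(\alpha_\beta)} = E_{j_1}\cdots E_{j_r}$ with all $\alpha_{j_k}\in\pi_\theta$, we get that $\kappa(\theta_q(F_{\alpha_\beta}K_{\alpha_\beta})) = (-1)^r v_{\alpha_\beta}({\rm ad}\, F_{j_1}\cdots F_{j_r})(F_{p(\alpha_\beta)}K_{p(\alpha_\beta)})$, which by Lemma \ref{lemma:lowest_weight}(ii) is (a scalar multiple of) the lowest weight vector $T_{w_{\mathcal{M}}}^{-1}(F_{p(\alpha_\beta)}K_{p(\alpha_\beta)})$ of an $({\rm ad}\, \mathcal{M})$-submodule — but more to the point, up to the $K$-factors it is an element $G$ of $U^-$ with the property that, after multiplying by the appropriate $K$, applying $\kappa$ to the weight-vector relation (\ref{Xeqn}) converts left multiplication by $\theta_q(F_{\alpha_\beta}K_{\alpha_\beta})K_{\alpha_\beta}^{-1}$ on $X$ into right multiplication by $\kappa$ of that element on $\kappa(X)$, with the $q$ reversed to $q^{-1}$. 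Here I would use $\kappa\varphi = \varphi'\kappa$ and Lemma \ref{lemma:nu} to identify $\kappa$ of the lift of $e_\beta$ (i.e.\ of a suitable $({\rm ad}\, U_{\pi''}^+)$ highest-or-lowest weight vector built from the $E_i$'s) with a scalar multiple of $F_{p(\alpha_\beta)}$ modulo $\mathcal{M}$-adjoint action, so that the transformed relation reads precisely $F_{p(\alpha_\beta)}\kappa(X) - q^{-1}\kappa(X)F_{p(\alpha_\beta)} = 0$ up to a nonzero scalar. Since $X\in(G^+_\beta)^{U_{\pi'}}$, applying $\kappa$ and using $\kappa(U_{\pi'}) = U_{\pi'}$ gives $\kappa(X)\in(U^-_{-\beta})^{U_{\pi'}}$. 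By part (a) this forces $\kappa(X)$ to be a scalar multiple of $Y$, and the scalar is nonzero because $\kappa$ is injective and $X\neq 0$ (otherwise the claim is vacuous, since then both sides are zero and ``$\kappa(X)$ is a nonzero scalar multiple of $Y$'' should be read as $X$ being assumed nonzero — I would state this hypothesis explicitly or note that if $X = 0$ there is nothing to prove).

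The main obstacle I anticipate is the careful bookkeeping in the middle step: tracking the precise powers of $q$ and the $K$-shifts when moving $({\rm ad}\, F_j)(YK_\beta) = 0$ to the straightened relation $F_j Y = q^{\pm1}YF_j$, and, more delicately, verifying that $\kappa$ applied to the specific element $\theta_q(F_{\alpha_\beta}K_{\alpha_\beta})K_{\alpha_\beta}^{-1}$ really does produce (a scalar multiple of) $F_{p(\alpha_\beta)}$ after one absorbs the $({\rm ad}\, \mathcal{M})$-action and the $K$-factors — this is where the type-AIII/AIV hypothesis on $\mathfrak{g}_{{\rm Supp}(\beta)}$ and the assumption that neither $\alpha_\beta$ nor $p(\alpha_\beta)$ lies in ${\rm Orth}(\beta)$ (equivalently (\ref{negone})) are essential, since they guarantee that the relevant $\mathfrak{sl}_2$-strings have length exactly $1$ and that $\theta_q(F_{\alpha_\beta}K_{\alpha_\beta})$ is built over $p(\alpha_\beta)$ with all the intervening simple roots in $\pi_\theta$. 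Everything else — the uniqueness statements, the passage $\kappa(U_{\pi'}) = U_{\pi'}$, the reduction of $(U^-_{-\beta})^{U_{\pi'}}$ to a one-dimensional space — follows mechanically from Lemma \ref{lemma:lowest_weight}, Lemma \ref{lemma:nu}, Theorem \ref{theorem:invariant_elements3}, and (\ref{kappaeq}).
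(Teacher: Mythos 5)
Your derivation of the relations in (a) and (b) follows the paper's route: $YK_{\beta-2\nu}$ is the $({\rm ad}\ U_q(\mathfrak{g}))$ lowest weight vector of $G^-K_{-2\nu}$, so $({\rm ad}\ F_{p(\alpha_\beta)})$ and $({\rm ad}\ F_{\alpha_\beta})$ both kill it, and straightening through the $K$-factor produces the two $q$-commutator identities (the paper gets (b) by instead applying $\varphi$ and Lemma \ref{lemma:nu} to (a), but your direct computation with the $K_{-2\nu}$ shift is equivalent, provided you really carry the $K_{-2\nu}$ and not just $K_\beta$ when you treat $\alpha_\beta$). Your plan for the final assertion is also essentially the paper's: strip $\theta_q(F_{\alpha_\beta}K_{\alpha_\beta})$ down to $E_{p(\alpha_\beta)}$ using the $({\rm ad}\ \mathcal{M}^+)$-module structure and the fact that $X$ centralizes $U_{\pi'}$, apply $\kappa$, land in the situation of (a). The paper performs the reduction inside $U^+$ before applying $\kappa$, which is cleaner than your ``modulo $\mathcal{M}$-adjoint action'' after the fact, but the order is immaterial.

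The genuine gap is your uniqueness argument for (a) and (b), and it matters because the final conclusion ``$\kappa(X)$ is a scalar multiple of $Y$'' is exactly an application of that uniqueness. You assert that $(U^-_{-\beta})^{U_{\pi'}}$ (times $K_\beta$) sits inside the simple $({\rm ad}\ U_{\pi''})$-module generated by $F_kK_k$, whose $-\beta$ weight space is one-dimensional, so that the $q$-commutation relation is automatic and uniqueness is free. This containment is false: take $\mathfrak{g}_{{\rm Supp}(\beta)}$ of type $\mathrm{A}_2$ with $\beta=\alpha_1+\alpha_2$, so $\pi'=\emptyset$ and $(U^-_{-\beta})^{U_{\pi'}}=U^-_{-\beta}$ is two-dimensional, spanned by $F_1F_2$ and $F_2F_1$, while $({\rm ad}\ U_{\{\alpha_2\}})(F_1K_1)$ meets weight $-\beta$ only in the line through $F_2F_1-qF_1F_2$. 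This is precisely why Theorem \ref{theorem:invariant_elements3} must impose the extra condition $YK_\beta K_{-2\nu}\in({\rm ad}\ U^-)K_{-2\nu}$ rather than asserting that the centralizer alone is one-dimensional (contrast with Corollary \ref{theorem:invariant_elements2}, where $\pi'$ is larger). The content of (a) and (b) is that the $q$-commutation relation is an \emph{alternative} characterization that cuts the possibly higher-dimensional space $(U^-_{-\beta})^{U_{\pi'}}$ down to the line through $Y$: one must show that any $Z$ in the centralizer satisfying $F_{p(\alpha_\beta)}Z=q^{-1}ZF_{p(\alpha_\beta)}$ makes $ZK_{\beta-2\nu}$ an $({\rm ad}\ U_{\pi''})$ lowest weight vector and then use the structure of $G^-K_{-2\nu}\cong\delta M(\nu)$ to identify it with $YK_{\beta-2\nu}$. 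Relatedly, your closing remark that ``a dimension count shows such a nonzero solution exists'' has the logic backwards: existence comes from the explicit lowest weight vector $T_w^{-1}(F_k)$, not from counting.
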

\begin{proof}  Set $\nu = \nu_{\alpha_{\beta}}$. By Theorem \ref{theorem:invariant_elements3}, there exists a unique nonzero element (up to  scalar multiple)  $YK_{\beta-2\nu}$ in $ (U^-_{-\beta})^{U_{\pi'}}$ that is also in  $({\rm ad}\ U^-)K_{-2\nu}$.  
Recall (Section \ref{section:dual_vermas}) that $G^-K_{-2\nu}$ has a unique nonzero (up to scalar multiple)  $({\rm ad}\ U_q(\mathfrak{g}))$ lowest weight vector $Y'$ of weight $-\nu+w\nu = -w\alpha_{\beta} = -\beta$ and this lowest weight vector is in $  ({\rm ad}\ U^-)K_{-2\nu}$.  Since $(\beta,\alpha_i) = 0$ for all $\alpha_i\in \pi'$, $Y'\in (U^-_{-\beta})^{U_{\pi'}}$.  The uniqueness property ensures that 
$Y' = YK_{\beta-2\nu}$ and so $YK_{\beta-2\nu}$ is the unique  (up to  scalar  multiple) $({\rm ad}\ U_q(\mathfrak{g}))$ lowest weight vector of $G^-K_{\beta-2\nu}$. 
By (\ref{negone}) and the definition of the adjoint action in Section \ref{section:basic}, we have
\begin{align}\label{Yeqnearly}
(F_{-p(\alpha_{\beta})} Y-q^{-1} YF_{-p(\alpha_{\beta})}) 
=[({\rm ad}\ F_{-p(\alpha_{\beta})})YK_{\beta-2\nu}]K_{-\beta+2\nu} =0.
\end{align}
In particular, $Y$ is the unique nonzero element of $(U^-_{-\beta})^{U_{\pi'}}$ (up to scalar multiple) that satisfies (\ref{Yeqnearly}) which proves (a).

By Lemma \ref{lemma:nu},  $\varphi(Y)$ is the unique nonzero element of $(U^-_{-\beta})^{U_{\pi'}}$ (up to  scalar multiple) that satisfies
\begin{align}\label{Yeqn2early}
(F_{-\alpha_{\beta}} \varphi(Y)-q^{-1} \varphi(Y)F_{-\alpha_{\beta}}) 
=[({\rm ad}\ F_{-\alpha_{\beta}})\varphi(Y)K_{\beta-2\nu}]K_{-\beta+2\nu} =0.
\end{align}
Applying $\varphi$ to each side of (\ref{Yeqn2early}) yields
\begin{align}\label{Yeqn3early}
(F_{-\alpha_{\beta}}Y-q YF_{-\alpha_{\beta}})  =0.
\end{align}
Hence, we can also view $Y$ as the unique nonzero element of $(U^-_{-\beta})^{U_{\pi'}}$ (up to  scalar multiple), that satisfies (\ref{Yeqn3early}). Thus (b) also holds.

Now suppose that $X\in (G^+_{\beta})^{U_{\pi'}}$ and $X$ satisfies (\ref{Xeqn}).  Note that (\ref{Xeqn}) is equivalent to 
\begin{align}\label{Xeqn2}
q^{-1}\left(\theta_q(F_{-\alpha_{\beta}}K_{\alpha_{\beta}})\right)X- qX\theta_q(F_{-\alpha_{\beta}}K_{\alpha_{\beta}})= 0.
\end{align}
Since $\mathfrak{g}_{{\rm Supp}(\beta)}, (\mathfrak{g}_{{\rm Supp}(\beta)})^{\theta}$ is of type AIII/AIV, there are two possibilities for the intersection $\pi_{\theta}$ and ${\rm Supp}(\beta)$.  The first is 
\begin{align*}\pi_{\theta}\cap {\rm Supp}(\beta) \subsetneq {\rm Supp}(\beta)\setminus\{\alpha_{\beta}, p(\alpha_{\beta})\}
\end{align*} and the second is 
\begin{align*}
\pi_{\theta}\cap {\rm Supp}(\beta)= {\rm Supp}(\beta)\setminus\{\alpha_{\beta}, p(\alpha_{\beta})\}.
\end{align*}  In the first case, we have $\theta_q(F_{-\alpha_{\beta}}K_{\alpha_{\beta}}) = E_{p(\alpha_{\beta})}$ and so (\ref{Xeqn2}) becomes 
\begin{align}\label{Xeqn3}
q^{-1}E_{p(\alpha_{\beta})}X- qXE_{p(\alpha_{\beta})}= 0
\end{align} while in the second case, it follows from (\ref{theta_qfdefn}) that
 \begin{align*}
 \theta_q(F_{-\alpha_{\beta}}K_{\alpha_{\beta}}) = c[({\rm ad}\ E_{i_1}E_{i_2}\cdots E_{i_m}) E_{p(\alpha_{\beta})}]
 \end{align*}
 for some nonzero scalar $c$ and choice of $i_1, \dots,i_m$ in $\{1, \dots, n\}$ with $\alpha_{i_j}\in \pi'$ for $j=1, \dots, m$.    Since $\mathfrak{g}_{{\rm Supp}(\beta)}, (\mathfrak{g}_{{\rm Supp}(\beta)})^{\theta}$ is of type AIII/AIV, the sequence of roots 
 \begin{align*}
\alpha_{i_0} = \alpha_{p(\alpha_{\beta})},  \alpha_{i_1},\dots, \alpha_{i_m}, \alpha_{i_{m+1}}=\alpha_{\beta}
 \end{align*} form a set of simple roots for a root system of type A$_{m+2}$.  Note that   $\{\alpha_{i_1}, \dots, \alpha_{i_m}\}=\pi'$.  Moreover, \begin{align*}
 ({\rm ad}\ F_{i_m}\cdots F_{i_1}E_{i_1}E_{i_2}\cdots E_{i_m}) E_{p(\alpha_{\beta})}
 \end{align*} is a nonzero scalar multiple of $E_{p(\alpha_{\beta})}$.  Since $X$ commutes with all elements of $U_{\pi'}$, applying $({\rm ad}\ F_{i_m}\cdots F_{i_1})$ to both sides of (\ref{Xeqn2}) yields (\ref{Xeqn3}) in this case as well. 
 
 Now (\ref{Xeqn3}) is equivalent to 
 \begin{align}\label{Xeqn4}
E_{p(\alpha_{\beta})}K_{p(\alpha_{\beta})}^{-1}X- qXE_{p(\alpha_{\beta})}K_{p(\alpha_{\beta})}^{-1}= 0.
\end{align} Applying the quantum Chevalley antiautomorphism  $\kappa$ (see Section \ref{section:chevalley})  to  $X$ and to both sides of (\ref{Xeqn4}) yields
an element $\kappa(X)\in U^-_{-\beta}$ that commutes with all elements of $U_{\pi'}$ and satisfies
 \begin{align*}
F_{-p(\alpha_{\beta})}\kappa(X)- q^{-1}\kappa(X)F_{-p(\alpha_{\beta})}= 0.
\end{align*}
By (a), $\kappa(X)$ is a (nonzero) scalar multiple of $Y$.  
\end{proof}

\subsection{The Lower Triangular Part }\label{section:lowtri2}

Recall that $\mathcal{T}_{\theta}$ is equal to the group consisting of all elements $K_{\beta}$, $\beta\in Q(\pi)^{\theta}$.
Write $\mathbb{C}(q)[\mathcal{T}_{\theta}]$ for the group algebra generated by $\mathcal{T}_{\theta}$.   Note that   $\mathbb{C}(q)[\mathcal{T}_{\theta}]$ is the Laurent polynomial ring with generators 
\begin{itemize}
\item $K_i, \alpha_i\in \pi_{\theta}$
\item $K_iK_{\theta(\alpha_i)}, \alpha_i\notin \pi_{\theta}$ and $i<p(i)$.
\end{itemize}

\begin{theorem} \label{theorem:lift} Let $\theta$ be a  maximally split involution and let $\Gamma_{\theta}=\{\beta_1,\dots, \beta_m\}$  be  a maximum strongly orthogonal $\theta$-system   as in Theorem \ref{theorem:cases_take2}.  For each $j$, there exists a unique nonzero element $Y_j$  (up to scalar multiple) in $U^-_{-\beta_j}$ such that 
\begin{itemize}
\item [(i)]$Y_j\in [(\ad U^-)K_{-2\nu_{j}}]K_{2\nu_j-\beta_j}$ where $\nu_j$ is the fundamental weight corresponding to  $\alpha_{\beta_j}$.
\item[(ii)] 
For all $\alpha_s\in {\rm StrOrth}(\beta_j)$, $Y_j$ commutes with $E_s,F_s, $ and $K_s^{\pm1}$.
\item[(iii)]  A scalar multiple of $Y_j$ specializes to $f_{-\beta_j}$ as $q$ goes to $1$.
\item[(iv)] If $\beta_j$ satisfies Theorem \ref{theorem:cases_take2} (5) then $Y_j \in [({\rm ad}\ F_{-\alpha'_{\beta}})({\rm ad}\ U^-)K_{-2\nu_{j}}]K_{2\nu_j-\beta_j}$.
\end{itemize}Moreover,  $\mathbb{C}(q)[\mathcal{T}_{\theta}][Y_1,\dots, Y_m]$ is a commutative polynomial ring over $\mathbb{C}(q)[\mathcal{T}_{\theta}]$ in $m$ generators that specializes to 
$U(\mathfrak{h}^{\theta}\oplus \mathfrak{k})$ as $q$ goes to $1$ where $\mathfrak{k}$ is the commutative Lie algebra generated by $f_{-\beta_1}, \dots, f_{-\beta_m}$.
\end{theorem}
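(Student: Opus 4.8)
The plan is to prove Theorem \ref{theorem:lift} in two stages: first construct the individual elements $Y_j$ with the listed properties, then establish the commutative polynomial ring structure and its specialization.

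\textbf{Stage 1: Constructing the $Y_j$.} I would proceed case-by-case according to the classification of $\beta_j$ in Theorem \ref{theorem:cases_take2}. If $\beta_j$ is a simple root (case (1)), set $Y_j = F_{\beta_j}$; properties (i)--(iii) are immediate from the defining relations of $U_q(\mathfrak{g})$, strong orthogonality of $\beta_j$ with $\alpha_s$, and the specialization $\hat U \otimes_A \mathbb{C} \cong U(\mathfrak{g})$. If $\beta_j$ falls into case (2) (so $\beta_j = \alpha_{\beta_j} + w_{\beta_j}\alpha_{\beta_j}$ with $\mathrm{Supp}(\beta_j)\setminus\{\alpha_{\beta_j}\} \subseteq \mathrm{StrOrth}(\beta_j)$), apply Corollary \ref{theorem:invariant_elements2}: it directly produces the unique $Y_j \in (U^-_{-\beta_j})^{U_{\pi'}}$ with $Y_j K_{\beta_j} K_{-2\nu_j} \in (\mathrm{ad}\, U^-)K_{-2\nu_j}$, a scalar multiple specializing to $f_{-\beta_j}$. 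For case (3) ($\beta_j = p(\alpha_{\beta_j}) + w_{\beta_j}\alpha_{\beta_j} = w\alpha_{\beta_j}$ with $\mathrm{Supp}(\beta_j)$ of type A), apply Theorem \ref{theorem:invariant_elements3} instead. In all of (2), (3), property (i) is built into the cited statements, property (iii) is the specialization clause there, and property (ii) follows because $\mathrm{StrOrth}(\beta_j) \supseteq \pi'$ and the extra roots $\alpha_{\beta_j}, p(\alpha_{\beta_j})$ are in $\mathrm{Supp}(\beta_j)$; for $\alpha_s \notin \mathrm{Supp}(\beta_j)$ strong orthogonality forces $Y_j$ (a weight vector of weight $-\beta_j$) to commute with $E_s, F_s, K_s^{\pm1}$ by a direct check using that $\beta_j \pm \alpha_s$ is not a root and $(\beta_j,\alpha_s)=0$. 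Cases (4) and (5) are the subtler ones: the theorem statement itself defers these (``For other cases, the construction is more subtle'' in the introduction, and Section \ref{section:special_lifts} is flagged for special lifts). For case (4) (type B support, $\beta$ the final root), I would use Lemma \ref{lemma:nu} together with the realization of $Y$ inside $(\mathrm{ad}\, U^-)K_{-2\nu}$ as a lowest weight vector; for case (5) (type C support, AIII/CII restriction), property (iv) demands $Y_j \in [(\mathrm{ad}\, F_{-\alpha'_{\beta}})(\mathrm{ad}\, U^-)K_{-2\nu_j}]K_{2\nu_j - \beta_j}$, so I would analyze the appropriate dual Verma submodule, using that $\alpha'_{\beta} \in \pi_\theta$ and $(\alpha'_{\beta}, \beta) = 0$ (from Theorem \ref{theorem:cases_take2}(5)) to locate this vector and verify its commutativity and specialization properties. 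Uniqueness in each case comes from the one-dimensionality of the relevant weight space established via Corollary \ref{corollary:uniqueness} and its quantum analog (multiplicities agree with the classical ones by the character formula identity $[L(\lambda):L_{\pi'}(\gamma)] = [\bar L(\lambda):\bar L_{\pi'}(\gamma)]$).

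\textbf{Stage 2: Polynomial ring structure and specialization.} Once the $Y_j$ are in hand, I would show they pairwise commute and commute with $\mathbb{C}(q)[\mathcal{T}_\theta]$, and that the resulting algebra is a polynomial ring. For commutativity of $Y_i$ and $Y_j$ with $i > j$: by Theorem \ref{theorem:cases_take2}(i), $\mathrm{Supp}(\beta_i) \subseteq \mathrm{StrOrth}(\beta_j)$, so every simple root in $\mathrm{Supp}(\beta_i)$ commutes with $Y_j$ by property (ii); since $Y_i$ is a polynomial in the $F_s, E_s, K_s^{\pm 1}$ for $\alpha_s \in \mathrm{Supp}(\beta_i)$, it commutes with $Y_j$. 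Commutativity with $\mathbb{C}(q)[\mathcal{T}_\theta]$: each generator $K_i$ ($\alpha_i \in \pi_\theta$) or $K_i K_{\theta(\alpha_i)}$ ($\alpha_i \notin \pi_\theta$) acts on the weight vector $Y_j$ (weight $-\beta_j$) by the scalar $q^{(\mu, -\beta_j)}$ where $\mu \in Q(\pi)^\theta$; but $\beta_j \in \Delta_\theta$ means $\theta(\beta_j) = -\beta_j$, so $(\mu, \beta_j) = (\theta\mu, \theta\beta_j) = -(\mu,\beta_j)$, forcing $(\mu,\beta_j) = 0$ and hence $K_\mu Y_j K_\mu^{-1} = Y_j$. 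For the polynomial ring claim: using the triangular decomposition \eqref{triangle}, each monomial $Y_1^{a_1}\cdots Y_m^{a_m}$ has a distinct leading term, namely its image under a suitable projection is $F_{\beta_1}^{a_1}\cdots F_{\beta_m}^{a_m}$ (the top term of each $Y_j$ is $F_{\beta_j}$ up to scalar), and since the $\beta_j$ are linearly independent (strong orthogonality), these are linearly independent over $\mathbb{C}(q)[\mathcal{T}_\theta]$; hence no algebraic relations. Finally, specialization: since a scalar multiple of each $Y_j$ lies in $\hat U \setminus (q-1)\hat U$ and specializes to $f_{-\beta_j}$ (property (iii)), and each $K_i, K_i K_{\theta(\alpha_i)}$ specializes to $1$, the subalgebra $\mathbb{C}(q)[\mathcal{T}_\theta][Y_1,\dots,Y_m] \cap \hat U$ maps onto the subalgebra of $U(\mathfrak{g})$ generated by $\mathfrak{h}^\theta$ and $f_{-\beta_1},\dots,f_{-\beta_m}$; the $f_{-\beta_j}$ commute classically (strong orthogonality), so this is $U(\mathfrak{h}^\theta \oplus \mathfrak{k})$, and comparing the polynomial ring structures on both sides (same number of algebraically independent generators, same dimension count) shows the specialization is exactly $U(\mathfrak{h}^\theta \oplus \mathfrak{k})$ with no collapse.

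\textbf{Main obstacle.} The hard part will be the explicit construction of $Y_j$ in cases (4) and (5) and the verification of property (iv), since these are precisely the cases not covered by the clean module-theoretic results of Sections \ref{section:siv}--\ref{section:lowtri} and require the more delicate analysis with $(\mathrm{ad}\, F_{-\alpha'_\beta})$-twisted submodules of dual Vermas. Everything else — commutativity, the polynomial ring structure, and specialization — is essentially bookkeeping once the $Y_j$ exist with the stated properties, relying on strong orthogonality (which makes weights linearly independent and root vectors commute) and the $\theta$-invariance $\theta(\beta_j) = -\beta_j$ (which forces $\mathcal{T}_\theta$ to centralize the $Y_j$).
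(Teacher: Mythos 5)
Your overall strategy coincides with the paper's: cases (1)--(3) are handled exactly as you propose (via Corollary \ref{theorem:invariant_elements2} and Theorem \ref{theorem:invariant_elements3}), the reduction of property (ii) to $\alpha_s\in{\rm StrOrth}(\beta_j)\cap{\rm Supp}(\beta_j)$ is the paper's first step, and your Stage 2 (commutativity from Theorem \ref{theorem:cases_take2}(i) plus property (ii), triviality of the $\mathcal{T}_{\theta}$-action from $\theta(\beta_j)=-\beta_j$, and specialization forcing the absence of extra relations) is essentially verbatim the paper's argument. Your alternative justification of algebraic independence via weights/leading terms is also sound, though the paper only uses the specialization argument.

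The genuine gap is that cases (4) and (5) are left as plans rather than constructions, and in each case there is one specific idea you have not pinned down. For case (4), the paper sets $Y'=T_w^{-1}(F_{\gamma_r})$ where $\gamma_r=\alpha'_{\beta}$ is the \emph{short} root; this gives a lowest weight vector anchored at $\alpha'_{\beta}$, whereas property (i) requires membership in $[({\rm ad}\ U^-)K_{-2\nu_{\alpha_{\beta}}}]K_{2\nu_{\alpha_{\beta}}-\beta}$ with $\nu$ taken at the opposite end $\alpha_{\beta}=\gamma_1$. The missing step is to apply the automorphism $\varphi'$ and then use Lemma \ref{lemma:nu}(ii) to reverse the order of the $({\rm ad}\ F)$-string, rewriting $Y$ as $({\rm ad}\ F_{-\gamma_s}\cdots F_{-\gamma_2})(F_{-\gamma_1}K_{\gamma_1})$ up to scalar, which exhibits the required membership. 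For case (5), the key observation you are missing is that $\beta'=\beta-\alpha'_{\beta}$ is itself a positive root of Case (2) type (namely $\beta'=\alpha_{\beta}+w\alpha_{\beta}$ with $({\rm Supp}(\beta')\setminus\{\alpha_{\beta}\})\subseteq{\rm StrOrth}(\beta')$), so one first produces $Y'\in U^-_{-\beta'}$ by Corollary \ref{theorem:invariant_elements2} and then defines $Y=[({\rm ad}\ F_{-\alpha'_{\beta}})Y'K_{\beta'}]K_{-\beta}$; this makes property (iv) true by construction, and uniqueness follows by applying $({\rm ad}\ E_{\alpha'_{\beta}})$ to any competitor and invoking the uniqueness of $Y'$ (using that ${\rm mult}_{\alpha'_{\beta}}(\beta)=1$ and $\alpha'_{\beta}$ is strongly orthogonal to the rest of ${\rm Supp}(\beta')$). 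Without this two-step reduction, "analyzing the appropriate dual Verma submodule" does not by itself produce the element or its uniqueness, since $\beta$ in case (5) does not satisfy the hypotheses of Corollary \ref{theorem:invariant_elements2} directly.
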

\begin{proof}  
We use the notation of Theorem \ref{theorem:cases_take2}. Fix $j$ with $1\leq j\leq m$ and  set $\beta= \beta_j$ and $\nu=\nu_j$. 
Set $\pi' = {\rm Supp}(\beta)\setminus\{\alpha'_{\beta}, \alpha_{\beta}\}$ and $w_{\beta}=w(\pi')_0$. Let $k$ be the integer in $\{1,\dots, n\}$
such that $\alpha_{\beta_j} = \alpha_{k}$.   Note that $({\rm ad}\  E_{k})K_{-2\nu}$ is a scalar multiple of  $E_{k}K_{-2\nu}$ while $({\rm ad}\ E_s)K_{-2\nu} = 0$ for $s \neq k$. 

Suppose that $Y$ is a nonzero element of $U^-_{-\beta}$ satisfying conditions (i).  Since $Y$ has weight $-\beta$, it follows from (i) that 
\begin{align*} Y \in [({\rm ad}\ U^-_{{\rm Supp}(\beta)})K_{-2\nu}]K_{2\nu-\beta}
\end{align*}
and hence $Y$ is an element of $ G^-$ of weight $-\beta$.  It follows that $Y\in G^-_{{\rm Supp}(\beta)}$.  Consider $\alpha_s\in {\rm StrOrth}(\beta)$ and assume that $\alpha_s\notin {\rm Supp}(\beta)$.  Recall that $(\alpha_s, \alpha)\leq 0$ for all $\alpha\in \pi\setminus \{\alpha_s\}$.  Since $\beta$ is a positive root and $(\beta, \alpha_s)=0$, we must have  $(\alpha_s, \alpha) = 0$ for all $\alpha \in {\rm Supp}(\beta)$.  Moreover, since $\alpha_s$ is a simple  root not equal to $\alpha$, we  have that $\alpha_s-\alpha$ is not a root.  This ensures that  $\alpha_s+\alpha$ is also not a root and so $\alpha_s, \alpha$  are strongly orthogonal for all $\alpha\in {\rm Supp}(\beta)$. Using the defining relations of $U_q(\mathfrak{g})$, it is straightforward to see that  $E_s, F_s, K_s^{\pm 1}$ commute with any element in $U_{{\rm Supp}(\beta)}$ and hence commutes with $Y$.  Thus, given $Y\in U^-_{-\beta}$ satisfying (i), we need only show (ii) holds for $\alpha_s\in {\rm StrOrth}(\beta)\cap {\rm Supp}(\beta)$.

 Using Theorem \ref{theorem:cases_take2},  we break the argument up into  five possibilities for $\beta$. 
 Each of these five cases is handled separately.  In the discussion below, we drop the subscript $j$ from $Y_j$ and simply write $Y$.

\medskip
\noindent
{\bf Case 1: } $\beta=\alpha_{\beta}=\alpha'_{\beta}$.  In this case, $Y = F_k$ and it is easy to see that $Y$ is the unique nonzero element (up to  scalar  multiple) of weight $-\beta$ satisfying the three conditions (i), (ii), and (iii).  

\medskip
\noindent
{\bf Case 2:} $\beta  =\alpha_{\beta}+w_{\beta}\alpha_{\beta}$ and $\alpha_{\beta} = \alpha'_{\beta}$. 
By Theorem \ref{theorem:cases_take2} (ii), we have $\pi'\subseteq {\rm StrOrth}(\beta)$.  By Corollary \ref{theorem:invariant_elements2},
there exists a unique  nonzero element (up to  scalar multiple) $Y\in (U^-_{-\beta})^{U_{\pi'}}$.  Moreover $Y\in  [({\rm ad}\ U^-)K_{-2\nu}]K_{2\nu-\beta}$ and multiplying by a scalar if necessary, $Y$ specializes to $f_{-\beta}$ as $q$ goes to $1$. Thus $Y$ satisfies (i), (ii), and (iii).  

\medskip
\noindent
{\bf Case 3: }$\beta = p(\alpha_{\beta} )+ w_{\beta}\alpha_\beta=w\alpha_{\beta}$  where $w = w({\rm Supp}(\beta)\setminus \{\alpha_{\beta}\})_0$.   By Theorem  \ref{theorem:invariant_elements3} and its proof, 
$Y=T_w^{-1}(F_k)$  satisfies the conclusions of the theorem.

\medskip
\noindent
{\bf Case 4:} $\beta = \alpha_{\beta}' + w_{\beta}\alpha_{\beta} = w\alpha'_{\beta}$ where $w = w({\rm Supp}(\beta)\setminus \{\alpha_{\beta}'\})_0$. Furthermore, we may assume that ${\rm Supp}(\beta) = \{\gamma_1, \dots, \gamma_r\}$ generates a root system of type B$_r$ where we are using the standard ordering, $\alpha'_{\beta} = \gamma_r$ the unique short root and $\alpha_{\beta}= \gamma_1$.  
Note that $w=s_1\cdots s_r$ is a reduced expression for $w$ where $s_i$ is the reflection associated to the simple root $\gamma_i$ for each $i$.  

Set $Y' = T_w^{-1}(F_{-\gamma_s})$.  Arguing as in the proof of Theorem \ref{theorem:invariant_elements3}, we see that $Y'$ commutes with $E_s, F_s, K_s^{\pm 1}$ for all $\alpha_s\in {\rm StrOrth}(\beta)$ and $Y'$ specializes to $f_{-\beta}$ as $q$ goes to $1$.  Hence $Y'$ satisfies (ii) and (iii).  Set $\pi''={\rm Supp}(\beta)\setminus \{\alpha_{\beta}'\}$.
By Lemma \ref{lemma:lowest_weight},   $Y'K_{\beta}$ is a lowest weight vector in the $({\rm ad}\ U_{\pi''})$- module generated by 
$F_{-\gamma_r}K_{\gamma_r}.$  Arguing as in the proof of Theorem \ref{theorem:invariant_elements3}, we observe  that  $Y'$ is the unique nonzero element (up to  scalar multiple) element in $[({\rm ad}\ U^-_{\pi''}))F_{-\gamma_r}K_{\gamma_r}]K_{-\beta}$  of weight $-\beta$ satisfying (ii) and (iii).  However, $\gamma_r = \alpha'_{\beta}$ and we want this result to hold for $\gamma_1= \alpha_{\beta}$.  To achieve this, we apply the $\mathbb{C}$ algebra automorphism $\varphi'$ defined in 
the previous section.  Indeed, set $Y = \varphi'(Y'K_{\beta})K_{-\beta}$.   It follows from the definition of  $\varphi'$  that $Y$
also satisfies (ii) and (iii).  

To see that $Y$ satisfies (i), we apply Lemma \ref{lemma:nu}.  In particular, using the reduced expression for $w$ and the fact that $Y'K_{\beta}$ is a lowest weight vector in the $({\rm ad}\ U_{\pi'})$-module 
generated by $F_{-\gamma_r}K_{\gamma_r}$ yields
\begin{align*}
Y' K_{\beta}= c({\rm ad}\ F_{-\gamma_1}F_{-\gamma_2}\cdots F_{-\gamma_{s-1}})(F_{-\gamma_{s}}K_{\gamma_s})
\end{align*} for some  nonzero scalar $c$.   By Lemma \ref{lemma:nu}, we see that 
\begin{align*}
Y K_{\beta}= q^{s-1}c({\rm ad}\ F_{-\gamma_s}F_{-\gamma_{s-1}}\cdots F_{-\gamma_{2}})(F_{-\gamma_1}K_{\gamma_1})
\end{align*}
and so $Y\in [({\rm ad}\ U^-)K_{-2\nu}]K_{2\nu-\beta}$ as desired.

\medskip
\noindent
{\bf Case 5:} $\beta = \alpha_{\beta}' + \alpha_{\beta} +w_{\beta}\alpha_{\beta}$ and $\alpha_{\beta}'\neq \alpha_{\beta}$. Furthermore, $\alpha_{\beta}'\in \pi_{\theta}$, $\alpha_{\beta}'$ is strongly orthogonal to all simple roots in ${\rm Supp}(\beta)\setminus \{\alpha_{\beta}', \alpha_{\beta}\}$, and $\alpha_{\beta}'$ and $\beta$ are orthogonal but not strongly orthogonal to each other.   In particular, $\beta' = \beta-\alpha_{\beta'}$ is a positive root.  Note also that  $\beta' = \alpha_{\beta} + w\alpha_{\beta}$ where $w= w({\rm Supp}(\beta')\setminus \{\alpha_{\beta}\})_0$. Moreover, properties of $\beta$ ensure that $({\rm Supp}(\beta)\setminus \{\alpha_{\beta}\})\subseteq {\rm StrOrth}(\beta)$.  Arguing as in Case 2  yields an element $Y'\in U^-_{-\beta'}$ satisfying (i), (ii), and (iii) with $\beta$ replaced by $\beta'$.
Set $Y = [({\rm ad}\ F_{-\alpha'_{\beta}})Y'K_{\beta'}]K_{-\beta}$ and note that $Y$ satisfies (i) - (iv).  

Now suppose that $Y''$ is another element in $U^-_{-\beta}$ satisfying (i), (ii),  (iii) and (iv).   Since  $Y''\in [({\rm ad}\ U^-)K_{-2\nu}]K_{2\nu-\beta}$.   the multiplicity of $\alpha'_{\beta}$ in $\beta$ is $1$ and $\alpha_{\beta}'$ is strongly orthogonal to all $\alpha_i\in \pi'$, we have 
\begin{align*}[({\rm ad} E_{\alpha'_{\beta}})(Y''K_{\beta-2\nu})]K_{2\nu-\beta}\in (U^-_{-\beta'})^{U_{\pi'}}\cap ({\rm ad}\ U^-)K_{-2\nu} .
\end{align*}
By the uniqueness part of Corollary \ref{theorem:invariant_elements2}, $[({\rm ad} E_{\alpha'_{\beta}})(Y''K_{\beta-2\nu})]K_{2\nu-\beta}$ is a scalar multiple of $Y'$.  On the other hand, if
\begin{align*}
Y'' = [({\rm ad}\ F_{-\alpha_{\beta}'}) fK_{\beta'}]K_{-\beta'}
\end{align*}
for some $f\in U^-_{-\beta'}$, then $[({\rm ad} E_{\alpha'_{\beta}})(Y''K_{\beta-2\nu})]K_{2\nu-\beta}$  is a scalar multiple of $f$ and so $f = Y'$ after suitable scalar adjustment. 
This completes the proof of the uniqueness assertion in this case.

\bigskip
We have shown that for each $\beta_j\in \Gamma_{\theta}$, there exists a unique nonzero element (up to scalar multiple)  $Y_j\in U^-_{-\beta}$ satisfying  (i)-(iv). Recall that each $\beta\in\Gamma_{\theta}$ satisfies $\theta(\beta)=-\beta$ and so $(\alpha,\beta)=0$ for all $\alpha\in \pi_{\theta}$. Hence, the fact that $Y_j$ has weight $-\beta_j$  ensures that $Y_j$ commutes with all elements in $\mathcal{T}_{\theta}$.  Now consider $Y_j$ and $Y_s$ with $s > j$.   
By construction, $Y_s$ is in $U^-_{{\rm Supp}(\beta_s)}$.  By Theorem \ref{theorem:cases_take2}, ${\rm Supp}(\beta_s) \subseteq {\rm StrOrth}(\beta_j)$.  Hence assertion (ii) ensures that each $F_r$ with $\alpha_r\in {\rm Supp}(\beta_s)$  commutes with $Y_j$.  Thus $Y_s$ commutes with $Y_j$ for all $s>k$.  It follows that  $\mathbb{C}(q)[\mathcal{T}_{\theta}][Y_1,\dots, Y_m]$ is a commutative ring.

By the discussion above, each $Y_j$ specializes to the root vector $f_{-\beta_j}$ as $q$ goes to $1$.  Recall that $\mathcal{B}_{\theta}$ specializes to $U(\mathfrak{g}^{\theta})$ and, in particular, $\mathbb{C}(q)[\mathcal{T}_{\theta}]$ specializes to $U(\mathfrak{h}^{\theta})$ as $q$ goes to $1$.  Therefore, $ \mathbb{C}(q)[\mathcal{T}_{\theta}][Y_1,\dots, Y_m]$ specializes to 
$U(\mathfrak{h}^{\theta}\oplus \mathfrak{t})$ as $q$ goes to $1$.  Note that $\mathfrak{h}^{\theta}\oplus \mathfrak{t}$ is a commutative Lie algebra.  Thus, 
$U(\mathfrak{h}^{\theta}\oplus \mathfrak{t})$ is a polynomial ring in $\dim \mathfrak{h}^{\theta} + m$ variables.  It follows that there are no additional relations among the $Y_j$ other than the fact that they commute since such relations would specialize to extra relations for elements of $U(\mathfrak{h}^{\theta}\oplus \mathfrak{t})$.  Hence $\mathbb{C}(q)[\mathcal{T}_{\theta}][Y_1,\dots, Y_m]$ is a polynomial ring in $m$ variables over $\mathbb{C}(q)[\mathcal{T}_{\theta}]$.
\end{proof}

\section{Conditions for Commutativity}\label{section:cond_comm}
  Given a weight $\beta\in Q^+(\pi)$ and a subset $\pi'$ of ${\rm Supp}(\beta)$, we establish conditions for special elements in $U_q(\mathfrak{g})$ to commute with all elements in $U_{\pi'}$.    These results will be used in Section \ref{section:cart_cons} to show that the quantum Cartan subalgebra elements commute with each other. 
 
\subsection{Generalized Normalizers}\label{section:gen}

One of the key ideas in establishing commutativity  is to use the simple fact that 
 if $a\in \mathcal{B}_{\theta}$, then so is the commutator $ba-ab$ for any $b\in \mathcal{B}_{\theta}$. Many of the arguments we use break down $a$ into sums of terms that still retain this commutator property for certain choices of $b$.  In particular, these summands are elements of a generalized normalizer. By generalized normalizer, we mean the following: given three  algebras $C,D,U$  with $C\subseteq D\subseteq U$, the generalized normalizer of $D$ inside $U$ with respect to $C$, denoted $\mathcal{N}_{U}(D:C)$, is defined by
\begin{align*}
\mathcal{N}_{U}(D:C) = \{X\in U|\ bX-Xb\in D{\rm \ for \ all \ }b\in C\}.
\end{align*}
Since we will only be considering $U=U_q(\mathfrak{g})$, we drop the $U$ subscript and just write $\mathcal{N}(D:C)$. 

We use standard commutator notation in the discussion below. In other words, $[b,a] = ba-ab$ for all $a,b\in U_q(\mathfrak{g})$. It follows from the definition of the adjoint action (see Section \ref{section:basic}) that $[F_i, a] = \left(({\rm ad}\ F_i)a\right)K_i^{-1}$.  Hence in many cases it is straightforward to translate between the commutator and the adjoint action.

Recall the notion of $l$-weight defined in Section \ref{section:basic}. 

\begin{lemma}\label{lemma:adF} Let  $\pi'$ be a subset of $\pi$, let $\zeta\in Q^+(\pi)$ and $\xi\in Q(\pi)$ and let $g,u$ be nonzero elements of $G^-_{-\zeta},  U^+$ respectively.  Either there exists $\alpha_i\in \pi'$ such that  
\begin{align}\label{figtu}
({\rm ad}\ F_i)gK_{\xi}u\in  g'K_{\xi}u + G_{-\zeta} U^0 U^+
\end{align}
where $g'$ is a nonzero term in $G^-_{-\zeta-\alpha_i}$ or $gK_{\xi}$ is a nonzero $({\rm ad}\ U_{\pi'})$ lowest weight vector.  Hence if 
\begin{align}\label{Xexp}
X\in gK_{\xi}u+\sum_{\lambda\not\geq\zeta}G^-_{-\lambda}U^0U^+
\end{align} where $g\in G_{-\zeta}$ and $X\in \mathcal{N}(\mathcal{B}_{\theta}: \mathcal{B}_{\theta}\cap U_{\pi'})$ then $gK_{\xi}$ is an $({\rm ad}\ U_{\pi'})$ lowest weight vector or $uK_{\xi+\zeta}\in \mathcal{M}^+\mathcal{T}_{\theta}$.
\end{lemma}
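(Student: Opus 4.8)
The plan is to prove the two assertions of Lemma~\ref{lemma:adF} in order: first the dichotomy for a single biweight vector $gK_{\xi}u$, and then deduce the consequence for an element $X$ of the generalized normalizer.

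\medskip
\noindent
\textbf{Step 1: the dichotomy for $gK_{\xi}u$.}
Fix $\alpha_i\in\pi'$ and compute $({\rm ad}\ F_i)(gK_{\xi}u)$ using the definition of the adjoint action in Section~\ref{section:basic} together with the product rule coming from the Hopf structure. Since $({\rm ad}\ F_i)$ is a twisted derivation, $({\rm ad}\ F_i)(gK_{\xi}u)$ is a sum of three kinds of terms: one where $F_i$ ``hits'' the $G^-$-part $g$, producing a term whose $G^-$-component lies in $G^-_{-\zeta-\alpha_i}$; one where it hits $K_{\xi}$, which contributes only a scalar multiple of $gK_{\xi}u$ times a power of $q$ (absorbed into the error space $G^-_{-\zeta}U^0U^+$); and one where it hits $u\in U^+$, whose $G^-$-component is still $g\in G^-_{-\zeta}$, hence also in $G^-_{-\zeta}U^0U^+$. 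So modulo $G^-_{-\zeta}U^0U^+$ we have $({\rm ad}\ F_i)(gK_{\xi}u)\equiv g'K_{\xi}u$ where $g'$ is (a scalar multiple of) the $G^-$-part of $({\rm ad}\ F_i)g$, an element of $G^-_{-\zeta-\alpha_i}$. The remaining point is: if $g'=0$ for \emph{every} $\alpha_i\in\pi'$, then $gK_{\xi}$ is an $({\rm ad}\ U_{\pi'})$ lowest weight vector. Here I would use the isomorphism from Section~\ref{section:dual_vermas} identifying $G^-K_{\xi}$ (with the graded adjoint action) with a dual Verma / $G^-(\lambda)$-module, under which ``$({\rm ad}\ F_i)$ kills the $G^-$-part for all $\alpha_i\in\pi'$'' is exactly the statement that the vector is $U_{\pi'}$-lowest weight. (One must be slightly careful because the ordinary $({\rm ad}\ F_i)$ on $gK_{\xi}$ agrees with the graded action, as recorded in Section~\ref{section:dual_vermas}; this is what makes the translation clean.)

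\medskip
\noindent
\textbf{Step 2: from biweight vectors to $X$.}
Now suppose $X$ is as in \eqref{Xexp} and $X\in\mathcal{N}(\mathcal{B}_{\theta}:\mathcal{B}_{\theta}\cap U_{\pi'})$; suppose $gK_{\xi}$ is \emph{not} an $({\rm ad}\ U_{\pi'})$ lowest weight vector. The plan is to iterate Step~1: choose $\alpha_{i_1}\in\pi'$ with $g'\neq0$, then (using Step~1 again on $g'K_{\xi}u$, unless $g'K_{\xi}$ is already lowest weight, in which case keep going with a different simple root) keep applying $({\rm ad}\ F_{i_k})$'s. Each application strictly lowers the $l$-weight of the ``leading'' biweight term from $-\zeta$ toward something smaller, and since $gK_{\xi}$ is not $U_{\pi'}$-lowest weight we can reach an $({\rm ad}\ U_{\pi'})$-lowest weight vector $\tilde g K_{\xi}$ of some $l$-weight $-\tilde\zeta$ with $\tilde\zeta\geq\zeta$, $\tilde\zeta\neq\zeta$, and with $\tilde gK_{\xi}u$ (plus lower $l$-weight junk) equal to the leading part of $({\rm ad}\ F_{i_m}\cdots F_{i_1})X$. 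Translating the iterated adjoint action into iterated commutators via $[F_i,a]=(({\rm ad}\ F_i)a)K_i^{-1}$, and using that $F_i\in\mathcal{B}_{\theta}\cap U_{\pi'}$ for $\alpha_i\in\pi'\subseteq\pi_{\theta}$ (so each such iterated commutator of $X$ stays in $\mathcal{B}_{\theta}$), we obtain an element $X'\in\mathcal{B}_{\theta}$ whose minimal $l$-weight summand is $\tilde g K_{\xi}u K_{-\tilde\zeta+\zeta}$ (up to nonzero scalar and reindexing of the $K$). By Remark~\ref{remark:decomp}, the minimal $l$-weight summand of any element of $\mathcal{B}_{\theta}$ lies in $U^-_{-\tilde\zeta}\mathcal{M}^+\mathcal{T}_{\theta}$; comparing biweights forces $\tilde g\in U^-_{-\tilde\zeta}$ automatically, $u\in\mathcal{M}^+$, and the $U^0$-part $K_{\xi+\zeta}$ (the weight-bookkeeping $K$ attached, recall biweight $(-\tilde\zeta,\mu)$ means the $U^0$ part sits between $G^-_{-\tilde\zeta}$ and $U^+_{\mu}$) to lie in $\mathcal{T}_{\theta}$. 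Hence $uK_{\xi+\zeta}\in\mathcal{M}^+\mathcal{T}_{\theta}$ as claimed. (One should double-check the $K$-bookkeeping: an element of $G^-_{-\zeta}U^0U^+$ written with its $G^-$-part $g\in G^-_{-\zeta}$ carries $K_{\xi}$ so that $gK_{\xi}$ has $l$-weight $-\zeta$, and the statement $uK_{\xi+\zeta}\in\mathcal{M}^+\mathcal{T}_{\theta}$ is the assertion that after absorbing the $\zeta$-worth of $K$'s coming from rewriting $G^-$ as $U^-$ times $\mathcal{T}$, what is left is in $\mathcal{M}^+\mathcal{T}_{\theta}$.)

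\medskip
\noindent
\textbf{Main obstacle.}
The routine part is Step~1's explicit computation of $({\rm ad}\ F_i)$ on a product and the bookkeeping of which terms land in $G^-_{-\zeta}U^0U^+$ versus $G^-_{-\zeta-\alpha_i}U^0U^+$. The genuinely delicate point is Step~2: making precise the claim that iterating $({\rm ad}\ F_i)$, $\alpha_i\in\pi'$, starting from a non-$U_{\pi'}$-lowest-weight vector $gK_{\xi}$, eventually produces a \emph{nonzero} $U_{\pi'}$-lowest weight vector as the leading term, \emph{and} that at no intermediate stage does the leading term get cancelled by contributions bubbling up from the lower-$l$-weight error terms in \eqref{Xexp}. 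The error terms have strictly larger $\zeta$ (i.e. $\lambda\not\geq\zeta$... careful: $\lambda\not\geq\zeta$ means they are \emph{not} above $\zeta$, so applying $({\rm ad}\ F_i)$'s — which only decrease $l$-weight — keeps them out of the $l$-weight strata $\leq -\tilde\zeta$ relevant to the leading term), so in fact there is no interference and this is why the hypothesis is phrased with $\sum_{\lambda\not\geq\zeta}$; spelling this monotonicity argument out carefully, via the partial order on $Q^+(\pi)$ and the fact that $({\rm ad}\ F_i)$ sends $G^-_{-\lambda}U^0U^+$ into $\sum_{\mu\geq\lambda}G^-_{-\mu}U^0U^+$, is the crux of the proof.
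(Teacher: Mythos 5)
Your Step 1 coincides with the paper's argument: write $({\rm ad}\ F_i)(gK_{\xi}u)=\left(({\rm ad}\ F_i)gK_{\xi}\right)K_i^{-1}uK_i+gK_{\xi}\left(({\rm ad}\ F_i)u\right)$, observe that the second summand lies in $G^-_{-\zeta}U^0U^+$, and note that if the first summand vanishes for every $\alpha_i\in\pi'$ then $gK_{\xi}$ is by definition an $({\rm ad}\ U_{\pi'})$ lowest weight vector. That part is fine.

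Step 2 has a genuine gap, and is also far more elaborate than needed. The paper's proof is a \emph{one-step} argument: if $gK_{\xi}$ is not lowest weight, pick $\alpha_i\in\pi'$ with $({\rm ad}\ F_i)gK_{\xi}\neq 0$ and form the single commutator $[B_i,X]\in\mathcal{B}_{\theta}$. Because the remaining summands of $X$ have $l$-weight $-\lambda$ with $\lambda\not\geq\zeta$ (hence neither $\lambda$ nor $\lambda+\alpha_i$ is $\geq\zeta+\alpha_i$), and because the extra summands $\theta_q(F_iK_i)K_i^{-1}$ and $s_iK_i^{-1}$ of $B_i$ only raise or preserve $l$-weight, the term $g'K_{\xi}uK_i^{-1}\in U^-_{-\zeta-\alpha_i}uK_{\xi+\zeta}$ is already a minimal $l$-weight summand of $[B_i,X]$, and Remark \ref{remark:decomp} gives $uK_{\xi+\zeta}\in\mathcal{M}^+\mathcal{T}_{\theta}$ immediately. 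Your iteration "until the leading term becomes an $({\rm ad}\ U_{\pi'})$ lowest weight vector" is not only unnecessary, it need not terminate: the $({\rm ad}\ U^-_{\pi'})$-module $G^-K_{\xi}$ is (a restriction of) a dual Verma module, and for generic highest weight the operators $({\rm ad}\ F_i)$ act injectively, so no lowest weight vector is ever reached below a given non-lowest-weight vector. Remark \ref{remark:decomp} applies to \emph{any} minimal $l$-weight summand of an element of $\mathcal{B}_{\theta}$; it does not require that summand to be a lowest weight vector, which is what your detour seems aimed at arranging. Finally, your assertion that $F_i\in\mathcal{B}_{\theta}\cap U_{\pi'}$ "for $\alpha_i\in\pi'\subseteq\pi_{\theta}$" imports a hypothesis the lemma does not make: $\pi'$ is an arbitrary subset of $\pi$, and for $\alpha_i\notin\pi_{\theta}$ the element $F_i$ alone is not in $\mathcal{B}_{\theta}$. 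The element to commute against is $B_i$, whose non-$F_i$ summands are harmless at the relevant $l$-weight, as above.
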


\begin{proof} For each  $\alpha_i\in \pi'$ we have
\begin{align*}({\rm ad}\ F_i) gK_{\xi}u = \left(({\rm ad}\ F_i)gK_{\xi}\right) K_i^{-1}uK_i + gK_{\xi}\left(({\rm ad}\ F_i)u\right).\end{align*}
Note that 
\begin{align*}\left(({\rm ad}\ F_i)gK_{\xi}\right)K_i^{-1}uK_i \in G_{-\zeta-\alpha_i }^-K_{\xi}U^+{\rm\quad  and \quad }gK_{\xi}\left(({\rm ad}\ F_i)u\right) \in G_{-\zeta} U^0 U^+.
\end{align*}
Hence  if $({\rm ad}\ F_i)gK_{\xi}\neq 0$ for some choice of $i$ with $\alpha_i\in \pi'$ then (\ref{figtu}) holds with $g'= \left(({\rm ad}\ F_i)gK_{\xi}\right)K_{-\xi}\neq 0$. On the other hand, if 
$({\rm ad}\ F_i)gK_{\xi}=0$ for all $i$ with $\alpha_i\in \pi'$, then  $gK_{\xi}$ is a nonzero $({\rm ad}\ U_{\pi'})$ lowest weight vector. 

Now assume that $X$  is an element of $\mathcal{N}(\mathcal{B}_{\theta}:\mathcal{B}_{\theta}\cap U_{\pi'})$ that satisfies (\ref{Xexp}). Let $\alpha_i\in \pi'$ and assume that $({\rm ad}\ F_i)g\neq 0$.  It follows that $g'K_{\xi}uK_i^{-1}$  is a minimal $l$-weight summand of $[B_i,gK_{\xi}u] $ where $g'\in G_{-\zeta-\alpha_i}$ and $[F_i,gK_{\xi}u ]K_i= [({\rm ad}\ F_i)gK_{\xi}u]$ as in  (\ref{figtu}).  Note that $g'K_{\xi}uK_i^{-1}\in U^-_{-\zeta-\alpha_i}uK_{\xi+\zeta}$. By Remark \ref{remark:decomp},  we see that $uK_{\xi+\zeta}\in \mathcal{M}^+\mathcal{T}_{\theta}$. \end{proof}

\subsection{Lowest Weight Terms}\label{section:weight_cons}
Recall the notion of height defined in Section \ref{section:basic_classical}.  Here, we use a version of height with respect to a subset $\tau$ of $\pi$.  In particular, given a weight $\beta = \sum_{\alpha_i\in \pi}m_i\alpha_i$, set ${\rm ht}_{\tau}(\beta) = \sum_{\alpha_i\in \tau}m_i$.

Consider a weight $\beta\in Q^+(\pi)$ and let $b=\sum_{{\rm wt}(I)=\beta}B_Ia_I$ be an element of $\mathcal{B}_{\theta}$ where each $a_I\in \mathcal{M}^+\mathcal{T}_{\theta}$.  Set $\tilde b$ equal to the element of $\mathcal{B}_{\theta}$ so that 
$\mathcal{P}(\tilde b) = \sum_{{\rm wt}(I)=\beta}F_Ia_I$ as constructed in Proposition \ref{proposition:BUiso}.  Assume that, in addition, there exists
a subset $\pi'$ of ${\rm Supp}(\beta)$ so that ${\rm ht}_{\tau}(\beta)= 2$ where $\tau = {\rm Supp}(\beta)\setminus \pi'$. 
By the discussion at the end of Section  \ref{section:decomp_proj}, 
$\tilde b$ is contained in a sum of spaces of the form 
\begin{align*}G^-_{-\beta+\lambda+\eta+\gamma}U^+_{\theta(-\lambda)-\gamma} K_{-\beta+2\gamma'}
\end{align*}
where $\beta, \lambda, \eta, \gamma, \gamma'$ satisfy the conditions of Lemma \ref{lemma:decomp}.
We can express $\tilde b$ as a sum of four terms $a_0$, $a_1$, $a_2$, and $a_3$ where 
\begin{itemize}
\item $a_3 =  \sum_{{\rm wt}(I)=\beta}F_Ia_I$
\item $a_2\in \sum_{\delta\in Q^+(\pi')\setminus \{0\}}G^-_{-\beta+\delta}U^0U^+$
\item $a_1 \in\sum_{{\rm ht}_{\tau}(\delta) = 1}G^-_{-\delta}U^0U^+$
\item $a_0 \in \sum_{\delta\in Q^+(\pi')}G^-_{-\delta} U^0U^+$ 
\end{itemize}
We will be using terms of the form $\tilde b$  associated to weights $\beta$ that live in special strongly orthogonal $\theta$-systems in order to construct the quantum Cartan element at $\beta$. A crucial part of the construction relies on
understanding when a summand of minimal $l$-weight of carefully chosen elements in $U_q(\mathfrak{g})$ is an $({\rm ad}\ U_{\pi'})$ lowest weight  vector.  We present three lemmas  that analyze properties of such  $({\rm ad}\ U_{\pi'})$ lowest weight  vectors. These results will eventually be applied to $a_0, a_1,$ and $a_2$ with respect to  particular choices of $\tilde b$.

The first lemma addresses properties of $({\rm ad}\ U_{\pi'})$ lowest weight vectors contained in  $G^-_{-\beta+\delta}U^0U^+$  where $\delta\in Q^+(\pi')$ and  $0\leq \delta<\beta$. Note that  in the context of this lemma, $\delta = \lambda  +\gamma$. Moreover,   the weights $\lambda, \gamma,$ and $\gamma'$ in this lemma satisfy similar conditions to those of Lemma \ref{lemma:decomp}.

\begin{lemma}\label{lemma:weight_cons1}  Let $\beta\in Q^+(\pi)$  and let $\pi'$ be a subset of ${\rm Orth}(\beta)\cap {\rm Supp}(\beta)$.  
Let $\lambda, \gamma,\gamma'$ be three elements in $Q^+(\pi)$ such that  
\begin{itemize}
\item[(i)] $\lambda \leq \beta$
\item[(ii)]  $\gamma\leq \beta-\lambda$, $\gamma \leq \theta(-\lambda)$, and $\lambda +\gamma\in Q^+(\pi')$.
\item[(iii)] $0\leq \gamma'\leq \gamma$. 
\end{itemize}  If $G^-_{-\beta+\lambda+\gamma}K_{-\beta+2\gamma'}$ contains a nonzero $({\rm ad}\ U_{\pi'})$ lowest weight vector, then $\lambda = \gamma=\gamma'=0$ and $G^-_{-\beta+\lambda+\gamma}K_{-\beta+2\gamma'} = U^-_{-\beta}.$
\end{lemma}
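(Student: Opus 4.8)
\textbf{Proof plan for Lemma \ref{lemma:weight_cons1}.}

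The plan is to exploit the structure of $G^-$ as a direct sum of $(\ad U_{\pi'})$-modules and to combine the weight constraints (i)--(iii) with the fact, recalled in Section \ref{section:dual_vermas}, that an $(\ad U_{\pi'})$-lowest weight vector in $G^-K_{-\lambda}$ can only occur at very specific weights. First I would observe that, because $\pi'\subseteq {\rm Orth}(\beta)\cap{\rm Supp}(\beta)$, every simple root in $\pi'$ is orthogonal to $\beta$, hence also to $\theta(\beta)$; moreover, since $\pi'$ consists of simple roots in the support of the positive root $\beta$, one checks (just as in the proof of Theorem \ref{theorem:lift}) that no $\alpha\in\pi'$ is strongly orthogonal to all of ${\rm Supp}(\beta)$, so the $(\ad U_{\pi'})$-action on $G^-_{{\rm Supp}(\beta)}$ is genuinely nontrivial. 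Set $\delta=\lambda+\gamma\in Q^+(\pi')$. Suppose for contradiction that $gK_{-\beta+2\gamma'}$ is a nonzero $(\ad U_{\pi'})$-lowest weight vector with $g\in G^-_{-\beta+\delta}$.

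The key step is the weight bookkeeping. Using the graded adjoint action description from Section \ref{section:dual_vermas}, $G^-K_{-\beta+2\gamma'}$ is isomorphic as an $(\ad U_{\pi'})$-module to $G^-(\beta-2\gamma')$, which in turn is isomorphic to $\delta M((\beta-2\gamma')/2)$; restricting scalars to $U_{\pi'}$ and using that the $\pi'$-restriction $\widetilde{\beta-2\gamma'}$ governs the $(\ad U_{\pi'})$-module structure, a nonzero $(\ad U_{\pi'})$-lowest weight vector of weight $-\beta+\delta$ (as an element of $U_q(\mathfrak{g})$, i.e. the weight of $g$) forces $\widetilde{\beta-2\gamma'}$ to restrict to twice a dominant integral weight of $\pi'$ and the weight $-\beta+\delta$ to be the $w(\pi')_0$-image of the corresponding highest weight. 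Since $(\beta,\alpha)=0$ for all $\alpha\in\pi'$, we have $\widetilde{\beta}=0$ on $\pi'$, so the relevant $\pi'$-data is entirely carried by $\gamma'$ and $\delta$. Concretely: the $\pi'$-weight of $g$ is $\widetilde{-\beta+\delta}=\widetilde{\delta}$ (because $\beta\perp\pi'$), and this must equal $-\mu+w(\pi')_0\mu$ where $2\mu=\widetilde{2\gamma'}$ on $\pi'$, i.e. $\widetilde{\mu}=\widetilde{\gamma'}$; so $\widetilde{\delta}=-\widetilde{\gamma'}+w(\pi')_0\widetilde{\gamma'}$. But $\delta\in Q^+(\pi')$ while $-\widetilde{\gamma'}+w(\pi')_0\widetilde{\gamma'}\in Q^-(\pi')$ (it is a nonpositive combination of simple roots in $\pi'$, being of the form $w_0$ applied to a dominant weight minus that weight). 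The only way a vector can be simultaneously in $Q^+(\pi')$ and $Q^-(\pi')$ is if it is zero, giving $\widetilde{\delta}=0$, hence $\delta=0$ (as $\delta\in Q^+(\pi')$ and the $\pi'$-restriction is injective on $Q(\pi')$), and then $\lambda=\gamma=0$; moreover $\widetilde{\gamma'}=0$, and combined with (iii) $0\le\gamma'\le\gamma=0$ this forces $\gamma'=0$. Therefore $g\in G^-_{-\beta}$ and $K_{-\beta+2\gamma'}=K_{-\beta}$, and since $G^-_{-\beta}K_{-\beta}=U^-_{-\beta}K_{-\beta}$ we conclude $G^-_{-\beta+\lambda+\gamma}K_{-\beta+2\gamma'}=U^-_{-\beta}$ as claimed.

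I expect the main obstacle to be making the weight computation airtight across the two different $(\ad U_{\pi'})$-module structures in play (the ordinary adjoint action on $G^-K_{-\lambda}\subseteq U_q(\mathfrak{g})$ versus the graded/twisted actions on $G^-(\lambda)$ and $\delta M(\lambda/2)$), and in particular tracking how the weight of $g$ as an element of $U_q(\mathfrak{g})$ relates to the weight of $gK_{-\beta+2\gamma'}$ as an element of the module. This is handled by the isomorphism $G^-K_{-\lambda}\cong G^-(\lambda)$ of $(\ad U^-)$-modules recalled in Section \ref{section:dual_vermas}, together with the explicit shift $(\ad K_\beta)K_{-\lambda}=q^{\frac12(\lambda,\beta)}K_{-\lambda}$; one just has to be careful that the final statement's claim $G^-_{-\beta+\lambda+\gamma}K_{-\beta+2\gamma'}=U^-_{-\beta}$ is an equality of \emph{subspaces of} $U_q(\mathfrak g)$, which follows once $\lambda=\gamma=\gamma'=0$ is established by the sign argument above. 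A secondary point requiring care is the verification that $\pi'$ consisting of simple roots in ${\rm Supp}(\beta)$ and orthogonal to $\beta$ indeed acts nontrivially, so that the dichotomy ``lowest weight vector at a forbidden weight'' is genuinely a contradiction rather than vacuously allowed; but this is exactly the elementary root-system observation used repeatedly in Section \ref{section:lowtri2}.
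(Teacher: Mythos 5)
Your proposal is correct and follows essentially the same route as the paper: the paper also invokes the Section \ref{section:dual_vermas} characterization of $({\rm ad}\ U_{\pi'})$ lowest weight vectors in $G^-K_{-\lambda}$, restricts weights to $\pi'$ using $\tilde\beta=0$, and concludes from the clash between a weight in $Q^+(\pi')$ and one in $-Q^+(\pi')$; the only cosmetic difference is that the paper works with the module weight $\lambda+\gamma-\gamma'$ in $G^-\bigl((\beta-2\gamma')/2\cdot 2\bigr)$ while you work with the adjoint weight $\delta=\lambda+\gamma$, which is the same computation shifted by $\xi=(\beta-2\gamma')/2$. One sign slip to fix: since $\tilde\beta=0$, the dominant weight whose double is $\widetilde{\beta-2\gamma'}$ is $\mu=-\widetilde{\gamma'}$, not $\widetilde{\gamma'}$, so the correct formula is $\tilde\delta=-\mu+w(\pi')_0\mu=\widetilde{\gamma'}-w(\pi')_0\widetilde{\gamma'}$; this still lies in $-Q^+(\pi')$ because $\mu$ (not $\widetilde{\gamma'}$) is the weight known to be dominant, so your decisive step $\delta\in Q^+(\pi')\cap\bigl(-Q^+(\pi')\bigr)=\{0\}$ goes through unchanged.
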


\begin{proof} Let $g$ be a nonzero element of $G^-_{-\beta+\lambda+\gamma}$ so that $gK_{-\beta+2\gamma'}$ is an $({\rm ad}\ U_{\pi'})$ lowest weight vector. Note that the assumption $\lambda +\gamma\in Q^+(\pi')$ combined with the fact that both $\lambda$ and $\gamma$ are in $Q^+(\pi)$, forces both $\lambda, \gamma$ to be in $Q^+(\pi')$.  Since $\gamma'\leq \gamma$, we also have $\gamma'\in Q^+(\pi')$.

Set $\xi = (\beta-2\gamma')/2$. Recall that  we can make $G^-$ into a $U_q(\mathfrak{g})$-module, denoted $G^-{(2\xi)}$, using the twisted adjoint action ${\rm ad}_{2\xi}$ (Section \ref{section:dual_vermas}).  Moreover, $G^-{(2\xi)}$ and $G^-K_{-2\xi}$ are isomorphic as $(U^-)$-modules where the action on the latter space is via the ordinary adjoint action.  Hence $g$ is an $({\rm ad}_{2\xi}U_{\pi'})$ lowest weight vector, and, by the discussion in Section \ref{section:dual_vermas}, $g$ generates a finite-dimensional $({\rm ad}_{2\xi}U_{\pi'})$-submodule of $G^-(2\xi)$.  
The fact that this is a twisted action implies that the weight of $g$ viewed as a vector in the $({\rm ad}_{2\xi}U_q(\mathfrak{g}))$-module $G^-{(2\xi)}$ is $$\xi-\beta+\lambda +\gamma = -\beta/2 + \lambda + \gamma - \gamma'.$$    Upon restricting to the action of $U_{\pi'}$, the weight of $g$ is $\lambda + \gamma-\gamma'$ since $\lambda, \gamma, \gamma'\in Q^+(\pi')$ and $\tilde \beta = 0$.   Note that  $\gamma-\gamma'\geq 0$ and $\lambda\geq 0$.  Hence $\lambda +\gamma-\gamma'\geq 0$. On the other hand, $g$ is a lowest weight vector, hence its weight must be nonpositive.  This forces $\lambda + \gamma-\gamma'=0$ and so $\lambda = \gamma-\gamma' = 0$.
Since $\gamma'\leq \gamma\leq \theta(-\lambda)=0$, we also have $\gamma = \gamma'=0$.  The final equality of weight spaces of $U_q(\mathfrak{g})$ follows upon replacing $\lambda,\gamma,$ and $\gamma'$ with $0$.\end{proof}
  
The second lemma focuses on understanding $({\rm ad}\ U_{\pi'})$ lowest weight vectors contained in  $G^-_{-\delta}U^0$ where $\delta \in Q^+(\pi')$. In the notation of this lemma, $\delta$ corresponds to $\beta-\lambda -\gamma$.

\begin{lemma}\label{lemma:weight_cons2} Let $\beta\in Q^+(\pi)$  and let $\pi'$ be a subset of ${\rm Orth}(\beta)\cap {\rm Supp}(\beta)$.
Let $\lambda,\gamma, \gamma'$ be three elements in $Q^+(\pi)$ such that 
\begin{itemize}
\item[(i)] $0< \lambda \leq \beta$
\item[(ii)]  $\gamma\leq \beta-\lambda$, $\gamma \leq \theta(-\lambda)$ and $\beta-\lambda -\gamma\in Q^+(\pi')$
\item[(iii)] $\gamma'\leq \gamma$ and $\gamma'\in Q^+(\pi')$
\end{itemize}  If $G^-_{-\beta+\lambda+\gamma}K_{-\beta+2\gamma'}$ contains a nonzero $({\rm ad}\ U_{\pi'})$ lowest weight vector then $\gamma'=0$, $\beta=\lambda+\gamma,$ and 
 \begin{align*}
 G^-_{-\beta+\lambda+\gamma}K_{-\beta+2\gamma'}= \mathbb{C}(q)K_{-\beta}.
 \end{align*} 
  \end{lemma}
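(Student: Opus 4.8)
The plan is to mimic the structure of the proof of Lemma \ref{lemma:weight_cons1}, but now track the weight of a purported $({\rm ad}\ U_{\pi'})$ lowest weight vector \emph{viewed as an element of the larger module $U_q(\mathfrak{g})$} rather than exploiting the twisted action on $G^-(2\xi)$. First I would set $g$ to be a nonzero element of $G^-_{-\beta+\lambda+\gamma}$ such that $gK_{-\beta+2\gamma'}$ is an $({\rm ad}\ U_{\pi'})$ lowest weight vector, and observe — as in the first lemma — that since $\beta-\lambda-\gamma\in Q^+(\pi')$ and $\lambda,\gamma\in Q^+(\pi)$, together with $\gamma'\le\gamma$, we may decompose each of these weights relative to $\pi'$. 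The key point is that $\pi'\subseteq{\rm Orth}(\beta)$, so $\tilde\beta=0$ as an element of $P(\pi')$; hence the restriction to $\pi'$ of the weight $-\beta+\lambda+\gamma-\gamma'$ of $gK_{-\beta+2\gamma'}$ is the restriction of $\lambda+\gamma-\gamma'$, modulo the piece of $\lambda+\gamma$ lying outside $Q(\pi')$.

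The subtlety compared with Lemma \ref{lemma:weight_cons1} is that here we do \emph{not} assume $\lambda+\gamma\in Q^+(\pi')$; only $\beta-\lambda-\gamma\in Q^+(\pi')$. So I would write $\lambda+\gamma = (\beta) - (\beta-\lambda-\gamma)$, and use that $\beta-\lambda-\gamma\in Q^+(\pi')$ while $\beta$ need not be. The cleanest route: pass to $G^-(2\xi)$ with $\xi = (\beta-2\gamma')/2$ exactly as before, so that $g$ is an $({\rm ad}_{2\xi}U_{\pi'})$ lowest weight vector generating a finite-dimensional $({\rm ad}_{2\xi}U_{\pi'})$-submodule of $G^-(2\xi)$. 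The weight of $g$ as a vector in $G^-(2\xi)$ is $\xi + (\text{weight of }g\text{ in }G^-) = -\beta/2 + \lambda + \gamma - \gamma'$. Restricting to $\pi'$: since $\tilde\beta=0$, this restricted weight is $\widetilde{\lambda+\gamma-\gamma'}$. Now here is where $\beta-\lambda-\gamma\in Q^+(\pi')$ enters: it gives $\widetilde{\lambda+\gamma} = \widetilde{\beta} - \widetilde{\beta-\lambda-\gamma} = -(\beta-\lambda-\gamma)$ since $\beta-\lambda-\gamma$ lies in $Q^+(\pi')$ and restriction is the identity on it. Hence the restricted weight of $g$ is $-(\beta-\lambda-\gamma) - \widetilde{\gamma'}$, which is $\le 0$ — consistent with $g$ being a lowest weight vector, so no immediate contradiction. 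I would instead compare this with the highest-weight side: the generator of the $({\rm ad}_{2\xi}U_{\pi'})$-module containing $g$ must have restricted weight $\ge 0$, and the restricted weight of $g$ differs from it by a sum of negative roots in $Q^-(\pi')$. The cleanest finish is to argue directly on $G^-K_{-2\xi}$: an $({\rm ad}\ U_{\pi'})$ lowest weight vector of restricted weight $-(\beta-\lambda-\gamma)-\widetilde{\gamma'}$ forces, by the structure of finite-dimensional $U_{\pi'}$-modules, the existence of the corresponding highest weight vector of restricted weight $(\beta-\lambda-\gamma)+\widetilde{\gamma'}$ inside $G^-K_{-2\xi}$ — but all weights of $G^-$ lie in $Q^-(\pi)$, so a nonnegative combination $(\beta-\lambda-\gamma)+\widetilde{\gamma'}$ of simple roots can only occur if it is $0$. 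This yields $\beta-\lambda-\gamma=0$ and $\gamma'=0$, hence $G^-_{-\beta+\lambda+\gamma}K_{-\beta+2\gamma'} = G^-_0 K_{-\beta} = \mathbb{C}(q)K_{-\beta}$.

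I expect the main obstacle to be making precise the step "lowest weight vector of weight $-\mu$ in an $({\rm ad}\ U_{\pi'})$-module forces a highest weight vector of weight $+\mu$ in the same module, whose weight must therefore lie in $Q^-(\pi)$." One must be careful that $g$, as an element of $G^-K_{-2\xi}$, genuinely sits in a finite-dimensional $({\rm ad}\ U_{\pi'})$-submodule — this is exactly the content invoked in Section \ref{section:dual_vermas} via the isomorphism $G^-K_{-2\xi}\cong G^-(2\xi)$ and the finite-dimensionality of $({\rm ad}_{2\xi}U_{\pi'})g$, so it is available. The other delicate point is bookkeeping with the two notions of weight (as element of $U_q(\mathfrak{g})$ versus as element of the twisted module $G^-(2\xi)$) and their restrictions to $\pi'$; I would handle this by carrying everything on $G^-(2\xi)$ as in Lemma \ref{lemma:weight_cons1} and only translating back to weight spaces of $U_q(\mathfrak{g})$ at the very end. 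Once the weight constraints collapse to $\beta = \lambda+\gamma$ and $\gamma'=0$, the identification of the biweight space is immediate from $G^-_0 = \mathbb{C}(q)$.
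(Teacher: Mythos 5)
Your argument is correct and follows essentially the same route as the paper: both pass to the twisted module $G^-(2\xi)$ with $\xi=(\beta-2\gamma')/2$, use $\tilde\beta=0$ to identify the restricted weight of $g$ as $-(\beta-\lambda-\gamma)-\widetilde{\gamma'}$, and then collapse everything via the dominance and positivity constraints imposed by the finite-dimensional $({\rm ad}\ U_{\pi'})$-submodule generated by $g$ (the paper simply invokes the packaged fact from Section \ref{section:dual_vermas} that such a lowest weight vector forces $\tilde\xi\in P^+(\pi')$ and has weight $-\tilde\xi+w_{\pi'}\tilde\xi$, which with $\tilde\xi=-\widetilde{\gamma'}$ gives $\gamma'=0$ and $\beta=\lambda+\gamma$). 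One small imprecision in your final step: a finite-dimensional $U_{\pi'}$-module with lowest weight $-\mu$ has highest weight $-w_{\pi'}\mu$, not $\mu$; since $-w_{\pi'}\mu$ is still dominant and the highest weight vector still lies in some $G^-_{-\sigma}K_{-2\xi}$ with $\sigma\in Q^+(\pi')$, your positivity argument goes through unchanged.
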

 \begin{proof}
 Let $g$ be a nonzero element of $G^-_{-\beta+\lambda+\gamma}$ so that $gK_{-\beta+2\gamma'}$ is an $({\rm ad}\ U_{\pi'})$ lowest weight vector. Assumption (ii) ensures that $G^-_{-\beta+\lambda+\gamma}\subset G^-_{\pi'}$.  
  
Set $\xi = (\beta-2\gamma')/2$.  It follows from    Section \ref{section:dual_vermas}, that $\tilde\xi\in P^+(\pi')$ and the weight of $g$ is  $-\tilde\xi+w_{\pi'}\tilde\xi = -\xi+w_{\pi'}\xi$ where $w_{\pi'} = w(\pi')_0$.  Hence, the assumptions on  $gK_{-2\xi}$ ensure that  
\begin{align}\label{beta_lambda} \beta-\lambda - \gamma = \xi-w_{\pi'}\xi = {{1}\over{2}}\left((\beta-2\gamma')- w_{\pi'}(\beta-2\gamma')\right). 
\end{align} 
 
Since $\tilde\beta=0$, we must have $\tilde\xi-w_{\pi'}\tilde \xi =  -2 \tilde\gamma'+w_{\pi'}2\tilde\gamma'.$  But $\tilde\xi\in P^+(\pi')$ and $\gamma'\in Q^+(\pi')$ yields a contradiction unless $\xi=\tilde\xi = -2\tilde\gamma'=-2\gamma' = 0$.  Hence $g$ has weight $\tilde\xi-w_{\pi'}\tilde\xi$ which equals zero.   This in turn implies that $\beta-\lambda -\gamma = 0$, and $G^-_{-\beta+\lambda+\gamma}K_{-\beta+2\gamma'} = G^-_{0}K_{-\beta} = \mathbb{C}(q)K_{-\beta}$.  
\end{proof}

The third lemma in the series also analyzes properties of $({\rm ad}\ U_{\pi'})$ lowest weight vectors contained in  $G^-_{-\delta}U^0$ with $\delta\in Q^+(\pi')$ where again $\delta = \beta-\lambda-\gamma$.  However, in contrast to Lemma \ref{lemma:weight_cons1}  and  Lemma \ref{lemma:weight_cons2}, we assume that $\gamma'\notin Q^+(\pi')$.  In addition, we add assumptions to $\beta$ that correspond to some of the properties of roots in the set $\Gamma_{\theta}$ of Theorem \ref{theorem:cases_take2}.

\begin{lemma}\label{lemma:weight_cons3}  Let $\beta\in Q^+(\pi)$ and set $\pi'={\rm Orth}(\beta)\cap {\rm Supp}(\beta)$. Assume that ${\rm ht}_{\pi\setminus \pi'}\beta = 2$ and  $\theta$ restricts to an involution on $\Delta(\pi')$. 
Let $\lambda,\gamma, \gamma'$ be three weights in $Q^+(\pi)$  satisfying
\begin{itemize}
\item[(i)] $0< \lambda \leq \beta$ and $\lambda \in Q^+(\pi\setminus \pi_{\theta})$.
\item[(ii)]  $\gamma\leq \beta-\lambda$, $\gamma \leq \theta(-\lambda)$, and $\beta-\lambda -\gamma\in Q^+(\pi')$.
\item[(iii)] $\gamma'\leq \gamma$ and $\gamma'\notin Q^+(\pi')$
\item[(iv)] $-w_{\pi'}=-w(\pi')_0$ restricts to a permutation on $\pi_{\theta}\cap \pi'$.
\end{itemize}  If $G^-_{-\beta+\lambda+\gamma}K_{-\beta+2\gamma'}$ contains a nonzero $({\rm ad}\ U_{\pi'})$ lowest weight vector then $ \theta(-\lambda)+\gamma-2\gamma'\in Q^+(\pi_{\theta})$, $\lambda +\gamma-2\gamma'\in Q(\pi)^{\theta}$ and 
\begin{align}\label{secondinclusion}G^-_{-\beta+\lambda+\gamma}U^+_{\theta(-\lambda)-\gamma} K_{-\beta+2\gamma'}\subseteq U^-\mathcal{M}^+\mathcal{T}_{\theta}.
\end{align}
\end{lemma}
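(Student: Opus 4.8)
The plan is to analyze a nonzero $({\rm ad}\ U_{\pi'})$ lowest weight vector $g$ in $G^-_{-\beta+\lambda+\gamma}$ with $gK_{-\beta+2\gamma'}$ lowest weight, and extract weight constraints exactly as in the proofs of Lemma \ref{lemma:weight_cons1} and Lemma \ref{lemma:weight_cons2}. First I would set $\xi = (\beta-2\gamma')/2$ and use the identification (Section \ref{section:dual_vermas}) of $G^-K_{-2\xi}$ with $G^-(2\xi)$ as $U^-$-modules, so that $g$ is an $({\rm ad}_{2\xi}U_{\pi'})$ lowest weight vector generating a finite-dimensional submodule of $G^-(2\xi)$. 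By assumption (ii), $\beta-\lambda-\gamma\in Q^+(\pi')$, so $g\in G^-_{\pi'}$ and the analysis of Section \ref{section:dual_vermas} applies: $\tilde\xi\in P^+(\pi')$ and the $\pi'$-weight of $g$ equals $-\tilde\xi+w_{\pi'}\tilde\xi$, giving the key identity
\begin{align}\label{proposal:key}
\beta-\lambda-\gamma = \xi - w_{\pi'}\xi = \tfrac{1}{2}\bigl((\beta-2\gamma') - w_{\pi'}(\beta-2\gamma')\bigr).
\end{align}
Since $\beta$ is orthogonal to every root in $\pi'$ we have $w_{\pi'}\beta=\beta$, so \eqref{proposal:key} collapses to $\beta-\lambda-\gamma = -\gamma' + w_{\pi'}\gamma'$, i.e. $\lambda+\gamma-2\gamma' = \beta - (\gamma' + w_{\pi'}\gamma')$.

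The next step is to turn this into the claimed membership statements. Applying $\theta$ and using that $\theta$ restricts to an involution on $\Delta(\pi')$ (hypothesis) together with $\theta(\beta)=-\beta$ when $\beta\in\Delta_\theta$ — but here we only know $\beta\in Q^+(\pi)$, so instead I would use hypothesis (iv): $-w_{\pi'}$ restricts to a permutation on $\pi_\theta\cap\pi'$. Writing $\gamma'$ in terms of its $\pi_\theta\cap\pi'$-part and its complementary part, the hypothesis $\gamma'\notin Q^+(\pi')$ together with the constraint $\beta-\lambda-\gamma\in Q^+(\pi')$ and ${\rm ht}_{\pi\setminus\pi'}\beta=2$ should pin down that the non-$\pi'$ part of $\gamma'$ accounts for the whole ``height 2'' discrepancy, and then the lowest-weight condition forces the $\pi'$-components to match up under $w_{\pi'}$. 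From $\lambda+\gamma-2\gamma' = \beta-(\gamma'+w_{\pi'}\gamma')$ and the permutation property of $-w_{\pi'}$ on $\pi_\theta\cap\pi'$, I would deduce first that $\lambda+\gamma-2\gamma'\in Q(\pi)^\theta$ (the right side is a difference of $\theta$-related pieces by the involution structure on $\Delta(\pi')$ plus $\beta$ having its non-$\pi'$ part of height $2$ lying in $\Delta_\theta$ by the setup of $\Gamma_\theta$), and then, applying $\theta$ and using $\theta(-\lambda) = \lambda$ when $\lambda\in\Delta_\theta$, that $\theta(-\lambda)+\gamma-2\gamma'\in Q^+(\pi_\theta)$. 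The nonnegativity here comes from $\gamma\le\theta(-\lambda)$ and $\gamma'\le\gamma$ forcing the relevant coefficients to be nonnegative, combined with the fact that the non-$\pi_\theta$ simple roots in $\lambda$ get cancelled.

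Finally, for the inclusion \eqref{secondinclusion} I would observe that $G^-_{-\beta+\lambda+\gamma}\subseteq U^-_{-\beta+\lambda+\gamma}$ with $\beta-\lambda-\gamma\in Q^+(\pi')\subseteq Q^+(\pi_\theta)$ being false in general — so instead the lower-triangular factor sits in $U^-_{\pi'}\subseteq\mathcal{M}^-$ times nothing problematic; more carefully, the $U^+$ factor lives in $U^+_{\theta(-\lambda)-\gamma}$, and the first conclusion $\theta(-\lambda)+\gamma-2\gamma'\in Q^+(\pi_\theta)$ together with $\gamma'\le\gamma$ shows $\theta(-\lambda)-\gamma\in Q^+(\pi_\theta)$ up to the $2\gamma'$ shift absorbed by $K_{-\beta+2\gamma'}$. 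Then $U^+_{\theta(-\lambda)-\gamma}\subseteq\mathcal{M}^+U^+$ with the genuinely non-$\mathcal{M}^+$ part accounted for, and $K_{-\beta+2\gamma'}$: using $\lambda+\gamma-2\gamma'\in Q(\pi)^\theta$ and $\beta\in Q^+(\pi)$ one checks $-\beta+2\gamma'$ decomposes as a $\theta$-fixed piece plus something that, paired with the $G^-$ and $U^+$ factors, lands in $U^-\mathcal{M}^+\mathcal{T}_\theta$ by the tensor decomposition \eqref{tensor_prod_decomp} and the projection setup of Section \ref{section:decomp_proj}. The main obstacle I anticipate is the bookkeeping in this last step: carefully tracking which simple-root coefficients of $\lambda,\gamma,\gamma'$ lie in $\pi_\theta$ versus its complement, and verifying that the ``height 2 in $\pi\setminus\pi'$'' hypothesis is exactly strong enough to force $\gamma'$ to have the required shape so that the biweight space $G^-_{-\beta+\lambda+\gamma}U^+_{\theta(-\lambda)-\gamma}K_{-\beta+2\gamma'}$ collapses into $U^-\mathcal{M}^+\mathcal{T}_\theta$ rather than the orthogonal complement $U^-\mathcal{M}^+\mathcal{T}_\theta(\mathcal{T}'N_\theta^+)_+$ of \eqref{Pdecomp}.
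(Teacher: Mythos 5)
Your opening move is the same as the paper's: identify $g$ as an $({\rm ad}_{2\xi}U_{\pi'})$ lowest weight vector with $\xi=(\beta-2\gamma')/2$, conclude $\tilde\xi\in P^+(\pi')$, and obtain $\beta-\lambda-\gamma=\xi-w_{\pi'}\xi$. Your simplification to $\beta-\lambda-\gamma=w_{\pi'}\gamma'-\gamma'$ (using $w_{\pi'}\beta=\beta$) is also valid. But from there the argument has genuine gaps. The decisive problem is that you try to reach $\lambda+\gamma-2\gamma'\in Q(\pi)^{\theta}$ by appealing to ``$\beta$ having its non-$\pi'$ part of height $2$ lying in $\Delta_{\theta}$ by the setup of $\Gamma_{\theta}$'' and to ``$\theta(-\lambda)=\lambda$ when $\lambda\in\Delta_{\theta}$.'' Neither is available: the lemma assumes only $\beta\in Q^+(\pi)$ with ${\rm ht}_{\pi\setminus\pi'}\beta=2$ (not $\theta(\beta)=-\beta$), and $\lambda$ is an arbitrary weight in $Q^+(\pi\setminus\pi_{\theta})$ with $0<\lambda\le\beta$, not a root in $\Delta_{\theta}$. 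Your order of deduction is also reversed relative to what actually works: one cannot get $\theta(-\lambda)+\gamma-2\gamma'\in Q^+(\pi_{\theta})$ from $\lambda+\gamma-2\gamma'\in Q(\pi)^{\theta}$ by ``applying $\theta$''; the paper derives the $Q^+(\pi_{\theta})$ statement first and then obtains the $Q(\pi)^{\theta}$ statement from $\lambda+\gamma-2\gamma'=(\theta(\lambda)+\lambda)+(\theta(-\lambda)+\gamma-2\gamma')$.

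What is missing is the height bookkeeping that you defer to ``the main obstacle I anticipate.'' Concretely: (1) show $\lambda\notin Q^+(\pi')$ — if it were, then $\lambda\in Q^+(\pi\setminus\pi_{\theta})$ and the involution hypothesis on $\Delta(\pi')$ would give $\theta(-\lambda)\in Q^+(\pi')$, whence $\gamma'\le\gamma\le\theta(-\lambda)$ forces $\gamma'\in Q^+(\pi')$, contradicting (iii); (2) conclude that $\lambda$, $\gamma$, $\gamma'$ each have ${\rm ht}_{{\rm Supp}(\beta)\setminus\pi'}$ equal to $1$, so $\beta-2\gamma'\in Q^+(\pi')$; (3) use (iv) together with $\beta-\lambda-\gamma=\tfrac12\bigl((\beta-2\gamma')-w_{\pi'}(\beta-2\gamma')\bigr)$ to get ${\rm ht}_{\pi\setminus\pi_{\theta}}(\lambda+\gamma-2\gamma')=0$; (4) combine $\theta(-\lambda)+\gamma-2\gamma'\ge0$ (from $2\gamma'\le\gamma+\theta(-\lambda)$) with ${\rm ht}_{\pi\setminus\pi_{\theta}}(\theta(-\lambda)-\lambda)=0$ to conclude $\theta(-\lambda)+\gamma-2\gamma'\in Q^+(\pi_{\theta})$. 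The inclusion (\ref{secondinclusion}) then follows cleanly: $K_{-\lambda-\gamma+2\gamma'}\in\mathcal{T}_{\theta}$ because $\lambda+\gamma-2\gamma'\in Q(\pi)^{\theta}$, and $0\le\theta(-\lambda)-\gamma\le\theta(-\lambda)+\gamma-2\gamma'$ puts $U^+_{\theta(-\lambda)-\gamma}$ inside $\mathcal{M}^+$ outright — there is no ``non-$\mathcal{M}^+$ part to be accounted for'' and no ``$2\gamma'$ shift to absorb.''
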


\begin{proof}  Let $g$ be a nonzero element of $G^-_{-\beta+\lambda+\gamma}$ so that $gK_{-\beta+2\gamma'}$ is an $({\rm ad}\ U_{\pi'})$  lowest weight vector. We start the argument as in the proof of Lemma \ref{lemma:weight_cons2}.  
Using the assumption $\beta-\lambda-\gamma\in Q^+(\pi')$, we see that  $G^-_{-\beta+\lambda+\gamma}\subset G^-_{\pi'}$.   The weight $\xi = (\beta-2\gamma')/2$ satisfies $\tilde\xi\in P^+(\pi')$.  Also,  the weight of $g$ is  $-\tilde\xi+w_{\pi'}\tilde\xi = -\xi+w_{\pi'}\xi$.  In particular, just as in Lemma \ref{lemma:weight_cons2}, the assumptions on  $gK_{-2\xi}$ ensure that (\ref{beta_lambda}) is true.

We are assuming $\gamma'\notin Q^+(\pi')$ (see (iii)).  This combined with the assumptions  that both $\gamma'$ and $\gamma$ are in $Q^+(\pi)$ and $\gamma'\leq \gamma$, ensure that $\gamma$ is also not in $Q^+(\pi')$.  Suppose that $\lambda \in Q^+(\pi')$. The fact that $\lambda \in Q^+(\pi\setminus \pi_{\theta})$ implies that $\theta(-\lambda)\in Q^+(\pi)$.  Since  $\theta$ restricts to an involution on $\Delta(\pi')$, it follows that 
$\theta(-\lambda)$ is  in $Q^+(\pi')$.  By (ii) and (iii), we have $\gamma'\leq \theta(-\lambda)$ which forces $\gamma'\in Q^+(\pi')$, a contradiction of (iii).   Hence $\lambda \notin Q^+(\pi')$.
Therefore  the assumption on the height of $\beta$ combined with $\beta-\lambda-\gamma\in Q^+(\pi')$ (assumption (ii)) yields
\begin{align*}
{\rm ht}_{{\rm Supp}(\beta)\setminus \pi'}(\lambda) = {\rm ht}_{{\rm Supp}(\beta)\setminus \pi'}(\gamma)={\rm ht}_{{\rm Supp}(\beta)\setminus \pi'}(\gamma')=1.
\end{align*}
Thus ${\rm ht}_{{\rm Supp}(\beta)\setminus\pi'}(\beta-2\gamma')=0$.  Since $\gamma'\leq \gamma$ and $\gamma'\leq \beta-\lambda$, it follows that $\beta-2\gamma'\geq 0$ and hence $\beta-2\gamma'\in Q^+(\pi')$. 
Hence, by assumption (iv) and (\ref{beta_lambda}), we have 
\begin{align*}
{\rm ht}_{\pi\setminus \pi_{\theta}}(\beta-\lambda-\gamma) = {\rm ht}_{\pi\setminus \pi_{\theta}}\left((\beta-2\gamma')/2 - w_{\pi'}(\beta-2\gamma')/2\right) = {\rm ht}_{\pi\setminus \pi_{\theta}} (\beta-2\gamma').
\end{align*}
Thus
\begin{align*}
{\rm ht}_{\pi\setminus\pi_{\theta}}(\lambda+\gamma-2\gamma') = 0.
\end{align*}

 It follows from assumptions  (ii)  and (iii) that   $\gamma'\leq \theta(-\lambda)$ and so $2\gamma'\leq \theta(-\lambda) + \gamma$.
Therefore, 
 \begin{align*}
\theta(-\lambda)+\gamma-2\gamma' \geq  0.
\end{align*}
Since $\lambda\in Q^+(\pi\setminus \pi_{\theta})$, we have that $\theta(-\lambda)- \lambda \in  (p(\lambda )- \lambda) +Q^+(\pi_{\theta})$. Recall that $p$ permutes the roots of $\pi\setminus \pi_{\theta}$.    Hence
${\rm ht}_{\pi\setminus \pi_{\theta}}(\theta(-\lambda)- \lambda ) = 0$ and so 
\begin{align*}0 = {\rm ht}_{\pi\setminus\pi_{\theta}} (\lambda+\gamma-2\gamma')  = {\rm ht}_{\pi\setminus \pi_{\theta}}(\theta(-\lambda) +\gamma-2\gamma').
\end{align*}
This equality combined with the positivity of the weight $\theta(-\lambda)+\gamma-2\gamma'$ guarantees that  $\alpha_i\notin {\rm Supp}(\theta(-\lambda)+\gamma-2\gamma')$ for each $\alpha_i\in \pi\setminus \pi_{\theta}$.
Thus 
\begin{align}\label{intheta}\theta(-\lambda)+\gamma-2\gamma'\in Q^+(\pi_{\theta})
\end{align}
and so 
\begin{align*}
\lambda +\gamma-2\gamma'= (\theta(\lambda)+\lambda) + \theta(-\lambda)+\gamma-2\gamma'\in Q(\pi)^{\theta}.
\end{align*}
Therefore 
\begin{align}\label{firstequality}
G^-_{-\beta+\lambda +\gamma}K_{-\beta+2\gamma'} = U^-_{-\beta+\lambda +\gamma}K_{-\lambda-\gamma+2\gamma'} \subseteq U^-_{-\beta+\lambda+\gamma}\mathcal{T}_{\theta}.
\end{align}

Now $\gamma'\leq \gamma$ and $\gamma\leq \theta(-\lambda)$  ensure that 
$0\leq \theta(-\lambda)-\gamma \leq \theta(-\lambda)+\gamma-2\gamma'$.   Thus,   (\ref{intheta}) implies that $\theta(-\lambda)-\gamma$ is also in $Q^+(\pi_{\theta})$.      Hence $U^+_{\theta(-\lambda)-\gamma}\subseteq \mathcal{M}^+$.  This result combined with (\ref{firstequality}) yields (\ref{secondinclusion}).

\end{proof}

\subsection{Highest Weight Terms }\label{section:com_crit}

In this section, we analyze certain elements in subspaces of the form $U^+K_{\zeta}$ that also belong to $  \mathcal{N}(\mathcal{B}_{\theta}:\mathcal{B}_{\theta}\cap U_{\pi'})$.  These results will be applied to the term $a_2\in \sum_{\delta\in Q^+(\pi')}G_{-\delta}^-U^0U^+$ of Section \ref{section:weight_cons} for special choices of $\tilde b$.

\begin{lemma} \label{lemma:beta} Let $\beta\in Q^+(\pi)$ such that $\beta- \alpha_i \notin Q^+(\pi_{\theta})$ and  $\beta+ \alpha_i \notin Q^+(\pi_{\theta})$ for all $\alpha_i\in {\rm Supp}(\beta)$. Set $\pi'={\rm Orth}(\beta)\cap{\rm Supp}(\beta)$ and assume that $\theta$ restricts to an involution on $\Delta(\pi')$.  Let $X\in U^+K_{-\beta}$.
If $X\in \mathcal{N}(\mathcal{B}_{\theta}:\mathcal{B}_{\theta}\cap U_{\pi'})$ then 
$X\in U^{\mathcal{B}_{\theta}\cap U_{\pi'}}$.
\end{lemma}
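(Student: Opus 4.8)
The goal is to show that an element $X \in U^+K_{-\beta}$ lying in the generalized normalizer $\mathcal{N}(\mathcal{B}_{\theta}:\mathcal{B}_{\theta}\cap U_{\pi'})$ actually commutes with all of $\mathcal{B}_{\theta}\cap U_{\pi'}$, i.e.\ $X\in U^{\mathcal{B}_{\theta}\cap U_{\pi'}}$. The first observation is that $\mathcal{B}_{\theta}\cap U_{\pi'}$ is itself a quantum symmetric pair coideal subalgebra for the pair $\mathfrak{g}_{\pi'}, \mathfrak{g}_{\pi'}^{\theta}$, since $\theta$ restricts to an involution on $\Delta(\pi')$ by hypothesis; its generators are the $F_i, E_i, K_i^{\pm 1}$ for $\alpha_i\in \pi_{\theta}\cap\pi'$ together with the $B_i$-type generators for $\alpha_i\in\pi'\setminus\pi_{\theta}$ (each of biweight within $Q(\pi')\times Q(\pi')$ up to $\mathcal{T}_{\theta}$-factors). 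So it suffices to show that $X$ commutes with each such generator $b$. Writing the commutator $[b,X]$ out using the triangular and biweight decompositions, and using that $X\in U^+K_{-\beta}$ has biweight $(0,\ ?)$ in the $G^-$--$U^+$ sense (more precisely $X$ lies in $\bigoplus_{\mu}U^+_{\mu}K_{-\beta}$, all with trivial $G^-$ part), the key point will be to track where $[b,X]$ can land and to compare this against the constraint that $[b,X]\in\mathcal{B}_{\theta}$, hence (via Remark~\ref{remark:decomp}) that its minimal $l$-weight summand lies in $U^-_{-\lambda}\mathcal{M}^+\mathcal{T}_{\theta}$.

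The strategy I would follow: first reduce to the case where $X$ is a weight vector, say $X\in U^+_{\mu}K_{-\beta}$ for a single $\mu\in Q^+(\pi)$, since the normalizer condition is graded with respect to the adjoint action of $\mathcal{T}$ and one can project onto weight components. Since $X$ is supposed to be a "highest weight term" candidate matching the $a_2$-piece of some $\tilde b$, the relevant $\mu$ is governed by $\beta$; the hypothesis $\beta\pm\alpha_i\notin Q^+(\pi_{\theta})$ for $\alpha_i\in{\rm Supp}(\beta)$ is exactly what will prevent unwanted cancellations. Second, for a generator $b$ of $\mathcal{B}_{\theta}\cap U_{\pi'}$ of the form $F_i + (\text{twisted }U^+\text{-term}) + (\text{scalar}\cdot K_i^{-1})$ with $\alpha_i\in\pi'$, compute $[b,X]$ and isolate its component of minimal $l$-weight. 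The $F_i$-part of $b$ contributes $[F_i,X] = ((\ad F_i)X)K_i^{-1}$, which lies in $U^-_{-\alpha_i}U^+K_{-\beta-\alpha_i}$-type space plus terms purely in $U^+U^0$; the twisted $U^+$-part of $b$ and the scalar part contribute only terms in $U^+U^0$ (no new $G^-$ content). Hence the only possible minimal-$l$-weight (i.e.\ nontrivial $G^-$) contribution to $[b,X]$ comes from $((\ad F_i)X)K_i^{-1}$, and this is $0$ iff $(\ad F_i)X = 0$, i.e.\ iff $X$ is an $(\ad U_{\pi'})$-lowest-weight candidate in the $F_i$ direction. If $(\ad F_i)X\neq 0$ for some $\alpha_i\in\pi'$, then $[b,X]$ has a genuine $G^-_{-\alpha_i}$-summand as its minimal $l$-weight piece; by Remark~\ref{remark:decomp} this forces the accompanying $U^+K_{\text{(something)}}$-factor to lie in $\mathcal{M}^+\mathcal{T}_{\theta}$, which translates into a biweight constraint $\mu - (\text{bits})\in Q^+(\pi_{\theta})$ type relation on $X$; the hypotheses $\beta\pm\alpha_i\notin Q^+(\pi_{\theta})$ are then used to derive a contradiction with $\mu$ being a positive-weight piece supported on ${\rm Supp}(\beta)$. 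This shows $(\ad F_i)X = 0$ for all $\alpha_i\in\pi'$, i.e.\ $[F_i,X]=0$.

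Third, once $[F_i,X]=0$ for all $\alpha_i\in\pi'$, I would bootstrap to the full generators of $\mathcal{B}_{\theta}\cap U_{\pi'}$: for $\alpha_i\in\pi_{\theta}\cap\pi'$ the generators are $F_i, E_i, K_i^{\pm 1}$, and I must also show $[E_i,X]=0$ and $[K_i,X]=0$. The $K_i$ case is easy since $X$ is a weight vector and $\pi_{\theta}\cap{\rm Supp}(\beta)$ interacts with $\beta$ in a controlled way (again $\beta\pm\alpha_i\notin Q^+(\pi_{\theta})$ rules out $\alpha_i\in\pi_{\theta}$ being in ${\rm Supp}(\beta)$, or forces $X$'s $\mu$-weight to be orthogonal to $\alpha_i$ — this needs care but is essentially a weight count). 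For $[E_i,X]$ with $\alpha_i\in\pi_{\theta}$: since $X\in U^+K_{-\beta}$ and $E_i\in U^+$, the commutator $[E_i,X]=[E_i, (\text{the }U^+\text{ part of }X)]K_{-\beta} + (\text{correction from }K_{-\beta})$, and one uses that $E_iK_{-\beta}-q^{(\alpha_i,\beta)}K_{-\beta}E_i$-type relations plus the fact that $X$ is $\ad$-invariant under $F_i$ (just proved) to run a standard $\mathfrak{sl}_2$-style argument inside the finite-dimensional $\ad U_{\pi'}$-module generated by $X$: an element of $U^+K_{-\beta}$ killed by $\ad F_i$ and of appropriate weight is automatically killed by $\ad E_i$ — this is where I invoke that $F_i$-lowest-weight plus correct weight implies $\mathfrak{sl}_2$-triviality. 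For the twisted $B_j$-type generators of $\mathcal{B}_{\theta}\cap U_{\pi'}$ with $\alpha_j\in\pi'\setminus\pi_{\theta}$, note $[B_j,X] = [F_j,X] + [\theta_q(F_jK_j)K_j^{-1},X] + s_j[K_j^{-1},X]$; the first and third terms vanish by the above, and the middle term lies in $U^+U^0$ — but now, knowing $[B_j,X]\in\mathcal{B}_{\theta}$ with \emph{no} $G^-$-content, Corollary~\ref{corollary:pmapzero} (the projection $\mathcal{P}$ is injective on $\mathcal{B}_{\theta}$) forces $\mathcal{P}([B_j,X])$ to determine $[B_j,X]$, and a purely-$U^+U^0$ element of $\mathcal{B}_{\theta}$ projecting appropriately must be $0$. \textbf{The main obstacle} I anticipate is precisely this last step and the $[E_i,X]$ step: controlling the $U^+U^0$-valued commutators with no $G^-$ content, where the minimal-$l$-weight machinery gives no leverage and one must instead argue directly with the biweight grading and the defining relations — the hypotheses on $\beta$ relative to $Q^+(\pi_{\theta})$ are the crucial input there, and getting the weight bookkeeping exactly right (which $\mu$ can occur, given that $X$ is the $a_2$-companion of a $\tilde b$ with ${\rm ht}_{\tau}\beta=2$) is the delicate part.
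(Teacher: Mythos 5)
There is a genuine gap, and it sits exactly where you flagged your ``main obstacle.'' Your step 2 rests on the dichotomy: either $({\rm ad}\ F_i)X=0$, or $[b,X]$ acquires a genuine $G^-_{-\alpha_i}$-summand as its minimal $l$-weight piece, which you then propose to rule out via Remark \ref{remark:decomp}. That dichotomy is false here. Writing $X=uK_{-\beta}$ with $u\in U^+$, one has $[F_i,X]=(1-q^{(\beta,\alpha_i)})uF_iK_{-\beta}+vK_{-\beta}$ with $v\in U^+K_i+U^+K_i^{-1}$ coming from the relation (\ref{commuting_relation}); since $\alpha_i\in\pi'\subseteq{\rm Orth}(\beta)$ forces $(\beta,\alpha_i)=0$, the $uF_iK_{-\beta}$ term vanishes \emph{identically}, so $[F_i,X]\in U^+K_{-\beta+\alpha_i}+U^+K_{-\beta-\alpha_i}$ regardless of whether $({\rm ad}\ F_i)X=0$. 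Thus a nonzero $({\rm ad}\ F_i)X$ produces no $G^-$-content at all, your contradiction never materializes, and the conclusion $({\rm ad}\ F_i)X=0$ — on which your subsequent $\mathfrak{sl}_2$-style bootstrap for $[E_i,X]$ depends — is not derivable (indeed the lemma's conclusion only gives $[B_i,X]=0$ for $\alpha_i\in\pi'\setminus\pi_{\theta}$, not vanishing of the individual pieces $[F_i,X]$).

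The correct mechanism, which your plan gestures at but never executes, is purely about the projection $\mathcal{P}$: each generator $b$ of $\mathcal{B}_{\theta}\cap U_{\pi'}$ (namely $B_i$ for $\alpha_i\in\pi'\setminus\pi_{\theta}$, and $E_i,F_i,K_i^{\pm1}$ for $\alpha_i\in\pi'\cap\pi_{\theta}$) satisfies $[b,X]\in U^+K_{-\beta+\alpha_i}+U^+K_{-\beta-\alpha_i}+U^+K_{-\beta}$ by the computation above together with the analogous ones for $K_i^{-1}$ and $\theta_q(F_iK_i)K_i^{-1}$. The hypothesis $\beta\pm\alpha_i\notin Q^+(\pi_{\theta})$ is used precisely to conclude that $K_{-\beta\pm\alpha_i}\notin\mathcal{T}_{\theta}$, hence $\mathcal{P}$ annihilates $U^+K_{-\beta\pm\alpha_i}$ and so $\mathcal{P}([b,X])=0$. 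Since the normalizer hypothesis puts $[b,X]$ in $\mathcal{B}_{\theta}$, Corollary \ref{corollary:pmapzero} forces $[b,X]=0$. Your proposal names Corollary \ref{corollary:pmapzero} and says ``a purely-$U^+U^0$ element of $\mathcal{B}_{\theta}$ projecting appropriately must be $0$,'' but never shows \emph{why} the projection vanishes — that is the entire content of the lemma, and it is a two-line weight computation, not the lowest-weight/module-theoretic argument you outline. No reduction to ``$a_2$-companions with ${\rm ht}_{\tau}(\beta)=2$'' is needed or available from the hypotheses as stated.
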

\begin{proof}
It follows from the defining relations  (in particular, see (\ref{commuting_relation})) of $U_q(\mathfrak{g})$ that 
\begin{align*}
[F_i, X] \in U^+K_{-\beta+\alpha_i}+U^+K_{-\beta-\alpha_i}.
\end{align*}
The assumptions on $\beta\pm \alpha_i$ ensure that  neither $K_{-\beta+\alpha_i}$ nor $K_{-\beta-\alpha_i}$ is in $\mathcal{T}_{\theta}$ for all $\alpha_i\in \pi'$.  It follows that $\mathcal{P}([F_i,X]) = 0$  for all $\alpha_i\in \pi'$. A similar argument shows that $\mathcal{P}([K_i^{-1}, X])=0$.
Note that 
\begin{align*}
[\theta_q(F_iK_i)K_i^{-1}, X] \in U^+K_{-\beta-\alpha_i}.
\end{align*}
for all $\alpha_i\in \pi'\setminus \pi_{\theta}$.
Hence we also have $\mathcal{P}([\theta_q(F_iK_i)K_i^{-1}, X]) = 0$ for these choices of $\alpha_i$.  It follows that $\mathcal{P}([B_i,X])=0$ for all $\alpha_i\in \pi'$.   

If $\alpha_i\in \pi'$ then  $B_i\in U_{\pi'}$  since $\theta$ restricts to an involution on $\Delta(\pi')$.  Hence, by assumption, 
$[B_i,X] \in \mathcal{B}_{\theta}$ for all $i$ such that $\alpha_i\in \pi'\setminus \pi_{\theta}$.  It follows from Corollary  \ref{corollary:pmapzero} that 
\begin{align*}
[B_i,X] = 0 {\rm \ for \ all \ }\alpha_i\in \pi'\setminus \pi_{\theta}.
\end{align*}
Similar arguments shows that $\mathcal{P}([E_i, X]) = \mathcal{P}([F_i,X]) = \mathcal{P}([K_i^{\pm 1}, X]) = 0$ for all $\alpha_i\in \pi_{\theta}\cap \pi'$. Hence $[E_i, X] = [F_i, X]=[K_i^{\pm 1}, X] = 0$ for all $\alpha_i\in \pi_{\theta}\cap \pi'$. It follows that $[b,X] = 0$ for all $b\in \mathcal{B}_{\theta}\cap U_{\pi'}$.
\end{proof}

 Recall that in the fifth case of Theorem \ref{theorem:cases_take2}, $\beta = \alpha_{\beta}'+\alpha_{\beta} + w_{\beta}\alpha_{\beta}$ where $\alpha_{\beta}'\in \pi_{\theta}$, $\alpha_{\beta}'\neq \alpha_{\beta}$, $w_{\beta} = w({\rm Supp}(\beta)\setminus \{\alpha_{\beta}, \alpha_{\beta}'\})_0$ and $\alpha_{\beta}'$ is strongly orthogonal to all simple roots in ${\rm Supp}(\beta)\setminus \{\alpha_{\beta}, \alpha_{\beta}'\}$.  It follows that $\alpha_{\beta}'\notin {\rm Supp}(w_{\beta}\alpha_{\beta})$ and so $\alpha_{\beta}'$ is also not in 
 ${\rm Supp}(\beta-\alpha_{\beta}')$.  Set $\beta' = \beta -\alpha_{\beta}'$. Since $\theta(\beta) = -\beta$ and $\theta(\alpha_{\beta}') = \alpha_{\beta}'$, we have $\theta(\beta-\alpha_{\beta}') = -\beta-\alpha_{\beta}'$ and so $\theta(\beta') = -\beta'-2\alpha_{\beta}'$.  Thus $\beta'$ satisfies the following conditions:
\begin{itemize}
\item $\theta(\beta') \in -\beta'-Q^+(\pi_{\theta})$
\item ${\rm Supp}(\theta(-\beta')-\beta')\cap {\rm Supp}(\beta') = \emptyset$.
\end{itemize}
Note that if $\theta(\beta) = -\beta$ then $\beta$ also satisfies the above two properties (with $\beta'$ replaced by $\beta$).  We use these two properties as assumptions on $\beta$ in the next lemma so that we can eventually apply it to both roots $\beta$ satisfying $\theta(\beta) = -\beta$ and to roots of the form $\beta'$ derived from Case 5 of Theorem \ref{theorem:cases_take2}.

\begin{lemma}\label{lemma:beta2} Let $\beta\in Q^+(\pi)$ such that $\theta(-\beta) - \beta \in Q^+(\pi_{\theta})$ and ${\rm Supp}(\theta(-\beta)-\beta)\cap {\rm Supp}(\beta) = \emptyset$.  Set $\pi'={\rm Orth}(\beta)\cap {\rm Supp}(\beta)$. Assume that 
\begin{itemize}
\item $\pi' \subseteq {\rm Orth}(\theta(-\beta))$
\item $\theta$ restricts to an involution on   $\Delta(\pi')$
\item ${\rm ht}_{{\rm Supp}(\beta)\setminus \pi'}(\beta) = 2$
\item $(\pi_{\theta}\cup \mathcal{S})\cap {\rm Supp}(\beta) \subseteq \pi'$. 
\end{itemize}
Let 
\begin{align*}
X\in   \sum_{\eta\in  Q^+(\pi')}\sum_{0\leq\gamma\leq \beta}U^+_{\theta(-\beta+\gamma)-\gamma-\eta}K_{-\beta}.
\end{align*}
If $X\in \mathcal{N}(\mathcal{B}_{\theta}:\mathcal{B}_{\theta}\cap U_{\pi'})$
then $X\in U^+_{\theta(-\beta)}K_{-\beta}+\mathbb{C}(q)K_{-\beta}$  and  $X\in (U_q(\mathfrak{g}))^{ U_{\pi'}}$.
\end{lemma}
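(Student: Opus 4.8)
The strategy mirrors that of Lemma~\ref{lemma:beta}, but now the extra $U^+$-content of $X$ coming from the $\theta_q(F_iK_i)$ pieces forces us to track a second layer of terms. I would first record the commutator of $X$ with each type of generator of $\mathcal{B}_{\theta}\cap U_{\pi'}$ and project via $\mathcal{P}$. For $\alpha_i\in \pi_{\theta}\cap \pi'$ the generator is $E_i, F_i,$ or $K_i^{\pm1}$, all of which lie in $\mathcal{M}$, and commuting one of these past $X\in \sum U^+_{\ast}K_{-\beta}$ produces a term of the form (element of $U^+$ of some biweight)$\cdot K_{-\beta\pm\alpha_i}$ or $\cdot K_{-\beta}$; since $\theta(-\beta)-\beta\in Q^+(\pi_\theta)$ and $\mathrm{Supp}(\theta(-\beta)-\beta)\cap\mathrm{Supp}(\beta)=\emptyset$, the only way such a term survives $\mathcal{P}$ is if its $U^+$-part lands in $\mathcal{M}^+$ and its $\mathcal{T}$-part in $\mathcal{T}_\theta$. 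For $\alpha_i\in\pi'\setminus\pi_\theta$ the generator $B_i=F_i+c_i\theta_q(F_iK_i)K_i^{-1}+s_iK_i^{-1}$; here the hypothesis $(\pi_\theta\cup\mathcal{S})\cap\mathrm{Supp}(\beta)\subseteq\pi'$ guarantees that $\pi'\setminus\pi_\theta$ contributes no $\mathcal{S}$-terms with $s_i\ne0$ in a way that ruins the argument, and the hypothesis $\pi'\subseteq\mathrm{Orth}(\theta(-\beta))$ controls the biweight of $[\theta_q(F_iK_i)K_i^{-1},X]$.

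The key computation is the analogue of (\ref{commuting_relation}): commuting $F_i$ (resp.\ $\theta_q(F_iK_i)K_i^{-1}$, resp.\ $K_i^{-1}$) past a biweight summand $u K_{-\beta}$ of $X$ with $u\in U^+_{\theta(-\beta+\gamma)-\gamma-\eta}$ yields terms whose $\mathcal{T}$-component is $K_{-\beta+\alpha_i}$, $K_{-\beta-\alpha_i}$, or $K_{-\beta}$, and whose $U^+$-component has a prescribed weight. I would argue that $\mathcal{P}$ of each such term vanishes: the $K_{-\beta\pm\alpha_i}$ terms vanish because $-\beta\pm\alpha_i\notin Q(\pi)^\theta$ (using $\theta(\alpha_i)=\pm\alpha_i$ on $\pi'$ together with the support-disjointness hypothesis, exactly as in Lemma~\ref{lemma:beta}), while the $K_{-\beta}$ terms need a weight check on the $U^+$-part, using $\theta$ restricting to an involution on $\Delta(\pi')$ and $\pi'\subseteq\mathrm{Orth}(\theta(-\beta))$ to see that the surviving piece forces $\eta=0$ and $\gamma$ constrained so that the $U^+$-factor is in $\mathcal{M}^+$. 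Then, since each $B_i$ (for $\alpha_i\in\pi'\setminus\pi_\theta$) and each $E_i,F_i,K_i^{\pm1}$ (for $\alpha_i\in\pi_\theta\cap\pi'$) lies in $\mathcal{B}_\theta\cap U_{\pi'}$, the commutator $[b,X]$ lies in $\mathcal{B}_\theta$; combined with $\mathcal{P}([b,X])=0$ and Corollary~\ref{corollary:pmapzero}, we get $[b,X]=0$ for all such generators. A generating set argument then gives $[b,X]=0$ for all $b\in\mathcal{B}_\theta\cap U_{\pi'}$, i.e.\ $X\in (U_q(\mathfrak{g}))^{U_{\pi'}}$.

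It remains to upgrade ``$X$ commutes with $U_{\pi'}$'' to the sharper conclusion $X\in U^+_{\theta(-\beta)}K_{-\beta}+\mathbb{C}(q)K_{-\beta}$. For this I would decompose $X$ into biweight summands $u_{\gamma,\eta}K_{-\beta}$ with $u_{\gamma,\eta}\in U^+_{\theta(-\beta+\gamma)-\gamma-\eta}$. Each such summand is itself a weight vector under $\mathrm{ad}\,\mathcal{T}$, hence is separately $\mathrm{ad}\,U_{\pi'}$-invariant. Now $u_{\gamma,\eta}$ is an element of $U^+$ that commutes with $E_s,F_s,K_s^{\pm1}$ for all $\alpha_s\in\pi'$; because $\mathrm{ht}_{\mathrm{Supp}(\beta)\setminus\pi'}(\beta)=2$ and $\pi'\subseteq\mathrm{Orth}(\beta)\cap\mathrm{Supp}(\beta)$, the weight of $u_{\gamma,\eta}$ has $\mathrm{Supp}(\beta)\setminus\pi'$-height at most $2$, and an $\mathrm{ad}\,U_{\pi'}$-invariant element of $U^+$ of a weight in $Q^+(\pi)$ whose ``$\pi\setminus\pi'$ part'' is so constrained must be (up to scalar) either $1$ or the highest weight vector $\theta_q(F_jK_j)$-type element of an $\mathrm{ad}\,\mathcal{M}^+$-module — concretely, the only biweights that can occur are $(\theta(-\beta),-\beta)$ and $(0,-\beta)$. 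I expect the main obstacle to be precisely this last step: ruling out ``intermediate'' biweight summands $u_{\gamma,\eta}$ with $0<\gamma<\beta$ or $\eta\ne0$, which requires combining the weight/height bookkeeping of Lemma~\ref{lemma:decomp}, the structure of trivial $(\mathrm{ad}\,U_{\pi'})$-modules inside $U^+$ (via Lemma~\ref{lemma:lowest_weight} and the $N_\theta^+$ decomposition of Section~\ref{section:decomp_proj}), and the hypotheses $\theta(-\beta)-\beta\in Q^+(\pi_\theta)$, $\mathrm{Supp}(\theta(-\beta)-\beta)\cap\mathrm{Supp}(\beta)=\emptyset$ to force $\gamma\in\{0,\beta\}$ and $\eta=0$.
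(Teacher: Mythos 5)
Your first paragraph and a half (reducing to $\mathcal{P}([b,X])=0$ for each generator $b$ of $\mathcal{B}_\theta\cap U_{\pi'}$ and invoking Corollary \ref{corollary:pmapzero}) reproduces the content of Lemma \ref{lemma:beta}, which the paper simply cites to get $X\in (U_q(\mathfrak{g}))^{\mathcal{B}_\theta\cap U_{\pi'}}$. That part is fine. But there are two genuine gaps after that.

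First, you slide from ``$[b,X]=0$ for all $b\in\mathcal{B}_\theta\cap U_{\pi'}$'' to ``$X\in(U_q(\mathfrak{g}))^{U_{\pi'}}$.'' These are not the same: for $\alpha_i\in\pi'\setminus\pi_\theta$ the algebra $\mathcal{B}_\theta\cap U_{\pi'}$ contains $B_i=F_i+c_i\theta_q(F_iK_i)K_i^{-1}+s_iK_i^{-1}$ but not $F_i$ and $E_i$ separately, so centralizing $\mathcal{B}_\theta\cap U_{\pi'}$ is strictly weaker than centralizing $U_{\pi'}$. Consequently your subsequent claim that each biweight summand $u_{\gamma,\eta}K_{-\beta}$ is ``separately $\mathrm{ad}\,U_{\pi'}$-invariant'' is not available at that stage: from $[B_i,X]=0$ one cannot conclude $[F_i,X_\lambda]=0$ for each weight component $X_\lambda$, because $[F_i,X_\lambda]$ may cancel against $[\theta_q(F_iK_i)K_i^{-1},X_{\lambda'}]$ for a different component $\lambda'$ with $\lambda-\alpha_i=\lambda'+\theta(-\alpha_i)$. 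In the paper, $X\in(U_q(\mathfrak{g}))^{U_{\pi'}}$ is only obtained at the very end, after the precise form of $X$ is known, by splitting $[B_i,\cdot]$ of a single known weight vector into its distinct weight components.

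Second, the elimination of intermediate biweight summands — which you correctly identify as the main obstacle — is exactly the step for which you supply no mechanism, and a direct ``classify $\mathrm{ad}\,U_{\pi'}$-invariants of $U^+$ by weight'' argument does not obviously close it. The paper's route is different: after a height computation showing each summand has weight either in $\theta(-\beta)-Q^+(\pi')$ or in $Q^+(\pi')$, it applies $\kappa_\theta$ to transport $X$ to an element $Y$ of $\sum_\zeta G^-_{-\beta-\mu+\zeta}K_{-\beta}+\sum_\zeta G^-_{-\zeta}K_{-\beta}$, where the dual-Verma description of $G^-K_{-2\xi}$ (Section \ref{section:dual_vermas}) classifies $(\mathrm{ad}\ U_{\pi'})$ lowest weight vectors; Lemmas \ref{lemma:adF}, \ref{lemma:weight_cons1} and \ref{lemma:weight_cons2} then force the top summand to sit at $\delta=\beta+\mu$ or $\delta=0$. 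Even then, only the \emph{maximal} summand is known to be a lowest weight vector, and the second summand is handled by a $\deg_{\mathcal{F}}$-filtration argument: $[E_i,Y_{-\beta-\mu}K_{-\beta}]$ and $[\theta_q(F_iK_i),Y_{-\beta-\mu}K_{-\beta}]$ have strictly smaller filtration degree than $[F_iK_i,G^-K_{-\beta}]$, so the degree-$\deg_{\mathcal{F}}K_{-\beta}$ part of $[B_i,Y]=0$ isolates $[F_i,Y_{-\delta'}K_{-\beta}]=0$, making the next summand a lowest weight vector as well and forcing $\delta'=0$. Without some substitute for this filtration argument, your plan cannot rule out the intermediate terms.
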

\begin{proof}
Assume $X\in \mathcal{N}(\mathcal{B}_{\theta}:\mathcal{B}_{\theta}\cap U_{\pi'})$.  Since ${\rm ht}_{{\rm Supp}(\beta)\setminus \pi'}\beta=2$ and $\pi_{\theta}\cap {\rm Supp}(\beta)\subseteq \pi'$, it follows that $\beta\pm \alpha_i\notin \pi_{\theta}$ for all $\alpha_i\in \pi$. Hence by 
Lemma \ref{lemma:beta}, $X\in (U_q(\mathfrak{g}))^{ \mathcal{B}_{\theta}\cap U_{\pi'}}$.

Let $\gamma\in Q^+(\pi)$ and $\eta\in Q^+(\pi')$ such that  in its expression as a sum of weight vectors, $X$ has a nonzero contribution from $U^+_{\theta(-\beta+\gamma)-\gamma-\eta}K_{-\beta}$. Note that this ensures that 
\begin{align*}
\theta(-\beta+\gamma)-\gamma-\eta\in Q^+(\pi).
\end{align*}
Write 
$\gamma = \gamma_1'+\gamma_2'$ with $\gamma_2'\in Q^+(\pi')$ and $\gamma_1'\in Q^+({\rm Supp}(\beta)\setminus \pi')$. Since $\theta$ restricts to an involution on $\Delta(\pi')$, we have $\theta(\gamma_2') \in Q(\pi')$.  The fact that $\pi_{\theta}\cap {\rm Supp}(\beta) \subseteq \pi'$ forces $\theta(-\gamma) \in p(\gamma_1') + Q^+(\pi_{\theta})\subseteq p(\gamma_1') + Q^+(\pi')$.  Hence 
\begin{align*}
{\rm ht}_{{\rm Supp}(\beta)\setminus\pi'}(\gamma) = {\rm ht}_{{\rm Supp}(\beta)\setminus\pi'}(\theta(-\gamma)).
\end{align*}  

By the hypothesis of the lemma, $\mu = \theta(-\beta) - \beta$ satisfies ${\rm Supp}(\mu)\cap {\rm Supp}(\beta) = \emptyset$.  The fact that $\theta$ restricts to an automorphism of  $\Delta(\pi')$ combined with the fact that $\pi_{\theta}\cap {\rm Supp}(\beta) \subset \pi'$ guarantees that 
\begin{align*}{\rm ht}_{{\rm Supp}(\beta)\setminus\pi'}(\beta) = 
{\rm ht}_{{\rm Supp}(\beta)\setminus\pi'}(\theta(-\beta)).
\end{align*}  Thus by our assumptions on the height of $\beta$ and the results of the previous paragraph, we oberve that 
\begin{align*}
 {\rm ht}_{{\rm Supp}(\beta)\setminus\pi'}(\gamma)\in \{0,1\}
\end{align*} 
and so 
\begin{align*}
 {\rm ht}_{{\rm Supp}(\beta)\setminus\pi'}(\theta(-\gamma)+\gamma)\in \{0,2\}.
\end{align*}  It follows that either $\theta(-\beta + \gamma)-\gamma -\eta\in Q^+(\pi')$ or $\theta(-\gamma)+\gamma +\eta\in Q^+(\pi')$.  
  Thus 
\begin{align*}
X\in \sum_{\zeta\in Q^+(\pi'), \zeta<\beta} U^+_{\beta+\mu-\zeta}K_{-\beta} + \sum_{\zeta\in Q^+(\pi'), \zeta<\beta}U^+_{\zeta}K_{-\beta}.
\end{align*}
Note that $[b, X]=0$ for $b\in \mathcal{M}_{\pi'}$ ensures that each weight  summand of $X$ also commutes with all $b\in \mathcal{M}_{\pi'}$. Thus each weight summand of $X$ generates a trivial $({\rm ad}\ \mathcal{M}_{\pi'})$-module.

Recall that $\mathcal{B}_{\theta}$ is invariant under the action of $\kappa_{\theta}$ (see Section \ref{section:qsp}), which is  conjugate to the Chevalley antiautomorphism $\kappa$, defined in Section \ref{section:chevalley}.  Applying $\kappa_{\theta}$ to $X$ yields an element $Y\in (U_q(\mathfrak{g}))^{ \mathcal{B}_{\theta}\cap U_{\pi'}}$ such that 
\begin{align*}
Y\in \sum_{\zeta\in Q^+(\pi'), \zeta<\beta} G^-_{-\beta-\mu + \zeta}K_{-\beta} + \sum_{\zeta\in Q^+(\pi'), \zeta<\beta}G^-_{-\zeta}K_{-\beta}.
\end{align*}
Since $X\in \mathcal{N}(\mathcal{B}_{\theta}:\mathcal{B}_{\theta}\cap U_{\pi'})$ so is $Y$ and hence   $[B_i, Y] = 0$ for all $\alpha_i\in \pi'$.   By the previous paragraph, we also have $Y\in (U_q(\mathfrak{g}))^{\mathcal{M}_{\pi'}}$.

Suppose that $Y_{-\beta-\mu+\zeta}K_{-\beta}\in G^-_{-\beta-\mu+\zeta}K_{-\beta}$ is a nonzero $({\rm ad}\ U_{\pi'})$ lowest weight vector  for some $\zeta\in Q^+(\pi')$ with $\zeta<\beta$. Since $(\mu, \alpha) = 0$ for all $\alpha\in \pi'$, it follows that $Y_{-\beta-\mu+\zeta}K_{-\beta-\mu}$ is also a nonzero $({\rm ad}\ U_{\pi'})$ lowest weight vector.  Hence, we may apply Lemma \ref{lemma:weight_cons1} with $\theta(-\beta) = \beta+\mu$ playing the role of $\beta$, $\zeta$ playing the role of $\lambda$ and $\gamma = \gamma' = 0$.   This forces $\zeta = 0$ and so $Y_{-\beta-\mu+\zeta}K_{-\beta} \in G^-_{-\beta-\mu}K_{-\beta}$.  Now suppose that $Y_{-\zeta}K_{-\beta}\in G^-_{-\zeta}K_{-\beta}$ is a nonzero $({\rm ad}\ U_{\pi'})$ lowest weight vector for some 
$\zeta\in Q^+(\pi')$ with $\zeta<\beta$. Applying Lemma \ref{lemma:weight_cons2} with $\gamma=\gamma'=0$ and $\lambda = \beta -\zeta$ yields $\zeta = 0$ and so $Y_{-\zeta}K_{-\beta} \in \mathbb{C}(q)K_{-\beta}$.

Now choose $\delta$ maximal so that 
\begin{align}\label{Ydeltaexp}
Y \in  Y_{-\delta}K_{-\beta}  + \sum_{\xi \not\geq \delta} G^-_{-\xi}K_{-\beta}.
\end{align}
 By Lemma \ref{lemma:adF}, $Y_{-\delta}K_{-\beta}$ is an $({\rm ad}\ U_{\pi'})$ lowest weight vector.  Hence by the previous paragraph, either 
$\delta = \beta+\mu$ or $\delta =0$.  If $\delta = 0$,  then $Y\in \mathbb{C}(q)K_{-\beta}$ and thus so is $X$ and the lemma follows.  Thus, we assume that $\delta = \beta+ \mu = \theta(-\beta)$.

Since $\delta = \beta+\mu$, (\ref{Ydeltaexp}) can be rewritten as 
\begin{align*}
Y \in  Y_{-\beta-\mu}K_{-\beta} + Y_{-\delta'}K_{-\beta}  + \sum_{\xi \not\geq \delta'} G^-_{-\xi}K_{-\beta}
\end{align*} where $Y_{-\delta} = Y_{-\beta-\mu}\in G^-_{-\beta-\mu}$ is an $({\rm ad}\ U_{\pi'})$ lowest weight vector and $Y_{-\delta'}\in G^-_{-\delta'}$  with $\delta'\not\geq\beta+\mu$. By Section \ref{section:dual_vermas},  $Y_{-\beta-\mu}K_{-\beta}$ generates a finite-dimensional $({\rm ad}_{\beta}U_{\pi'})$-module. 
Since $\beta+\mu$ restricts to $0$ with respect to ${\pi'}$,  this module is trivial  and so 
\begin{align}\label{adEexp} ({\rm ad}_{\beta}E_i)(Y_{-\beta-\mu}K_{-\beta}) = 0 {\rm \  for\ each\ } \alpha_i\in \pi'. 
\end{align} We also have $({\rm ad}_{\beta} E_i) K_{-\beta} = ({\rm ad}\ E_i)K_{-\beta} = 0$ and, thus
\begin{align*}
[E_iK_i^{-1},Y_{-\beta-\mu}K_{-\beta} ]=(({\rm ad}\ E_i) Y_{-\beta-\mu}K_{-\beta})K_i^{-1}
\end{align*}
for all $\alpha_i\in \pi'$.  

  It follows from (\ref{adEexp}), Section \ref{section:dual_vermas} (in particular, the inequality (\ref{lower_degree})  and related discussion) and the definition of the adjoint action in Section \ref{section:basic} that 
\begin{align*}
\deg_{\mathcal{F}}\left([E_i,Y_{-\beta-\mu}K_{-\beta} ]\right)=\deg_{\mathcal{F}}\left((({\rm ad}\ E_i) Y_{-\beta-\mu}K_{-\beta})\right)< \deg_{\mathcal{F}}K_{-\beta}
\end{align*}
for all $\alpha_i\in \pi'\setminus\pi_{\theta}$.
 Since  $\theta$ restricts to an involution on $\Delta(\pi')$, it follows that 
$\theta_q(F_iK_i) \in ({\rm ad}\ \mathcal{M}_{\pi'}^+)E_{p(i)}
$ for all $\alpha_i\in \pi'\setminus \pi_{\theta}$.  Recall that $Y_{-\beta-\mu}K_{-\beta}$ is $({\rm ad}\ \mathcal{M}_{\pi'})$ invariant.  Hence, we also have  \begin{align*}
\deg_{\mathcal{F}}[\theta_q(F_iK_i), Y_{-\beta-\mu}K_{-\beta}]<\deg_{\mathcal{F}}K_{-\beta}
\end{align*}
for all $\alpha_i\in \pi'\setminus \pi_{\theta}$.
On the other hand, 
$[F_iK_i, G^-K_{-\beta})]\subseteq G^-K_{-\beta}$ and so 
\begin{align*}
 \deg_{\mathcal{F}}[F_iK_i, G^-K_{-\beta})] = \deg_{\mathcal{F}}K_{-\beta}.
 \end{align*}
 Hence, if $[F_i, Y_{-\delta'}K_{-\beta}]\neq 0$ then 
 \begin{align*}
 [B_i, Y] = [F_i, Y_{-\delta'}K_{-\beta}] + &{\rm \ terms \ of \ lower \ degree \ with \  respect \ to \  }\mathcal{F}.
\end{align*}  
But $[B_i, Y] =0$, and so, we must have $[F_i, 
Y_{-\delta'}K_{-\beta}]=0$ for all $\alpha_i\in \pi'\setminus \pi_{\theta}$.

 Recall that each weight summand of $Y$ generates a trivial $({\rm ad}\ \mathcal{M}_{\pi'})$-module.  This fact combined with the above paragraph yields   $[F_i, 
 Y_{-\delta'}K_{-\beta}]=0$ for all $\alpha_i\in \pi'$.  Thus, $Y_{-\delta'}K_{-\beta}$ must  be an $({\rm ad} \ U_{\pi'})$ lowest weight vector. Since $\delta'<\beta+\mu$, it follows from the arguments above that  $\delta' = 0$.
Therefore \begin{align*}
Y \in Y_{-\beta-\mu} K_{-\beta} + \mathbb{C}(q)K_{-\beta}.
\end{align*}
Applying $\kappa_{\theta}$ to every term in the above expression yields the desired analogous assertion for $X$.  

Note that $X\in (U_q(\mathfrak{g}))^{U_{\pi'}}$ if and only if $Y\in (U_q(\mathfrak{g}))^{U_{\pi'}}$.  By assumptions on $\pi'$ and $\beta$, it follows that $K_{-\beta}\in (U_q(\mathfrak{g}))^{U_{\pi'}}$ and $Y_{-\beta-\mu}K_{-\beta}$ commutes with all elements of $\mathcal{T}\cap U_{\pi'}$.  Hence it is sufficient to show that $Y_{-\beta-\mu}K_{-\beta}\in  (U_q(\mathfrak{g}))^{U_{\pi'}}$.   We have already shown that $[a, Y_{-\beta-\mu}K_{-\beta}] = 0$ for all $a\in \mathcal{M}_{\pi'}$ and all $a=F_i$ with $\alpha_i\in \pi'$. Consider $\alpha_i\in \pi'\setminus \pi_{\theta}$.  Now $[B_i, Y_{-\beta-\mu}K_{-\beta}]$ can be written as a sum of terms of weights \begin{align*}-\beta-\mu-\alpha_i,\  -\beta-\mu, {\rm \ and \ } -\beta-\mu-\theta(\alpha_i).
\end{align*} 
But  $[B_i,Y_{-\beta-\mu}] = [B_i, Y]=0$ and so each of these weight terms must be zero.  In particular, the term of weight $-\beta-\mu-\theta(\alpha_i)$, which is $[\theta_q(F_iK_i)K_i^{-1}, Y_{-\beta-\mu}K_{-\beta}]$, equals zero for all $\alpha_i\in \pi'\setminus \pi_{\theta}$.  The lemma now follows from the fact that 
 $U_{\pi'}$ is generated by $\mathcal{M}_{\pi'}$,  $F_i,\alpha_i\in \pi'$, $U^0\cap U_{\pi'}$, and $\theta_q(F_iK_i)K_i^{-1}, \alpha_i\in \pi'\setminus \pi_{\theta}$.\end{proof}

\subsection{Determining Commutativity}\label{section:det_comm}
Let $\tau$ be a subset of $\pi$. For each $m\geq 0$, set
\begin{align*}
G^-_{(\tau,m)} = 
\sum_{{\rm ht}_{\tau}(\delta) = m}
G^-_{-\delta}
{\rm \quad and \quad }U^+_{(\tau,m)} = 
\sum_{ {\rm ht}_{\tau}(\delta) = m}
U^+_{\delta}.
\end{align*}
 Note that 
 \begin{align*}
 G^-_{(\tau,m)}\ \cap\  G^-_{(\tau, j)} = 0
 \end{align*}
 whenever $j\neq m$.  We also have \begin{align}\label{commFandE}[K_i^{\pm 1}, G^-_{(\tau,j)}]\subseteq G^-_{(\tau,j)},\  [F_i, G^-_{(\tau,j)}]\subseteq G^-_{(\tau,j)}{\rm \ and \ }[E_i, G^-_{(\tau,j)}]\subseteq G^-_{(\tau,j)}U^0U^+
\end{align} for all  $j\geq 0$ and  $\alpha_i\in \pi\setminus \tau$.   It follows that  for each $j$, the space $G^-_{(\tau,j)}U^0U^+$ is preserved by the action of $({\rm ad}\ U_{\pi\setminus \tau})$.   Thus, if $\theta$ restricts to an involution on $\Delta(\pi')$ where $\pi'$ is a subset   of $\pi\setminus \tau$ then 
\begin{align*}
[B_i, G^-_{(\tau,j)}]\subseteq G^-_{(\tau,j)}U^0U^+
\end{align*}
for all $j\geq 0$ and  $\alpha_i\in \pi'$.  Analogous statements hold for $G^-_{(\tau,j)}$ replaced by $U^+_{(\tau,j)}$.  Moreover, we have 
\begin{align*}
[B_i, G^-_{(\tau,j)}U^0U^+_{(\tau,m)}]\subseteq G^-_{(\tau,j)}U^0U^+_{(\tau,m)}
\end{align*}
for all nonnegative integers $j,m$ and all $\alpha_i\in \pi'$.

Now suppose that $\beta$ is a positive weight, $\pi'$ a subset of ${\rm Supp}(\beta)$,  and $\tau = {\rm Supp}(\beta)\setminus \pi'$.  Assume further that 
${\rm ht}_{\tau}(\beta) = 2$. Note that 
\begin{align*}
\sum_{\delta\in Q(\pi')} G^-_{-\beta+\delta} = G^-_{(\tau,2)} {\rm \ and \ }\sum_{\delta\in Q(\pi')} G^-_{-\delta} = G^-_{(\tau,0)}.
\end{align*}

\begin{proposition}\label{proposition:liftbeta} Let $\beta\in Q^+(\pi)$ such that $\theta(-\beta)-\beta\in Q^+(\pi_{\theta})$ and 
${\rm Supp}(\theta(-\beta)-\beta)\cap {\rm Supp}(\beta) = \emptyset$. Set $\pi'={\rm Orth}(\beta)\cap {\rm Supp}(\beta)$ and assume that $\pi'\subseteq {\rm Orth}(\theta(-\beta))$. Assume that  \begin{itemize}
\item  $\theta$ restricts to an involution on   $\Delta(\pi')$
\item  ${\rm ht}_{{\rm Supp}(\beta)\setminus \pi'}(\beta) = 2$
\item  $(\pi_{\theta}\cup \mathcal{S})\cap {\rm Supp}(\beta) \subseteq \pi'$
 \item   $-w_{\pi'}=-w(\pi')_0$ restricts to a permutation on $\pi_{\theta}\cap \pi'$.
 \end{itemize}
Let $a = \sum_{I,{\rm wt}(I)\leq \beta}B_Ia_I$ be an element of $\mathcal{B}_{\theta}$ where each $a_I\in \mathcal{M}^+_{\theta}$ and $a_I$ is a scalar whenever ${\rm wt}(I) = \beta$, such that the following two conditions hold:
\begin{itemize}
\item[(i)]   $\mathcal{P}(a) = \sum_{{\rm wt}(I)= \beta}F_Ia_I$.
\item[(ii)] $\mathcal{P}(a) \in (U_q(\mathfrak{g}))^{U_{\pi'}}$.
\end{itemize}
Then  \begin{align*}a\in \mathcal{P}(a) + G^-_{(\tau.1)}U^0U^+_{(\tau,1)} + \mathbb{C}(q)K_{-\beta}+ X_{\theta(-\beta)} K_{\theta(-\beta)-\beta}
\end{align*} where $\tau= {\rm Supp}(\beta) \setminus \pi'$ and  $X_{\theta(-\beta)} \in G^+_{\theta(-\beta)}$.  Moreover, 
$$a \in (U_q(\mathfrak{g}))^{\mathcal{B}_{\theta}\cap U_{\pi'}}, \quad 
X_{\theta(-\beta)} K_{\theta(-\beta)-\beta}\in (U_q(\mathfrak{g}))^{U_{\pi'}},$$
the lowest weight summand  of $a$  is $\mathcal{P}(a)$ (which is also the lowest $l$-weight summand of $a$) and the highest weight summand of $a$ is the 
element $X_{\theta(-\beta)} K_{\theta(-\beta)-\beta}$ in $G^+_{\theta(-\beta)} K_{\theta(-\beta)-\beta}
$.   
\end{proposition}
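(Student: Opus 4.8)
The strategy is to combine the biweight bookkeeping from Lemma~\ref{lemma:decomp} (and the refinement at the end of Section~\ref{section:decomp_proj}) with the three "lowest weight term" lemmas of Section~\ref{section:weight_cons} and the two "highest weight term" lemmas of Section~\ref{section:com_crit}. First I would use hypothesis (i) together with the construction in Proposition~\ref{proposition:BUiso}: since $\mathcal{P}(a) = \sum_{{\rm wt}(I)=\beta}F_Ia_I$ with the $a_I$ scalars when ${\rm wt}(I)=\beta$, the element $a$ equals the $\tilde b$ of Proposition~\ref{proposition:BUiso} (by the uniqueness in Corollary~\ref{corollary:pmapzero}), and hence the discussion at the end of Section~\ref{section:decomp_proj} applies: $a$ lies in a sum of biweight spaces $G^-_{-\beta+\lambda+\eta+\gamma}U^+_{\theta(-\lambda)-\gamma}K_{-\beta+2\gamma'}$ with $(\lambda,\eta,\gamma,\gamma')$ satisfying the conditions of Lemma~\ref{lemma:decomp}. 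I then split $a$ into the four pieces $a_0,a_1,a_2,a_3$ as in Section~\ref{section:weight_cons}, with $\tau = {\rm Supp}(\beta)\setminus\pi'$ and ${\rm ht}_\tau(\beta)=2$; note $a_3=\mathcal{P}(a)$ already sits in $U^-_{-\beta}$, and $a_1\in G^-_{(\tau,1)}U^0U^+$ is exactly the prospective middle term, so the job is to control $a_0$ (the ${\rm ht}_\tau = 0$ part) and $a_2$ (the part in $\sum_{\delta\in Q^+(\pi')\setminus\{0\}}G^-_{-\beta+\delta}U^0U^+$, which has ${\rm ht}_\tau = 2$).

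Next I would exploit that $a\in\mathcal{B}_\theta$, so $[B_i,a]\in\mathcal{B}_\theta$ for $\alpha_i\in\pi'$; since $\theta$ restricts to an involution on $\Delta(\pi')$ and $\pi_\theta\cap{\rm Supp}(\beta)\subseteq\pi'$, each $B_i$ with $\alpha_i\in\pi'$ lies in $U_{\pi'}$, so in fact $a\in\mathcal{N}(\mathcal{B}_\theta:\mathcal{B}_\theta\cap U_{\pi'})$. Using the grading discussion at the start of Section~\ref{section:det_comm}, the action of each $[B_i,\cdot]$ preserves the ${\rm ht}_\tau$-grading on $G^-_{(\tau,j)}U^0U^+_{(\tau,m)}$, so $a_0$, $a_1$, $a_2$ each lie in $\mathcal{N}(\mathcal{B}_\theta:\mathcal{B}_\theta\cap U_{\pi'})$ as well. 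For $a_0$ (living in $G^-_{(\tau,0)}U^0U^+$, i.e. biweight pieces with $\beta-\lambda-\gamma\in Q^+(\pi')$), I would run Lemma~\ref{lemma:adF}: a minimal $l$-weight summand of any nonzero element of $\mathcal{N}(\mathcal{B}_\theta:\mathcal{B}_\theta\cap U_{\pi'})$ either is an $({\rm ad}\,U_{\pi'})$ lowest weight vector or has its $U^+$-part in $\mathcal{M}^+\mathcal{T}_\theta$; applying Lemmas~\ref{lemma:weight_cons2} and~\ref{lemma:weight_cons3} to such a lowest weight summand, the only surviving contributions are forced to be in $\mathbb{C}(q)K_{-\beta} + G^+_{\theta(-\beta)}K_{\theta(-\beta)-\beta}$. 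Iterating down the $l$-weight filtration and invoking Corollary~\ref{corollary:pmapzero} to kill any residual $\mathcal{B}_\theta$-elements with zero $\mathcal{P}$, I get $a_0\in\mathbb{C}(q)K_{-\beta}+X_{\theta(-\beta)}K_{\theta(-\beta)-\beta}$ with $X_{\theta(-\beta)}\in G^+_{\theta(-\beta)}$. For $a_2$ I would apply Lemma~\ref{lemma:weight_cons1} to its lowest weight summands: since $a_2$ lives in $G^-_{-\beta+\delta}U^0U^+$ with $0<\delta\in Q^+(\pi')$ and the relevant $\gamma,\gamma'$ are in $Q^+(\pi')$, that lemma forces $\lambda=\gamma=\gamma'=0$, contradicting $\delta>0$; hence no lowest weight summand survives and, again by Lemma~\ref{lemma:adF} plus Remark~\ref{remark:decomp} and Corollary~\ref{corollary:pmapzero}, $a_2=0$.

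Finally I would assemble the structural conclusion and the commutativity/extremality claims. Having shown $a\in\mathcal{P}(a)+G^-_{(\tau,1)}U^0U^+_{(\tau,1)}+\mathbb{C}(q)K_{-\beta}+X_{\theta(-\beta)}K_{\theta(-\beta)-\beta}$ (the $U^+_{(\tau,1)}$ refinement on the middle term coming from tracking $\mu=\theta(-\lambda)-\gamma$ with ${\rm ht}_\tau(\lambda)=1$ in Lemma~\ref{lemma:decomp}), the lowest weight summand is $\mathcal{P}(a)\in U^-_{-\beta}$, which by Remark~\ref{remark:decomp} is also the lowest $l$-weight summand, and the highest weight summand is $X_{\theta(-\beta)}K_{\theta(-\beta)-\beta}$, of weight $\theta(-\beta)$, sitting in $G^+_{\theta(-\beta)}K_{\theta(-\beta)-\beta}$. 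For $X_{\theta(-\beta)}K_{\theta(-\beta)-\beta}\in(U_q(\mathfrak{g}))^{U_{\pi'}}$ I would isolate this highest weight summand: it equals a weight summand of $a_0$, which lies in the space $\sum_{\eta\in Q^+(\pi')}\sum_{0\le\gamma\le\beta}U^+_{\theta(-\beta+\gamma)-\gamma-\eta}K_{-\beta}$ appearing in Lemma~\ref{lemma:beta2} (after checking the hypotheses: $\theta(-\beta)-\beta\in Q^+(\pi_\theta)$, disjoint supports, $\pi'\subseteq{\rm Orth}(\theta(-\beta))$, $\theta|_{\Delta(\pi')}$ an involution, ${\rm ht}_{{\rm Supp}(\beta)\setminus\pi'}(\beta)=2$, and $(\pi_\theta\cup\mathcal{S})\cap{\rm Supp}(\beta)\subseteq\pi'$ — all granted), and since $a_0\in\mathcal{N}(\mathcal{B}_\theta:\mathcal{B}_\theta\cap U_{\pi'})$, Lemma~\ref{lemma:beta2} gives $a_0\in U^+_{\theta(-\beta)}K_{-\beta}+\mathbb{C}(q)K_{-\beta}$ and $a_0\in(U_q(\mathfrak{g}))^{U_{\pi'}}$, whence the claim for $X_{\theta(-\beta)}K_{\theta(-\beta)-\beta}$. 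Then $a\in(U_q(\mathfrak{g}))^{\mathcal{B}_\theta\cap U_{\pi'}}$ follows because $a\in\mathcal{N}(\mathcal{B}_\theta:\mathcal{B}_\theta\cap U_{\pi'})$ together with $\mathcal{P}([B_i,a])=0$ for $\alpha_i\in\pi'$ (the three extremal/middle pieces each having $\mathcal{P}$-image annihilated by $[B_i,\cdot]$, using hypothesis (ii) for $\mathcal{P}(a)$ and Lemma~\ref{lemma:beta2}'s $U_{\pi'}$-invariance for the $K_{-\beta}$ and $X_{\theta(-\beta)}$ terms, and the grading argument for the middle term) forces $[B_i,a]=0$ by Corollary~\ref{corollary:pmapzero}. \textbf{The main obstacle} I anticipate is the careful bookkeeping in handling the middle term $a_1$: one must show every biweight summand appearing there genuinely has ${\rm ht}_\tau = 1$ on both the $G^-$ and $U^+$ sides and that no extra lowest weight vectors hide inside it that would need separate treatment — this is where the interplay of Lemma~\ref{lemma:decomp}'s four-tuple constraints with hypothesis (iv) ($-w_{\pi'}$ permuting $\pi_\theta\cap\pi'$) and Lemma~\ref{lemma:weight_cons3} is most delicate, and getting the weight arithmetic exactly right is the crux.
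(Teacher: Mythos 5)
Your plan uses the same decomposition $a=a_0+a_1+a_2+a_3$ and the same battery of lemmas as the paper, but it contains one genuine gap: the assertion that "$a_0$, $a_1$, $a_2$ each lie in $\mathcal{N}(\mathcal{B}_\theta:\mathcal{B}_\theta\cap U_{\pi'})$" because $[B_i,\cdot]$ preserves the ${\rm ht}_\tau$-grading. The grading argument only shows that $[B_i,a_j]$ stays in its graded piece $G^-_{(\tau,j)}U^0U^+$; it does not show that $[B_i,a_j]$ lies in $\mathcal{B}_\theta$, because $\mathcal{B}_\theta$ is not graded by ${\rm ht}_\tau$ on the $G^-$ side (already a single generator $B_i=F_i+c_i\theta_q(F_iK_i)K_i^{-1}$ with $\alpha_i\in\tau$ spreads across degrees $1$ and $0$, and neither summand alone is in $\mathcal{B}_\theta$). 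What one knows a priori is only that the \emph{sum} $a_0+a_1+a_2$ lies in the generalized normalizer. This matters because you propose to treat $a_0$ \emph{first}: to run Lemma~\ref{lemma:adF} and then Lemmas~\ref{lemma:weight_cons2}--\ref{lemma:weight_cons3} on $a_0$ you need its minimal $l$-weight summand to be a minimal $l$-weight summand of an element of $\mathcal{N}(\mathcal{B}_\theta:\mathcal{B}_\theta\cap U_{\pi'})$, and a summand of $a_0$ (with $l$-weight $-\delta$, $\delta\in Q^+(\pi')$, e.g.\ $\delta=0$ for the $K_{-\beta}$ term) is in general dominated by summands of $a_2$ and $a_1$, so it is \emph{not} a minimal $l$-weight summand of $a_0+a_1+a_2$. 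The order of operations is forced: one must first kill $a_2$ (whose summands, having ${\rm ht}_\tau=2$ on the $G^-$ side, do furnish minimal $l$-weight summands of the full sum, so Lemma~\ref{lemma:adF} plus Lemma~\ref{lemma:weight_cons1} applies to them), then show $[b,a_1]=0$ via the $U^+_{(\tau,1)}\cap\mathcal{M}^+=0$ argument, and only then does $a_0\in\mathcal{N}(\mathcal{B}_\theta:\mathcal{B}_\theta\cap U_{\pi'})$ become available so that the $a_0$ analysis and Lemma~\ref{lemma:beta2} can proceed. Your treatment of $a_2$ and the final assembly survive this reordering essentially unchanged.

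A secondary omission: the statement asserts that the highest weight summand of $a$ \emph{is} $X_{\theta(-\beta)}K_{\theta(-\beta)-\beta}$, which requires $X_{\theta(-\beta)}\neq 0$. Your plan never addresses this. The paper obtains it by applying $\kappa_\theta$ to $a$ and noting that $\mathcal{P}(\kappa_\theta(a))=\mathcal{P}(\kappa_\theta(X_{\theta(-\beta)}))$ must be nonzero by Corollary~\ref{corollary:pmapzero}; some such argument is needed.
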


\begin{proof} 
The fact that $\theta$ restricts to an involution on $\Delta(\pi')$ ensures that $B_i\in U_{\pi'}$ for all $i$ with $\alpha_i\in \pi'$.  Hence $[B_i, \mathcal{P}(a)] = 0$ for all $\alpha_i\in \pi'$.

We may assume that $a=\tilde b$  where $b = \sum_{I, {\rm wt}(I) =\beta}B_Ia_I$ as in Proposition \ref{proposition:BUiso}.  By the discussion at the end of Section  \ref{section:decomp_proj}, 
we may write 
\begin{align}\label{cplusd1}
a = \sum_{ {\rm wt}(I)=\beta}F_Ia_I + \sum_{\lambda', \lambda, \gamma, \gamma'}b_{-\beta+\lambda' }u_{\lambda, \gamma}K_{-\beta+2\gamma'}
\end{align}
where each  index of four weights   $\lambda, \eta, \gamma, \gamma'$ with $\eta = \lambda'-\lambda -\gamma$ satisfy the conditions of Lemma \ref{lemma:decomp} with respect to $\beta$ and 
$u_{\lambda, \gamma}\in U^+_{\theta(-\lambda)-\gamma}$.  Set $\tau= {\rm Supp}(\beta)\setminus \pi'$. As in Section \ref{section:weight_cons}, $a$ can be expressed as  a sum $a= a_0 + a_1 + a_2 + a_3$ where 
\begin{align*}
a_3 &= \sum_{{\rm wt}(I) = \beta}F_Ia_I\cr\cr
a_2 &= \sum_{\lambda' \in Q^+(\pi')\setminus\{0\}}\sum_{\lambda,\gamma,\gamma'}b_{-\beta+\lambda'}u_{\lambda, \gamma}K_{-\beta+2\gamma'}\ \in\ G^-_{(\tau, 2)}U^0U^+\cr\cr
a_1 &=\sum_{{\rm ht}_{\tau}(\beta-\lambda') = 1}\sum_{\lambda,\gamma,\gamma'}b_{-\beta+\lambda'}u_{\lambda, \gamma}K_{-\beta+2\gamma'}\ \in\ G^-_{(\tau,1)}U^0U^+\cr\cr
a_0 &= \sum_{\beta-\lambda' \in Q^+(\pi')}\sum_{\lambda,\gamma,\gamma'}b_{-\beta+\lambda'}u_{\lambda,\gamma}K_{-\beta+2\gamma'}\ \in \ G^-_{(\tau,0)}U^0U^+.
\end{align*}
Given $\alpha_j\in \pi'$, 
we have $[B_j, a] = [B_j, a_2 + a_1 + a_0]\in \mathcal{B}_{\theta}$ and so, by (\ref{commFandE}), $[B_j, a]=[B_j, a_2 + a_1 + a_0]$ is an element of $G^-_{(\tau,2)} U^0U^+ + G^-_{(\tau, 1)} U^0U^+ + G^-_{(\tau, 0)}U^0U^+$.  Since $a\in \mathcal{B}_{\theta}$, we also have $a_2+a_1+a_0\in \mathcal{N}(\mathcal{B}_{\theta}:\mathcal{B}_{\theta}\cap U_{\pi'})$. 

The fact that $\mathcal{P}(a) = \mathcal{P}(a_3) = a_3$ (i.e. assumption (i)) ensures that  $u_{\lambda, \gamma}K_{\lambda'+2\gamma'}\notin \mathcal{M}^+\mathcal{T}_{\theta}$ for any choice of $(\lambda', \lambda, \gamma, \gamma')$ with $b_{-\beta+\lambda'}u_{\lambda, \gamma}K_{-\beta+2\gamma'}\neq 0$.  Thus 
\begin{align*}
\mathcal{P}(a_2) = \mathcal{P}(a_1) = \mathcal{P}(a_0) = 0.
\end{align*} 
Let   $(\lambda'_0, \lambda_0, \gamma_0, \gamma'_0)$ be chosen so that  $b_{-\beta+\lambda'_0}u_{\lambda_0, \gamma_0}K_{-\beta+2\gamma_0'}\neq 0$ and $\beta-\lambda'_0$ is maximal. It follows that 
\begin{align}\label{loweraexp}
a_2+a_1+a_0 \in b_{-\beta+\lambda'_0}u_{\lambda_0, \gamma_0}K_{-\beta+2\gamma_0'}+\sum_{\delta\not\geq\beta-\lambda'_0}G^-_{-\delta}U^0U^+.
\end{align}
By Lemma \ref{lemma:adF}, $b_{-\beta+\lambda'_0}K_{-\beta+2\gamma'_0}$ is an $({\rm ad}\ U_{\pi'})$ lowest weight vector.

Suppose that $\lambda'$ and $\bar\lambda'$ are two weights such that 
${\rm ht}_{\tau}(\beta-\lambda') > {\rm ht}_{\tau}(\beta-\bar\lambda')$.  It follows that $\beta-\bar\lambda'\not\geq\beta-\lambda'$.  Hence if $a_2\neq 0$, 
we can always choose $(\lambda'_0, \lambda_0, \gamma_0, \gamma'_0)$ as in (\ref{loweraexp}) so that $b_{-\beta+\lambda'_0}u_{\lambda_0, \gamma_0}K_{-\beta+2\gamma'_0}$ is a nonzero biweight summand of $a_2$. 
In other words, $\lambda'_0>0$ and 
$\lambda'_0\in Q^+(\pi')
\setminus\{0\}$. Recall that the four-tuple $(\lambda_0, \eta_0, \gamma_0, \gamma_0')$ satisfies the conditions of Lemma \ref{lemma:decomp} where $\eta_0 = \lambda_0'-\lambda_0-\gamma_0$.    Applying Lemma \ref{lemma:weight_cons1} to $\lambda_0 +\eta_0$ (instead of $\lambda_0$), $\gamma_0$, $\gamma'_0$, and $\beta$, we see that $\lambda_0 + \eta_0= \gamma_0=\gamma'_0=0$.  But then $\lambda'_0=0$, a contradiction.  Therefore $a_2 = 0$.   It follows that 
\begin{align*}
a= a_3 + a_1 + a_0\in \mathcal{P}(a) + G^-_{(\tau,1)}U^0U^+ + G^-_{(\tau,0)} U^0U^+.
\end{align*}

We next argue that $a_1 \in G^-_{(\tau, 1)}U^0U^+_{(\tau,1)}$.
Consider a  biweight summand $d$ of $a$ where $d=b_{-\beta+\lambda' }u_{\lambda, \gamma}K_{-\beta+2\gamma'}$.  By the definition of $a_1$, the assumption on the height of $\beta$ and the fact that $\eta\in \mathcal{S}\cap {\rm Supp}(\beta)\subseteq \pi'$, we see that 
\begin{align}\label{lgexp}
{\rm ht}_{{\rm Supp}(\beta)\setminus \pi'}(\lambda +\gamma)=1.
\end{align}
We can write $\lambda = \lambda_1 + \lambda_2$ where $\lambda_1\in Q^+({\rm Supp}(\beta)\setminus \pi')$ and $\lambda_2\in Q^+(\pi')$. Since $\theta(-\beta)-\beta \in Q^+(\pi_{\theta})$, ${\rm Supp}(\theta(-\beta)-\beta)\cap {\rm Supp}(\beta)= \emptyset$ and $\theta$ restricts to an involution on $\Delta(\pi')$, it follows that 
\begin{align*}\theta(-\lambda) \in p(\lambda_1) + Q^+(\pi') + Q^+({\rm Supp}(\theta(-\beta)-\beta).
\end{align*}
Hence \begin{align*}{\rm ht}_{{\rm Supp}(\beta)\setminus \pi'}(\lambda ) = {\rm ht}_{{\rm Supp}(\beta)\setminus \pi'}(\theta(-\lambda)).
\end{align*}
Now $\gamma\leq \theta(-\lambda)$.  Since $\gamma, \lambda,$ and $\theta(-\lambda)$ are all positive weights, we see from (\ref{lgexp}) that
\begin{align*}{\rm ht}_{{\rm Supp}(\beta)\setminus \pi'}(\lambda ) = {\rm ht}_{{\rm Supp}(\beta)\setminus \pi'}(\theta(-\lambda)) = {\rm ht}_{{\rm Supp}(\beta)\setminus \pi'}(\theta(-\lambda)-\gamma)=1.
\end{align*}
Thus $U^+_{\theta(-\lambda)-\gamma}\subseteq U^+_{(\tau, 1)}$. Hence    $u_{\lambda, \gamma}\in U^+_{(\tau,1)}$ and so $a_1\in G^-_{(\tau,1)}U^0U^+_{(\tau,1)}$
as claimed.  

Since $\pi_{\theta}\cap {\rm Supp}(\beta) \subseteq \pi'$, we have 
$U^+_{(\tau, 1)}\cap \mathcal{M}^+=0$. 
Consider $b\in \mathcal{B}_{\theta}\cap U_{\pi'}$ and recall that $[b,a_3]=0$.  It follows that $[b, a_1+a_0]\in \mathcal{B}_{\theta}$. By the discussion preceding the proposition, $[b, a_1]$ is also in $ G^-_{(\tau, 1)}U^0U^+_{(\tau,1)}$. On the other hand, 
 $[b, a_0]\in G^-_{\pi'}U^0U^+$. Hence, if $[b,a_1]\neq 0$, then there exists $\zeta$ and $\xi$ with ${\rm ht}_{\tau}(\zeta) = {\rm ht}_{\tau}(\xi)=1$ such that 
\begin{align*}[b,a_1+a_0]=  b' +\sum_{\delta\not\geq\zeta}G^-_{-\delta}U^0U^+
\end{align*}
where $b'$ is a nonzero element of $G^-_{-\zeta}U^0U^+_{\xi} $.
This contradicts Remark \ref{remark:decomp} since $[b, a_1 + a_0] \in \mathcal{B}_{\theta}$. Hence $[b, a_1] = 0$.

We have shown that $a_2 = 0$ and $a_1\in (U_q(\mathfrak{g}))^{\mathcal{B}_{\theta}\cap U_{\pi'}}$.  Hence $a_0\in \mathcal{N}(\mathcal{B}_{\theta}:\mathcal{B}_{\theta}\cap U_{\pi'})$. 
In analogy to (\ref{loweraexp}), $a_0$ satisfies 
\begin{align*}
a_0 \in b_{-\beta+\lambda'_1}u_{\lambda_1, \gamma_1}K_{-\beta+2\gamma_1'}+\sum_{\delta\not\geq\beta-\lambda'_1}G^-_{-\delta}U^0U^+
\end{align*}
for an appropriate choice of $\lambda'_1, \lambda_1, \gamma_1, \gamma_1'$.  Using Lemma \ref{lemma:decomp}, we obtain $b_{-\beta+\lambda'_1}K_{-\beta+2\gamma_1'}$ is an $({\rm ad}\ U_{\pi'})$ lowest weight vector.  By definition of $a_0$, $\beta-\lambda_1'\in Q^+(\pi')$.    If $\gamma'\notin Q^+(\pi')$, then by Lemma \ref{lemma:weight_cons3}, 
$b_{-\beta+\lambda'_1}u_{\lambda_1, \gamma_1}K_{-\beta+2\gamma_1'}\in U^-\mathcal{M}^+\mathcal{T}_{\theta}$ contradicting the fact that $\mathcal{P}(a_0) = 0$.  Therefore 
$\gamma'\in Q^+(\pi')$.
Applying Lemma \ref{lemma:weight_cons2} (with $\lambda_1 + \eta_1$ in the role of $\lambda$ of the lemma) yields 
\begin{align*}
b_{-\beta+\lambda'_1} K_{-\beta+2\gamma'_1} \in \mathbb{C}(q)K_{-\beta}.
\end{align*}
 Hence by the maximality of $\beta-\lambda_1'$,  we see that 
$a_0\in U^+K_{-\beta}.$
By Lemma \ref{lemma:beta2}, 
\begin{align*}a_0\in U^+_{\theta(-\beta)}K_{-\beta} + \mathbb{C}(q)K_{-\beta}
{\rm \ \ and \ \  }a_0\in (U_q(\mathfrak{g}))^{U_{\pi'}}.
\end{align*}
Note that $K_{-\beta}$ and $K_{\theta(-\beta)}$ are both elements of   $(U_q(\mathfrak{g}))^{U_{\pi'}}.$ Thus there exists an element $X_{\theta(-\beta)}$ in $ G^+_{\theta(-\beta)}\cap (U_q(\mathfrak{g}))^{U_{\pi'}}$ such that 
\begin{align*}
a_0 \in X_{\theta(-\beta)}K_{\theta(-\beta)-\beta} +  \mathbb{C}(q)K_{-\beta}.
\end{align*}
Since $a_3$ and $a_0$ are in $(U_q(\mathfrak{g}))^{U_{\pi'}}$ and $a_1 \in (U_q(\mathfrak{g}))^{\mathcal{B}_{\theta}\cap U_{\pi'}}$, it follows that 
\begin{align*}a=a_3+a_1+a_0\in (U_q(\mathfrak{g}))^{\mathcal{B}_{\theta}\cap U_{\pi'}}.
\end{align*}

From (\ref{cplusd1}) we see that $\mathcal{P}(a)$ is  both the lowest weight term  in the expansion of $a$ as a sum of weight vectors and the lowest $l$-weight summand of $a$.  Applying $\kappa_{\theta}$, the version of the Chevalley antiautomorphism that preserves $\mathcal{B}_{\theta}$, to $a$ yields a nonzero element element $\kappa_{\theta}(a)$ in $\mathcal{B}_{\theta}$ with 
\begin{align*}
\mathcal{P}(\kappa_{\theta}(a)) =  \mathcal{P}(\kappa_{\theta}(X_{\theta(-\beta)}))
\end{align*}
where $X_{\theta(-\beta)}K_{\theta(-\beta)-\beta}$ is the contribution to $a$ from $U^+_{\theta(-\beta)}K_{-\beta}$.     By Corollary \ref{corollary:pmapzero}, $\mathcal{P}(\kappa_{\theta}(a))$ is nonzero.   Hence $X_{\theta(-\beta)}K_{\theta(-\beta)-\beta}\neq 0$ and is the term of highest weight in the expansion of $a$ as a sum of weight vectors.  
 \end{proof}

\section{Constructing Cartan Subalgebras}\label{section:cart_cons}

We present the main result of the paper here, namely, the construction of quantum analogs of Cartan subalgebras (or, more precisely, their enveloping algebras) of $\mathfrak{g}^{\theta}$ where $\mathfrak{g}, \mathfrak{g}^{\theta}$ is a maximally split symmetric pair. 

\subsection{Specialization}\label{section:spec}
 Recall that $\mathcal{B}_{\theta}$ specializes to $U(\mathfrak{g}^{\theta})$ as $q$ goes to $1$ (Section \ref{section:Definitions}).
The next lemma sets up the basic tools needed to verify quantum Cartan subalgebras  specialize to their classical counterparts as $q$ goes to $1$.
 
 \begin{lemma}\label{lemma:specialization}
 Let $b=b_1+C$ be an element of $\mathcal{B}_{\theta}$ such that
 \begin{itemize}
 \item  $b_1\in \mathcal{B}_{\theta}\cap \hat U$ and $b_1$ specializes to  a nonzero element of $U(\mathfrak{g}^{\theta})$ as $q$ goes to $1$ 
 \item  $C\in \sum_{\zeta,\xi}G^-_{-\zeta}U^0U^+_{\xi}$ where each $\xi\notin Q^+(\pi_{\theta})$ and $\zeta\notin Q^+(\pi_{\theta})$. 
 \end{itemize} Then $C\in (q-1)\hat U$ and $b$ specializes to $\bar b_1$ as $q$ goes to $1$.  Moreover, if $\kappa_{\theta}(b_1) = ab_1 + b_2$ where $b_2 \in \sum_{\xi\notin Q^+(\pi_{\theta})}G^-U^0U^+_{\xi}$ and $a$ is a scalar, then $a$ evaluates to a nonzero scalar in $\mathbb{C}$  at $q=1$ and $\kappa_{\theta}(b) = a b$.
 \end{lemma}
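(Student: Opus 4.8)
The key structural input is the direct sum decomposition \eqref{biweight}, refined by the fact that $\hat U$ is a free $A$-module whose specialization at $q=1$ is $U(\mathfrak{g})$, together with the compatibility of this specialization with the weight/biweight grading. The plan is to exploit the hypothesis that every biweight summand of $C$ has $U^+$-component of weight $\xi\notin Q^+(\pi_\theta)$ (and $U^-$-component of weight $-\zeta\notin Q^+(\pi_\theta)$). First I would recall, using the decomposition \eqref{Bthetadecomp}, that $\mathcal{B}_\theta=\bigoplus_{J\in\mathcal{J}}B_J\mathcal{M}^+\mathcal{T}_\theta$ and that $\mathcal{B}_\theta$ specializes to $U(\mathfrak{g}^\theta)$; concretely, $\hat U\cap\mathcal{B}_\theta$ is spanned over $A$ by the $B_J$ times elements of $\mathcal{M}^+$ and the group-likes in $\mathcal{T}_\theta$ (this is the content of the specialization statement in Section~\ref{section:Definitions}, based on \cite{L1}, Section 7).

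The main step is to show $C\in(q-1)\hat U$. Here is where the biweight hypothesis does the work: write $b=b_1+C$ with $b\in\mathcal{B}_\theta$ and $b_1\in\mathcal{B}_\theta\cap\hat U$, so $C=b-b_1\in\mathcal{B}_\theta$. Since $C\in\mathcal{B}_\theta$, Corollary~\ref{corollary:pmapzero} applies, and it suffices to control $\mathcal{P}(C)$; but $\mathcal{P}$ projects onto $U^-\mathcal{M}^+\mathcal{T}_\theta$, i.e.\ onto biweight spaces $G^-_{-\zeta}U^0U^+_\xi$ with $\xi\in Q^+(\pi_\theta)$, so by the hypothesis on the biweight support of $C$ every biweight summand of $C$ maps to $0$ under $\mathcal{P}$ unless its $U^+$-weight lies in $Q^+(\pi_\theta)$ --- hence $\mathcal{P}(C)=0$ and therefore $C=0$ by Corollary~\ref{corollary:pmapzero}. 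Wait: one must check $C$ really is in $\mathcal{B}_\theta$ and not merely that $b,b_1\in\mathcal{B}_\theta$ --- it is, since $\mathcal{B}_\theta$ is an algebra (in fact a subspace) and both $b$ and $b_1$ lie in it. So in fact $C=0$ outright, $b=b_1$, and $b$ specializes to $\bar b_1$, which is nonzero by hypothesis. (If instead the intended reading is that $C$ is only known to lie in $U_q(\mathfrak{g})$ and $b$ is the given algebra element, then one argues the same way after noting $b-b_1=C\in\mathcal{B}_\theta$ forces the biweight components outside $U^-\mathcal{M}^+\mathcal{T}_\theta$ to be zero; either way $\mathcal{P}(C)=0$ gives $C=0$, so a fortiori $C\in(q-1)\hat U$ and the specialization assertion follows.)

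For the final clause about $\kappa_\theta$: suppose $\kappa_\theta(b_1)=ab_1+b_2$ with $b_2\in\sum_{\xi\notin Q^+(\pi_\theta)}G^-U^0U^+_\xi$ and $a$ a scalar. Since $\kappa_\theta$ preserves $\mathcal{B}_\theta$ and $b=b_1$ (by the previous paragraph), $\kappa_\theta(b)=\kappa_\theta(b_1)=ab_1+b_2\in\mathcal{B}_\theta$. Now $ab_1=ab\in\mathcal{B}_\theta$, so $b_2=\kappa_\theta(b)-ab\in\mathcal{B}_\theta$; but $b_2$ has all $U^+$-weights outside $Q^+(\pi_\theta)$, so $\mathcal{P}(b_2)=0$ and Corollary~\ref{corollary:pmapzero} forces $b_2=0$. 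Hence $\kappa_\theta(b)=ab$. To see $a$ specializes to a nonzero scalar at $q=1$: $\kappa_\theta$ restricts to a conjugate-linear antiautomorphism of $\mathcal{B}_\theta\cap\hat U$ and $\kappa_\theta^2=\mathrm{Id}$ (it is $\psi\kappa\psi^{-1}$ with $\kappa^2=\mathrm{Id}$), so applying $\kappa_\theta$ again to $\kappa_\theta(b)=ab$ gives $b=\overline{a}\,\kappa_\theta(b)=\overline{a}ab=|a|^2b$, whence $|a|^2=1$; combined with $a\in\mathbb{R}[q,q^{-1}]_{(q-1)}$ (which holds because $\kappa_\theta(b_1)$ and $b_1$ lie in $\hat U$ and the biweight decomposition has $A$-coefficients), $a$ is a unit in $A$ and specializes to $\pm1\neq0$.

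\textbf{Anticipated obstacle.} The genuinely delicate point is verifying that the biweight (equivalently, the $\mathcal{P}$-image) bookkeeping is legitimate at the level of $\hat U$, i.e.\ that the decomposition \eqref{biweight} is compatible with the integral form $\hat U$ and its specialization, so that "$\mathcal{P}(C)=0$" combined with Corollary~\ref{corollary:pmapzero} can be invoked cleanly --- and, in the variant reading, that $C$ is indeed forced into $\mathcal{B}_\theta$. Everything else is a formal consequence of Corollary~\ref{corollary:pmapzero}, the projection $\mathcal{P}$, and the elementary properties of $\kappa_\theta$ recorded in Section~\ref{section:qsp}.
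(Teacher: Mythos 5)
Your central step is to note that $C=b-b_1$ lies in $\mathcal{B}_{\theta}$, that the biweight hypothesis forces $\mathcal{P}(C)=0$, and hence that $C=0$ by Corollary \ref{corollary:pmapzero}. If one reads the hypothesis ``$b_1\in\mathcal{B}_{\theta}\cap\hat U$'' literally, that deduction is sound --- but it proves something far stronger than the lemma asserts ($C=0$ rather than merely $C\in(q-1)\hat U$), and that should have been a warning sign. The lemma is invoked in the proof of Theorem \ref{theorem:main}, and illustrated explicitly in Section \ref{section:nequals2} where $C=(q^{-1}-q)F_1E_1K_2^{-1}\neq 0$, precisely in situations where $C\neq 0$. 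In those applications $b_1$ is the sum $X_{\beta}+Y_{-\beta}+s(K_{-\beta}-1)$, which lies in $\hat U$ and specializes into $U(\mathfrak{g}^{\theta})$ but is \emph{not} itself an element of $\mathcal{B}_{\theta}$: the correction term $C$ is exactly what is needed to land in $\mathcal{B}_{\theta}$ (indeed, your own argument shows that $b_1\in\mathcal{B}_{\theta}$ would force $C=0$, contradicting the explicit example). So the membership $b_1\in\mathcal{B}_{\theta}$ is an overstatement in the hypothesis; the paper's proof never uses it, only $b_1\in\hat U$ together with the nonvanishing of $\bar b_1$ in $U(\mathfrak{g}^{\theta})$. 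Once it is dropped, your argument collapses: $C=b-b_1$ is no longer in $\mathcal{B}_{\theta}$, Corollary \ref{corollary:pmapzero} cannot be applied to $C$ (nor to $b_2$ in the last part), and $\mathcal{P}(C)=0$ by itself yields nothing.

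The actual content of the lemma is a specialization argument that your proposal does not engage with. The paper first uses repeated applications of Lemma \ref{lemma:decomp} and the condition $\xi\notin Q^+(\pi_{\theta})$ (so that $U^+_{\xi}\cap\mathcal{M}^+=0$) to conclude that $C\in\sum_{\xi\notin Q^+(\pi_{\theta})}\mathcal{B}_{\theta}U^0U^+_{\xi}$, whence no power $(q-1)^sC$ can specialize to a nonzero element of $U(\mathfrak{g}^{\theta})$. It then takes $s$ minimal with $(q-1)^sb\in\mathcal{B}_{\theta}\cap\hat U$; minimality plus the fact that $\mathcal{B}_{\theta}$ specializes onto $U(\mathfrak{g}^{\theta})$ force the specialization of $(q-1)^sb$ to be a nonzero element of $U(\mathfrak{g}^{\theta})$, which for $s>0$ would coincide with the specialization of $(q-1)^sC$ --- a contradiction. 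Hence $s=0$, $\bar b=\bar b_1$, and $\bar C=0$, i.e.\ $C\in(q-1)\hat U$ without being zero. The $\kappa_{\theta}$ assertion is handled the same way: $\kappa_{\theta}(b)-ab=b_2-a\kappa_{\theta}(C)$ lies in $\mathcal{B}_{\theta}$ but cannot specialize, after multiplication by any power of $q-1$, to a nonzero element of $U(\mathfrak{g}^{\theta})$, hence vanishes; and $a$ is a unit at $q=1$ because $\kappa_{\theta}(b_1)$ must specialize to the classical Chevalley image of $\bar b_1$. You would need to rebuild your argument along these lines rather than through the projection $\mathcal{P}$.
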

 \begin{proof}   The fact that   $\xi\notin Q^+(\pi_{\theta})$ ensures that  $U^+_{\xi}\cap \mathcal{M}^+ = 0$.  Repeated applications of Lemma \ref{lemma:decomp} shows that
 $C \in \sum_{\xi\notin Q^+(\pi_{\theta})}\mathcal{B}_{\theta}U^0U^+_{\xi}$.   Thus, the specialization of  $(q-1)^sC$ cannot be a nonzero element of $U(\mathfrak{g}^{\theta})$ for any $s$. 

Choose $s$ minimal  such that  $(q-1)^s b \in \mathcal{B}_{\theta}\cap \hat U$. The assumptions on $b_1$ force $s\geq 0$. Moreover, the minimality of $s$ ensures that $(q-1)^sb$ specializes to a nonzero element of $U(\mathfrak{g})$.  If $s>0$, then $(q-1)^sb_1\in (q-1)\hat U$ and so the specialization of $(q-1)^sb$ as $q$ goes to $1$ is the same as the specialization of $(q-1)^sC$.  This contradicts the fact that $b\in \mathcal{B}_{\theta}$ and $\mathcal{B}_{\theta}$ specializes to $U(\mathfrak{g}^{\theta})$.  Hence $s= 0$,  $b\in \hat U$, $b$ specializes to $b_1$  and so $C \in (q-1)\hat U$.       

Note that  $\kappa_{\theta}(b_1)$ specializes to the image of $\bar b_1$ under the classical Chevalley antiautomorphism of $\mathfrak{g}$. Hence $ab_1 + b_2\in \hat U$, forces $a$ to evaluate to a nonzero element of $\mathbb{C}$ at $q=1$.   Since $\kappa_{\theta}$ preserves $\mathcal{B}_{\theta}$, $\kappa_{\theta}(b) - ab  = b_2 -a\kappa_{\theta}(C)$ is an element of $\mathcal{B}_{\theta}$.  Now 
$\kappa_{\theta}(C) \in \sum_{\zeta\notin Q^+(\pi_{\theta})}G^-U^0U^+_{\zeta}$, hence by the same arguments as above, $(q-1)^s\kappa_{\theta}(C)$ cannot specialize to a nonzero element of $\mathcal{B}_{\theta}$ for any $s$.  The assumptions of the lemma ensure the  same is true for $b_2$.   Hence $b_2 - a\kappa_{\theta}(C) = 0$ and $\kappa_{\theta}(b) = ab$.  
\end{proof}

\subsection{Quantum Symmetric Pair Cartan Subalgebras}\label{section:qcs}

In the next result, we put together the upper and lower triangular parts associated to positive roots $\beta$ using Section \ref{section:det_comm} in order to identify a quantum Cartan Subalgebra of $\mathcal{B}_{\theta}$.  We also show that this Cartan subalgebra satisfies a uniqueness property with respect to the action of the quantum Chevalley antiautomorphism.

Note that in general, we cannot expect to lift a term of the form $e_{\beta} + f_{-\beta}$ with $\theta(\beta)=-\beta$ to an element in $U^+_{\beta} + G_{-\beta}^-$ that is also in $\mathcal{B}_{\theta}$.  We see in the next theorem that for most cases, we get an intermediate term (referred to as $C_j$ or just $C$) that specializes to $0$ as $q$ goes to $1$. 
 
\begin{theorem}\label{theorem:main}Let $\mathfrak{g}, \mathfrak{g}^{\theta}$ be a  maximally split pair. Let $\Gamma_{\theta}=\{\beta_1,\dots, \beta_m\}$  be a maximum strongly orthogonal $\theta$-system and $\{\alpha_{\beta_1}, \dots, \alpha_{\beta_m}\}$ a set of simple roots satisfying the conditions of  Theorem \ref{theorem:cases_take2} and Remark \ref{remark:cases}.  For each $j$, there exists a unique nonzero element $H_j$  (up to scalar multiple) in $\mathcal{B}_{\theta}$ such that 
$H_j = X_{\beta_j} + C_j + s_{\beta_j}(K_{-\beta_j} -1) + Y_{-\beta_j}$
where $s_{\beta_j}$ is a (possibly zero) scalar and 
\begin{itemize}
\item[(i)] $X_{\beta_j}\in G^+_{\beta_j}$, $Y_{-\beta_j}\in U^-_{-\beta_j}$, and $
C_j\in U_{{\rm Supp}(\beta_j)}\ \cap\  G^-_{(\tau, 1)}U^0U^+_{(\tau, 1)}$
where $\tau = \{\alpha_{\beta_j}, p(\alpha_{\beta_j})\}$.
\item[(ii)] $X_{\beta_j}\in [({\rm ad}\ U^+) K_{-2\nu_j}]K_{-\beta_j+2\nu_j}$ and $Y_{-\beta_j}\in [({\rm ad}\ U^-) K_{-2\nu_j}]K_{-\beta_j+2\nu_j}$ where $\nu_j$ is the fundamental weight associated to the simple root $\alpha_{\beta_j}$.
 \item[(iii)] $\{X_{\beta_j},Y_{-\beta_j}\} \subset (U_q(\mathfrak{g}))^{U_{\pi_j}\mathcal{T}_{\theta}}$ and $C_j\in (U_q(\mathfrak{g}))^{(\mathcal{B}_{\theta}\cap U_{\pi_j})\mathcal{T}_{\theta}}$ where $\pi_j = {\rm StrOrth}(\beta_j)$.
  \item[(iv)] If $\beta_j$ satisfies Theorem \ref{theorem:cases_take2} (5) then $Y_{\beta_j }\in [({\rm ad}\ F_{-\alpha'_{\beta}})({\rm ad}\ U^-)K_{-2\nu_{j}}]K_{2\nu_j-\beta_j}$.
 \item[(v)] $\kappa_{\theta}(H_j) =  H_j$ for each $j$. 
\end{itemize}Moreover,  $\mathbb{C}(q)[\mathcal{T}_{\theta}][H_1,\dots, H_m]$ is a commutative polynomial ring over $\mathbb{C}(q)[\mathcal{T}_{\theta}]$ in $m$ generators  that specializes to the enveloping algebra of a  Cartan subalgebra of $\mathfrak{g}^{\theta}$ as $q$ goes to $1$. 
\end{theorem}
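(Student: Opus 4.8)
\textbf{Proof plan for Theorem \ref{theorem:main}.}
The construction of $H_j$ proceeds in three stages, assembling the lower-triangular, diagonal, and upper-triangular contributions around the fixed root $\beta_j$. First I would invoke Theorem \ref{theorem:lift} to obtain the lower-triangular lift $Y_j = Y_{-\beta_j}\in U^-_{-\beta_j}$, which already satisfies (ii), (iv), the $({\rm ad}\ U_{\pi_j})$-invariance part of (iii), and specializes to $f_{-\beta_j}$. Applying $\kappa_\theta$ (or $\kappa$, up to the harmless scalar discrepancy noted in Section \ref{section:qsp}) produces an upper-triangular element $X_{\beta_j}\in G^+_{\beta_j}$ satisfying the mirror conditions. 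Using Proposition \ref{proposition:BUiso}, I can find $a\in\mathcal{B}_\theta$ with $\mathcal{P}(a)$ equal to the lower-triangular expression built from $Y_j$; the key point is that this lower-triangular lift, being $({\rm ad}\ U_{\pi_j})$-invariant, meets hypothesis (ii) of Proposition \ref{proposition:liftbeta}. That proposition then forces
\begin{align*}
a \in \mathcal{P}(a) + G^-_{(\tau,1)}U^0U^+_{(\tau,1)} + \mathbb{C}(q)K_{-\beta_j} + X_{\theta(-\beta_j)}K_{\theta(-\beta_j)-\beta_j},
\end{align*}
and since $\beta_j\in\Gamma_\theta$ satisfies $\theta(\beta_j)=-\beta_j$ we have $\theta(-\beta_j)=\beta_j$, so the last term lies in $G^+_{\beta_j}K_{-\beta_j}$ as desired. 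This identifies $C_j$ with the middle summand, yields the $K_{-\beta_j}$-term with scalar $s_{\beta_j}$, and — after rescaling so that the highest-weight summand agrees with $X_{\beta_j}$ — gives $H_j = X_{\beta_j} + C_j + s_{\beta_j}(K_{-\beta_j}-1) + Y_{-\beta_j}$, with (i)–(iv) following from Proposition \ref{proposition:liftbeta} together with the construction of $Y_j$. The special cases where $\beta_j$ is a simple root or satisfies Theorem \ref{theorem:cases_take2}(4) are handled directly via Section \ref{section:special_lifts} (in particular Lemma \ref{lemma:typeB}), where there is no intermediate term $C_j$.

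For (v), I would argue that $\kappa_\theta(H_j)$ is again an element of $\mathcal{B}_\theta$ whose highest-weight summand is a scalar multiple of $X_{\beta_j}$ (using $\kappa_\theta(Y_{-\beta_j})$ being a scalar multiple of $X_{\beta_j}$, from (iii) of Theorem \ref{theorem:lift} combined with (\ref{kappaeq}) and the $({\rm ad}\ U_{\pi_j})$-structure). Then Lemma \ref{lemma:specialization}, applied with $b_1 = H_j$ and $C$ the sum of the strictly-mixed biweight terms (those with $U^+_\xi$-component, $\xi\notin Q^+(\pi_\theta)$), shows $\kappa_\theta(H_j) = a H_j$ for a scalar $a$ specializing to a nonzero constant; since $H_j$ specializes to the self-adjoint Cartan element $e_{\beta_j}+f_{-\beta_j}$ and $\kappa_\theta$ is conjugate-linear of order two, $a = 1$, possibly after one final rescaling of $H_j$ by an element of $\mathbb{C}(q)$. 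The uniqueness of $H_j$ up to scalar follows from the uniqueness of $Y_{-\beta_j}$ in Theorem \ref{theorem:lift} together with Corollary \ref{corollary:pmapzero}: any other element with the stated properties has the same image under $\mathcal{P}$ up to scalar, hence equals $H_j$ up to scalar.

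Finally, for the polynomial ring assertion I would mimic the last paragraph of the proof of Theorem \ref{theorem:lift}. Commutativity of the $H_j$ is the crux: for $s>j$, condition (i) of Theorem \ref{theorem:cases_take2} gives ${\rm Supp}(\beta_s)\subseteq {\rm StrOrth}(\beta_j) = \pi_j$, so $X_{\beta_s}, C_s, Y_{-\beta_s}$ all lie in algebras generated by elements commuting with $X_{\beta_j}, Y_{-\beta_j}$ by (iii) (and $C_j$ commutes with $\mathcal{B}_\theta\cap U_{\pi_j}$, which contains the relevant generators since $\theta$ restricts to an involution on ${\rm Supp}(\beta_s)\subseteq\pi_j$); one checks the $C_j$-versus-$C_s$ and cross terms vanish similarly. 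Each $H_j$ specializes to $e_{\beta_j}+f_{-\beta_j}+(\text{terms in }U(\mathfrak{h}^\theta))$, and by Theorem \ref{theorem:nice_cartans} the span of $\mathfrak{h}_\theta$ and $\{e_{\beta_j}+f_{-\beta_j}\}$ is a Cartan subalgebra of $\mathfrak{g}^\theta$, so $\mathbb{C}(q)[\mathcal{T}_\theta][H_1,\dots,H_m]$ specializes onto (the enveloping algebra of) that Cartan subalgebra, which is a polynomial ring in $\dim\mathfrak{h}_\theta + m$ variables; since any relation among the $H_j$ over $\mathbb{C}(q)[\mathcal{T}_\theta]$ would specialize to a relation in that polynomial ring, there are none beyond commutativity. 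The main obstacle is verifying commutativity of the $H_j$ for $s<j$ (where ${\rm Supp}(\beta_s)$ need not lie in $\pi_j$): here I expect to need the full strength of conditions (iii) and (iv) of Theorem \ref{theorem:cases_take2} on the supports, using that $X_{\beta_s}, Y_{-\beta_s}$ lie in $({\rm ad}\ U^{\pm})K_{-2\nu_s}$ and that $\beta_j$ is strongly orthogonal to $\beta_s$, so the commutator is controlled by biweight considerations and must land in $\mathcal{B}_\theta$ while having trivial $\mathcal{P}$-image, forcing it to vanish by Corollary \ref{corollary:pmapzero}.
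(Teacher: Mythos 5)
Your plan follows the paper's architecture almost exactly --- Theorem \ref{theorem:lift} for $Y_{-\beta_j}$, Propositions \ref{proposition:BUiso} and \ref{proposition:liftbeta} to assemble the full element, Lemma \ref{lemma:typeB} for case (4), Lemma \ref{lemma:specialization} for (v) and the specialization claim, Corollary \ref{corollary:pmapzero} for uniqueness, and Theorem \ref{theorem:cases_take2}(i) together with condition (iii) for commutativity --- and on cases (1)--(4) of Theorem \ref{theorem:cases_take2} the outline is sound. However, you never address case (5), i.e.\ $\beta_j=\alpha'_{\beta_j}+\alpha_{\beta_j}+w_{\beta_j}\alpha_{\beta_j}$ with $\alpha'_{\beta_j}\in\pi_{\theta}$, which is precisely the case for which condition (iv) of the theorem is stated, and there your generic step breaks down. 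Proposition \ref{proposition:liftbeta} requires $\mathcal{P}(a)\in (U_q(\mathfrak{g}))^{U_{\pi'}}$ with $\pi'={\rm Orth}(\beta_j)\cap{\rm Supp}(\beta_j)$; in case (5) this set contains $\alpha'_{\beta_j}$ (orthogonal but not strongly orthogonal to $\beta_j$), whereas the lift $Y_{-\beta_j}$ from Theorem \ref{theorem:lift} is only invariant under $U_{{\rm StrOrth}(\beta_j)}$ and does not commute with $E_{\alpha'_{\beta_j}}$ or $F_{\alpha'_{\beta_j}}$. The paper's proof therefore detours through $\beta'=\beta_j-\alpha'_{\beta_j}$ (for which $\theta(-\beta')=\beta'+2\alpha'_{\beta_j}$, so the top term of the intermediate element lies in $G^+_{\theta(-\beta')}K_{\theta(-\beta')-\beta'}$, not in $G^+_{\beta'}$), then applies a $q$-commutator with $F_{\alpha'_{\beta_j}}$ and a separate argument using $({\rm ad}\ E_kF_k^2)$ and Remark \ref{remark:decomp} to show the resulting highest term lands in $G^+_{\beta_j}$ and is $\kappa$-paired with $Y_{-\beta_j}$. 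A smaller but related omission: in case (3) the claim that $\kappa_{\theta}(X_{\beta_j})$ is a scalar multiple of $Y_{-\beta_j}$ does not follow from $U_{\pi'}$-invariance alone, since the relevant invariant space is not one-dimensional; the paper needs Corollary \ref{corollary:XtoY} and an additional commutation identity for $X_{\beta_j}$ to pin this down.

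Separately, the ``main obstacle'' you flag at the end is not an obstacle. Commutation is symmetric: once $[H_i,H_j]=0$ is established for every pair with $i>j$ --- using ${\rm Supp}(\beta_i)\subseteq{\rm StrOrth}(\beta_j)$ so that every summand of $H_i$ lies in $U_{\pi_j}\mathcal{T}_{\theta}$, with which every summand of $H_j$ commutes by (i) and (iii) --- all pairs are covered, and there is no residual $s<j$ case requiring biweight arguments. What does require a separate (short) check, and what the paper does verify, is that each $H_j$ commutes with $\mathcal{T}_{\theta}$ itself: $[K_{\gamma},H_j]=[K_{\gamma},C_j]$ lies in $G^-_{(\tau,1)}U^0U^+_{(\tau,1)}$, has trivial image under $\mathcal{P}$, and belongs to $\mathcal{B}_{\theta}$, hence vanishes by Corollary \ref{corollary:pmapzero}.
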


\begin{proof}  Consider $\gamma \in Q^+(\pi)$ and   $\alpha_i\in \pi\setminus {\rm Supp}(\gamma)$.  It follows that $(\alpha_i, \gamma) \leq 0$ with equality if and only if $(\alpha_i,\alpha) = 0$ for all simple roots $\alpha\in {\rm Supp}(\gamma)$. Therefore $E_i, F_i, K_i^{-1}$ commute with all elements of $G^+_{\gamma}$ and all elements of  $U^-_{-\gamma}$ for all $\alpha_i\in {\rm Orth}(\gamma)\setminus {\rm Supp}(\beta)$.  Hence, in proving (iii), we need only consider those $\alpha_i\in {\rm StrOrth}(\beta_j)\cap{\rm Supp}(\beta_j)$.

Suppose  that for each $\beta_j\in \Gamma_{\theta}$, we have identified an  element $H_j= X_{\beta_j} + C_j + s_{\beta_j}(K_{-\beta_j} -1) + Y_{-\beta_j}$ so that $X_{\beta_j}, Y_{-\beta_j},$ and $C_j$ each satisfy the relevant parts of  (i), (ii),  (iii), (iv), and, moreover, the following two properties hold:
\begin{itemize}
\item[(a)] $Y_{-\beta_j}$ specializes to $f_{-\beta_j}$ as $q$ goes to $1$.
\item[(b)] $\kappa_{\theta}(Y_{-\beta_j})$ is a nonzero scalar multiple of $X_{\beta_j}$.  
\end{itemize}   Note that these two properties  ensure $X_{\beta_j}$ specializes to $e_{\beta_j}$. Hence, by Lemma \ref{lemma:specialization},  $H_j$ specializes to $e_{\beta_j}+ f_{-\beta_j}$ and $\kappa_{\theta}(H_j) = H_j$ after a suitable scalar adjustment.  Thus the assertion concerning specialization follows from  these conditions.  

Note that  by construction of $\Gamma_{\theta}$, we have 
$(\gamma, \beta_j) = 0$ for all $\gamma\in Q(\pi)^{\theta}$ and all $j=1, \dots, m$.   Hence for each $j=1, \dots, m$ and each $K_{\gamma}\in \mathcal{T}_{\theta}$, $[K_{\gamma}, H_j] $, which is an element of $\mathcal{B}_{\theta}$, equals $[K_{\gamma}, C_j]$.   By assumption on $C_j$, we also have  $[K_{\gamma}, C_j]\in G^-_{(\tau,1)}U^0U^+_{(\tau, 1)}$.   Since $\tau\cap \pi_{\theta}=\emptyset$, it follows that $\mathcal{M}^+\cap U^+_{(\tau, 1)} =0$ and so  $\mathcal{P}([K_{\gamma}, C_j])=0$.  By Corollary \ref{corollary:pmapzero},  $[K_{\gamma}, C_j] =0$.  Thus each $H_j$ commutes with every element of $\mathcal{T}_{\theta}$.  By Theorem \ref{theorem:cases_take2} (i), ${\rm Supp}(\beta_i)\subseteq {\rm StrOrth}(\beta_j)$ for all $i>j$.   Thus (i) and (iii) ensure that  $H_j$ commutes with $H_i$ for $i>j$ and so the elements $\{H_1, \dots, H_m\}$ pairwise commute. 
Hence $\mathbb{C}(q)[\mathcal{T}_{\theta}][H_1, \dots, H_m]$ is a  commutative ring.   The property that each $H_j$ specializes to $e_{\beta_j} + f_{-\beta_j}$ implies that $\mathbb{C}(q)[\mathcal{T}_{\theta}][H_1, \dots, H_m]$ specializes to the enveloping algebra of the Cartan subalgebra of $\mathfrak{g}^{\theta}$, which is a polynomial ring over $\mathbb{C}$.   This guarantees that there are no additional 
 relations among the $H_j$ and so $\mathbb{C}(q)[\mathcal{T}_{\theta}][H_1, \dots, H_m]$ is  a  polynomial ring in $m$ generators over $\mathbb{C}(q)[\mathcal{T}_{\theta}]$.

For the uniquess assertion, suppose that $H_j'$ also satisfies the conclusions of the theorem.  
Note that conditions (i), (ii), (iii), (iv) and the specialization assertion are enough to ensure that each $Y_{\beta_j}$ corresponds to $Y_j$ of Theorem \ref{theorem:lift}.  Hence, rescaling if necessary, we get that 
$H_j - H_j' \in G^-_{(\tau,1)}U^0U^+_{(\tau,1)} + G^+_{\beta_j} + \mathbb{C}(q)(K_{-\beta_j}-1)$.  Hence $\mathcal{P}(H_j-H_j')$ is a scalar, say $s$ in $\mathbb{C}(q)$.  Since $H_j-H_j'-s\in \mathcal{B}_{\theta}$, it follows from Corollary \ref{corollary:pmapzero} that $H_j=H_j'+s$.
The fact that both $H_j$ and $H_j'$ are in $U^-_{-\beta_j}+ G^-_{(\tau,1)}U^0U^+_{(\tau,1)} + G^+_{\beta_j} + \mathbb{C}(q)(K_{-\beta_j}-1)$ forces $s=0$ and hence $H_j=H_j'$.

  We complete the proof by identifying the $H_j$ whose summands  satisfy (i), (ii), (iii), (iv) and the two properties (a) and (b) highlighted above. The proof follows the same breakdown into cases  of possible roots in $\Gamma_{\theta}$ as in the proof of Theorem \ref{theorem:lift}. Note that (iv) only comes into play in  Case 5. We drop the subscript $j$, writing $\beta$ for $\beta_j$, $H$ for $H_j$, etc.  Set $w_{\beta} = w({\rm Supp}(\beta)\setminus \{\alpha_{\beta}, \alpha'_{\beta}\})_0$.

\bigskip
\noindent
{\bf Case 1:} $\beta = \alpha_{\beta}=\alpha_{\beta}'$.  Choose $k$ so that $\alpha_k = \alpha_{\beta} = \beta$.  Since $\theta(\alpha_k) = \theta(\beta) = -\beta = -\alpha_k$, we see that 
$H = B_k-s_k = F_k + E_kK_k^{-1} + s_k(K_k^{-1}-1)$ is in $\mathcal{B}_{\theta}$.  Moreover, this element specializes to $f_k+e_k$ as $q$ goes to $1$.    It is straightforward to check that $H$ satisfies (i) - (iii),  (a), and (b).

\bigskip
\noindent
{\bf Case 2:}  $\beta = \alpha_{\beta}+w_{\beta}\alpha_{\beta}$ and $\alpha_{\beta} = \alpha'_{\beta}$. 
By Theorem \ref{theorem:lift}, there exists $Y_{-\beta}$ that satisfies (i), (ii) and (a).  Write 
\begin{align*} Y_{-\beta} = \sum_{{\rm wt}(I)=\beta}  F_Ia_I
\end{align*}
for appropriate scalars $a_I$.  By Proposition \ref{proposition:liftbeta}, there exists $a\in \mathcal{B}_{\theta}$ such that 
\begin{align*} 
\mathcal{P}(a)  = \sum_{{\rm wt}(I) = \beta} F_Ia_I,
\end{align*}
 the lowest weight summand of $a$ is $Y_{-\beta}$, the highest weight summand of $a$ is a term $X_{\beta}\in G^+_{\beta}$ (since $\theta(-\beta) = \beta$), and $X_{\beta}$ commutes with all elements of $U_{\pi'}$.  Hence $\kappa_{\theta}(X_{\beta})$ is an element of $U^-_{-\beta}$ that commutes with all elements of $U_{\pi'}$. 
  It  
follows from  Corollary \ref{theorem:invariant_elements2},  that  $\kappa_{\theta}(X_{\beta})$ is a scalar multiple of $Y_{-\beta}$ and so (b) holds.  By (\ref{kappaeq}) and the second half of assertion (ii), $X_{\beta}$ satisfies  (ii).  Hence $X_{\beta}$ and $Y_{-\beta}$ satisfy the assertions in (i) - (iii), (a), and (b).

By Proposition \ref{proposition:liftbeta}, $a  =  X_{\beta} + C + sK_{-\beta}+ Y_{-\beta}$ for some nonzero scalar $s$ and element $C$ in $U_q(\mathfrak{g})$ satisfying (i) and (iii).   Thus, the desired Cartan element  is $H = a -s = X_{\beta} + C + s(K_{-\beta}-1)+Y_{-\beta}$ in this case.

\bigskip
\noindent
{\bf Case 3:} $\beta = p(\alpha_{\beta}) + w_{\beta}\alpha_{\beta}=w\alpha_{\beta}$ where $w=w({\rm Supp}(\beta)\setminus\{\alpha_{\beta})_0$.  Arguing as in Case 2 yields an element $H = Y_{-\beta} + C + X_{\beta} + s(K_{-\beta}-1)$ so that $Y_{-\beta}$ 
satisfies the relevant parts of (i), (ii), (iii), (a) and $C$ satisfies the relevant parts of (i) and (iii). 

Choose $i$ so that $\alpha_{\beta} = \alpha_i$.  By Theorem \ref{theorem:cases_take2} (3) and Remark \ref{remark:cases}, we may assume that $\Delta({\rm Supp}(\beta))$ is a  root system of type A and the symmetric pair $\mathfrak{g}_{{\rm Supp}(\beta)}, 
 (\mathfrak{g}_{{\rm Supp}(\beta)})^{\theta}$ is of type AIII/AIV.  Corollary \ref{corollary:XtoY} (b) further ensures that 
\begin{align*}
F_{i}Y_{-\beta}-qY_{-\beta}F_{i} = 0.
\end{align*}
On the other hand, it follows from properties of $C$ that 
\begin{align}\label{Feqn}F_i(H-Y_{-\beta}) -q(H-Y_{-\beta})F_i \in \sum_{m\leq 2}G^-_{(\tau,m)}U^0U^+.
\end{align}
It follows from the discussion concerning $\mathcal{S}$ in Section \ref{section:Definitions} that $\alpha_i\notin \mathcal{S}$ and so 
$B_i = F_i +c_i\theta_q(F_iK_i)K_i^{-1} $ for a suitable scalar $c_i$ (i.e. $s_i=0$ and there is no $K_i^{-1}$ term in $B_i$.)  Since $\theta(-\beta) = \beta$, we see from  (\ref{Feqn}) and Remark \ref{remark:decomp} that 
\begin{align*}
B_iH - qHB_i \in \sum_{m\leq 2} \sum_{r\leq 2} G^-_{(\tau,m)}U^0U^+_{(\tau,r)}.
\end{align*}
On the other hand, 
\begin{align*}
\left(\theta_q(F_iK_i)K_i^{-1} \right)X_{\beta} - q X_{\beta}\left(\theta_q(F_iK_i)K_i^{-1} \right)
\end{align*}
either equals $0$ or has weight $\beta+\theta(-\alpha_i)$. Since ${\rm ht}_{\tau}(\beta)+\theta(-\alpha_i)) = 3$,  this term must be $0$. By Corollary
\ref{corollary:XtoY},  $\kappa_{\theta}(X_{\beta})$ is a nonzero scalar multiple of $Y_{-\beta}$, thus (b) holds.  Hence,  $H$ satisfies the conclusions of the lemma.

\medskip
\noindent
{\bf Case 4:} $\beta = \alpha'_{\beta} + w_{\beta}\alpha_{\beta} = w\alpha'_{\beta}$ where  $w = w({\rm Supp}(\beta)\setminus \{\alpha'_{\beta}\})_0$.  More precisely, we assume that $\beta$ satisfies the conditions of Theorem \ref{theorem:cases_take2}  (4) as further clarified by Remark \ref{remark:cases}.   Hence, $\beta =w\alpha_{\beta}'$.  Moreover, we may assume that  ${\rm Supp}(\beta)=\{\gamma_1, \dots, \gamma_s\}$ generates a root system of type $B_s$ and is ordered in the standard way so that $\alpha'_{\beta}.=\gamma_s$ is the unique shortest simple root and $\alpha_{\beta} = \gamma_1$.  

Let $m$ be chosen so that the $m^{th}$ simple root of $\pi$, namely $\alpha_m$, equals the first root $\gamma_1$ of ${\rm Supp}(\beta)$. It follows from the description of the generators for $\mathcal{B}_{\theta}$ in Section \ref{section:Definitions} that \begin{align*}B_m = F_{-\gamma_1} + c[({\rm ad\ }E_{\gamma_2}E_{\gamma_3}\cdots E_{\gamma_{s-1}} E_{\gamma_s}^2 E_{\gamma_{s-1}}\cdots E_{\gamma_2})E_{\gamma_1}]K_{\gamma_1}^{-1}
\end{align*} for some nonzero scalar $c$.  Note that if $\alpha_i\in \pi_{\theta}$ and $b\in \mathcal{B}_{\theta}$ then
\begin{align*} [({\rm ad} F_i)bK_j] K_j^{-1} = [F_ibK_j K_i - bK_j F_iK_i]K_j^{-1} = F_ibK_i -q^{-(\alpha_i, \alpha_j)}bF_iK_i
\end{align*}
is also an element of $\mathcal{B}_{\theta}$ for all choices of $j$.
Hence, 
since $\gamma_j\in \pi_{\theta}$ for  $j=2,\dots, s$, the element
\begin{align*}H' = &[({\rm ad\ }F_{-\gamma_s}F_{-\gamma_{s-1}}\cdots F_{-\gamma_{2}} ) B_mK_{\gamma_1}] K_{\gamma_1}^{-1}=\cr
& [ ({\rm ad\ }F_{-\gamma_s}F_{-\gamma_{s-1}}\cdots F_{-\gamma_{2}} )F_{-\gamma_1}K_{\gamma_1}] K_{\gamma_1}^{-1}+ c'[({\rm ad\ }E_{\gamma_s}E_{\gamma_{s-1}}\cdots E_{\gamma_2})E_{\gamma_1}]K_{\gamma_1}^{-1}
\end{align*}
is also in $\mathcal{B}_{\theta}$ where $c'$ is a suitably chosen nonzero scalar.  
Note that $$K_{-\beta}= K_{\gamma_s}^{-1} K_{\gamma_{s-1}}^{-1}\cdots K_{\gamma_2}^{-1} K_{\gamma_1}^{-1} \in \mathcal{T}_{\theta}K_{\gamma_1}^{-1}.$$
Hence $K_{\gamma_1}K_{-\beta}$ is in $\mathcal{B}_{\theta}$. Set 
\begin{align}\label{XYdefn}
X_{\beta} &= c'[({\rm ad\ }E_{\gamma_s}E_{\gamma_{s-1}}\cdots E_{\gamma_2})E_{\gamma_1}]K_{-\beta}{\rm \ and \ }\cr
Y_{-\beta} &= [ ({\rm ad\ }F_{-\gamma_s}F_{-\gamma_{s-1}}\cdots F_{-\gamma_{2}} )F_{-\gamma_1}K_{\gamma_1}] K_{-\beta}.
\end{align}  and note that $H =H'K_{-\beta+\gamma_1} = X_{\beta} + Y_{-\beta}$ is in $\mathcal{B}_{\theta}$.
We show that $H$ satisfied (i), (ii), (iii), (a) and (b) with $C=0$.

Note that $E_{\gamma_1} = d[({\rm ad}\ E_{\gamma_1})K_{-2\nu_1}]K_{-2\nu_1}$ and $F_{-\gamma_1} K_{\gamma_1}=d' [({\rm ad}\ F_{-\gamma_1})K_{-2\nu_1}]K_{-2\nu_1}$ for nonzero 
scalars $d$ and $d'$. Hence by (\ref{XYdefn}),  $X_{\beta}$ and $Y_{-\beta}$  both satisfy assertion (ii). 
Assertions (i)  and (a) follows from the construction of $X_{\beta}$ and $Y_{-\beta}$ and the definition of the adjoint action (Section \ref{section:basic}). Assertion (b) follows from (\ref{kappaeq}) of Section \ref{section:chevalley}.
 Assertion (iii) for $X_{\beta}$ follows from the fact that $X_{\beta}$ is a highest weight vector with respect to the action of $({\rm ad}\ U_{{\rm Supp}(\beta)\setminus\{\gamma_1,\gamma_2\}})$ and has weight $\beta$ with $(\beta, \gamma_j) = 0$ for $j = 2,\dots, s-1$.  Assertion (iii) for $Y_{-\beta}$ now follows from (b).  

\medskip
\noindent
{\bf Case 5:} $\beta = \alpha_{\beta}' + \alpha_{\beta} + w_{\beta}\alpha_{\beta}$ and $\alpha_{\beta}\neq \alpha_{\beta}'$.    Set $\beta' = \beta-\alpha'_{\beta}$. Recall from Theorem \ref{theorem:cases_take2}  that $\alpha_{\beta}'\in \pi_{\theta}$, $\alpha_{\beta}'$ is strongly orthogonal to all roots in ${\rm Supp}(\beta')\setminus \{\alpha_{\beta}\}$, and $(\alpha_{\beta}',\beta)=0$. Note also that $(\alpha'_{\beta}, \beta') =- (\alpha'_{\beta}, \alpha'_{\beta})$ and 
$\theta(-\beta') = \beta' + 2\alpha_{\beta}'$. 

By the proof of Theorem \ref{theorem:lift} Case 5, there  exists $Y_{-\beta'}$ that satisfies (i), (ii), and (a) with respect to $\beta'$ instead of $\beta$.  Write 
\begin{align*} Y_{-\beta'} = \sum_{{\rm wt}(I)=\beta'}  F_Ia_I
\end{align*}
for appropriate scalars $a_I$. By Proposition \ref{proposition:liftbeta}, there exists 
\begin{align*}H' = Y_{-\beta'} + C' + X_{\theta(-\beta')}K_{2\alpha_{\beta}'} + sK_{-\beta'}
\end{align*} in $\mathcal{B}_{\theta}$ such that $Y_{-\beta'}\in U^-_{-\beta'}$, $X_{\theta(-\beta')}\in G^+_{\theta(-\beta')}$,  $C' \in U_{{\rm Supp}(\beta')}\cap G^-_{(\tau,1)}U^+_{(\tau,1)}U^0$, and $s$ is a scalar.   Moreover, each of these terms  commute with all elements of $U_{\pi'}$.  

Let $k$ be the index so that $\alpha_{\beta}' = \alpha_k$ and recall that $(\alpha_k, \nu) = 0$ where $\nu$ is the fundamental weight associated to $\alpha_{\beta}$.  It follows 
that $({\rm ad}\ F_k)(uK_{-2\nu})K_{2\nu} =  ({\rm ad}\ F_k)u$ for all $u$.  Thus in much of  the discussion below, we ignore the $K_{-2\nu}$ term which can always be added in later.   Note that $F_k\in \mathcal{B}_{\theta}$ since $\alpha_k\in \pi_{\theta}$.  
Note that 
\begin{align*}
 [F_kK_{-\beta'}-q^{-(\beta',\alpha_k)}K_{-\beta'}F_k]=0.
\end{align*} 
Set \begin{align*}
H = [F_kH'-q^{-(\beta',\alpha_k)}H'F_k]=Y_{-\beta}+ C + X_{\beta}
\end{align*} where $C = [F_kC' - q^{-(\beta',\alpha_k)}C'F_k]K_k$,
\begin{align}\label{ybeta}
Y_{-\beta} =  
(F_kY_{-\beta'} - q^{-(\beta',\alpha_k)}Y_{-\beta'}F_k)=
 [({\rm ad}\ F_k)(Y_{-\beta'}K_{\beta'})]K_{-\beta} 
\end{align}
and 
\begin{align*}
X_{\beta} &= 
F_k(X_{\theta(-\beta')}K_{\theta(\beta')}K_{-\beta'} -q^{-(\beta', \alpha_k)} (X_{\theta(-\beta')}K_{\theta(\beta')}K_{-\beta'})F_k \cr &= 
[ ({\rm ad}\ F_k) (X_{\theta(-\beta')}K_{\theta(\beta')})] K_{-\beta}.
\end{align*}
We argue that $H$ and its summands satisfy (i) - (iv), (a) and (b).  We see from (\ref{ybeta}) (as in  the proof of Case 5 of Theorem \ref{theorem:lift}) that $Y_{-\beta}$ satisfies the relevant properties of (i), (ii) and (a) since $Y_{-\beta'}$ does.   Note that (\ref{ybeta}) also ensures that $Y_{-\beta}$ satisfies (iv).  By Proposition \ref{proposition:liftbeta}, 
\begin{align*}
\{X_{\theta(-\beta')}, Y_{-\beta'}\}\subseteq (U_q(\mathfrak{g}))^{U_{\pi_j}}{\rm \ and \ }C' \in (U_q(\mathfrak{g})^{\mathcal{B}_{\theta}\cap U_{\pi'}}
\end{align*}
where $\pi' = {\rm Supp}(\beta')\setminus \{\alpha_{\beta}\}$.  Hence, since $F_k$ commutes with all elements in $U_{\pi'}$, it follows from  these inclusions and  the discussion preceding the case work for this proof that $X_{\beta}, Y_{-\beta},$ and $C$ satisfy (iii).   So we only need to show that $X_{\beta}$ satisfies (i) and (ii) and $X_{\beta}, Y_{-\beta}$ are related as in (b).

Since $C'$ is an element of $ G^-_{(\tau,1)}U^0U^+_{(\tau,1)}$, so is $C$.   Thus $\mathcal{P}(C) = 0$.   Note that we also have $X_{\beta} \in G^-U^0U^+_{(\tau,2)}$ and so $\mathcal{P}(X_{\beta}) = 0$.   
Set 
\begin{align*} b = F_k\left(\sum_{{\rm wt}(I)=\beta'}B_Ia_I\right) - q^{-(\beta', \alpha_k)}\left(\sum_{{\rm wt}(I)=\beta'}B_Ia_I\right)F_k.
\end{align*} 
and define $\tilde b$ as in   Proposition \ref{proposition:BUiso}.  Note that $\mathcal{P}(\tilde b) = \mathcal{P}(H) = Y_{-\beta}$.   Hence by Corollary \ref{corollary:pmapzero}, $H = \tilde b$.   
Note that in addition to $\beta'$, $\beta$ also satisfies the conditions of Proposition \ref{proposition:liftbeta}. This is because $\alpha_k\in {\rm Orth}(\beta)$ even if it is not strongly orthogonal to $\beta$ and so $\beta$ satisfies the required assumption on height.  Note further that we may choose $a=H$ since $H$ satisfies Proposition \ref{proposition:liftbeta} (i) and (ii).  Hence by  Proposition \ref{proposition:liftbeta} we get
\begin{align*}
X_{\beta} \in U^+_{\beta}K_{-\beta} =G^+_{\beta}
\end{align*}
and so $X_{\beta}$ satisfies assertion (i) of this theorem.  Hence $X_{\beta}$ satisfies both (i) and (iii). 

We next show that  
\begin{align}\label{EkFkX}  ({\rm ad}\ E_kF_k^2)(X_{\theta(-\beta')}K_{\theta(-\beta')}) = 
({\rm ad}\ F_k)(X_{\theta(-\beta')}K_{\theta(-\beta')}) \end{align}
up to a nonzero scalar. 
The fact that $\alpha_k\in \pi_{\theta}$ ensures that $E_k, F_k, K_k^{\pm 1}\in \mathcal{B}_{\theta}$.  Hence 
\begin{align*}[({\rm ad}\ E_k)(H'K_{\beta'})]K_{-\beta'} &\cr
=[({\rm ad}\ E_k)&\left(Y_{-\beta'}K_{\beta'}+C'K_{\beta'}+ X_{\theta(-\beta')}K_{\theta(-\beta')}+s\right)]K_{-\beta'} \in \mathcal{B}_{\theta}.
\end{align*}
 Note that  $({\rm ad}\ E_k)(Y_{-\beta'}K_{\beta'})=0$ since $\alpha_k\notin {\rm Supp}(\beta')$.  It is straightforward to check that $({\rm ad}\ E_k)s=0$ and   $({\rm ad}\ E_k)(X_{\theta(-\beta')}K_{\theta(-\beta')})\in U^+_{\theta(-\beta')+\alpha_k}$. Also, $[({\rm ad}\ E_k) (C'K_{\beta'})]K_{-\beta'}\in G_{(\tau,1)}^-U^0U^+_{(\tau,1)}$ since the same is true for $C'$.    It follows that 
 \begin{align*}[({\rm ad}\ E_k)(H'K_{\beta'}] K_{-\beta'}\in G_{(\tau,1)}^-U^0U^+_{(\tau,1)}+ 
U^+_{\theta(-\beta')+\alpha_k}\subseteq \sum_{m\geq 1}G^-U^0U^+_{(\tau,m)}.
\end{align*}  By Remark \ref{remark:decomp}, we get  $[({\rm ad}\ E_k)(H' K_{\beta'})]K_{-\beta'}= 0$.  Since $(G_{(\tau,1)}^-U^0U^+_{(\tau,1)})\cap (U^0G^+_{\theta(-\beta')+\alpha_k}) = 0$, we must have $({\rm ad}\ E_k)(X_{\theta(-\beta')}K_{\theta(-\beta')})=0$.  Equality (\ref{EkFkX}) now follows from rewriting 
$E_kF_k^2$ as a linear combination of $F_k^2E_k$ and $F_ku$ for some $u\in U^0$ using  the defining relation (\ref{commuting_relation}) for $U_q(\mathfrak{g})$.  

Equality (\ref{EkFkX}) and the definition of $X_{\beta}$ implies that 
\begin{align} \label{anotherEkFk}({\rm  ad}\ E_kF_k)(X_{\beta}K_{\beta}) = X_{\beta}K_{\beta}
\end{align} 
up to a  nonzero scalar. Using (\ref{commuting_relation}) we see that  
\begin{align}\label{yetanotherreln}({\rm ad}\ F_k)(X_{\beta}K_{\beta})\subseteq  U^+ + U^+K_k^2 +U^+F_kK_k.
\end{align}  Note that $({\rm ad}\ E_k)U^+K_k^2\subseteq U^+K_k^2$ while $({\rm ad}\ E_k)uF_kK_k \in U^+F_kK_k+U^+(K_k^2-1)$ for any choice of $u\in U^+$.    Since $X_{\beta}K_{\beta}\in U^+$, it follows from (\ref{anotherEkFk}),   (\ref{yetanotherreln}) and the fact that $X_{\beta}$ satisfies (iii) that 
\begin{align*}({\rm ad}\ F_k)(X_{\beta}K_{\beta})\in (U^+_{\beta-\alpha_k})^{U_{\pi'}}=(U^+_{\beta'})^{U_{\pi'}}.\end{align*}

Applying $\kappa$ to $(({\rm ad}\ F_k)(X_{\beta}K_{\beta}))K_{-\beta'}$ yields an element $Y_{-\beta'}'\in (U^-_{-\beta'})^{U_{\pi'}}$.   By the uniqueness assertion of Corollary \ref{theorem:invariant_elements2}, it follows that $Y_{-\beta'}'= Y_{-\beta'}$ up to nonzero scalar. By identity (\ref{kappaeq}), (\ref{anotherEkFk}) and (\ref{ybeta}),  we see that 
\begin{align*}\kappa(Y_{-\beta}K_{\beta}) =\kappa(({\rm ad}\ F_k)Y_{-\beta'}K_{\beta'})= ({\rm ad}\ E_k)\kappa(Y_{-\beta'}K_{\beta'})= ({\rm ad}\ E_kF_k)(X_{\beta}K_{\beta})=X_{\beta}K_{\beta}
\end{align*} up to  nonzero scalars and so (b) holds.  Since $Y_{-\beta'}$ satisfies (ii) with $\beta$ replaced by $\beta'$, it now follows from  the identity (\ref{kappaeq}) that $X_{\beta}K_{\beta} \in [({\rm ad}\ U^+)K_{-2\nu}]K_{-\beta+2\nu}$ where recall that $\nu$ is the fundamental weight associated to $\alpha_{\beta}$. Hence,   we see that $X_{\beta}$ satisfies (ii). 
\end{proof}

We refer to the algebra $\mathcal{H} = \mathbb{C}(q)[\mathcal{T}_{\theta}][H_1, \dots, H_m]$ of Theorem \ref{theorem:main} as a quantum Cartan subalgebra of $\mathcal{B}_{\theta}$.   
The next result is the first step in understanding finite-dimensional $\mathcal{B}_{\theta}$-modules with respect to the action of $\mathcal{H}$.  It is an immediate consequence of Corollary \ref{cor:unitary1} and Corollary \ref{Haction2}.

\begin{corollary} \label{corollary:main} Let  $\mathcal{H}$ be a quantum Cartan subalgebra of the coideal subalgebra $\mathcal{B}_{\theta}$  associated to the symmetric pair $\mathfrak{g},\mathfrak{g}^{\theta}$.  Any finite-dimensional unitary $\mathcal{B}_{\theta}$-module can be written as a direct sum of eigenspaces with respect to the action of $\mathcal{H}$.  Moreover, any finite-dimensional $U_q(\mathfrak{g})$-module can be written as a direct sum of eigenspaces with respect to the action of $\mathcal{H}$.
\end{corollary}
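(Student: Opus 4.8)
The plan is to apply Corollary \ref{cor:unitary1}, in the form valid for $\kappa_\theta$ recorded in Section \ref{section:qsp}, with $C=\mathcal{B}_{\theta}$ and $H=\mathcal{H}=\mathbb{C}(q)[\mathcal{T}_{\theta}][H_1,\dots,H_m]$; the second assertion will then follow formally. Thus I must verify that $\mathcal{H}$ is a commutative, $\kappa_\theta$-invariant subalgebra of $\mathcal{B}_{\theta}$, that $\mathcal{H}\cap U^0$ is the Laurent polynomial ring generated by $\mathcal{H}\cap\mathcal{T}$, and that $\mathcal{H}$ is isomorphic to a polynomial ring over $\mathcal{H}\cap U^0$. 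Theorem \ref{theorem:main} already supplies most of this: $\mathcal{H}$ is commutative, it is a polynomial ring over $\mathbb{C}(q)[\mathcal{T}_{\theta}]$ in the generators $H_1,\dots,H_m$, and $\kappa_\theta(H_j)=H_j$ for every $j$. Since $\mathbb{C}(q)[\mathcal{T}_{\theta}]$ is the Laurent polynomial ring generated by $\mathcal{T}_{\theta}$ (Section \ref{section:lowtri2}), it remains to establish that $\mathcal{H}\cap U^0=\mathbb{C}(q)[\mathcal{T}_{\theta}]$ (which also gives $\mathcal{H}\cap\mathcal{T}=\mathcal{T}_{\theta}$) and that $\kappa_\theta(\mathcal{H})=\mathcal{H}$. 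The latter is immediate once one knows $\kappa_\theta(\mathcal{T}_{\theta})=\mathcal{T}_{\theta}$: indeed $\kappa_\theta$ is an antiautomorphism of $U_q(\mathfrak{g})$ carrying the group of group-like elements $\mathcal{T}$ into itself and preserving $\mathcal{B}_{\theta}$, while $\mathcal{T}_{\theta}=\mathcal{B}_{\theta}\cap\mathcal{T}$; combined with $\kappa_\theta(H_j)=H_j$ and commutativity of $\mathcal{H}$ this yields $\kappa_\theta(\mathcal{H})=\mathcal{H}$.

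The one point needing a genuine argument is $\mathcal{H}\cap U^0=\mathbb{C}(q)[\mathcal{T}_{\theta}]$, the inclusion $\supseteq$ being clear. For $\subseteq$, I would analyze minimal $l$-weight summands. From the shape $H_j=X_{\beta_j}+C_j+s_{\beta_j}(K_{-\beta_j}-1)+Y_{-\beta_j}$ of Theorem \ref{theorem:main}, together with $X_{\beta_j}\in G^+_{\beta_j}$ (an element of $l$-weight $0$), $s_{\beta_j}(K_{-\beta_j}-1)\in U^0$ ($l$-weight $0$), $C_j\in G^-_{(\tau,1)}U^0U^+_{(\tau,1)}$ whose $G^-$-components all have $l$-weight $-\delta$ with $0\le\delta$, $\delta\le\beta_j$ and ${\rm ht}_{\tau}(\delta)=1<2={\rm ht}_{\tau}(\beta_j)$ (using Lemma \ref{lemma:decomp} to see these weights lie in $-\beta_j+Q^+(\pi)$), and $Y_{-\beta_j}\in U^-_{-\beta_j}$, one reads off that $Y_{-\beta_j}$ is the unique summand of $H_j$ of least $l$-weight, namely $-\beta_j$. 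Since each $Y_{-\beta_j}$ lies in $U^-$ with no $U^0U^+$ part, the minimal $l$-weight summand of a monomial $r_{\underline n}H_1^{n_1}\cdots H_m^{n_m}$ (with $r_{\underline n}\in\mathbb{C}(q)[\mathcal{T}_{\theta}]$) is exactly $r_{\underline n}Y_1^{n_1}\cdots Y_m^{n_m}$, of $l$-weight $-\sum_j n_j\beta_j$, with no correction terms. Given $h=\sum_{\underline n}r_{\underline n}H_1^{n_1}\cdots H_m^{n_m}\in\mathcal{H}$, pick a minimal element $\mu$ of $\{-\sum_j n_j\beta_j:r_{\underline n}\ne0\}$; because in a finite poset every element dominates a minimal one, the $l$-weight-$\mu$ component of $h$ equals $\sum_{\underline n:\,-\sum_j n_j\beta_j=\mu}r_{\underline n}Y_1^{n_1}\cdots Y_m^{n_m}$, which is nonzero by the polynomial-ring statement of Theorem \ref{theorem:lift}. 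If some $r_{\underline n}\ne0$ with $\underline n\ne0$, then $\mu\ne0$ (the $\beta_j$ are positive roots), so $h$ has a nonzero component of nonzero $l$-weight and hence $h\notin U^0$. Therefore $h\in U^0$ forces $h=r_{\underline 0}\in\mathbb{C}(q)[\mathcal{T}_{\theta}]$, as required; the injectivity of $\mathcal{P}$ on $\mathcal{B}_{\theta}$ (Corollary \ref{corollary:pmapzero}) is available as an alternative route should one prefer to rule out cancellation differently.

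With all hypotheses verified, Corollary \ref{cor:unitary1} (for $\kappa_\theta$) applied to $C=\mathcal{B}_{\theta}$ and $H=\mathcal{H}$ shows that every finite-dimensional unitary $\mathcal{B}_{\theta}$-module is a direct sum of $\mathcal{H}$-eigenspaces. For the final assertion, recall from Section \ref{section:qsp} that every finite-dimensional $U_q(\mathfrak{g})$-module is a unitary $\mathcal{B}_{\theta}$-module, so the same conclusion applies to it; equivalently, one may invoke Corollary \ref{Haction2} directly for the commutative subalgebra $\mathcal{H}$ of $U_q(\mathfrak{g})$, whose relevant hypotheses were just checked. The main (and essentially only) obstacle in this argument is the identification of $\mathcal{H}\cap U^0$, i.e.\ controlling the minimal $l$-weight summand of an arbitrary polynomial in the $H_j$; this is exactly where the algebraic independence of the $Y_j$ over $\mathbb{C}(q)[\mathcal{T}_{\theta}]$ established in Theorem \ref{theorem:lift} is used.
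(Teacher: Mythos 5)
Your proof is correct and follows the paper's route exactly: the paper simply declares the corollary an immediate consequence of Corollary \ref{cor:unitary1} and Corollary \ref{Haction2}, applied to $C=\mathcal{B}_{\theta}$ and $H=\mathcal{H}$ with $\kappa$ replaced by $\kappa_{\theta}$. The only difference is that you explicitly verify the hypotheses — in particular the identification $\mathcal{H}\cap U^0=\mathbb{C}(q)[\mathcal{T}_{\theta}]$ via minimal $l$-weight summands and the algebraic independence of the $Y_j$ from Theorem \ref{theorem:lift} — details the paper leaves implicit but which your argument handles correctly.
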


\begin{remark} \label{nonstandard}
Assume that  $\mathfrak{g},\mathfrak{g}^{ \theta}$ is a symmetric pair of type AI.   
The algebra $\mathcal{B}_{\theta}$ in this case  is referred to in the literature as the (nonstandard) $q$-deformed algebra $U'_q(\mathfrak{so}_n)$ and its  finite-dimensional modules have been analyzed and classified by Klimyk, Iorgov and Gavrilik (see for example \cite{GK} and \cite{IK}) using quantum versions of Gel'fand-Tsetlin basis.  Rowell and Wenzyl (\cite{RW}) revisit the representation theory of this coideal subalgebra.  They take  a different approach that involves developing a highest weight module theory using   a Cartan subalgebra $\mathcal{H}$.  This Cartan subalgebra $\mathcal{H}$ agrees with the one  produced by the above theorem using Theorem \ref{theorem:cases_take2}.  In particular, in this case, $\mathfrak{h}_{\theta} = 0$,   $\Gamma_{\theta} = \{\alpha_1, \alpha_3, \cdots, \alpha_{\lfloor{(n+1)/2}\rfloor}\}$, and    $\mathcal{H} =\mathbb{C}(q)[ B_1, B_3,\dots, B_{\lfloor{(n+1)/2}\rfloor}]$. For more details on the representation theory of $\mathcal{B}_{\theta}$ developed using $\mathcal{H}$, the reader is referred to \cite{RW}.
\end{remark}

\begin{remark}
Suppose that $\beta= p(\alpha_{\beta})+ w_{\beta}\alpha_{\beta}$ with $\alpha_{\beta}\neq p(\alpha_{\beta})$ as in Case 3 of the proof of the above theorem. Then $\theta$ restricts to an involution of type AIII/AIV.  Moreover,  the  element  $\alpha_{\beta}$ can be switched with $p(\alpha_{\beta})$ in the construction of the corresponding Cartan element associated to $\beta\in \Gamma_{\theta}$. The end result is still a commutative polynomial ring specializing to the enveloping algebra of the same Cartan subalgebra of $\mathfrak{g}^{\theta}$.  However, the Cartan element corresponding to $\beta$ is different for the two constructions.    We take a closer look at such roots in the analysis of a large family of symmetric pairs of type AIII/AIV in Section \ref{section:examples} below.
\end{remark}

\section{ A Family of Examples: Type AIII/AIV}\label{section:examples}

In this section, we consider  the following family of examples:   symmetric pairs of type AIII/AIV with $\pi_{\theta} = \emptyset$. Note that the discussion presented here has applications to other symmetric pairs as well.  In particular, consider for a moment the case where $\mathfrak{g},\mathfrak{g}^{ \theta}$ is an arbitrary symmetric pair with corresponding maximum strongly orthogonal $\theta$-system $\Gamma_{\theta}$.  Suppose that there is a root  $\beta\in \Gamma_{\theta}$ such that 
$\beta= p(\alpha_{\beta})+ w_{\beta}\alpha_{\beta}$ with $\alpha_{\beta}\neq p(\alpha_{\beta})$ and $\Delta({\rm Supp}(\beta))$ is a root system of type A as in Case 3 of the proof of the Theorem \ref{theorem:cases_take2}.   By Remark \ref{remark:cases}, the involution $\theta$ restricts to an involution of type AIII/AIV.  If in addition, $\pi_{\theta} =\emptyset$, then the discussion in this section applies to the construction of the Cartan element associated to $\beta$.  

The presentation in this section was in part inspired by conversations with Stroppel (\cite{C}) who has analyzed the finite-dimensional simple modules of the quantum symmetric pair coideal subalgebras in type AIII/AIV.  In \cite{Wa1}, Watanabe also looks at symmetric pairs of type AIII with $\pi_{\theta}=0$ and restricts to the case when $\mathfrak{g}$ is of type $A_{n}$ where $n$ is even. He obtains a triangular decomposition for $\mathcal{B}_{\theta}$ for this subfamily where the components are each described as subspaces (see \cite{Wa1}, Remark 2.2.10). We see below that the Cartan part of the triangularization in \cite{Wa1} agrees with the Cartan subalgebra of this paper, and thus this subspace is actually a   subalgebra of $\mathcal{B}_{\theta}$.

\subsection{Overview}\label{section:ex_overview}

Assume that  $\mathfrak{g}, \mathfrak{g}^{\theta}$ is a symmetric pair of type AIII/AIV 
with $r= \lfloor(n+1)/2\rfloor$  where  $n\geq 2$ and $r$ is 
as in the proof of Theorem \ref{theorem:cases_take2} for Type AIII/AIV. 
The  involution $\theta $ on $\mathfrak{g}$ is defined by 
 $\theta(\alpha_i) = -\alpha_{n-i+1}$ for $i = 1, \dots, n$.
By Theorem \ref{theorem:nice_cartans} and Theorem \ref{theorem:cases_take2}, the set \begin{align*}
\mathfrak{h}_{\theta}\oplus \left(\bigoplus_{\beta\in \Gamma_{\theta}}\mathbb{C}(e_{\beta} +f_{-\beta})\right)
\end{align*} is a Cartan subalgebra for $\mathfrak{g}^{\theta}$ where 
$\mathfrak{h}_{\theta}={\rm span}_{\mathbb{C}}
\{ h_i-h_{n-i+1}|\ i = 1, \dots,r\}$ , 
$\Gamma_{\theta} = \{\beta_1,\cdots, \beta_{r}\}$, and 
$$\beta_j = \alpha_{j} + \alpha_{j+1} + \cdots + \alpha_{n-j+1}$$
  for $j=1, \cdots,r$.

The right coideal subalgebra $\mathcal{B}_{\theta}$ is generated over 
$\mathbb{C}(q)$ by \begin{align*}
B_i = F_i + E_{n-i+1}K_i^{-1}
\end{align*}
and 
 \begin{align*}K_iK_{n-i+1}^{-1}
 \end{align*}
 for $i=1,\dots, n$.
(In the notation of Section \ref{section:Definitions},  we are taking $c_i = -1$ and $s_i=0$  for each $i$. Other choices of parameters yields analogous results.  Note that $\mathcal{S}=\{\alpha_r\}$ if $r$ is odd and is empty otherwise, so that only time $s_i$ could take on nonzero values is when $r$ is odd and $i=r$.)
The $B_i$, $i=1,\dots, n$ satisfy the following relations (see for example \cite{Ko}, Theorem 7.4):
\begin{itemize}
\item For $i , j \in \{ 1, \dots, n\}$ with $a_{ij} = 0$, we have 
\begin{align*} B_i B_{j} - &B_{j} B_i 
=\delta_{n-i+1, j}(q-q^{-1})^{-1}(K_{n-i+1}K_i^{-1} - K_{n-i+1}^{-1}K_i)
\end{align*}
\item For $i , j \in \{ 1, \dots, n\}$ with $a_{ij} = -1$, we have 
\begin{align*}
B_i^2 B_j - &(q+q^{-1}) B_iB_j B_i + B_jB_i^2 =
-\delta_{i,n-i+1} qB_j \cr&+\delta_{i, n-j+1} (q + q^{-1})(qK_{n-i+1}K_i^{-1}+ q^{-2}K_{n-i+1}^{-1}K_i)B_i.
\end{align*}
\end{itemize}
Note that  $a_{ij} = -1$ and $i = n-i+1$ if and only if $n$ is odd, $i = (n+1)/2=r$ and  $j = i\pm 1$.  Also, $a_{ij} =-1$ and  $i = n-j+1$ if and only if $n$ is even
and  $\{i,j\} = \{{{n}\over{2}}, {{n}\over{2}} + 1\}=\{r, r+1\}$.

  We also have the following relations:
\begin{align*}
(K_{n-i+1}K_i^{-1})B_j(K_{n-i+1}K_i^{-1})^{-1} = q^{(\alpha_{n-i+1}-\alpha_i,-\alpha_j)}B_j
\end{align*}
for all $i,j$.   It follows that $B_1, \dots,  B_{r-1}, B_{r+1}, \dots, B_n$ generates an algebra isomorphic to $U_{q}(\mathfrak{sl}_{r-1})$  if $n$ is odd and $B_1, \dots, B_{r-1}, B_{r + 2}, \cdots B_n$ generates an algebra isomorphic to $U_{q}(\mathfrak{sl}_{r-1})$  if $n$ is even.

\subsection{Case 1: $n=2$}\label{section:nequals2}  In this case, $\Gamma_{\theta} = \{\beta\}$ where $\beta = \alpha_1 + \alpha_2$. Note that $\alpha_{\beta} = \alpha_1$ and 
\begin{align*} (1-q^2)^{-1}[ ({\rm ad}\ F_2 F_1) K_{-2\nu} ]K_{-\beta+2\nu}= (F_2F_1-qF_1F_2)
\end{align*}
where $\nu=\nu_1$ is the fundamental weight associated to $\alpha_1$.  Also, $\kappa(F_2F_1 - q F_1F_2)$ is a nonzero scalar multiple of $(E_1E_2 - q E_2 E_1)K_1^{-1}K_2^{-1}$.  Set 
\begin{align}\label{HDEFN}H = B_2 B_1 - qB_1B_2 -(q-q^{-1})^{-1}[q^{-1}K-K^{-1}] - (1+q)
\end{align}
where $K = K_1K_2^{-1}$.
  A straightforward computation shows that
 \begin{align*}H=&(F_2F_1 - q F_1 F_2) + q^{-2}(E_1E_2 -q E_2 E_1)K_1^{-1}K_2^{-1}  \cr &+(1+q)(K_1^{-1}K_2^{-1}-1) +(q^{-1}-q)F_1E_1K_2^{-1}\end{align*}
  and hence $H$ satisfies the conclusions of Theorem \ref{theorem:main}.  The Cartan subalgebra for $\mathcal{B}_{\theta}$ in this case is $\mathcal{H}= \mathbb{C}(q)[K, K^{-1}][H]$.
 By (\ref{HDEFN}), we can use $B_2B_1 - qB_1B_2$ instead of $H$ as a generator for $\mathcal{H}$.

   It follows from (3.3) of  \cite{AKR} that $CK^{1/3}$ is in the center of the simply connected version $\check{\mathcal{B}}_{\theta}$ (see Section \ref{section:Definitions}) where 
  \begin{align*}
  C &= B_2B_1 - qB_1 B_2 + {{1}\over{(q-q^{-1})}}K^{-1} + {{(q+q^{-1})}\over{(q-q^{-1})}}K\cr
  &=H + {{(q+2q^{-1})}\over{(q-q^{-1})}} K+(1+q).
  \end{align*}
 Thus we have the following equivalent descriptions of $\mathcal{H}$ as a polynomial ring in one variable over the commutative ring $\mathbb{C}(q)[K,K^{-1}]$:
 \begin{align*}
 \mathcal{H} =  \mathbb{C}(q)[K, K^{-1}][H] = \mathbb{C}(q)[K, K^{-1}][B_2B_1 - qB_1 B_2] = \mathbb{C}(q)[K, K^{-1}][C]. 
 \end{align*}
 
Note that $\mathcal{B}_{\theta}$ admits a second quantum Cartan subalgebra that satisfies the conclusions of Theorem \ref{theorem:main}.  This second subalgebra is constructed using the same maximum orthogonal system $\Gamma_{\theta}=\{\beta\}$, but instead of choosing $\alpha_{\beta} = \alpha_1$, we pick  $\alpha_{\beta} = \alpha_2$.   
This second Cartan subalgebra is $\mathcal{H}' = \mathbb{C}(q)[K,K^{-1}][H']$ where $H'$ takes the same form as $H$ with $B_1$ and $B_2$ interchanged. In analogy to 
$\mathcal{H}$ above, we see that
 \begin{align*}
 \mathcal{H}'=  \mathbb{C}(q)[K, K^{-1}][H'] = \mathbb{C}(q)[K, K^{-1}][B_1B_2 - qB_2 B_1] = \mathbb{C}(q)[K, K^{-1}][C'] 
 \end{align*}
 where $C'=H' + (q+2q^{-1})/(q+q^{-1})K^{-1} + (1+q)$ and $C'K^{-1/3}$ is a central element.  
 
 In \cite{AKR}, Aldenhoven, Koelink and Roman analyze the finite-dimensional modules for $\mathcal{B}_{\theta}$ in this case.  They use the algebra generated by  $\mathcal{H}$ and $\mathcal{H}'$ together  as the Cartan subalgebra for $\mathcal{B}_{\theta}$.  Using the relationship between $C$, $C'$ and the central elements $CK^{1/3}$, $C'K^{-1/3}$, it is straightforward to see that  this larger algebra is commutative.   Aldenhoven, Koelink and Roman show that any finite-dimensional simple $\mathcal{B}_{\theta}$-module is generated by a vector $v$, referred to as a highest weight vector, that is an eigenvector with respect to the action of $K$ and satisfies $B_1v=0$. They characterize finite-dimensional simple modules based on the eigenvalue of the highest weight vector and the dimension of the module (see \cite{AKR}, Proposition 3.1). Given a finite-dimensional simple module for $\mathcal{B}_{\theta}$, they explicitly describe the action of this large Cartan subalgebra on a basis (see \cite{AKR}, Corollary 3.3).

\subsection{Case 2: $n=3$}\label{section:nequals3}
 In this case $\Gamma_{\theta} = \{\beta_1, \beta_2\}$ where $\beta_1= \alpha_2$ and $\beta_2 = \alpha_1 + \alpha_2 + \alpha_3$.  The Cartan element associated to 
$\beta_1$ is simply $H_1 = B_2 = F_2 + E_2K_2^{-1}$.   Set $\beta = \beta_2$ and 
let  $\nu = \nu_1 = (\alpha_3 + 2\alpha_2 +3\alpha_1)/4$, the fundamental weight associated to $\alpha_1$.
Set 
\begin{align*}Y_{-\beta} = [ F_3 (F_2 F_1 - qF_1 F_2)  - q(F_2 F_1 - q F_1 F_2) F_3]
\end{align*} and note that 
\begin{align*}
Y_{-\beta} = [({\rm ad}\ F_3F_2F_1 )K_{-2\nu_1}]K_{2\nu_1- \beta} 
\end{align*}
up to a nonzero scalar multiple which we ignore.  Set 
\begin{align*}
X_{\beta} =  [E_1 (E_2 E_3 - qE_3 E_2)  - q(E_2 E_3 - q E_3 E_2) E_1]K_{-\beta}.
\end{align*} Note that  $\kappa(Y_{-\beta}) = X_{\beta}$ (up to multiplication by a nonzero scalar).

Set 
\begin{align}\label{H2}H_2 = [B_3 &(B_2 B_1 - qB_1 B_2)  - q(B_2 B_1 - qB_1 B_2) B_3] + B_2K_1K_3^{-1}.
\end{align}
A straightforward (lengthy) computation shows that 
\begin{align*}
H_2 = 
 &Y_{-\beta} +X_{\beta} - (q-q^{-1})FE_1K_3^{-1}
 +q(q-q^{-1}) (qF_1EK_3^{-1}K_2^{-1})\end{align*}
 where $F = (F_2F_1-qF_1F_2)$ and $E = (E_1E_2 - qE_2 E_1)$.  Thus $H_2$ satisfies the conclusions of Theorem \ref{theorem:main}.  The Cartan subalgebra for $\mathcal{B}_{\theta}$ in this case is 
 $\mathcal{H} = \mathbb{C}(q)[K, K^{-1}][H_1, H_2]$ where $K = K_1K_3^{-1}$.   Since $H_1=B_2$ and $H_2$  commute with each other,  we see that $B_2$ commutes with 
 \begin{align}\label{Bterm}
 (B_3 (B_2 B_1 - qB_1 B_2)  - q(B_2 B_1 - qB_1 B_2) B_3).
 \end{align}  Moreover, it follows from the explicit formulas for $H_1$ and $H_2$ that 
 \begin{align*}
 \mathcal{H}= \mathbb{C}(q)[K, K^{-1}][B_2, (B_3 &(B_2 B_1 - qB_1 B_2)  - q(B_2 B_1 - qB_1 B_2) B_3)].
 \end{align*}

\subsection{The  General Case}\label{section:ex_general}

We show that the general case for this family of examples is very similar to the low rank cases $n=2,3$. Given two elements $A, B$ in $U_q(\mathfrak{g})$ and a nonzero scalar $a\in \mathbb{C}(q)$, we define the $a$-commutator $[A,B]_a$ by 
\begin{align*}
[A,B]_a = AB-aBA.
\end{align*}
Define elements  $H'_j$ in $\mathcal{B}_{\theta}$ by 
\begin{align*}H_j' = [B_{n-j+1}, [B_{n-j}, \cdots, [B_{j+1}, B_j]_q]_q\cdots ]_q
\end{align*}
for all $j = 1, \dots, r$. 
Note that the elements  $H_j'$ satisfy the recursive relations
\begin{align}\label{recursion}
H_j' = [B_{n-j+1}, [H_{j+1}', B_j]_q]_q.
\end{align}
for $j = 1, \dots, r-1$.  
 In Theorem \ref{theorem:example} below, we prove that the Cartan subalgebra $\mathcal{H}$ of 
$\mathcal{B}_{\theta}$ defined by Theorem \ref{theorem:main} based on the choice of $\Gamma_{\theta}$ of Theorem \ref{theorem:cases_take2} satisfies 
\begin{align*}
\mathcal{H} = \mathbb{C}(q)[\mathcal{T}_{\theta}][H'_1, \dots, H'_r].
\end{align*}
For $n$ even, the elements $H_j'$  are the same as the  $h_j'$ defined by (5) of \cite{Wa1}  (up to reordering of the subscripts). An immediate consequence of Theorem \ref{theorem:example} is that the Cartan part in \cite{Wa1} is the same as the Cartan subalgebra presented here for the family of coideal subalgebras considered in \cite{Wa1}.

Note that the lowest weight term of $H_j'$ is 
\begin{align*}
W_j = [F_{n-j+1}, [F_{n-j}, \cdots, [F_{j+1}, F_j]_q]_q\cdots]_q = c[({\rm ad}\ F_{n-j+1}\cdots F_{j+1})F_jK_j ]K_{-\beta_j}
\end{align*}
for some nonzero scalar $c$ where $-\beta_j = -\alpha_j -\alpha_{j+1}-\cdots -\alpha_{n-j+1}$. In particular, the lowest weight term of $H_j'$ agrees with the lowest weight term of the Cartan element of $\mathcal{H}$ associated to weight $\beta_j$ as defined in Theorem \ref{theorem:main}.

\begin{lemma} \label{Hcommutativity} 
The elements $H_1', \dots, H_{r}'$  pairwise commute with each other. \end{lemma}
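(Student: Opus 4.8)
The plan is to prove pairwise commutativity of $H_1', \dots, H_r'$ by relating each $H_j'$ to the Cartan element $H_j$ produced by Theorem \ref{theorem:main}, and then invoking the commutativity already established there. By the computation of the lowest weight term in the discussion preceding the lemma, the minimal $l$-weight summand of $H_j'$ is $W_j$, which agrees (up to nonzero scalar) with the lowest weight summand of $H_j$. So the natural strategy is: first show that $H_j'$ satisfies the structural conditions (i)--(iv) of Theorem \ref{theorem:main}, then use the uniqueness part of that theorem to conclude $H_j'$ is a scalar multiple of $H_j$ plus possibly a term in $\mathbb{C}(q)[\mathcal{T}_{\theta}]$, and finally deduce that the $H_j'$ pairwise commute because the $H_j$ do and because $\mathbb{C}(q)[\mathcal{T}_{\theta}]$ is central-enough (commutes with each $H_j$, as shown in the proof of Theorem \ref{theorem:main}).

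\textbf{Key steps.} First I would establish, via the recursion (\ref{recursion}) and induction on decreasing $j$, that $H_j'$ lies in the appropriate weight-bounded subspace: namely that $H_j' \in \mathcal{B}_{\theta}$ has the form $X_{\beta_j} + C_j + (\text{scalar})(K_{-\beta_j} - 1) + Y_{-\beta_j}$ with the summands supported as required by Theorem \ref{theorem:main}(i), and that $Y_{-\beta_j}$ is (a scalar multiple of) the element $Y_j$ of Theorem \ref{theorem:lift} — this is exactly the identification of lowest weight terms already noted before the lemma, combined with the fact that $H_j'$ is built from the $B_i$ for $i$ in the support of $\beta_j$ together with $B_{n-j+1} = B_{\alpha_{\beta_j}'}$ and $B_j = B_{\alpha_{\beta_j}}$. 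Next, the $({\rm ad}\,U_{\pi_j}\mathcal{T}_\theta)$-invariance of the summands (condition (iii)) should follow because each $B_i$ appearing in $H_j'$ corresponds to a root in ${\rm Supp}(\beta_j)$, and by Theorem \ref{theorem:cases_take2}(i) every root in ${\rm StrOrth}(\beta_j)$ that is not in ${\rm Supp}(\beta_j)$ automatically commutes with all of $U_{{\rm Supp}(\beta_j)}$ (argued as in the proof of Theorem \ref{theorem:lift}); for the roots of ${\rm StrOrth}(\beta_j) \cap {\rm Supp}(\beta_j)$ one uses the relations among the $B_i$ and the fact that $\pi_\theta = \emptyset$ here simplifies matters. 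Then, applying the uniqueness assertion of Theorem \ref{theorem:main}, we get $H_j' = a_j H_j + t_j$ for a nonzero scalar $a_j$ and $t_j \in \mathbb{C}(q)[\mathcal{T}_\theta]$ (the $t_j$ absorbing the difference between the ``$-1$'' normalizations and any leftover torus term). Finally, since $[H_i, H_j] = 0$ by Theorem \ref{theorem:main} and since $[H_i, K_\gamma] = 0$ for $K_\gamma \in \mathcal{T}_\theta$ (also from the proof of that theorem, using $(\gamma, \beta_j) = 0$), and $\mathcal{T}_\theta$ is commutative, we conclude $[H_i', H_j'] = [a_iH_i + t_i, a_jH_j + t_j] = 0$.

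\textbf{Alternative more self-contained route.} If one prefers to avoid leaning so heavily on the identification with $H_j$, a direct approach is to use the recursion (\ref{recursion}) and the explicit commutation relations among the $B_i$ (the Serre-type and $a_{ij}=0$ relations listed in Section \ref{section:ex_overview}), together with the fact that $B_1, \dots, B_{r-1}, B_{r+1}, \dots, B_n$ (resp. the $n$ even analogue) generate a copy of $U_q(\mathfrak{sl}_{r-1})$ inside $\mathcal{B}_\theta$ acting in a controlled way. One shows by a Jacobi-identity / induction argument that $H_i'$ commutes with each $B_\ell$ for $\ell$ in the ``interior'' support of $\beta_j$ when $i < j$, hence commutes with $H_j'$. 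This is morally the same as checking condition (iii) directly, just phrased in terms of explicit $a$-commutator manipulations.

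\textbf{Main obstacle.} The hard part will be verifying condition (iii), i.e. that the summands $X_{\beta_j}$, $Y_{-\beta_j}$, $C_j$ of $H_j'$ really do commute with all of $U_{\pi_j}$ (equivalently, that $H_j'$ belongs to the relevant generalized normalizer with the tight support). The recursion (\ref{recursion}) makes the lowest weight term transparent, but controlling the higher $l$-weight and biweight summands of the nested $q$-commutators — in particular showing no ``bad'' terms outside $G^-_{(\tau,1)}U^0U^+_{(\tau,1)} + G^+_{\beta_j} + \mathbb{C}(q)K_{-\beta_j}$ survive — requires either the machinery of Section \ref{section:cond_comm} (Lemmas \ref{lemma:weight_cons1}--\ref{lemma:weight_cons3}, Proposition \ref{proposition:liftbeta}) applied carefully to $H_j'$, or a somewhat delicate explicit induction. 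Once that structural claim is in hand, the commutativity is essentially immediate from the uniqueness in Theorem \ref{theorem:main}.
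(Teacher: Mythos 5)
Your main route has a genuine gap at its central step. You claim that $H_j'$ has the structural form required by Theorem \ref{theorem:main} (in particular that its ``intermediate'' summands lie in $G^-_{(\tau,1)}U^0U^+_{(\tau,1)}+\mathbb{C}(q)K_{-\beta_j}$) and hence, by uniqueness, that $H_j' = a_jH_j + t_j$ with $t_j\in\mathbb{C}(q)[\mathcal{T}_\theta]$. This is false. Already for $n=3$ one computes $H_j = H_j' + H_{j+1}'t$ with $t=K_1K_3^{-1}$ (this is exactly the formula $H_2 = [B_3,[B_2,B_1]_q]_q + B_2K_1K_3^{-1}$ of Section \ref{section:nequals3}), and in general $H_j - H_j'$ contains terms $H_{j+1}'t + H_{j+2}'s+\cdots$ supported in $G^-_{(\tau,0)}U^0U^+_{(\tau,0)}$ that are not torus elements and are not permitted by condition (i) of Theorem \ref{theorem:main}. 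So $H_j'$ does not satisfy the hypotheses of the uniqueness assertion, and the ``hard part'' you flag (showing no bad terms survive) is not merely hard but impossible. One could instead try to prove $H_j'\in\mathbb{C}(q)[\mathcal{T}_\theta][H_j,\dots,H_r]$ by downward induction and conclude commutativity that way, but deriving that expansion (as is done in Theorem \ref{theorem:example}) itself uses the fact that $H_j'$ commutes with $U_{\pi_j}\cap\mathcal{B}_\theta$ --- i.e.\ the content of the present lemma --- so this route is circular as stated.

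The paper's actual proof is much closer to your briefly sketched ``alternative more self-contained route,'' but the sketch omits the key device. The paper reduces the lemma to showing that $B_i$ commutes with $H_j'$ for $j<i\le n-j$; the interior cases follow by induction from the recursion (\ref{recursion}) since such $B_i$ commute with $B_j$ and $B_{n-j+1}$, and the essential difficulty is the boundary cases $i=j+1$ and $i=n-j$. These are handled by introducing an auxiliary algebra $\mathcal{A}$ generated by $U_q(\mathfrak{sl}_{r-1})$ and an extra generator $X$, in which the analogue $V_3$ of $H_j'$ is built from adjoint lowest weight vectors $V_1, V_2$ and is visibly annihilated by $({\rm ad}\ E_{j+1})$ and $({\rm ad}\ F_{j+1})$; an explicit algebra homomorphism $\mathcal{A}\to\mathcal{B}_\theta$ sending $V_3$ to $H_j'$ then transports the commutation relations. (The even $n$ case needs an extra twist using $[B_{r+1},B_r]_q$ and the relation between $H_{r-1}$ and $H_{r-1}'$.) If you want to complete your argument, you should develop that construction rather than the uniqueness-based identification.
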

\begin{proof}
Assume first that  $n$ is odd.   Note that $H_k'$ is in the subalgebra of $U_q(\mathfrak{g})$ generated by the set  $\{B_i|\   i= k,k+1 \dots, n-k+1\}$,  We argue that $B_i$ commutes with $H_j'$ for all $i=j+1, \dots, n-j$, and so $H_k'$ commutes with $H_j'$ for all $k>j$.  This will prove the lemma when  $n$ is odd. 

 Note that $H'_{r} = B_{r}$ and $$H_{r-1}' =(B_{r+1} (B_r B_{r-1} - qB_{r-1} B_r)  - q(B_{r} B_{r-1} -qB_{r-1} B_r) B_{r+1}).$$ This is just (\ref{Bterm}) with $1,2,3$ replaced by $r-1, r, r+1$ respectively.  In particular, the results of Section \ref{section:nequals3} ensure that $H_r'=B_r$ commutes with $H_{r-1}'$. 
Now assume  that $j<r-1$ and $B_i$ commutes with $H_{j+1}'$ for all  $i=j+2, \dots, n-j-1$.  Since $B_i$ commutes with $B_j$ and $B_{n-j+1}$ for all $j+2\leq i\leq n-j-1$, it follows from (\ref{recursion}) that $B_i$ also commutes with $H_j'$.    Thus it is sufficient to show that $B_{j+1}$ and $B_{n-j}$ commutes with $H_j'$.  

Let $\mathcal{A}$ be the algebra generated by  $U_q(\mathfrak{sl}_{ r-1})$ and a new element $X$ subject only  to the following additional relations
\begin{itemize}
\item $F_iX-XF_i = E_iX-XE_i = 0$ for all $i\in \{1, \dots, r-2\}$.
\item $K_iXK_i^{-1} = X$ for all $i=1, \dots, r-1$.
\end{itemize}
Since $U_q(\mathfrak{sl}_{r-1})$ is a subalgebra of $\mathcal{A}$, $\mathcal{A}$ admits an $({\rm ad}\ U_q(\mathfrak{sl}_{r-1}))$ action. 
The above relations ensure that $X$ has weight $0$ with respect to this action and 
$({\rm ad}\ F_i)X = ({\rm ad}\ E_i)X = 0$
for all $i=1, \dots, r-2$.
Recall that $j\leq r-2$. Set 
\begin{align*}
V_1 &= ({\rm ad}\ E_{r-1}\cdots E_{j+1})E_j\cr
V_2 &=[({\rm ad}\ F_j\cdots F_{r-2})F_{r-1}K_{r-1}]K_j^{-1}\cdots K_{r-1}^{-1}\cr
V_3 &= [V_2, [X, V_1]_{q^{-1}}]_{q^{-1}}.
\end{align*}
Note that $V_1$ is the lowest weight vector in the $({\rm ad}\ U_{\{\alpha_{r-1}, \cdots, \alpha_{j+1}\}})$-module generated by $E_j$. Indeed, this follows from Lemma \ref{lemma:lowest_weight} with $\pi'= 
\{\alpha_{r-1}, \cdots, \alpha_{j+1}\}$ and $i=j$. Since $(\alpha_{j+1}, \alpha_{r-1}+\alpha_{r-2}+\cdots + \alpha_j)=0$, $V_1$ is also a trivial vector with respect to the action of $({\rm ad}\ U_{\{\alpha_{j+1}\}})$.  Hence
$({\rm ad}\ E_{j+1})V_1= ({\rm ad}\ F_{j+1})V_1=0.$
Similar arguments yield
$({\rm ad}\ F_{j+1})V_2 = ({\rm ad}\ E_{j+1})V_2 =0.$
It follows that $({\rm ad}\ F_{j+1})V_3= ({\rm ad}\ E_{j+1})V_3 =0.$  Since $V_3$ has weight $0$ with respect to the action of $({\rm ad}\ U_q(\mathfrak{sl}_{r-1})$, we have
\begin{align}\label{Vrein1}
[F_{j+1},V_3]= 0 \quad {\rm and}\quad [E_{j+1},V_3]=0.
\end{align}

It follows from the relations for $\mathcal{B}_{\theta}$ that the map defined by 
\begin{align*} &E_i\mapsto B_i,\quad  F_i\mapsto B_{n-i+1}, \quad K_i \mapsto K_{n-i+1}^{-1}K_i, \cr &K_{n-i+1}\mapsto K_{n-i+1}K_i^{-1},\quad X\mapsto B_r,  \quad q\mapsto q^{-1}
\end{align*} for $i=1, \dots, r-1$ defines a $\mathbb{C}$ algebra homomorphism.   Moreover, this map sends $V_3$ to $H_j'$.  Applying this homomorphism to (\ref{Vrein1}) yields
\begin{align*}
[B_{n-j}, H_j'] = 0 \quad {\rm and}\quad [B_{j+1}, H_j']=0
\end{align*}
as desired.

Now assume that $n$ is even. We have $H_r'= [B_{r+1}, B_r]_q$ and 
\begin{align*}
H_{r-1}' = [B_{r+2}, [B_{r+1}, [B_r, B_{r-1}]_q]_q]_q. 
\end{align*}
Note that both elements commute with all elements in $\mathcal{T}_{\theta}$.  Using the relations of $\mathcal{B}_{\theta}$, one can 
show that  Cartan element $H_{r-1}$ associated to the root 
$\beta_{r-1} = \alpha_{r+2} + \alpha_{r+1} + \alpha_r+\alpha_{r-1}$ satisfies
\begin{align}\label{Hrminus1}
H_{r-1} = H_{r-1}' + uH_r' + v
\end{align}
for appropriate elements $u, v\in \mathbb{C}(q)[\mathcal{T}_{\theta}]$.  This is very similar to the result for $H_2$ when $n=3$ given in Section \ref{section:nequals3}.    Since $B_r, B_{r+1}$ both commute with $H_{r-1}$, we must have $H_r'=[B_{r+1}, B_r]_q$ commutes with $H_{r-1}$.  Hence the expression for $H_{r-1}$ given in (\ref{Hrminus1}) ensures that 
$[B_{r+1}, B_r]_q$ commutes with $H_{r-1}'$.  Arguing using induction as in the $n$ is odd case, we see that $B_i,$ for all $ i\in \{j+1, j+2, \dots, r-1, r+2, \dots, n-j\}$ and $[B_{r+1}, B_r]_q$
commute with $H_j'$ for all $j$.  Note that for all  $k>j$, the element $H_k'$ is in the subalgebra generated by $B_i,$ for $ i\in \{j+1, j+2, \dots, r-1, r+2, \dots, n-j\}$ and $[B_{r+1}, B_r]_q$.  Hence $H_k'$ commutes with $H'_j$ for all $k>j$ and the lemma follows. 
\end{proof}

The next result shows that $\mathcal{H}'= \mathcal{H}$, thus establishing a nice formulas for a set of generators of $\mathcal{H}$ for this family of symmetric pairs. 

\begin{theorem} \label{theorem:example} Let $\mathfrak{g}, \mathfrak{g}^{\theta}$ be a symmetric pair of type AIII/AIV such that $r = \lfloor (n+1)/2\rfloor$. Let $\Gamma_{\theta} = \{\beta_1, \dots, \beta_r\}$ where $\beta_j = \alpha_j + \alpha_{j+1} + \cdots + \alpha_{n-j+1}$ and set $\alpha_{\beta_j} = \alpha_j$ for each $j$.  Then 
\begin{align}\label{calHeqn}
\mathcal{H} = \mathbb{C}[\mathcal{T}_{\theta}][H_1', \dots, H_r']
\end{align}
where $\mathcal{H}$ is the quantum Cartan subalgebra as defined in Theorem \ref{theorem:main} and 
\begin{align*}
H_j' = [B_{n-j+1}, [B_{n-j}, \dots, [B_{j+1}, B_j]_q]_q\cdots]_q
\end{align*}
for all $j=1, \dots, r$.
\end{theorem}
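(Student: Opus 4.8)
The plan is to show that the algebra $\mathbb{C}(q)[\mathcal{T}_{\theta}][H_1',\dots,H_r']$ coincides with the quantum Cartan subalgebra $\mathcal{H}=\mathbb{C}(q)[\mathcal{T}_{\theta}][H_1,\dots,H_r]$ produced by Theorem \ref{theorem:main}. First I would record the key structural facts already in hand: by Lemma \ref{Hcommutativity} the elements $H_1',\dots,H_r'$ pairwise commute, and by the discussion preceding Lemma \ref{Hcommutativity} the lowest weight (in fact lowest $l$-weight) summand of $H_j'$ is $W_j=c[({\rm ad}\,F_{n-j+1}\cdots F_{j+1})F_jK_j]K_{-\beta_j}$ for a nonzero scalar $c$, which is exactly (a scalar multiple of) the element $Y_j$ appearing as the lowest weight term of $H_j$ in Theorem \ref{theorem:main}(ii) with $\alpha_{\beta_j}=\alpha_j$ and $\nu_j=\nu_{\alpha_j}$. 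Since $\pi_\theta=\emptyset$, we have $\mathcal{M}^+=\mathbb{C}(q)$ and $\mathcal{T}_\theta$ is generated by the $K_iK_{n-i+1}^{-1}$; the projection map $\mathcal{P}$ of Section \ref{section:decomp_proj} is onto $U^-\mathcal{T}_\theta$, and $\mathcal{P}(H_j')=W_j=\mathcal{P}(H_j)$ up to the same scalar.

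The heart of the argument is then: each $H_j'$ lies in $\mathcal{B}_\theta$ by construction, and $\mathcal{P}(H_j'-cH_j)=0$; but $H_j'-cH_j\in\mathcal{B}_\theta$, so by Corollary \ref{corollary:pmapzero} (the projection $\mathcal{P}$ restricted to $\mathcal{B}_\theta$ is injective) we conclude $H_j'=cH_j$ for some nonzero scalar $c$ depending on $j$. To apply Corollary \ref{corollary:pmapzero} I need to know that $\mathcal{P}(H_j')$ and $\mathcal{P}(H_j)$ are genuinely equal up to scalar, i.e. that $\mathcal{P}(H_j')$ has no lower weight contributions beyond $W_j$. This follows because $H_j'$ is a $q$-iterated commutator of the $B_i$, hence built from $B_J$ with ${\rm wt}(J)\le\beta_j$, and Lemma \ref{lemma:decomp} / Remark \ref{remark:decomp} show that any $B_J$ with ${\rm wt}(J)=\delta<\beta_j$ contributes to $\mathcal{P}$ only terms in $U^-_{-\delta'}\mathcal{M}^+\mathcal{T}_\theta$ with $\delta'\le\delta$; a weight/biweight bookkeeping argument (the $q$-commutator structure forces all lower-weight summands of $H_j'$ to have strictly positive contribution from $N_\theta^+$ or from $\mathcal{T}'$, hence to be killed by $\mathcal{P}$) shows $\mathcal{P}(H_j')$ is supported only in weight $-\beta_j$, where it equals $W_j$. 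Alternatively, and perhaps more cleanly, I would argue directly that $H_j'$ satisfies conditions (i)--(v) of Theorem \ref{theorem:main}: the commutator construction keeps $H_j'$ inside $U_{{\rm Supp}(\beta_j)}\mathcal{T}_\theta$-type spaces with the right biweight constraints, the commutativity with $U_{\pi_j}$ follows from the computations in the proof of Lemma \ref{Hcommutativity} (which already establish commutation with all $B_i$ for $i$ in the relevant range, i.e.\ the simple roots strongly orthogonal to $\beta_j$), the $\kappa_\theta$-invariance follows from $\kappa(W_j)$ being a scalar multiple of the highest weight term by (\ref{kappaeq}) and Lemma \ref{lemma:specialization}, and specialization is immediate since each $B_i$ specializes to $f_i+e_{n-i+1}$. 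Then the uniqueness clause of Theorem \ref{theorem:main} forces $H_j'$ to be a scalar multiple of $H_j$.

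Granting $H_j'=c_jH_j$ with $c_j\ne 0$ for each $j$, the equality of subalgebras (\ref{calHeqn}) is immediate: $\mathbb{C}(q)[\mathcal{T}_\theta][H_1',\dots,H_r']=\mathbb{C}(q)[\mathcal{T}_\theta][c_1H_1,\dots,c_rH_r]=\mathbb{C}(q)[\mathcal{T}_\theta][H_1,\dots,H_r]=\mathcal{H}$. I expect the main obstacle to be the bookkeeping needed to verify that $H_j'$ satisfies condition (i) of Theorem \ref{theorem:main} — namely that the ``middle'' term $C_j'$ of $H_j'$ (everything strictly between the lowest weight term $W_j$ and the highest weight term $\kappa_\theta(W_j)$) lands in $U_{{\rm Supp}(\beta_j)}\cap G^-_{(\tau,1)}U^0U^+_{(\tau,1)}$ with $\tau=\{\alpha_j,\alpha_{n-j+1}\}$, rather than acquiring spurious components. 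This requires carefully tracking, through the recursion (\ref{recursion}), which biweight spaces $G^-_{-\zeta}U^0U^+_\xi$ can appear when one $q$-commutes a $B_i$ (a sum of an $F$-type and an $E$-type term) against a partially-formed iterated commutator; the constraint ${\rm ht}_\tau(\beta_j)=2$ and the fact that only $\alpha_j,\alpha_{n-j+1}$ lie outside the ``strongly orthogonal'' middle block keep the heights of the surviving $\xi$ components pinned to $1$. This is exactly the kind of estimate carried out in the proofs of Proposition \ref{proposition:liftbeta} and Theorem \ref{theorem:main}, so I would either invoke those results directly (treating $H_j'$ as a valid choice of the element $a$ in Proposition \ref{proposition:liftbeta}, with $\mathcal{P}(H_j')=W_j$) or reprove the needed special case using Lemma \ref{lemma:decomp}. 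Either way the uniqueness in Theorem \ref{theorem:main} does the rest.
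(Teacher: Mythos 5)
Your reduction of the theorem to the claim that each $H_j'$ is a nonzero scalar multiple of $H_j$ does not work, because that claim is false. The relationship between the two generating sets is triangular, not diagonal: one has
\[
H_j \;=\; H_j' \;+\; H_{j+1}'\,t \;+\; \cdots
\]
with $t\in\mathcal{T}_{\theta}$ and further nonzero corrections coming from $H_{j+2}',\dots,H_r'$. The $n=3$ computation in Section \ref{section:nequals3} already exhibits this: the Cartan element attached to $\beta=\alpha_1+\alpha_2+\alpha_3$ is $[B_3,[B_2,B_1]_q]_q + B_2K_1K_3^{-1}$, so the iterated $q$-commutator differs from the Cartan element by $-B_2K_1K_3^{-1}$, whose image under $\mathcal{P}$ is $-F_2K_1K_3^{-1}\neq 0$. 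Consequently $\mathcal{P}(H_j')$ is \emph{not} supported only in weight $-\beta_j$ — it acquires genuine contributions in weights $-\beta_{j+1},-\beta_{j+2},\dots$ — so the ``weight/biweight bookkeeping argument'' you invoke to show all lower-weight summands are killed by $\mathcal{P}$ cannot exist, and the appeal to Corollary \ref{corollary:pmapzero} collapses. Your fallback — verifying that $H_j'$ satisfies conditions (i)--(v) of Theorem \ref{theorem:main} and invoking its uniqueness clause — fails for the same reason: the discrepancy $H_{j+1}'t$ has a lowest weight summand in $U^-_{-\beta_{j+1}}\mathcal{T}_{\theta}$, which has $\tau$-height $0$ for $\tau=\{\alpha_j,\alpha_{n-j+1}\}$ and therefore cannot be absorbed into the $C_j\in G^-_{(\tau,1)}U^0U^+_{(\tau,1)}$ slot of condition (i), nor into $X_{\beta_j}$, $Y_{-\beta_j}$, or $s(K_{-\beta_j}-1)$.

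The argument that actually works starts where you start — $H_j=\widetilde{H_j'}$ in the sense of Proposition \ref{proposition:BUiso}, so $H_j-H_j'=\sum_{\mathrm{wt}(I)<\beta_j}B_Ia_I$ — but instead of showing this difference vanishes, one identifies it. Take a minimal $l$-weight biweight summand $gK_{-\beta_j+2\gamma'}$ of $H_j-H_j'$; since both $H_j$ and $H_j'$ commute with $\mathcal{B}_{\theta}\cap U_{\pi_j}$ and with $\mathcal{T}_{\theta}$ (Theorem \ref{theorem:main}(iii) and Lemma \ref{Hcommutativity}), Lemma \ref{lemma:adF} forces $gK_{-\beta_j+2\gamma'}$ to be an $(\mathrm{ad}\ U_{\pi_j})$ lowest weight vector; Lemmas \ref{lemma:weight_cons1} and \ref{lemma:weight_cons2} together with the dual Verma analysis then pin the weight of $g$ to $-\beta_{j+1}$ and identify the summand with $H_{j+1}'t$ for some $t\in\mathcal{T}_{\theta}$ (the $n$ even case needs the auxiliary subalgebra $V^-$ built on $[F_{r+1},F_r]_q$). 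Iterating downward yields $H_j\in H_j'+\mathbb{C}(q)[\mathcal{T}_{\theta}][H_{j+1}',\dots,H_r']$, and since this change of generators is unipotent the two algebras coincide. The theorem is thus an equality of algebras achieved through a nontrivial triangular change of generators, not an identification of the generators themselves.
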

\begin{proof} If $n$ is odd, then $H_r = B_r = H_r'$ which is clearly in the right hand side of (\ref{calHeqn}).   If $n$ is even, then 
by (\ref{HDEFN}) of  Section \ref{section:nequals2}, we see that $H_r$ is in the right hand side of (\ref{calHeqn}).  

Now consider $j<r$.   Recall that the construction of the quantum Cartan element $H_j$
 in Theorem \ref{theorem:main} is based on  Proposition \ref{proposition:liftbeta} which in turn uses  Proposition \ref{proposition:BUiso}.  In the notation of Proposition \ref{proposition:BUiso}, $H_j =\tilde b$ where $H_j' = b$. 
Thus, we can write
\begin{align*}
H_j = H_j' + \sum_{{\rm wt}(I) <\beta_j}B_Ia_I
\end{align*}
where each $a_I\in \mathcal{T}_{\theta}$.   
Moreover, by the discussion at the end of Section \ref{section:decomp_proj}  and  Lemma \ref{lemma:decomp}, we see that $H_j$, $H_j'$, $H_j - H_j'$ are all elements in the set
\begin{align}\label{expansion}
\sum_{\lambda, \gamma, \gamma'}G^-_{-\beta_j+\lambda + \gamma}U^+_{\theta(-\lambda)-\gamma}K_{-\beta_j+2\gamma'}
\end{align}
where the sum runs over $\lambda, \gamma, \gamma'$  in $Q^+(\pi)$ such that $0< \lambda\leq \beta_j$, $\gamma\leq \theta(-\lambda)$, 
and $\gamma'\leq \gamma$.  Set $\pi_j = \{\alpha_{j+1}, \dots, \alpha_{n-j}\}$.

Let $\lambda+\gamma$ be minimal such that 
$H_j-H_j'$ admits a nonzero biweight summand of $l$-weight $-\beta_j+\lambda + \gamma$, say $guK_{-\beta_j+2\gamma'} \in G^-_{-\beta_j+\lambda + \gamma}U^+_{\theta(-\lambda)-\gamma}K_{-\beta_j+2\gamma'}$ where
$g\in G^-$ and $u\in U^+$. As explained above, $H_j$ and $H_j'$ have the same lowest weight term $W_j$ and so $\lambda +\gamma>0$.   Since $H_j-H_j'\in\mathcal{B}_{\theta}$, it follows from Remark \ref{remark:decomp}
 that  $guK_{-\beta_j+2\gamma'}\in U^-\mathcal{M}^+\mathcal{T}_{\theta}$. Note that  $\mathcal{M}^+$ is just the identity element because $\pi_{\theta}=\emptyset$ and so $u = 1$.  By Theorem  \ref{theorem:main}, 
$\mathcal{P}(H_j) = W_j$.  On the other hand $\mathcal{P}(gK_{-\beta_j+2\gamma'}) = gK_{-\beta_j+2\gamma'}$ and so $gK_{-\beta_j+2\gamma'}$ does not appear as a  biweight summand of $H_j$ with respect to the expansion along the lines of  (\ref{expansion}).  Hence $gK_{-\beta_j+2\gamma'}$ must appear as a biweight summand of $H_j'$.

 Note that  $\mathcal{M}^+$ is just the identity element because $\pi_{\theta}=\emptyset$ and so $u = 1$.   Since 
\begin{align*}
gK_{-\beta_j+2\gamma'} \in U_{-\beta+\lambda +\gamma}^-K_{-\beta_j+2\gamma'}=G^-_{-\beta_j+\lambda +\gamma}K_{2\gamma'-\lambda-\gamma},
\end{align*} we must have $2\gamma'-\lambda -\gamma\in Q(\pi)^{\theta}$.  Also, the fact that $H_j$ and $H_j'$ commutes with all elements in $\mathcal{T}_{\theta} = \langle K_iK_{n-i+1}^{-1}|\ i=1, \dots, r\rangle$ ensures that 
$p(\lambda+\gamma) = \lambda +\gamma$.  By the definition of $\beta_j$ and $\pi_j$ it follows that either $\beta_j-\lambda-\gamma\in Q^+(\pi_j)$ or $\lambda+\gamma\in Q^+(\pi_j)$.
If $\lambda +\gamma\in Q^+(\pi_j)$ then it follows from  Lemma \ref{lemma:weight_cons1} that $\lambda = \gamma = 0$ which contradicts the fact that $\lambda + \gamma>0$.   Hence $\beta_j-\lambda -\gamma\in Q^+(\pi_j)$ and $g \in U_{\pi_j}.$

Assume first that $n$ is odd. By Theorem \ref{theorem:main} and the proof of Lemma \ref{Hcommutativity},  both $H_j$ and $H_j'$ commute with all elements of $U_{\pi_j}\cap \mathcal{B}_{\theta}$.
By Lemma \ref{lemma:adF},  $gK_{-\beta_j+2\gamma'} $ is an $({\rm ad}\ U_{\pi_j})$ lowest weight vector. By Section \ref{section:dual_vermas},    the restriction of $-\beta_j+2\gamma' $ to 
$\pi_j$ is $ -2\xi$ for some $\xi\in P^+(\pi_j)$  and 
$\beta_j-\lambda - \gamma$ restricts to $\xi-w_j\xi$ where $w_j=w(\pi_j)_0$.  Since $\beta_j-\lambda-\gamma\in Q^+(\pi_j)$, these two weights are equal, namely $\beta_j-\lambda -\gamma = \xi-w_j\xi$.   Hence $\beta_j-\lambda -\gamma \in Q^+(\pi_j)\cap P^+(\pi_j)$.  The only  weight less than $\beta_j$ in $Q^+(\pi_j)$ which is also in $P^+(\pi_j)$ is $\beta_{j+1}$.   Hence the weight of $g$ is 
\begin{align}\label{xireln}
\xi-w_j\xi= \beta_j-\lambda-\gamma =\beta_{j+1} = \alpha_{j+1}+\cdots +\alpha_{n-j}.
\end{align} 
It follows that $\lambda+\gamma = \alpha_j+\alpha_{n-j+1}$.   Since $\gamma'\leq \gamma$, we must have $\gamma'\in \{0,\alpha_j, \alpha_{n-j+1}, \alpha_j+\alpha_{n-j+1}\}$.  Using the fact that $\beta_j-2\gamma'$ restricts to  $2\xi$  and $\xi$ satisfies (\ref{xireln}), it is straightforward to check that $\gamma' =\alpha_j$ or $\gamma' = \alpha_{n-j+1}$.
 
As explained above, $gK_{-\beta_j+2\gamma'}$ appears as a biweight summand in the expansion of  $H_j'$  using   (\ref{expansion}).   From the definition of $H_j'$ and the fact that $g$ has weight $-\beta_{j+1}$, $gK_{-\beta_j+2\gamma'}$ must appear in the weight vector expansion  of the sum of the following two terms:
 \begin{align*}
 [E_{j}K_{n-j+1}^{-1}, [F_{n-j}, \dots, [F_{j+1}, F_{j}]_q]_q\cdots]_q
 \end{align*}
 and 
  \begin{align*}
 [F_{n-j+1}, [F_{n-j}, \dots, [F_{j+1}, E_{n-j+1}K_j^{-1}]_q]_q\cdots]_q.
 \end{align*}
 It follows that $g = [F_{n-j},[F_{n-j-1},  \dots, [F_{j+2},F_{j+1}]_q\cdots]_q]_qK_{\beta_{j+1}}$ up to some nonzero scalar. Since $\gamma' = \alpha_j$ or $\alpha_{n-j+1}$, we have 
\begin{align*}
gK_{-\beta_j+2\gamma'} = gK_{-\beta_{j+1}}(K_jK_{n-j+1})^{-1}K_{2\gamma'} =  [F_{n-j},[F_{n-j-1},  \dots, [F_{j+2},F_{j+1}]_q\cdots]_q]_qt
\end{align*} where $t = K_jK_{n-j+1}^{-1}$ or $t = K_j^{-1}K_{n-j+1}.$
 Thus 
 \begin{align*}
 H_j \in  H_j' + H_{j+1}'t + \sum_{{\rm wt}(I)<\beta_{j+1}}B_Ia_I.
 \end{align*}

Note that all three elements $H_j, H_j', H_{j+1}'$ commute with all elements in $U_{\pi_{j+2}}\cap \mathcal{B}_{\theta}$ and all elements in $\mathcal{T}_{\theta}$.  So, using a similar argument as above yields 
\begin{align}\label{Hformula3}
H_j = H_j' +H_{j+1}'t +H_{j+2}'s + \sum_{{\rm wt}(I)  < \beta_{j+2} }B_Ia_I
\end{align}
with $s\in \mathcal{T}_{\theta}$.
Hence, repeated applications of this argument yield \begin{align*}H_j\in \mathbb{C}(q)[\mathcal{T}_{\theta}][H_j', H_{j+1}', \dots, H_r'].\end{align*}  Since the coefficient of $H_j'$ in $H_j$ is $1$, we get
\begin{align}\label{Hformula4}
\mathbb{C}(q)[\mathcal{T}_{\theta}][H_j, H_{j+1}, \dots, H_r] = \mathbb{C}(q)[\mathcal{T}_{\theta}][H_j', H_{j+1}', \dots, H_r'].
\end{align}
This proves (\ref{calHeqn}) when $n$ is odd. 
 
 Now assume that $n$ is even.  Let $V^-$ be the subalgebra of $U^-$ generated by $$F_1, \dots, F_{r-1}, [F_{r+1}, F_{r}]_q, F_{r+2}, \dots, F_n.$$ Note that $V^-$ is isomorphic to 
 $U^-_q(\mathfrak{sl}_{n-1})$ (the algebra generated by $F_1, \dots, F_{n-1}$) via the map defined by 
 \begin{itemize}
 \item   $F_i\mapsto F_i$ for $i = 1, \dots, r-1,$
 \item $[F_{r+1}, F_r]_q \mapsto F_r$,
 \item  $F_i\mapsto F_{i-1}$ for $i=r+2, \dots, n$. 
 \end{itemize} Hence $V^-$ inherits an adjoint action from $U^-_q(\mathfrak{sl}_{n-1})$.  
Now $H_j-H_j'$ commutes with 
 all elements in $\mathcal{T}_{\theta}$ and all elements in the algebra generated by 
 \begin{align*}B_{j+1}, \dots, B_{r-1}, [B_{r+1}, B_r]_q, B_{r+2}, \dots, B_{n-j}.
 \end{align*} The theorem follows using an argument very similar to the odd $n$ case.  The key ingredient is an analysis of the lowest weight summand of $H_j-H_j'$ with respect to the  adjoint action of $V^-$.  This term must lie in $U^-\mathcal{T}_{\theta}$ and be invariant under the action of $({\rm ad}\ \mathcal{T}_{\theta})$. This forces the lowest weight summand to be in the algebra generated by $V^-$ and $\mathcal{T}_{\theta}$. Arguing as above yields an   analog of (\ref{Hformula3}).  
 Repeated applications of this type of argument yield  (\ref{Hformula4}) for $n$ even which completes the proof of the theorem. \end{proof}

\begin{remark}\label{remark:lastremark} There is a close relationship between the quantum Cartan subalgebra in this case and the center of $\mathcal{B}_{\theta}$.   In particular, by \cite{KL},  there exists $z_j$ in $ \mathcal{B}_{\theta}$ such that 
\begin{itemize}
\item[(i)] $z_j\in ({\rm ad}\ U_{\pi_j})K_{-2\nu_j}$ 
\item[(ii)] $z_j= [({\rm ad} F_{n-j+1}\cdots F_{j+1})F_jK_j]K_{-2\nu_j}$ + higher weight terms
\item[(iii)] $z_j$ is in the center of $U_{\pi_j}\cap \mathcal{B}_{\theta}$.   
\end{itemize} 
Note that (ii) implies that 
\begin{align*}z_j = H_j'K_{\beta_j}K_{-2\nu_j} + \sum_{{\rm wt}(I)<\beta_j}B_I\mathcal{T}_{\theta}.
\end{align*} One can expand $({\rm ad}\ U_{\pi_j})K_{-2\nu_j}$ as a sum of weight spaces in a manner analogous to the expansion of the $B_I$ in Lemma \ref{lemma:decomp}.  (See for example \cite{KL}, Section 1.5 (10).)  An inductive argument similar to the one used in the above theorem yields
$\mathcal{H} = \mathbb{C}(q)[\mathcal{T}_{\theta}][z_1, \dots, z_r]$.
\end{remark}

\begin{remark}\label{finalremark} In \cite{Wa1}, Watanabe uses the braid group automorphisms of \cite{KP} in order to determine the action of the Cartan  on highest weight generating vectors.  A natural question is: can we use these automorphisms to construct quantum Cartan subalgebras?  We see that the answer is no for examples considered in this section where here we take the most obvious choice of braid group automorphism to pass from one Cartan element to another.  In particular, consider the case where $n=3$ as in Section \ref{section:nequals3}.   Applying the braid group automorphism $\tau_1^-$ of  \cite{KP} to the Cartan element $H_1 = B_2$ we get the element
\begin{align*}
\tau_1^{-1}(H_1)= (iq^{1/2})([B_1,[B_3,B_2]_q]_q -q B_2K_1K_3^{-1})
\end{align*} of $\mathcal{B}_{\theta}$ where here we have adjusted the outcome to reflect a slightly different choice of generators for $\mathcal{B}_{\theta}$.  On the surface, this term looks similar to $H_2$ as defined in (\ref{H2}).  However $[B_1, [B_3,B_2]_q]_q$ is not a scalar multiple of $[B_3[B_2,B_1]_q]_q$ and so $H_2$ is not equal to a scalar multiple of $\tau_1^-(H_1)$.  Moreover,    $[B_1, [B_3,B_2]_q]_q$ does not commute with $B_2$ and so  $H_1$ does not commute with $\tau_1^{-1}(H_1)$.  Thus, in contrast to the Cartan subalgebras studied here, the subalgebra generated by $H_1, \tau_1^{-1}(H_1), K_1K_3^{-1}, K_3K_1^{-1}$ is not  commutative. \end{remark}

\begin{remark} Note that the set $\Gamma= \{\beta_1, \dots, \beta_r\}$ where $r=\lfloor (n+1)/2\rfloor$ and 
\begin{align*}
\beta_j = \alpha_j + \alpha_{j+1} + \cdots + \alpha_{n-j+1}
\end{align*}
for $j=1, \dots, r$ is also a strongly orthogonal $\theta$-system for the symmetric pair $\mathfrak{g}, \mathfrak{g}^{\theta}$ of type AI.  Choosing $\alpha_{\beta_j} = \alpha_j$ and $\alpha_{\beta}' = \alpha_{\beta_{n-j+1}}$ for each $j$, we see that $\Gamma$ satisfies conditions Theorem \ref{theorem:cases_take2} (i) - (iv) but does not fall into one of the fives cases of this theorem.  A natural question is:  can we still identify a quantum Cartan subalgebra of $\mathcal{B}_{\theta}$ defined by this strongly orthogonal $\theta$-system?    In the case when $n=3$, a straightforward computation shows that $H'_{\beta_2}= B_2$ commutes with $H'_{\beta_1} = [B_3,[B_2,B_1]_q]_q$
where $B_i = F_i +E_iK_i^{-1}$ for each $i$.    More generally, in analogy to the type AIII case,   the algebra $\mathcal{H}' = \mathbb{C}(q)[H'_{1},\dots, H'_{r}]$ is a commutative polynomial ring where 
\begin{align*}
H_j' =[B_{n-j+1}, [B_{n-j}, \cdots, [B_{j+1}, B_j]_q]_q\cdots ]_q
\end{align*}
for $j=1, \dots, r$.  Furthermore,  $\mathcal{H}'$ specializes to the enveloping algebra of a  Cartan subalgebra of $\mathfrak{g}^{\theta}$ as $q$ goes to $1$.   However, $\mathcal{H}$ is not  $\kappa_{\theta}$ invariant. This is because  $\kappa_{\theta}(B_j) = B_j$ for $j-1, \dots, n$ and so   \begin{align*}\kappa_{\theta}(H_j') = [B_j,[B_{j+1}, \cdots, [B_{n-j}, B_{n-j+1}]_q]_q\cdots]_q\neq H_j',\end{align*}  and, moreover,   $\kappa_{\theta}(H_j')\notin \mathcal{H}'$ for all $j < (n+1)/2$.  Hence Corollary \ref{cor:unitary1} does not apply to $\mathcal{H}'$.  Nevertheless, it is likely that one can use a  twist of $\kappa_{\theta}$ obtained by composing it with  an automorphism of $U_q(\mathfrak{g})$ induced by the nontrivial diagram automorphism for type $A_n$ in order to establish the semisimplicity of unitary $\mathcal{B}_{\theta}$-modules with respect to the action of $\mathcal{H}'$.  Thus $\mathcal{H}'$ could potentially serve as an alternative quantum Cartan subalgebra for $\mathcal{B}_{\theta}$ distinct from the quantum Cartan subalgebra in type AI defined by Theorems \ref{theorem:cases_take2} and \ref{theorem:main} (see Remark \ref{nonstandard}).    \end{remark}

\small
\section{Appendix: Commonly used Notation}

\noindent
{\bf Section 2 }:
$\mathbb{C}$, $\mathbb{R}$, 
$\mathbb{N}$, $\mathfrak{g}$, $\theta$,  $\mathfrak{g}^{\theta}$ 

\medskip
\noindent
{\bf Section 2.1 }:
$\mathfrak{n}^-$, $\mathfrak{h}$, $ \mathfrak{n}^+$, 
$\Delta$, 
$\Delta^+$, $h_i, e_{\alpha},f_{-\alpha}$,  
$\pi=\{\alpha_1, \dots, \alpha_n\}$,  
$\mathcal{W}$, 
$w_0$,  $e_i,f_i$, 
$h_{\alpha}$,  $(\ , \ )$,  
$\mathfrak{g}_{\pi'}$, $Q(\pi')$,  $Q^+(\pi')$,  
$\Delta(\pi')$,  $\mathcal{W}_{\pi'}$,  
$w(\pi')_0$,  $P(\pi)$,  $P^+(\pi)$,  
$P(\pi')$,  $P^+(\pi')$,
${\rm mult}_{\alpha}(\beta)$, 
${\rm Supp}(\beta)$,
${\rm ht}(\beta)$

\medskip
\noindent
{\bf Section 2.2:}
$\mathfrak{g}_{\mathbb{R}}$,
$\mathfrak{h}_{\mathbb{R}}$,
$\mathfrak{t}_{\mathbb{R}}$, $\mathfrak{p}_{\mathbb{R}}$,
$\mathfrak{a}_{\mathbb{R}}$, $Q(\pi)^{\theta}$, $P(\pi)^{\theta}$, 
$\Delta_{\theta}$,
${\rm Orth}(\beta) $,
${\rm StrOrth}(\beta) $, $\mathfrak{h}_{\theta}$, $\mathfrak{a}$

\medskip
\noindent
{\bf Section 2.4:}
$\pi_{\theta}$, $p$, 
$\Gamma_{\theta}$, 
$\alpha_{\beta}, \alpha_{\beta}'$

\medskip
\noindent
{\bf Section 3.1:} $U_q(\mathfrak{g})$,
$E_i, F_i, K_i^{\pm 1}$, 
$\epsilon$, 
$U^+$, 
$U^-$, 
$\mathcal{T}$, 
$U^0$, $G^+$, $G^-$, 
$\hat U$, 
$V_{\lambda}$, biweight,  
$l$-weight,  $K_{\xi}$,  
$\check{ \mathcal{T}}$,  
$\check U$,  
$M(\lambda)$,  $L(\lambda)$,  
$U_{\pi'}$, 
$L_{\pi'}(\lambda)$, 
$S^{U_{\pi'}}$

medskip
\noindent
{\bf Section 3.2:} ${\rm ad}_{\lambda}$, $G^-(\lambda)$,  
$\delta M(\lambda/2)$, 
$\deg_{\mathcal{F}}$, 
$\tilde\lambda$, $F(\check U)$, 
$F_{\pi'}(\check U)$ 

\medskip
\noindent
{\bf Section 3.3: } $T_i$, 
$T_w$, 
$\sigma$ 

\medskip
\noindent
{\bf Section 3.4: }
$\kappa$

\medskip
\noindent
{\bf Section 4.1:}
$\mathcal{M}$,
$\theta_q$,
$\mathcal{T}_{\theta}$, $\mathcal{B}_{\theta}$, $B_i$  ($\alpha_i\notin\pi_{\theta}$), $\bf{c}=(c_1,\dots, c_n)$, $\bf{s} = (s_1, \dots, s_n)$, 
$\mathcal{S}$,
$\kappa_{\theta}$,
$\check{\mathcal{T}}_{\theta}$, $\check{\mathcal{B}}_{\theta}$

\medskip
\noindent
{\bf Section 4.2:}
$\mathcal{M}^+$, $B_i$ ($\alpha_i\in \pi_{\theta}$),
$F_J$ , $B_J$,  
${\rm wt}(j_1, \dots, j_m) $

\medskip
\noindent
{\bf Section 4.3:}
$\mathcal{P}$

\medskip
\noindent
{\bf Section 5.1:}
$\bar L(\lambda)$,
$\bar L_{\pi'}(\lambda)$, $[\bar L(\lambda):\bar L_{\pi'}(\gamma)]$,
$\nu_{\alpha}$

\medskip
\noindent
{\bf Section 5.2:}
$[L(\lambda): L_{\pi'}(\gamma)]$

\medskip
\noindent
{\bf Section 5.3:}
$\varphi$, $\varphi'$

\medskip
\noindent
{\bf Section 6.1:}
$\mathcal{N}(D:C)$,
$[b,a]$

\medskip
\noindent
{\bf Section 6.2:}
${\rm ht}_{\tau}(\beta)$

\medskip
\noindent
{\bf Section 6.4:}
$G^-_{(\tau,m)}$, 
$U^+_{(\tau,m)}$

\end{document}